\def \c{\mathbb{C}}
\def \z{\mathbb{Z}}
\def \r{\mathbb{R}}
\def \n{\mathbb{N}}
\def \p{\mathbb{P}}
\def \A{\mathcal{A}}
\def \L{\mathcal{L}}
\def \E{\mathcal{E}}
\def \O{\mathcal{O}}
\def \GL{\textup{GL}}
\def \w{{\underline{w}_0}}
\def \dim{\textup{dim}}
\def \ord{\textup{ord}}
\def \conv{\textup{conv}}
\def \vol{\textup{vol}}
\def \Lie{\textup{Lie}}
\def \codim{\textup{codim}}
\def \Spec{\textup{Spec}}
\def \End{\textup{End}}
\theoremstyle{plain}
\newtheorem{Th}{Theorem}[section]
\newtheorem{Lem}[Th]{Lemma}
\newtheorem{Prop}[Th]{Proposition}
\newtheorem{Cor}[Th]{Corollary}
\newtheorem{THM}{Theorem}
\theoremstyle{definition}
\newtheorem{Ex}[Th]{Example}
\newtheorem{Def}[Th]{Definition}
\newtheorem{Rem}[Th]{Remark}
\begin{document}

\title{Complete intersections in spherical varieties}

\dedicatory{Dedicated to Joseph Bernstein on the occasion of his 70th birthday}

\author{Kiumars Kaveh}
\address{Department of Mathematics, University of Pittsburgh,
Pittsburgh, PA, USA.}
\email{kaveh@pitt.edu}

\author{A. G. Khovanskii}
\address{Department of Mathematics, University of Toronto, Toronto,
Canada; Moscow Independent University, Moscow, Russia.}
\email{askold@math.utoronto.ca}

\begin{abstract}
Let $G$ be a complex reductive algebraic group. We study complete intersections in a spherical homogeneous space $G/H$ defined by a 
generic collection of sections from $G$-invariant linear systems. Whenever nonempty, all such complete intersections are smooth varieties. 
We compute their arithmetic genus as well as some of their $h^{p,0}$ numbers. The answers are given in terms of the moment polytopes and Newton-Okounkov polytopes
associated to $G$-invariant linear systems. We also give a necessary and sufficient condition on a collection of linear systems so that the corresponding generic 
complete intersection is nonempty. This criterion applies to arbitrary quasi-projective varieties (i.e. not necessarily spherical homogeneous spaces). When the spherical
homogeneous space under consideration is a complex torus $(\c^*)^n$, our results specialize to well-known results from the Newton polyhedra theory 
and toric varieties. 
\end{abstract}

\thanks{The first author is partially supported by a National Science Foundation Grant 
(Grant ID: 1200581).}

\thanks{The second author is partially supported by the Canadian Grant No. 156833-12.}

\keywords{Arithmetic and geometric genus, complete intersection, spherical variety, moment polytope, Newton-Okounkov polytope, virtual polytope} 
\subjclass[2010]{Primary: 14M27; Secondary: 14M10}
\date{\today}
\maketitle

\setcounter{tocdepth}{1}
\tableofcontents


\section{Introduction}
The main objective of the present paper is to study complete intersections in a spherical homogeneous space $G/H$ where $G$ is a complex 
connected reductive algebraic group. We compute the arithmetic genus as well as many of the $h^{p,0}$ numbers of a generic complete intersection in
$G/H$. Our results generalize the similar results from the Newton polyhedra theory and toric varieties obtained in \cite{Askold-toroidal, Askold-genus, Askold-new}.

We first get a convex geometric formula for the Euler characteristic of a $G$-linearized line bundle over a projective spherical variety. This then allows us to 
represent the arithmetic genus of a complete intersection also in terms of convex geometric data. In many cases of interest our formula is quite computable (Section \ref{sec-examples}).
Our approach is based on the notion of virtual polytope as developed in \cite{Kh-P}. Moreover, we use Newton-Okounkov bodies/polytopes associated to linear systems 
(\cite{Okounkov-spherical, AB, KKh-reductive}) as well as string polytopes (in particular Gelfand-Zetlin polytopes) associated to irreducible representations of $G$
(\cite{Littelmann, BZ} and \cite{Kiumars-string}). 
 
Let $G$ be a reductive algebraic group. A variety $X$ with an action of $G$ is called {\it spherical} if a Borel subgroup of $G$ has a dense open orbit. 
Spherical varieties are generalizations of toric varieties (where $G = (\c^*)^n$ is a torus) on one hand and the flag varieties $G/P$ on the other hand. Similar to toric varieties, geometry 
of spherical varieties and their orbit structure can be read off from combinatorial and convex geometric data of fans and convex polytopes.

We begin by recalling results about complete intersections in a torus (see also Section \ref{sec-toric-12}). 
Let $A_1, \ldots, A_k$ be finite subsets of $\z^n$ where $k \leq n$. For each $i=1, \ldots, k$ let 
$f_i(x) = \sum_{\alpha \in A_i} c_{i, \alpha} x^\alpha$ be a Laurent 
polynomial in $\c[x_1^{\pm 1}, \ldots, x_n^{\pm 1}]$ with generic coefficients $c_{i, \alpha}$. Let $$X_k = \{ x \in (\c^*)^n \mid f_1(x) = \cdots = f_k(x) = 0\}$$ 
be the complete intersection in the torus $(\c^*)^n$ defined by the $f_i$. For each $i=1, \ldots, k$, let $\Delta_i$ denote the convex hull of $A_i$. It is an integral convex polytope
in $\r^n$. 

In general a generic complete intersection $X_k$ in $(\c^*)^n$ may be empty. A beautiful result of David Bernstein \footnote{David Bernstein is the younger brother of 
Joseph Bernstein. He discovered the famous formula for the number of solutions in $(\c^*)^n$ of a
generic system of $n$ polynomial equations with fixed Newton polytopes  \cite{Bernstein}.
This amazing formula inspired much activity that eventually lead to
the creation of Newton polyhedra theory and the theory of Newton-Okounkov bodies.} 
gives a necessary and sufficient condition
for $X_k$ to be nonempty, in terms of the dimensions of the polytopes $\Delta_i$ and their Minkowski sums. It relies on a theorem of Minkowski which 
gives a necessary and sufficient condition for the mixed volume of $n$ convex polytopes to be nonzero (see \cite{Askold-new}).

We recall that the {\it arithmetic genus} of a smooth complete variety $Z$ of dimension $d$ is by definition $\chi(Z) = \sum_{i=0}^d (-1)^i h^{i,0}(Z)$. The 
{\it geometric genus} $p_g(Z)$ is the number $h^{d,0}(Z)$, i.e. the dimension of the space of holomorphic top forms. One knows that $h^{p,0}$ numbers 
are birational invariants and hence the $h^{p,0}$ numbers and the arithmetic and geometric genus can be defined for non-smooth and non-complete varieties as well.

In \cite{Askold-genus} the following formula for the arithmetic genus of $X_k$ is proved. It computes the arithmetic genus in terms of the number of integral points 
in the relative interior of $\Delta_i$ and their Minkowski sums:
\begin{equation} \label{equ-intro-genus-torus}
\chi(X_k) = 1 - \sum_{i_1} N'(\Delta_{i_1}) + \sum_{i_1 < i_2} N'(\Delta_{i_1} + \Delta_{i_2}) - \cdots + (-1)^k N'(\Delta_1 + \cdots + \Delta_k),
\end{equation}
where for a polytope $\Delta$, $N'(\Delta)$ denotes the number of integral points in the interior of $\Delta$ times $(-1)^{\dim(\Delta)}$. Here the interior is with respect to the 
topology of the affine span of $\Delta$. Moreover, if all the polytopes $\Delta_i$ have full dimension $n$, all the $h^{i,0}(X_k)$ are $0$ except $h^{0,0}(X_k) = 1$ and 
$h^{n-k,0}(X_k)$ which can be computed from \eqref{equ-intro-genus-torus}. In fact, the condition that the polytopes have full dimension can be substitute with a weaker condition
(see Corollary \ref{cor-32}).

As above let $G$ be a complex connected reductive algebraic group. Let $G/H$ be a spherical homogeneous space of dimension $n$. 
Let $\E_1, \ldots, \E_k$, $k \leq n$, be $G$-linearized globally generated line bundles on $G/H$. For each $i=1, \ldots, k$, let $E_i \subset H^0(G/H, \E_i)$ be a nonzero 
$G$-invariant linear system. 
That is, $E_i$ is a nonzero finite dimensional linear subspace of $H^0(G/H, \E_i)$ and stable under the action of $G$. Also for $i=1, \ldots, k$, let $f_i$ be a generic section from 
the linear system $E_i$. We are interested in a generic complete intersection:
$$X_k = \{ x \in G/H \mid f_1(x) = \cdots = f_k(x) = 0\}.$$

Let $\E$ be a $G$-linearized line bundle on $G/H$ and $E \subset H^0(G/H, \E)$ a nonzero $G$-invariant linear system.
Generalizing the notion of Newton polytope of a Laurent polynomial, to $E$ one associates two polytopes: the {\it moment polytope} $\Delta(E)$ and the {\it Newton-Okounkov polytope} 
$\tilde{\Delta}(E)$ (see Section \ref{subsec-NO-poly}). The moment polytope $\Delta(E)$ is defined as:
\begin{equation} \label{equ-intro-moment}
\Delta(E) = \overline{\bigcup_{m > 0} \{ \lambda / m \mid V_\lambda \textup{ appears in } \overline{E^m} \}}.
\end{equation}
Here $E^m \subset H^0(G/H, \E^{\otimes m})$ is the linear system spanned by all the products of $m$ elements from $E$ and $\overline{E^m}$ is the completion of $E^m$ as a linear system.
The moment polytope $\Delta(E)$ contains asymptotic information about the highest weights appearing in the complete linear systems $\overline{E^m}$ for large $m$. The name moment polytope comes from symplectic geometry since it coincides with the moment polytope in the sense of Hamiltonian group actions (see Remark \ref{rem-moment-polytope-symplectic}). In the context of reductive group actions on varieties, the notion of moment polytope, as defined in \eqref{equ-intro-moment}, goes back to M. Brion (\cite{Brion-moment}).

The Newton-Okounkov polytope $\tilde{\Delta}(E)$ is a polytope fibered over the moment polytope $\Delta(E)$ with string polytopes as fibers (see Section \ref{subsec-moment}). 
It has the property that for each $m > 0$, the dimension of the complete linear system $\overline{E^m}$ is equal to the number of integral points in the dilated polytope 
$m\tilde{\Delta}(E)$. From this it follows that the number of intersections of $n$ generic hypersurfaces in  the linear system $E$ is equal to $n! \vol(\tilde{\Delta}(E))$.

The definition of Newton-Okounkov polytope of a spherical variety goes back to 
A. Okounkov for when $G$ is a classical group (see \cite{Okounkov-spherical}) and V. Alexeev and M. Brion for a general reductive group (see \cite{AB}). 
It was generalized to arbitrary $G$-varieties in \cite{KKh-reductive}. 

The main results of the paper are as follows: 

Generalizing the Bernstein's theorem, we give a necessary and sufficient condition for $X_k$ to be nonempty. The conditions are in terms of the dimensions of the 
Newton-Okounkov polytopes of the $E_i$ and the products of the $E_i$ (see Theorem \ref{th-sph-nonempty}). The conditions can also be formulated only in terms of the moment polytopes. In fact, the necessary and sufficient conditions for the nonemptiness of a generic complete intersection can be formulated to apply to an arbitrary quasi-projective variety 
(see Theorems \ref{th-nec-cond-nonempty} and \ref{th-suff-cond-nonempty}).

Moreover, whenever a generic complete intersection $X_k$ is not empty, we give a formula for its genus. 
To $G/H$ there corresponds a sublattice in $\z^n$ and to $E$ there corresponds an integral point $\alpha \in \z^n$. For a polytope $\tilde{\Delta}$ we count the number of 
points in $\tilde{\Delta}$ lying in the sublattice shifted by $\alpha$. We denote by $N'(\tilde{\Delta}, \alpha)$ the number of points in the sublattice shifted by $\alpha$ which 
lie in the interior of $\tilde{\Delta}$ times $(-1)^{\dim(\tilde{\Delta})}$ (as before the interior is with respect to the topology of the affine span of $\tilde{\Delta}$).
\begin{THM}[Theorem \ref{th-main}]
The arithmetic genus $\chi(X_k)$ is given by:
\begin{multline}  \label{equ-intro-genus-sph}
\chi(X_k) = 1 - \sum_{i_1} N'(\tilde{\Delta}_{i_1}(E_{i_1}), \alpha_{i_1}) + \sum_{i_1 < i_2} N'(\tilde{\Delta}(E_{i_1} E_{i_2}), \alpha_{i_1}+\alpha_{i_2}) - \cdots \\
+ (-1)^k N'(\tilde{\Delta}(E_1\cdots E_k), \alpha_1 + \cdots + \alpha_k).
\end{multline}
Again the above can also be formulated only in terms of the moment polytopes.
\end{THM}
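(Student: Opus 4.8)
The plan is to reduce the computation of $\chi(X_k)$ to an Euler-characteristic computation on a projective spherical completion, and then to express each term of an inclusion–exclusion over the subsets of $\{1,\dots,k\}$ in convex-geometric terms using the Newton–Okounkov and moment polytopes. First I would use a standard adjunction/Koszul-complex argument: a generic complete intersection $X_k$ sits inside $X_{k-1}$ as the zero locus of a generic section of $\E_k|_{X_{k-1}}$, and the Koszul resolution of its structure sheaf gives
\begin{equation*}
\chi(\O_{X_k}) = \sum_{I \subseteq \{1,\dots,k\}} (-1)^{|I|}\,\chi\!\left(G/H,\; \bigotimes_{i\in I}\E_i^{-1}\right),
\end{equation*}
where $\chi$ on the right is the Euler characteristic of the line bundle on a fixed smooth projective spherical variety $\overline{G/H}$ compactifying the situation (one first has to choose a common $G$-equivariant smooth completion on which all the $\E_i$ extend as globally generated $G$-linearized line bundles, and check — using genericity and the smoothness statement from the abstract — that no contributions come from the boundary; this is where the hypothesis that the linear systems $E_i$ are $G$-invariant and the sections generic is essential, so that $X_k$ meets the open orbit in a dense set and its closure in $\overline{G/H}$ is again a transverse complete intersection avoiding the "bad" boundary strata). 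Since $h^{p,0}$ is a birational invariant and $X_k$ is smooth quasi-projective with a smooth complete model, $\chi(X_k) = \chi(\O_{X_k})$, so the left side is genuinely the arithmetic genus as defined in the excerpt.

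Next I would invoke the convex-geometric formula for the Euler characteristic of a $G$-linearized line bundle on a projective spherical variety — the first main result referred to in the introduction — which expresses $\chi(\overline{G/H}, \L)$ in terms of the number of lattice points, counted with the appropriate virtual-polytope sign, in the Newton–Okounkov polytope of $\L$ (fibered over its moment polytope with string-polytope fibers), shifted by the integral point $\alpha_\L \in \z^n$ attached to $\L$. The point is that for each subset $I$ the line bundle $\bigotimes_{i\in I}\E_i$ corresponds to the linear system $\prod_{i\in I} E_i$, whose Newton–Okounkov polytope is $\tilde{\Delta}\!\left(\prod_{i\in I}E_i\right)$ and whose associated integral point is $\sum_{i\in I}\alpha_i$; and passing from $\L$ to $\L^{-1}$ in the lattice-point count is exactly what converts an interior-lattice-point count into a signed one via Ehrhart reciprocity, producing the $N'$ quantities. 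Substituting these evaluations term-by-term into the Koszul inclusion–exclusion sum, and matching the $|I|=0$ term with the constant $1$ (since $\tilde{\Delta}$ of the trivial system is a point and contributes $1$), yields precisely \eqref{equ-intro-genus-sph}.

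The main obstacle, and the step requiring the most care, is the reciprocity/virtual-polytope bookkeeping that links $\chi(\overline{G/H}, \bigotimes_{i\in I}\E_i^{-1})$ to $N'(\tilde{\Delta}(\prod_{i\in I}E_i), \sum_{i\in I}\alpha_i)$ uniformly across all subsets $I$, including those for which the relevant Newton–Okounkov polytope is not full-dimensional. This is exactly the place where the framework of virtual polytopes from \cite{Kh-P} does the work: the Euler characteristic of $\L^{-1}$ is not given by a naive Ehrhart polynomial but by its "virtual" continuation, and one must show that evaluating that continuation at the relevant point reproduces the signed interior count $N'$ — including the sign $(-1)^{\dim}$ — and that these virtual-polytope identities are additive under the Minkowski sums appearing when $I$ ranges over subsets. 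A secondary subtlety is checking that the Euler-characteristic formula for $\L$ on the spherical completion is insensitive to the choice of completion and extends to the non-globally-generated bundles $\L^{-1}$ that appear in the Koszul complex; this is handled by the fact that Euler characteristic is locally constant in flat families and the string/Newton–Okounkov polytope data is intrinsic to the homogeneous space $G/H$ together with the linear system, not to any particular compactification. Once these points are in place, the remaining manipulations — expanding the Koszul sum, identifying products of linear systems with tensor products of bundles, and collecting terms by $|I|$ — are routine.
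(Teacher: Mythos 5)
Your proposal follows essentially the same route as the paper: pass to a smooth, sufficiently complete $G$-equivariant compactification $X$ of $G/H$ (Theorem \ref{th-suff-full-exists}), use transversality of generic sections so that the closure of $X_k$ is a smooth projective complete intersection with the same $h^{p,0}$ numbers, expand $\chi(\O_{X_k})$ by the Koszul/short-exact-sequence inclusion--exclusion (Theorem \ref{th-genus-complete-int}), and evaluate each term $\chi(X,\L_J^{-1})$ via Brion's vanishing theorem, Riemann--Roch polynomiality and the virtual-polytope evaluation at $m=-1$ (Corollary \ref{cor-dim-H^k-L^-1}, Theorem \ref{th-coh-L^-1}, Proposition \ref{prop-NO-poly-dim-H^0}), with $\chi(X)=1$ coming from Theorem \ref{th-h-p-sph}. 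One small correction: no Minkowski-additivity of the Newton--Okounkov polytopes is required (and it fails in general, cf. Remark \ref{rem-additive}); each subset $J$ is handled directly through the polytope $\tilde{\Delta}(E_J)$ of the product linear system, additivity entering only in the special cases of Remark \ref{rem-main-th-additive}.
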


Finally we give estimates for many of the $h^{i,0}$ numbers of $X_k$ (Theorem \ref{th-h-p-complete-int-sph}). In particular when all the polytopes $\tilde{\Delta}(E_i)$ have full 
dimension we have the following:
\begin{THM}[Corollary \ref{cor-h-p-complete-int-sph}]
With notation as above, suppose all the polytopes $\tilde{\Delta}(E_i)$, $i=1, \ldots, k$, have full dimension equal to $n= \dim(G/H)$. Then for any $0 \leq p < n-k$:
$$h^{p,0}(X_k) = \begin{cases} 
1, \quad p = 0 \\
0 \quad p \neq 0.\\
\end{cases}$$
Moreover, $h^{n-k, 0}(X_k)$ can be computed from \eqref{equ-intro-genus-sph}.
\end{THM}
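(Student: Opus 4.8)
The plan is to deduce the $h^{p,0}$ statement from the arithmetic-genus formula \eqref{equ-intro-genus-sph} together with a vanishing result that comes from realizing $X_k$ as a complete intersection inside a smooth projective spherical completion. First I would reduce to a geometric setting: choose a smooth projective $G$-equivariant completion $X$ of $G/H$ on which the line bundles $\E_i$ extend to globally generated (in fact very ample, after twisting) line bundles $\L_i$, with the linear systems $E_i$ spanning sub-systems of $H^0(X, \L_i)$; by the genericity of the $f_i$ one arranges (as in the general Bertini-type arguments used earlier in the paper) that the closure $\overline{X}_k$ of $X_k$ in $X$ is a smooth complete intersection of the divisors of the $f_i$, meeting the boundary $X \setminus (G/H)$ transversally, and that $h^{p,0}(\overline{X}_k) = h^{p,0}(X_k)$ since the two differ by a smaller-dimensional set and $h^{p,0}$ is a birational (indeed proper-birational, or rather open-immersion) invariant in the smooth case. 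The key input at this stage is that when all $\tilde{\Delta}(E_i)$ are full-dimensional, each $\L_i$ is \emph{ample} on a suitable such $X$ — this is exactly where full dimensionality of the Newton–Okounkov polytope is used.

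Next I would run the Koszul resolution of $\O_{\overline{X}_k}$ on $X$: from
\[
0 \to \L_1^{\vee} \otimes \cdots \otimes \L_k^{\vee} \to \cdots \to \bigoplus_{i<j} \L_i^{\vee}\otimes \L_j^{\vee} \to \bigoplus_i \L_i^{\vee} \to \O_X \to \O_{\overline{X}_k} \to 0,
\]
tensor with $\Omega_X^p$, and chase the resulting hypercohomology spectral sequence. To conclude $H^q(\overline{X}_k, \Omega_{\overline{X}_k}^p) = H^q(X, \Omega_X^p)$ for $p + q$ in the relevant range, and in particular to kill the contributions of the $\L_I^{\vee} := \bigotimes_{i\in I}\L_i^{\vee}$, I would invoke a Kodaira–Nakano–Akizuki type vanishing theorem: $H^q(X, \Omega_X^p \otimes \L_I^{\vee}) = 0$ whenever $p + q < n$ and $\L_I$ is ample. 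Since $X$ is a smooth projective (spherical, hence in particular rational) variety and each $\L_I$ is ample, this applies; combined with $H^q(X, \Omega_X^p) = 0$ for $p \neq q$ (rationality, or more directly the known Hodge structure of smooth projective spherical varieties, whose cohomology is of Hodge–Tate type) and $h^{0,0}(X) = 1$, the spectral sequence collapses to give $h^{p,0}(\overline{X}_k) = h^{p,0}(X)$ for $0 \le p < n-k$, i.e. $1$ for $p=0$ and $0$ otherwise. The last clause, that $h^{n-k,0}(X_k)$ is then determined, follows because all other $h^{p,0}(X_k)$ with $p < n-k$ are pinned down, $h^{p,0}(X_k) = 0$ for $p > n-k = \dim X_k$, and hence \eqref{equ-intro-genus-sph} — which computes the alternating sum $\sum_p (-1)^p h^{p,0}(X_k)$ — isolates $(-1)^{n-k} h^{n-k,0}(X_k)$.

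The main obstacle I anticipate is the geometric setup rather than the cohomological bookkeeping: one must produce a single smooth projective equivariant completion $X$ on which \emph{all} the $\L_i$ are simultaneously ample (or at least nef and big with the vanishing still available) and on which the generic complete intersection closes up smoothly with transversal boundary behavior — and one must be careful that passing to $X$ does not silently change $h^{p,0}$. Full-dimensionality of $\tilde{\Delta}(E_i)$ is precisely the hypothesis that makes the ample completion exist (a lower-dimensional Newton–Okounkov polytope signals that $\L_i$ is pulled back from a spherical variety of smaller dimension, breaking the ampleness needed for Kodaira–Nakano). A secondary technical point is justifying Akizuki–Nakano vanishing in this singular-boundary-free but possibly non-Kähler-in-spirit algebraic setting; since $X$ is smooth projective over $\c$ this is classical, but one should state it cleanly. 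Once $X$ is in hand, the argument is the standard Koszul-plus-vanishing computation, entirely parallel to the toric case treated in \cite{Askold-genus}, which is why I expect the paper's proof to be short modulo the spherical completion results recalled earlier.
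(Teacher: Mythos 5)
There is a genuine gap, and it sits exactly at the point you flag as your ``main obstacle''. Full dimensionality of $\tilde{\Delta}(E_i)$ does \emph{not} give you an equivariant smooth completion $X$ on which the $\L_i$ are ample: it only says that the Iitaka dimension of $E_i$ is $n$, i.e. the Kodaira map $\Phi_{E_i}$ is generically finite onto its image, so the pullback $\L_i=\Phi_{E_i}^*(\mathcal{O}(1))$ on a sufficiently complete smooth $X$ is globally generated and big, but in general not ample (the Kodaira map can contract boundary subvarieties, and for $k\geq 2$ distinct linear systems one cannot in general make all the $\L_i$ simultaneously ample on one smooth equivariant completion). Without ampleness your key vanishing input collapses: the Akizuki--Kodaira--Nakano statement $H^q(X,\Omega^p_X\otimes\L_I^{-1})=0$ for $p+q<n$ genuinely requires ampleness and is false for merely nef and big (semiample) bundles, and the Kawamata--Viehweg theorem does not extend to the $\Omega^p$-twisted form you need. (You would also have to handle the conormal sequence to pass from $\Omega^p_X|_{\overline{X}_k}$ to $\Omega^p_{\overline{X}_k}$, which the sketch elides, and that step again uses ampleness in the classical Lefschetz-type argument.)

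The paper's route avoids ampleness altogether and is where the full-dimensionality hypothesis actually enters. Since $h^{p,0}$ of the smooth projective closure equals $\dim H^p(\cdot,\mathcal{O})$ by Hodge symmetry, one only needs vanishing of $H^{p+|J|}(X,\L_J^{-1})$, not of $\Omega^p$-twisted groups. Theorem \ref{th-22} (an inductive bound coming from the restriction exact sequences along $X_k\subset X_{k-1}\subset\cdots\subset X$, i.e. a step-by-step Koszul argument with $\mathcal{O}$) bounds $h^{p,0}(X_k)$ by $h^{p,0}(X)+\sum_{J\neq\emptyset}\dim H^{p+|J|}(X,\L_J^{-1})$; Brion's theorem (Theorem \ref{th-coh-line-bundle}), valid for any globally generated $G$-linearized bundle on a projective spherical variety, says $H^i(X,\L_J^{-1})=0$ unless $i$ equals the Iitaka dimension $\kappa_J=\dim\tilde{\Delta}_J$. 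Full dimensionality of the $\tilde{\Delta}(E_i)$ forces $\kappa_J=n$, so the correction terms can only appear when $p=n-|J|\geq n-k$, i.e. no $p<n-k$ is critical; combined with $h^{p,0}(X)=\delta_{p,0}$ for smooth projective spherical $X$ (Theorem \ref{th-h-p-sph}) this gives the claim, and the last clause about $h^{n-k,0}$ follows from the genus formula as you say. Your argument can be repaired by replacing the $\Omega^p$-twisted Akizuki--Nakano step with this structure-sheaf computation (using Brion's vanishing, or Kawamata--Viehweg for the nef and big $\L_J$), but as written the ampleness claim and the vanishing it is meant to feed are not justified.
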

In fact, Corollary \ref{cor-h-p-complete-int-sph} is a slightly stronger version of the above theorem.

In the last section (Section \ref{sec-examples}) we consider three important classes of spherical varieties (aside from toric varieties): (1) horospherical varieties, 
(2) group embeddings, and (3) flag varieties. In particular our formula give very computable formulae for the following concrete examples of complete intersections in spherical 
homogeneous spaces:

\begin{itemize}
\item (A horospherical example)
Let $V$ be a finite dimensional $G$-module. Let $v_1, \ldots, v_s$ be highest weight vectors of $V$ with highest weights $\lambda_1, \ldots, \lambda_s$ respectively. 
Put $v = v_1 + \cdots + v_s$ and let $X$ be the closure of the $G$-orbit of $v$ in $V$. It is an affine spherical subvariety of $V$. Let $L$ be the linear subspace of $\c[X]$
consisting of linear functions in $V^*$ restricted to $X$. Let $\Delta = \conv\{ \lambda_i \mid i = 1, \ldots, s\}$. Also let $\tilde{\Delta}$ denote the corresponding 
Newton-Okounkov polytope. 
Let $f$ be a generic element in $L$ defining a hypersurface $H_f = \{ x \in X  \mid f(x) = 0\}$. Then 
the geometric genus of $H_f$ is equal to the number of integral points in the interior of the polytope $\tilde{\Delta}$ (see Section \ref{subsec-horosph}).

\item (A group example) Let $\pi: G \to \GL(V)$ be a finite dimensional faithful representation of $G$. Let $H$ be a hypersurface in $G$ defined by $f=0$ where $f$ is a 
generic matrix element of $\pi$. Then the the geometric genus of $H$ is given by the number of points in the interior of the polytope $\tilde{\Delta}_\pi$ (see Section \ref{subsec-gp-embed}).

\item (A flag variety example)
Let $G = \GL(n, \c)$. Let $\L_\lambda$ be the $G$-line bundle on the variety of complete flags associated to a dominant weight $\lambda$. 
If $H$ is a generic divisor of $\L_\lambda$ then the geometric genus of $H$ is equal to the number of integral points in $\r^{n(n-1)/2}$ lying in the interior of the Gelfand-Zetlin polytope $\Delta_{\textup{GZ}}(\lambda)$. In other words, 
the number of integral points in $\r^{n(n-1)/2}$ which satisfy the inequalities in \eqref{equ-GZ} where all the inequalities are strict (see Section \ref{subsec-flag}).
\end{itemize}

The second author would like to emphasize that he enjoyed a great amount of support from Joseph Bernstein during his works \cite{Askold-toroidal, Askold-genus}
and his support played an important role in completion of these papers.

In the late 70's  the second author stated and widely advertised the problem
of extending the results in \cite{Askold-toroidal, Askold-genus} about geometry of 
complete intersections in a torus $(\c^*)^n$ to other reductive groups
(see Section \ref{sec-toric-12}). The first result in this direction was obtained by 
B. Kazarnovskii (\cite{Kazarnovskii}) who computed the number of points in a zero dimensional
complete intersection in any reductive group (the answer is expressed as the integral of a certain polynomial over an associated moment polytope). 
Around the same time, M. Brion generalized Kazarnovskii's result to a zero dimensional complete intersection in any spherical
variety (\cite{Brion-Picard}). Much later the first author showed (\cite{Kiumars-thesis}) that the straightforward generalization
of the formula for the (topological) Euler characteristic of a complete intersection in a torus $(\c^*)^n$ to a reductive group fails 
and such a formula should be more complicated than in the torus case. The corresponding
formula was soon found by V. Kiritchenko (\cite{Valentina1, Valentina2}). Her unexpected and beautiful
result was at the same time slightly disappointing: the formula turns out to be
unavoidably too complicated. This somewhat reduced the hope and suggested that perhaps extensions of formulae for other geometric invariants 
from the torus case to the reductive case may be too complicated. Nevertheless, in the present paper we give formulae for the
arithmetic and geometric genus of complete intersections in a spherical homogeneous space, exactly extending the similar formulae for complete intersections 
in a torus $(\c^*)^n$ in terms of the number of integral points in certain associated polytopes (namely Newton-Okounkov polytopes). 

The results of this paper use basic facts about the theory of virtual polytopes and convex chains as developed in \cite{Kh-P}. 
The second author would like to point out that A. Pukhlikov and him arrived at these ideas thinking about the Euler characteritic of $T$-linearized 
line bundles on toric varieties.

Beside the techniques from \cite{Askold-toroidal, Askold-genus}, our results strongly rely on the results of Michel Brion on 
the cohomology of $G$-line bundles on projective spherical varieties (Theorem \ref{th-coh-line-bundle}), as well as the equivariant resolution of singularities of spherical varieties.\\

\noindent{\bf Acknowledgement:}
We would like to thank Michel Brion for telling us about some references regarding moment polytopes of $G$-varieties.\\

\section{Transversality and complete intersections in general varieties} \label{sec-transverse}
In this section we discuss some results on transversality and complete intersections in general varieties. We will use them later in Section \ref{subsec-genus-sph} to prove our 
main results about complete intersection in a spherical homogenous space.
\subsection{Stratifications and complete intersections} \label{subsec-strat}
Let $X$ be a complex quasi-projective algebraic variety. A finite collection  $\{Y_i\}$ of its quasi-projective subvarieties  is called a 
{\it stratification of $X$}, and each $Y_i$ is a {\it stratum}, if the following conditions hold: (1) The union of all the strata is $X$.
(2) The intersection of any two different strata is empty. (3) Each stratum is a smooth quasi-projective variety. 

Our definition of stratification, which suffices for our purposes, requires only mild assumptions on the strata with no condition on how a stratum approaches another stratum on its 
boundary (for example Whitney A and B conditions in a Whitney stratification). One can prove that any quasi-projective variety admits a stratification.

The following is easy to prove:
\begin{Lem} \label{lem-1}
Let $X$ be an irreducible quasi-projective $n$-dimensional variety. Then any stratification $\{Y_i\}$ of $X$  has exactly one $n$-dimensional stratum $X_0$ and it is dense in $X$ .
\end{Lem}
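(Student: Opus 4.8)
The plan is to argue first that a stratification must contain \emph{at least} one $n$-dimensional stratum, and then that it contains \emph{at most} one, and finally that the unique top-dimensional stratum is dense.

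\textbf{Existence of an $n$-dimensional stratum.} Since the strata are finitely many and cover $X$, and each stratum $Y_i$ is a quasi-projective variety of some dimension $d_i \leq n$, I would use the fact that a finite union of subvarieties of dimension $< n$ cannot equal an $n$-dimensional irreducible variety. Concretely, if every stratum had dimension $\leq n-1$, then $X = \bigcup_i Y_i$ would be a finite union of closed subsets (after taking closures) each of dimension $\leq n-1$; since $X$ is irreducible it would have to coincide with one of these closures $\overline{Y_i}$, forcing $\dim X \leq n-1$, a contradiction. Hence some stratum $X_0$ has $\dim X_0 = n$.

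\textbf{Uniqueness.} Suppose $X_0$ and $X_0'$ are two distinct $n$-dimensional strata. Each is an $n$-dimensional quasi-projective subvariety of the irreducible $n$-dimensional variety $X$; I would observe that an $n$-dimensional subvariety of an irreducible $n$-dimensional variety must be dense (its closure is a closed irreducible — or at least pure $n$-dimensional — subset of $X$ of dimension $n$, hence equals $X$ by irreducibility; if $X_0$ is reducible one passes to an $n$-dimensional irreducible component). Since both $X_0$ and $X_0'$ are dense in $X$, their closures both equal $X$, and two dense open-like subsets of an irreducible variety must meet: indeed $\overline{X_0} = \overline{X_0'} = X$ forces $X_0 \cap X_0' \neq \emptyset$, because otherwise $X_0 \subseteq X \setminus X_0'$, but $X\setminus X_0'$ is contained in the proper closed set $\overline{X\setminus X_0'}$ which cannot contain the dense set $X_0$. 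This contradicts condition (2) that distinct strata are disjoint. Hence there is exactly one $n$-dimensional stratum, and by the density observation just used it is dense in $X$.

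\textbf{Main obstacle.} The only delicate point is handling the possibility that strata are reducible or not closed — the definition of stratification in the paper allows arbitrary quasi-projective (hence possibly non-closed, possibly reducible) subvarieties, so statements like ``an $n$-dimensional subvariety of $X$ is dense'' must be justified via closures and irreducible components rather than applied naively. Once one is careful to pass to $\overline{Y_i}$ and to top-dimensional irreducible components, everything reduces to the elementary fact that an irreducible variety is not a finite union of proper closed subsets, together with the fact that in an irreducible space any two nonempty open (or dense) subsets intersect. I do not expect any genuinely hard step; the lemma is, as the authors say, easy, and the write-up is essentially bookkeeping with dimensions and closures.
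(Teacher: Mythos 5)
The paper offers no proof of this lemma (it is stated as ``easy to prove''), and your argument is correct and is surely the intended one: irreducibility rules out covering $X$ by closures of lower-dimensional strata, and a top-dimensional stratum is dense because a proper closed subset of an irreducible $n$-dimensional variety has dimension $<n$. The one step worth tightening is the assertion that $\overline{X\setminus X_0'}$ is a \emph{proper} closed subset: this is not automatic for an arbitrary dense subset, but since strata are quasi-projective subvarieties, hence locally closed in $X$, a dense stratum is in fact open in $X$, so its complement is closed and proper and the disjointness contradiction follows at once.
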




Consider a local  complete intersection $Z$  of codimension $k$ in an $n$-dimensional variety $X$ with a stratification $\{Y_i\}$. The subvariety $Z$  
is said to be {\it transverse to a stratum $Y_j$} of the stratification if for any point $a \in X \cap Y_j$ there is  a Zariski open set $U\subset X$, 
containing $a$, and a system of equations $f_1=\dots=f_k=0$ in $U$ defining  $Z \cap U$ such that the differentials of the restrictions of 
$f_1,\dots,f_k$ to $Y_j$ are independent in the tangent space to $T_a(Y_j)$. The subvariety $Z$ is {\it transverse to the stratification $\{Y_i\}$} if it is transverse to all the strata of the stratification.

The following is straightforward:
\begin{Th} \label{th-2} 
Let $Z$ be a local complete intersection of codimension $k$ in an $n$-dimensional variety $X$ which is transverse to a stratification $\{Y_i\}$ of $X$. Then: (1) If $Z \cap Y_j\neq \emptyset$ the variety $Z \cap Y_j$ is a smooth local complete intersection of codimension $k$ in $Y_j$.
(2) The set of all nonempty intersections $\{Z \cap Y_j\}$ form a stratification of $Z$.
(3) If $X$ is irreducible then  $Z_0 = Z \cap X_0$, where $X_0$  is the stratum of dimension $n$, is dense in $Z$.
(4) If $X$ is smooth then $Z$ is also smooth.
\end{Th}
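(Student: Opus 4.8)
The plan is to verify the four assertions essentially locally, using the defining property of transversality together with Lemma \ref{lem-1}. All four statements are local on $X$ near a point $a \in Z \cap Y_j$, so throughout I would fix such a point and a Zariski open neighborhood $U \subset X$ with equations $f_1 = \cdots = f_k = 0$ cutting out $Z \cap U$ in such a way that the differentials $d(f_i|_{Y_j})(a)$ are linearly independent in $T_a^*(Y_j)$.

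For (1): since $Y_j$ is a smooth variety and the restrictions $f_i|_{Y_j \cap U}$ have independent differentials at $a$, the standard implicit function / Jacobian criterion shows that near $a$ the vanishing locus $Z \cap Y_j \cap U = \{f_1|_{Y_j} = \cdots = f_k|_{Y_j} = 0\}$ is a smooth subvariety of $Y_j$ of codimension exactly $k$, hence a smooth local complete intersection of codimension $k$ in $Y_j$; since $a$ was arbitrary this holds globally on $Z \cap Y_j$. For (2): by hypothesis $\{Y_i\}$ covers $X$ and the $Y_i$ are pairwise disjoint, so the sets $Z \cap Y_j$ cover $Z$ and are pairwise disjoint; part (1) gives that each nonempty $Z \cap Y_j$ is smooth quasi-projective; thus the three conditions in the definition of stratification are met and $\{Z \cap Y_j\}$ is a stratification of $Z$. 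For (3): if $X$ is irreducible, Lemma \ref{lem-1} identifies the unique top-dimensional stratum $X_0$, which is dense and open in $X$; then $Z_0 = Z \cap X_0$ is open in $Z$ (being the complement in $Z$ of the closed set $Z \setminus Z_0 = \bigcup_{j : Y_j \neq X_0} (Z \cap Y_j)$, each piece of which has strictly smaller dimension by part (1) applied to the lower strata), and an open dense subset argument — or simply that $Z \setminus Z_0$ has dimension $< \dim Z$ while $Z$ is equidimensional of dimension $n-k$ near each of its points — shows $Z_0$ is dense in $Z$. For (4): if $X$ is smooth, take $Y_j = X$ itself (or observe $X$ is one of the strata of any stratification refining the trivial one, and $Z$ is transverse to it); then part (1) with $Y_j = X$ says precisely that $Z = Z \cap X$ is smooth.

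The main subtlety I anticipate is in part (3): one must be careful that "density" is the right statement when $Z$ could a priori have several irreducible components, and that the lower strata $Z \cap Y_j$ genuinely have dimension strictly less than $\dim Z$. This follows because each such $Y_j$ has $\dim Y_j < n$ (by Lemma \ref{lem-1}, only $X_0$ has dimension $n$), and part (1) gives $\dim(Z \cap Y_j) = \dim Y_j - k < n - k$, while every point of $Z$ lies on a local complete intersection of codimension $k$ in $X$, hence every component of $Z$ has dimension $\geq n - k$; combining, $Z_0$ meets every irreducible component of $Z$ in a dense open subset. A second minor point is that in (4) one should phrase the reduction cleanly — the honest statement is that smoothness of $Z$ is local, and near any point $a \in Z$ the equations $f_1 = \cdots = f_k = 0$ have independent differentials in $T_a X$ by transversality to the stratum containing $a$ together with $Y_j \subset X$ open-dense-smooth forcing the $df_i$ to be independent already in $T_a X$; this is exactly the Jacobian criterion for smoothness of $Z$. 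None of the steps requires more than the implicit function theorem and bookkeeping, so the "straightforward" label in the statement is justified.
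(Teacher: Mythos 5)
Your argument is correct, and since the paper states Theorem \ref{th-2} without proof (it is labelled ``straightforward''), what you have written is exactly the kind of local Jacobian-criterion bookkeeping the authors had in mind: transversality plus smoothness of each stratum gives (1) and (2), Lemma \ref{lem-1} plus the dimension count $\dim(Z\cap Y_j)=\dim Y_j-k<n-k$ versus $\dim\geq n-k$ for every component of $Z$ gives (3), and independence of the restricted differentials on $T_a Y_j$ pulled back along $T_a Y_j\hookrightarrow T_a X$ gives (4). One small caution on (4): your opening suggestion to ``take $Y_j=X$ itself'' is not available, since $X$ need not be a stratum of the given stratification; but your subsequent ``honest'' local argument --- that independence of $d(f_i|_{Y_j})(a)$ on the stratum through $a$ forces independence of the $df_i(a)$ in $T_a^*X$, whence the Jacobian criterion in the smooth ambient $X$ --- is the correct and complete justification, so only the parenthetical should be dropped.
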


Let $X$ and $\tilde{X}$ be quasi-projective varieties with stratifications  $\{Y_i\}$, $\{\tilde{Y}_j\}$ respectively.
\begin{Def} \label{def-resp-stratification}
A  morphism $\pi:\tilde{X}\to X$ {\it respects the stratifications} $\{\tilde{Y}_j\}$ and $\{Y_i\}$ if the following hold: (1) $\pi$ is surjective. (2) Its restriction to each stratum $\tilde{Y}_j$ is a 
surjective map from $\tilde{Y}_j$ to some other stratum $Y_i$. (3) For every $x \in \tilde{Y}_j$ the differential 
$d\pi_x: T_x\tilde{Y}_j \to T_{\pi(x)}Y_i$ is surjective.
\end{Def}

The following is easy to check:
\begin{Th} \label{th-3}
Assume that $\pi: \tilde{X} \to X$ respects the stratifications $\{\tilde{Y}_j\}$ and $\{Y_i\}$. Let $Z \subset X$ be a local complete intersection of codimension $k$ transverse to the stratification 
$\{Y_i\}$. Then $\pi^{-1}(Z)= \tilde{Z}\subset \tilde{X}$ is  a local complete intersection of codimension $k$ transverse to the stratification $\{\tilde{Y}_j\}$.
\end{Th}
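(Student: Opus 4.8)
The plan is to verify the three conditions of Definition \ref{def-resp-stratification} for the restricted map $\pi|_{\tilde Z}\colon \tilde Z\to Z$, working stratum by stratum using the stratifications $\{\tilde Z\cap\tilde Y_j\}$ of $\tilde Z$ and $\{Z\cap Y_i\}$ of $Z$ supplied by Theorem \ref{th-2}(2). First I would fix a stratum $\tilde Y_j$ of $\tilde X$ and let $Y_i$ be the stratum of $X$ with $\pi(\tilde Y_j)\subseteq Y_i$; since $\pi$ respects the ambient stratifications, $\pi|_{\tilde Y_j}\colon\tilde Y_j\to Y_i$ is surjective with surjective differential everywhere. The first task is to show that $\tilde Z=\pi^{-1}(Z)$ is a local complete intersection of codimension $k$ in $\tilde X$. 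This is local, so fix $\tilde a\in\tilde Z$, set $a=\pi(\tilde a)\in Z$, choose a Zariski-open $U\ni a$ in $X$ and equations $f_1=\cdots=f_k=0$ cutting out $Z\cap U$ with the differentials of the $f_\ell|_{Y_i}$ independent at $a$. Then $\tilde U=\pi^{-1}(U)$ is open around $\tilde a$ and the pulled-back functions $\tilde f_\ell=f_\ell\circ\pi$ cut out $\tilde Z\cap\tilde U$; the point is to see they form a regular sequence, i.e. that $\tilde Z$ has codimension exactly $k$, not more.

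The key observation for codimension is that surjectivity of $d\pi_{\tilde a}\colon T_{\tilde a}\tilde Y_j\to T_{a}Y_i$ forces the $k$ covectors $d(\tilde f_\ell)|_{\tilde Y_j}=d\pi_{\tilde a}^{\,*}\bigl(d(f_\ell)|_{Y_i}\bigr)$ to be linearly independent at $\tilde a$, because the pullback of covectors along a surjective linear map is injective. Hence at every point of $\tilde Z\cap\tilde Y_j$ the $k$ functions $\tilde f_\ell$ already have independent differentials when restricted to the smooth variety $\tilde Y_j$; so $\tilde Z\cap\tilde Y_j$ is smooth of codimension $k$ in $\tilde Y_j$ (in particular it is nonempty of the expected dimension whenever $Z\cap Y_i\neq\emptyset$, by surjectivity of $\pi|_{\tilde Y_j}$ onto $Y_i$ and the fact that $\pi(\tilde a)\in Z\cap Y_i$ implies $\tilde a\in\tilde Z\cap\tilde Y_j$). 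Running this over the dense top-dimensional stratum $\tilde Y_0$ (Lemma \ref{lem-1}), which maps onto the dense top stratum $Y_0$ of $X$ on which $Z$ has codimension $k$, shows $\tilde Z$ has codimension $k$ in $\tilde X$ generically; since the $k$ equations $\tilde f_\ell$ locally define $\tilde Z$, it is cut out by the expected number of equations and is therefore a local complete intersection of codimension $k$.

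Granting that $\tilde Z$ is a local complete intersection of codimension $k$, the transversality to $\{\tilde Y_j\}$ is precisely the statement, just proved in the previous paragraph, that for each $\tilde a\in\tilde X\cap\tilde Y_j$ one has the local equations $\tilde f_1=\cdots=\tilde f_k=0$ for $\tilde Z$ near $\tilde a$ whose restrictions to $\tilde Y_j$ have independent differentials at $\tilde a$. That is exactly the definition of $\tilde Z$ being transverse to the stratum $\tilde Y_j$, and since $\tilde a$ and $\tilde Y_j$ were arbitrary, $\tilde Z$ is transverse to the whole stratification $\{\tilde Y_j\}$.

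The main obstacle I anticipate is the codimension bookkeeping: a priori $\pi^{-1}(Z)$ could have a component of codimension strictly less than $k$ sitting over a lower stratum, or the pulled-back functions could fail to be a regular sequence. The resolution, as sketched, is that transversality is inherited stratum-by-stratum — the independence of the restricted differentials is what rules out excess-dimension components — together with the density assertion of Theorem \ref{th-2}(3) and Lemma \ref{lem-1}, which pin down the generic codimension via the top stratum. Once one is careful that "local complete intersection of codimension $k$" means cut out locally by $k$ equations with the expected codimension, the argument is a routine unwinding of the definitions; no genuinely new idea beyond the injectivity of covector pullback along a surjection is needed.
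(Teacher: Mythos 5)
Your argument is correct, and it is essentially the only reasonable one: the paper states Theorem \ref{th-3} without proof (``easy to check''), and what you wrote is the expected unwinding of the definitions. The two key points are exactly right: (i) pulling back the local defining equations, $\tilde f_\ell=f_\ell\circ\pi$, cuts out $\pi^{-1}(Z)$ on $\pi^{-1}(U)$, and since $d\pi_{\tilde a}\colon T_{\tilde a}\tilde Y_j\to T_aY_i$ is surjective, the dual map on covectors is injective, so independence of the $d(f_\ell|_{Y_i})$ at $a$ gives independence of the $d(\tilde f_\ell|_{\tilde Y_j})$ at $\tilde a$, which is verbatim the transversality condition of Definition \ref{def-resp-stratification}'s companion notion for $\tilde Z$; (ii) the same computation, run at every point of $\tilde Z\cap\tilde Y_j$, shows each $\tilde Z\cap\tilde Y_j$ is smooth of codimension $k$ in $\tilde Y_j$, which bounds $\dim\tilde Z$ by $\dim\tilde X-k$, while Krull's height theorem applied to the $k$ local equations bounds the codimension of every component by $k$; together these give codimension exactly $k$.

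Two small remarks. First, your appeal to the dense stratum is both unnecessary and slightly off: without irreducibility hypotheses the top-dimensional stratum $\tilde Y_0$ of $\tilde X$ need not map onto the top stratum $Y_0$ of $X$ (it maps onto some dense stratum only when $X$ is irreducible); but you do not need this, since the stratum-by-stratum estimate $\dim(\tilde Z\cap\tilde Y_j)\le\dim\tilde Y_j-k\le\dim\tilde X-k$ already rules out excess-dimensional components, as you yourself indicate in the final paragraph. Second, the phrase ``codimension exactly $k$, not more'' should read ``not less'' --- the danger is excess dimension --- but your later discussion shows you understand this correctly.
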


Now let $G$ be a complex algebraic group. In this section we consider $G$-varieties, i.e. varieties equipped with an algebraic action of $G$.
Any $G$-variety is the union of $G$-orbits. We will always assume that the action of $G$ on the variety has only finitely many orbits. It is clear that then the orbits of $G$ give a stratification of
the variety. We refer to this stratification as a {\it $G$-stratification}.
The following is straightforward.
\begin{Lem} \label{lem-4} 
Let $\pi: \tilde{X} \to X$ be a $G$-equivariant surjective morphism of $G$-varieties $\tilde{X}$ and $X$. Then $\pi$ respects the $G$-stratifications of $\tilde{X}$ and $X$.
\end{Lem}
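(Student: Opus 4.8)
The plan is to verify directly the three conditions in Definition \ref{def-resp-stratification} for a $G$-equivariant surjective morphism $\pi:\tilde X\to X$, taking for $\{\tilde Y_j\}$ and $\{Y_i\}$ the $G$-stratifications, i.e. the decompositions into $G$-orbits. Condition (1), surjectivity of $\pi$, is assumed. For condition (2), first observe that since $\pi$ is $G$-equivariant, the image $\pi(\tilde Y_j)$ of any $G$-orbit $\tilde Y_j\subset\tilde X$ is a $G$-stable subset of $X$ contained in a single orbit closure; more precisely, if $\tilde Y_j = G\cdot\tilde x$ then $\pi(\tilde Y_j) = \pi(G\cdot\tilde x) = G\cdot\pi(\tilde x)$, which is exactly the $G$-orbit $Y_i$ through $\pi(\tilde x)$. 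Thus $\pi$ maps each stratum $\tilde Y_j$ onto a stratum $Y_i$, and this restriction is automatically surjective onto that orbit.

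For condition (3), the surjectivity of the differential $d\pi_{\tilde x}:T_{\tilde x}\tilde Y_j\to T_{\pi(\tilde x)}Y_i$, I would argue using homogeneity. Both $\tilde Y_j = G\cdot\tilde x$ and $Y_i = G\cdot\pi(\tilde x)$ are homogeneous spaces under $G$, and $\pi$ restricted to $\tilde Y_j$ is a $G$-equivariant surjective morphism of homogeneous spaces, hence (being a quotient by a closed subgroup on each side, in characteristic zero) a smooth morphism — in particular submersive. Concretely: the orbit map $g\mapsto g\cdot\tilde x$ from $G$ to $\tilde Y_j$ is submersive, so $T_{\tilde x}\tilde Y_j$ is spanned by the images of the Lie algebra $\mathfrak g$ of $G$ under the infinitesimal action; the same holds for $Y_i$; and $d\pi_{\tilde x}$ intertwines these two infinitesimal actions because $\pi$ is equivariant. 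Therefore the image of $d\pi_{\tilde x}$ contains a spanning set of $T_{\pi(\tilde x)}Y_i$, so $d\pi_{\tilde x}$ is surjective. It suffices to check this at one point of each orbit since $G$ acts transitively and $\pi$ is equivariant.

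The argument is entirely formal once one unwinds the definitions, which is presumably why the authors call it ``straightforward''; there is no serious obstacle. The only point requiring a small amount of care is condition (3) in the case where one wants to avoid invoking generic smoothness — but over $\c$ (characteristic zero) the orbit maps are automatically separable and submersive, so the infinitesimal-action argument above goes through without any genericity hypothesis. One should also note implicitly that the finiteness-of-orbits assumption, in force throughout the section, is what guarantees that the orbit decompositions are genuine stratifications in the sense of Section \ref{subsec-strat} (finitely many strata, each a smooth quasi-projective variety), so that the statement is not vacuous.
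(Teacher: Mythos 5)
Your proposal is correct and follows essentially the same route as the paper: equivariance sends the orbit of a point onto the orbit of its image, and every tangent vector to the target orbit arises from the infinitesimal action (a velocity vector of a one-parameter subgroup), which $d\pi$ intertwines, giving surjectivity of the differential. The extra remarks about homogeneous spaces and characteristic zero are fine but not needed beyond this.
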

\begin{proof} Consider  a point  $a \in \tilde{X}$ and its image $b=\pi(a)\in X$. Because $\pi$ is $G$-equivariant it maps the $G$-orbit of the point $a$ onto the $G$-orbit of the point $b$. 
A tangent vector $\xi_2$ at $b$ to the orbit of $b$ is a velocity vector of $b$  under an action of some one-parameter subgroup $G_1\subset G$. The vector $\xi_2$ is the image under 
$d\pi$ of the velocity vector $\xi_1$ at $a$ of the action of $G_1$ on $\tilde{X}$. This finishes the proof.
\end{proof}

For the sake of completeness we recall some basic theorems about transversality.
First we recall the Bertini-Sard theorem. It is an algebraic version of the classical theorem of Sard on critical values of smooth maps on manifolds.
\begin{Th}[Bertini-Sard theorem] Let $ F:U \to \c^k$ be a morphism from a smooth algebraic variety $U$ to $\c^k$ and let $\Sigma\subset \c^k$ be the set of critical values of
$F$. Then $\Sigma$ is a semi-algebraic subset of $\c^k$ of  codimension at least one.
\end{Th}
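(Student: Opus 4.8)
The plan is to deduce the Bertini–Sard theorem from the classical Sard theorem combined with the structure of algebraic morphisms. First I would reduce to the case where $F$ is dominant: if $F(U)$ is not Zariski dense in $\c^k$, then by Chevalley's theorem $\overline{F(U)}$ is a proper closed subvariety, hence has complex dimension at most $k-1$, so $\Sigma \subseteq \overline{F(U)}$ is contained in a set of real codimension at least $2$, and in particular of codimension at least one; we are done in that case. So assume $F$ is dominant.

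Next, for the dominant case I would invoke generic smoothness: there is a nonempty Zariski open subset $V \subseteq \c^k$ such that $F$ restricted to $F^{-1}(V)$ is a smooth morphism onto $V$ (this uses that we are in characteristic zero). Consequently every point of $V$ is a regular value of $F$, so $\Sigma \subseteq \c^k \setminus V$, which is a proper Zariski closed subset and therefore has complex dimension $\leq k-1$, i.e. real codimension $\geq 2$. This already gives the stated conclusion, and in fact a stronger one. The semi-algebraicity of $\Sigma$ follows because the set of critical points $\Crit(F) \subseteq U$ is Zariski closed (it is cut out by the vanishing of the maximal minors of the Jacobian in suitable local coordinates), $F$ is a polynomial map, and the image of a semi-algebraic set under a polynomial map is semi-algebraic by the Tarski–Seidenberg theorem; alternatively, $\Sigma = \overline{F(\Crit F)}$ up to a constructible set, and constructible sets are semi-algebraic.

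Finally, to keep the argument self-contained without appealing to generic smoothness, one may instead argue as follows: $\Crit(F)$ is a closed algebraic subset of $U$, hence a finite union of smooth locally closed subvarieties $W_j$ (a stratification, as recalled earlier in the paper), each of complex dimension at most $\dim U$. On each stratum $W_j$ the restriction $F|_{W_j}$ is a morphism of smooth varieties, and one applies the classical Sard theorem to the underlying smooth map: the set of its critical values has measure zero, and since it is also semi-algebraic it must have real codimension at least one in its image; but more to the point, because $W_j \subseteq \Crit(F)$, for points of $W_j$ the differential $d F$ already drops rank on all of $U$, so a dimension count shows $F(W_j)$ has complex dimension $< k$ (the rank of $dF$ at such points is $\leq k-1$, and by the constant-rank-on-strata refinement the image is contained in a subvariety of dimension $\leq k-1$). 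Taking the union over the finitely many strata, $\Sigma = F(\Crit F)$ is contained in a finite union of subvarieties of dimension $\leq k-1$, hence is a semi-algebraic set of codimension at least one.

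The main obstacle is making the rank/dimension bookkeeping on the strata of $\Crit(F)$ rigorous: one needs that a morphism which has everywhere-degenerate differential cannot be dominant, which is precisely where characteristic zero (or generic smoothness) enters and must be cited. Everything else—semi-algebraicity via Tarski–Seidenberg, the stratification of $\Crit(F)$, and the reduction to the non-dominant case—is routine.
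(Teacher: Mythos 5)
The paper does not actually prove this statement: it is recalled as a standard fact (an algebraic analogue of Sard's theorem) and used as a black box, so there is no proof of record to compare against. Your argument is essentially the standard proof and it is correct. The reduction to the dominant case, the use of generic smoothness in characteristic zero to produce a nonempty Zariski open $V \subset \c^k$ over which $F$ is a smooth morphism (so that $\Sigma \subset \c^k \setminus V$ has complex codimension at least one), and the semi-algebraicity of $\Sigma$ are all sound. For the last point the cleanest phrasing is: $\Crit(F)$ is Zariski closed in $U$, hence constructible, so $\Sigma = F(\Crit(F))$ is constructible in $\c^k$ by Chevalley's theorem and therefore semi-algebraic; this is sharper than your slightly garbled ``equal to $\overline{F(\Crit F)}$ up to a constructible set,'' and it also finishes the non-dominant case, where you assert the codimension bound but still owe the semi-algebraicity of $\Sigma$ itself. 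Your third, ``self-contained'' paragraph is the same argument in disguise rather than an alternative: the claim that a stratum $W_j$ on which $dF$ has rank at most $k-1$ has image of dimension at most $k-1$ is precisely the characteristic-zero fact that the dimension of the image equals the generic rank of the differential, i.e.\ generic smoothness applied to $W_j \to \overline{F(W_j)}$, so it does not avoid the citation you flag and could simply be dropped; with that understood, the proposal stands as a correct proof of the recalled theorem.
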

We will say that some property holds  for a {\it generic point} of an irreducible algebraic variety $T$ if there is a proper closed algebraic subset $\Sigma \subset T$ 
such that the property holds for all the points in $X \setminus \Sigma$ (or equivalently if there is a semialgebraic set of codimension at least one such that the property holds in its complement). 
In the notation of the Bertini-Sard theorem, the set $F(U)\subset \c^k$ is semialgebraic. Therefore according to the theorem a generic point of $\c^k$ is not a critical value of  $F$, or equivalently, 
a generic point of $\c^k$ is a regular value of  $F$.

\begin{Lem} \label{lem-5} 
Suppose $T$ is an irreducible variety. Let $\Sigma \subset T$ be a semi-algebraic subset.
Then either $x \in \Sigma$ holds for generic points of $T$, or $x \notin \Sigma$ holds for generic points of $T$.
\end{Lem}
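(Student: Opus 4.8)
The plan is to reduce the claim to the familiar dichotomy for constructible subsets of an irreducible variety and to match its two cases with the two alternatives in the statement. First I would unwind the conventions: by the definition of ``generic point'' recalled above, the assertion ``$x \in \Sigma$ for generic $x$'' means that $T \setminus \Sigma$ is contained in a proper Zariski-closed subset of $T$, and ``$x \notin \Sigma$ for generic $x$'' means that $\Sigma$ itself is contained in such a subset (a codimension $\geq 1$ semialgebraic set and its Zariski closure have the same dimension, and a proper-dimensional closed subset of the irreducible $T$ is proper). These two possibilities exclude each other, since a proper closed subset cannot contain the complement of a proper closed subset; hence it suffices to show that at least one of them always holds.

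Next I would use the structure of $\Sigma$. The semialgebraic sets that occur in this paper — images of morphisms, starting with the set of critical values in the Bertini--Sard theorem and the sets built from it in the constructions that follow — are Zariski-constructible by Chevalley's theorem, so one may write $\Sigma = \bigcup_{i=1}^{N} (V_i \setminus W_i)$ as a finite union in which each $V_i$ is an irreducible closed subvariety of $T$ and $W_i \subsetneq V_i$ is closed, discarding the empty pieces. Now split into two cases. If every $V_i$ is a proper subvariety of $T$, then $\Sigma \subseteq \bigcup_{i=1}^{N} V_i$, which is again a proper closed subset of $T$ because a finite union of proper closed subsets of an irreducible variety is proper; hence $x \notin \Sigma$ for a generic point of $T$. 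Otherwise $V_{i_0} = T$ for some $i_0$, and then $T \setminus W_{i_0} \subseteq V_{i_0} \setminus W_{i_0} \subseteq \Sigma$ with $W_{i_0}$ a proper closed subset of $T$, so $T \setminus \Sigma \subseteq W_{i_0}$ and $x \in \Sigma$ for a generic point of $T$. This covers all cases and finishes the proof.

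The only point that needs genuine care is the reduction in the second paragraph, namely that the sets we are calling ``semialgebraic'' here are in fact constructible, so that the clean decomposition into locally closed subvarieties is available and irreducibility of $T$ can be brought to bear; this is exactly where Chevalley's theorem is used, and it is what makes the dichotomy true. (For an arbitrary real-semialgebraic subset of $T$ the statement would fail — a closed half-space in $\c = \r^2$ is neither thin nor co-thin — so the constructibility of the relevant $\Sigma$ is not a cosmetic point but the heart of the matter.) Beyond this, every step is routine.
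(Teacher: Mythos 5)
Your argument is correct under the reading of ``semi-algebraic'' that the paper actually needs (Zariski-constructible sets), but it is organized differently from the paper's proof. The paper disposes of the lemma with a two-case dimension count: if $\dim(\Sigma)=\dim(T)$ then $\dim(T\setminus\Sigma)<\dim(T)$, and if $\dim(\Sigma)<\dim(T)$ then $\dim(T\setminus\Sigma)=\dim(T)$, after which genericity is read off from the codimension-$\geq 1$ formulation of ``generic''. You instead invoke Chevalley, write $\Sigma=\bigcup_i(V_i\setminus W_i)$ with $V_i\subset T$ irreducible closed, and split according to whether some $V_i$ equals $T$; this is in effect a proof of exactly the fact the paper leaves implicit, namely that a full-dimensional constructible subset of an irreducible variety contains a dense Zariski-open set, so that its complement sits inside a proper closed subvariety. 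What your route buys is that in each case the generic locus is exhibited directly as the complement of a proper closed \emph{algebraic} subset, i.e.\ genericity in the paper's primary sense, with no appeal to dimension theory of semialgebraic sets. Your closing caveat is well taken and in fact applies equally to the paper's own first sentence: for a genuinely real-semialgebraic $\Sigma$ (a closed half-plane in $\c$) the implication $\dim(\Sigma)=\dim(T)\Rightarrow\dim(T\setminus\Sigma)<\dim(T)$ fails, so both proofs tacitly use that the sets arising in the paper (images and critical-value sets of morphisms of complex varieties) are constructible. One tiny quibble: your parenthetical remark that a codimension-$\geq 1$ semialgebraic set has Zariski closure of the same dimension is not needed for your argument and is delicate if ``Zariski closure'' means the complex one (the real line in $\c$ is real codimension one but Zariski dense); since your proof never relies on it, this does not affect correctness.
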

\begin{proof} 
If $\dim(\Sigma)=\dim(T)$ then $\dim(T \setminus \Sigma)<\dim (T)$. If $\dim(\Sigma)<\dim(T)$ then $\dim(T\setminus \Sigma)=\dim(T)$. This proves the lemma.
\end{proof}

Let $\L_1, \ldots, \L_k$ be line bundles on $X$, and for $i=1, \ldots, k$ let $E_k \subset H^0(X, \L_i)$ be a finite dimensional linear subspace of sections.
Let ${\bf E}$ denote the $k$-fold product $E_1\times \dots \times E_k$. The following is an immediate corollary of Lemma \ref{lem-5}:
\begin{Lem} \label{lem-6}
Let $Z_{{\bf g}}\subset X$ be the subvariety defined by  $g_1 = \cdots = g_k = 0$ for some 
${\bf g}=(g_1,\dots,g_k)\in {\bf E}$. Then either $Z_{{\bf g}}$ is empty for generic ${\bf g}\in {\bf E}$, or $Z_{{\bf g}}$ is nonempty for generic ${\bf g}\in {\bf E}$.
\end{Lem}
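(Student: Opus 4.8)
\textbf{Proof plan for Lemma \ref{lem-6}.}

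The plan is to reduce this dichotomy directly to Lemma \ref{lem-5} by exhibiting the bad locus as a semi-algebraic subset of the parameter space $\mathbf{E} = E_1 \times \cdots \times E_k$. First I would consider the incidence variety
$$
\Gamma = \{ (x, \mathbf{g}) \in X \times \mathbf{E} \mid g_1(x) = \cdots = g_k(x) = 0 \},
$$
which is a closed algebraic subset of $X \times \mathbf{E}$ (it is cut out by the vanishing of the sections, which depend algebraically on both $x$ and the coefficients of $\mathbf{g}$). Let $p \colon \Gamma \to \mathbf{E}$ be the second projection. Since $\mathbf{E}$ is irreducible (being an affine space, or at least a product of vector spaces), Lemma \ref{lem-5} applies once we know that the set
$$
\Sigma = p(\Gamma) = \{ \mathbf{g} \in \mathbf{E} \mid Z_{\mathbf{g}} \neq \emptyset \}
$$
is semi-algebraic. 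But $\Sigma$ is precisely the image of an algebraic set under a morphism of algebraic varieties, hence constructible by Chevalley's theorem, and a fortiori semi-algebraic. Therefore by Lemma \ref{lem-5}, either a generic point of $\mathbf{E}$ lies in $\Sigma$ — in which case $Z_{\mathbf{g}}$ is nonempty for generic $\mathbf{g}$ — or a generic point of $\mathbf{E}$ lies in the complement of $\Sigma$ — in which case $Z_{\mathbf{g}}$ is empty for generic $\mathbf{g}$. This is exactly the claimed dichotomy.

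The only point requiring a little care is the passage from "$Z_{\mathbf{g}}$ nonempty" to "$\mathbf{g} \in \Sigma$" and back: by definition $\mathbf{g} \in \Sigma$ iff the fiber $p^{-1}(\mathbf{g})$ is nonempty iff there exists $x \in X$ with $g_i(x) = 0$ for all $i$, iff $Z_{\mathbf{g}} \neq \emptyset$, so the identification is a tautology and no real obstacle arises. I expect the main (minor) obstacle to be simply confirming that one is entitled to invoke constructibility/semi-algebraicity of the image — this is standard (Chevalley's theorem for constructible sets over $\c$, or Tarski–Seidenberg in the semi-algebraic setting), and the excerpt's framework of "semi-algebraic subset" in Lemma \ref{lem-5} is set up precisely to absorb this. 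Alternatively, if one prefers to avoid Chevalley, one can note that $\mathbf{E}$ is irreducible and $\Gamma$ is closed, so $p(\Gamma)$ contains a dense open subset of its closure $\overline{p(\Gamma)}$ (by generic flatness / dominance of $p$ onto $\overline{p(\Gamma)}$), which already suffices to run the dichotomy argument. Either way the lemma follows immediately from Lemma \ref{lem-5}.
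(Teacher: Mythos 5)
Your proof is correct and follows the same route the paper intends: the paper treats Lemma \ref{lem-6} as an immediate corollary of Lemma \ref{lem-5}, with the implicit step being exactly your observation that $\{\mathbf{g} \in \mathbf{E} \mid Z_{\mathbf{g}} \neq \emptyset\}$ is the image of the incidence variety under projection, hence constructible (semi-algebraic), so Lemma \ref{lem-5} applies to the irreducible variety $\mathbf{E}$. Your filling in of the Chevalley/Tarski–Seidenberg detail is exactly what the paper leaves to the reader.
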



With notation as in Lemma \ref{lem-6} we have the following version of the Thom transversality theorem. 
It is a corollary of the Bertini-Sard theorem. We skip the details.
\begin{Th} [A version of Thom's transversality theorem] \label{th-7}
Let $X$ be a quasi-projective variety equipped with a stratification $\{Y_i\}$. Assume that the following hold: (1) The linear systems $E_1,\dots, E_k$  are base point free, and  
(2) $Z_{{\bf g}}\neq \emptyset $ for generic ${\bf g}\in {\bf E} = E_1 \times \cdots \times E_k$. Then, for  generic  ${\bf g} \in{\bf E}$, the subvariety $Z_{{\bf g}}$ is a local complete intersection of codimension $k$ which is transverse to $\{Y_i\}$.
\end{Th}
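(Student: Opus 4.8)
The plan is to deduce Theorem \ref{th-7} from the Bertini--Sard theorem by a parameter-space argument, one stratum at a time. Since a stratification is a finite collection of strata and the conclusion (being a transverse local complete intersection) is the conjunction of countably—in fact finitely—many open-dense conditions, it suffices by Lemma \ref{lem-5} to show, for each fixed stratum $Y_j$, that for generic ${\bf g} \in {\bf E}$ the subvariety $Z_{\bf g}$ meets $Y_j$ transversally in the sense of Section \ref{subsec-strat}; intersecting the finitely many resulting dense open (or, more precisely, generic) sets then gives the result. So fix $Y_j$.

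First I would set up the incidence/evaluation picture on $Y_j$. Because $E_1, \dots, E_k$ are base point free, for each $i$ the sections in $E_i$ have no common zero on $Y_j$, and one gets a well-defined morphism $\Phi_i \colon Y_j \to \p(E_i^*)$; more usefully, choosing a local trivialization of $\L_i$ one can think, locally on $Y_j$, of an element of $E_i$ as a function. The clean global way to proceed is to consider the total evaluation map. Pick a point $a \in Y_j$ and a Zariski-open $U \subset X$ on which all $\L_i$ are trivialized; over $U \cap Y_j$ each $g_i \in E_i$ becomes a regular function, and we consider
\begin{equation*}
F \colon (U \cap Y_j) \times {\bf E} \to \c^k, \qquad F(x, g_1, \dots, g_k) = (g_1(x), \dots, g_k(x)).
\end{equation*}
The base-point-freeness of each $E_i$ means that for fixed $x$ the map ${\bf g} \mapsto F(x,{\bf g})$ is a surjective linear map onto $\c^k$ (indeed each coordinate $g_i \mapsto g_i(x)$ is a nonzero linear functional on $E_i$). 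Hence $F$ is a submersion at every point, so in particular $0 \in \c^k$ is a regular value of $F$, and the fiber $\mathcal{Z} = F^{-1}(0)$ is smooth of codimension $k$ in $(U \cap Y_j) \times {\bf E}$. Now restrict the projection $p \colon \mathcal{Z} \to {\bf E}$ to this smooth variety and apply the Bertini--Sard theorem: for generic ${\bf g} \in {\bf E}$, ${\bf g}$ is a regular value of $p$. By the standard fiberwise-transversality argument (the preimage theorem for families), regularity of ${\bf g}$ for $p$ is exactly the statement that the map $x \mapsto F(x,{\bf g})$ is a submersion at every point of its zero set, i.e. the differentials of $g_1|_{Y_j}, \dots, g_k|_{Y_j}$ are independent at every point of $Z_{\bf g} \cap Y_j \cap U$. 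This is precisely transversality of $Z_{\bf g}$ to $Y_j$ along $U$.

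To finish, I would remove the dependence on the local chart $U$ and the point $a$: cover $Y_j$ by finitely many such $U$'s (quasi-compactness), get for each a generic condition on ${\bf E}$, and intersect; then intersect over the finitely many strata $Y_j$. Combined with hypothesis (2) — which guarantees $Z_{\bf g} \neq \emptyset$ for generic ${\bf g}$ so that the statement is not vacuous, and which via Lemma \ref{lem-6} is itself a generic-or-never dichotomy — and with part (1) of Theorem \ref{th-2} (transversality to a stratification automatically makes each $Z_{\bf g} \cap Y_j$ a smooth local complete intersection of codimension $k$), we obtain that for generic ${\bf g}$ the subvariety $Z_{\bf g}$ is a codimension-$k$ local complete intersection transverse to $\{Y_i\}$. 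The main obstacle is not conceptual but bookkeeping: one must be careful that "generic" is used consistently (semi-algebraic complements of codimension $\geq 1$, per the convention set after the Bertini--Sard theorem) and that the local triviality of the $\L_i$ does not affect the independence-of-differentials condition, since changing the trivialization multiplies each $g_i$ by a nowhere-vanishing function and hence does not change whether the $dg_i|_{Y_j}$ are linearly independent at a common zero. That is exactly the point the authors flag when they write "We skip the details."
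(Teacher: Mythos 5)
Your proof is correct and is exactly the parametric-transversality argument via Bertini--Sard that the paper alludes to (the authors explicitly skip the details): base point freeness makes the evaluation map on $(U\cap Y_j)\times {\bf E}$ a submersion, and regular values of the projection from its zero locus to ${\bf E}$ give generic transversality to each stratum, with finitely many charts and strata handled by Lemma \ref{lem-5}. One small bookkeeping correction: the claim that $Z_{\bf g}$ itself is a local complete intersection of codimension $k$ should not be drawn from Theorem \ref{th-2}(1), whose hypothesis already assumes it; rather, it follows because the $k$ local equations force every component of $Z_{\bf g}$ to have codimension at most $k$ in $X$, while transversality to every stratum gives $\dim (Z_{\bf g}\cap Y_j)\leq \dim Y_j-k$ for all $j$, and the strata cover $X$, so every component has codimension exactly $k$.
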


\subsection {When is a generic complete intersection nonempty?} \label{subsec-nonempty}
Suppose we are given $k$ linear systems on a variety. In this section we give a necessary and sufficient condition for a generic complete intersections from these
linear systems to be nonempty. We would need the notion of the Kodaira map of a linear system which we briefly explain below:

As above let $X$ be an $n$-dimensional quasi-projective variety. Let $E$ be a linear system on $X$, that is, a finite dimensional linear subspace of global sections of a line  bundle 
$\L$ on $X$. Assume that $E$ is base point free. One can then define a morphism $\Phi_E: X \to \p(E^*)$
called the {\it Kodaira map of $E$}. It is defined as follows: $\Phi_E(x)$ is the point in the projective space $\p(E^*)$ represented by the hyperplane $H_x$ in $E$ consisting of all the sections
which vanish at $x$. We denote the closure of the image of $\Phi_E$ by $Y_E$. It is a projective subvariety of $\p(E^*)$. The following is easy to prove from the definition of the Kodaira map:
\begin{Lem} \label{lem-8}
For $a, b \in X$ we have $\Phi_E(a)= \Phi_E(b)$, if and only if the sets $\{ g \in E \mid g(a)=0\}$ and $\{g \in E \mid g(b)=0\}$ coincide.
\end{Lem}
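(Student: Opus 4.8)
The plan is to unwind the definition of the Kodaira map and reduce the claim to the elementary fact that two nonzero linear functionals on a vector space are proportional if and only if they have the same kernel.

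First I would record what $\Phi_E$ is at the level of functionals. Fix $x \in X$ and a local trivialization of $\L$ near $x$; this identifies the fiber $\L_x$ with $\c$ and gives an evaluation functional $\textup{ev}_x \colon E \to \c$, $g \mapsto g(x)$. Since $E$ is base point free there is a section $g \in E$ with $g(x) \neq 0$, so $\textup{ev}_x$ is not identically zero; hence its class $[\textup{ev}_x] \in \p(E^*)$ is well defined (a change of trivialization multiplies $\textup{ev}_x$ by a nonzero scalar and so does not affect the class), and by construction $[\textup{ev}_x] = \Phi_E(x)$ while $\ker(\textup{ev}_x) = \{g \in E \mid g(x) = 0\} = H_x$. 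In particular each $H_x$ is a genuine hyperplane in $E$.

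Next, for $a, b \in X$: the equality $\Phi_E(a) = \Phi_E(b)$ means $[\textup{ev}_a] = [\textup{ev}_b]$ in $\p(E^*)$, i.e. $\textup{ev}_a = \lambda\, \textup{ev}_b$ for some $\lambda \in \c^*$, which immediately forces $\ker(\textup{ev}_a) = \ker(\textup{ev}_b)$, that is $H_a = H_b$. Conversely, suppose $H_a = H_b$. Pick $g_0 \in E$ with $\textup{ev}_b(g_0) \neq 0$; then $\textup{ev}_a(g_0) \neq 0$ as well, since otherwise $g_0 \in H_a = H_b$. Set $\lambda = \textup{ev}_a(g_0)/\textup{ev}_b(g_0) \in \c^*$. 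The functional $\textup{ev}_a - \lambda\, \textup{ev}_b$ vanishes on $g_0$ and on all of $H_b = H_a$; since $E = \c g_0 \oplus H_b$, it vanishes identically, so $\textup{ev}_a = \lambda\, \textup{ev}_b$ and $\Phi_E(a) = \Phi_E(b)$.

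There is no real obstacle here; the only point requiring the base point free hypothesis is that the $\textup{ev}_x$ be nonzero, so that the $H_x$ are honest hyperplanes and their classes in $\p(E^*)$ are defined. One could alternatively phrase the entire argument through the standard duality between $\p(E^*)$ and the set of hyperplanes of $E$, under which $\Phi_E(x)$ corresponds precisely to $H_x$; the statement is then immediate from the injectivity of that correspondence.
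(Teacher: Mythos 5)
Your proof is correct and is exactly the definitional unwinding the paper has in mind: since $\Phi_E(x)$ is by definition the point of $\p(E^*)$ corresponding to the hyperplane $H_x=\{g\in E\mid g(x)=0\}$, the lemma reduces to the standard bijection between points of $\p(E^*)$ and hyperplanes of $E$, which your evaluation-functional argument verifies carefully (the paper simply states the lemma as ``easy to prove from the definition'' and gives no written proof). Nothing is missing; the only hypothesis used, base point freeness, is invoked correctly to ensure the evaluation functionals are nonzero.
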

The notion of a Kodaira map is very useful in the theory of Newton-Okounkov bodies. One usually assumes that the linear system under consideration is large enough so that the 
Kodaira map $\Phi_E$ is an isomorphism (or at least a birational isomorphism) between $X$ and $Y_E$. In this section we relax this and would work with 
general base point free linear systems $E$ such that $Y_E$ can have smaller dimension than that of $X$.

Let $\L_1, \ldots, \L_k$ be a collection of globally generated line bundles on $X$. For $i=1, \ldots, k$ let $E_i \subset H^0(X, \L_i)$ be a finite dimensional subspaces
of global sections of $\L_i$ without base point. We will use the following notation: $I$ denotes the set of indices $\{1, \ldots, k\}$ and $J = \{i_1, \ldots, i_j\}$ 
is a nonempty subset of $I$. We write $\L_J$ for the line bundle $\L_{i_1} \otimes \cdots \otimes \L_{i_j}$ and $E_J$ is the subspace of $H^0(X, \L_J)$ 
spanned by all the tensor products $g_{i_1}\otimes \cdots \otimes g_{i_j}$, where $g_{i_\ell}$ is a section of $E_{i_\ell}$ for $\ell=1, \ldots, j$.  We will denote the Kodaira map 
of the linear system $E_J$ simply by $\Phi_J: X \to \p(E_J^*)$. We have the following extension of Lemma \ref{lem-8}:
\begin{Lem} \label{lem-9-Kodaira}
For $a, b \in X$ we have $\Phi_I(a) = \Phi_I(b)$ if and only if for every 
$i \in I$ the sets $\{g_i \in E_i \mid g_i(a)=0\}$ and $\{g_i \in E_i \mid g_i(b)=0\}$ coincide.
\end{Lem}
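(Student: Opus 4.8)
The plan is to reduce Lemma \ref{lem-9-Kodaira} to the already-proved Lemma \ref{lem-8} by unwinding the definition of the product linear system $E_I$ and its Kodaira map $\Phi_I$. First I would recall from Lemma \ref{lem-8} that $\Phi_I(a) = \Phi_I(b)$ if and only if the hyperplanes $H_a = \{g \in E_I \mid g(a) = 0\}$ and $H_b = \{g \in E_I \mid g(b) = 0\}$ coincide, so the whole statement becomes: $H_a = H_b$ as subspaces of $E_I$ if and only if for every $i \in I$ the hyperplanes $\{g_i \in E_i \mid g_i(a) = 0\}$ and $\{g_i \in E_i \mid g_i(b) = 0\}$ coincide in $E_i$.

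For the ``if'' direction, suppose for each $i$ the vanishing hyperplanes at $a$ and $b$ inside $E_i$ agree. Since $E_I$ is spanned by the decomposable sections $g_1 \otimes \cdots \otimes g_k$ with $g_i \in E_i$, it suffices to check the equivalence $a$-vanishing $\Leftrightarrow$ $b$-vanishing on such spanning sections. But a decomposable section $g_1 \otimes \cdots \otimes g_k$ vanishes at $a$ exactly when some factor $g_i$ vanishes at $a$, i.e. $g_i \in H_a^{(i)} := \{g \in E_i \mid g(a) = 0\}$ for some $i$; by hypothesis this is the same as $g_i \in H_b^{(i)}$ for some $i$, which is the same as vanishing at $b$. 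Hence $H_a$ and $H_b$ contain the same decomposable sections and therefore, being subspaces spanned by (subsets of) these, must coincide — here I should be slightly careful and argue at the level of the spanning set: a linear functional cutting out $H_a$ and one cutting out $H_b$ vanish on the same spanning vectors, hence are proportional. I would phrase this cleanly by noting $H_a = E_I \cap \bigl(\text{span of decomposables with some factor in }H_a^{(i)}\bigr)$.

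For the ``only if'' direction, assume $H_a = H_b$ but, for contradiction, that there is some index $i_0$ and a section $g_{i_0} \in E_{i_0}$ with $g_{i_0}(a) = 0$ but $g_{i_0}(b) \neq 0$. Using that each $E_i$ is base point free, for every $i \neq i_0$ pick $g_i \in E_i$ with $g_i(b) \neq 0$. Then the decomposable section $s = g_1 \otimes \cdots \otimes g_k$ vanishes at $a$ (because its $i_0$-factor does) so $s \in H_a = H_b$, forcing $s(b) = 0$; but $s(b) = \prod_i g_i(b) \neq 0$, a contradiction. By symmetry in $a$ and $b$ this gives exactly the claimed equality of hyperplanes in each $E_i$.

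The argument is essentially formal, so there is no deep obstacle; the one place requiring a little care is the ``if'' direction, where one must not merely check that $H_a$ and $H_b$ contain the same decomposable sections but genuinely conclude equality of the two hyperplanes — this follows because a hyperplane in $E_I$ is determined up to scalar by its defining functional, and evaluation at $a$ (resp. $b$) restricted to the decomposable sections already determines that functional since the decomposables span $E_I$; alternatively one observes $\dim H_a = \dim H_b = \dim E_I - 1$ whenever $a,b$ are not base points of $E_I$, and a hyperplane containing a spanning subset of another hyperplane of the same dimension equals it. I would also remark that the base-point-freeness of each $E_i$ guarantees $E_I$ is base point free, so $\Phi_I$ is genuinely defined everywhere on $X$, which is what makes Lemma \ref{lem-8} applicable to $E_I$ in the first place.
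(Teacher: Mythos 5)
Your ``only if'' direction is correct and is essentially the paper's argument: use base point freeness to pick auxiliary factors not vanishing at $b$, form a decomposable section, and apply Lemma \ref{lem-8}. The genuine gap is in the ``if'' direction, at the step passing from ``$H_a$ and $H_b$ contain the same decomposable sections'' to ``$H_a=H_b$''. The justification you give --- that functionals cutting out $H_a$ and $H_b$ ``vanish on the same spanning vectors, hence are proportional'' --- is false as a general linear-algebra principle: on $\mathbb{C}^2$ the functionals $x+2y$ and $2x+y$ vanish on the same (empty) subset of the spanning set $\{e_1,e_2,e_1+e_2\}$ yet have different kernels. Your other two formulations beg the same question: the identity $H_a=E_I\cap\bigl(\textup{span of decomposables with some factor in } H_a^{(i)}\bigr)$, i.e.\ the claim that the decomposable sections vanishing at $a$ span all of $H_a$, is exactly the nontrivial point and is asserted rather than proved; likewise the ``alternative'' via equal dimensions needs the decomposables vanishing at $b$ to span $H_b$. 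This spanning statement is precisely what the paper's proof supplies, by writing each $E_i=E_i^0\oplus\mathbb{C}s_i$ with $E_i^0$ the sections vanishing at $a$ and $b$ and $s_i$ nonvanishing at both points, and decomposing $E_I$ into the resulting $2^k$ summands, all but one of which consist of sections vanishing at $a$ and $b$.

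The repair is short, so the overall architecture of your argument survives, but the missing step must be supplied. Either reproduce the paper's decomposition, or argue via proportionality instead of zero sets: for each $i$ the hypothesis says the evaluation functionals at $a$ and at $b$ on $E_i$ (both nonzero by base point freeness) have the same kernel, hence are proportional, say $\mathrm{ev}^{(i)}_a=c_i\,\mathrm{ev}^{(i)}_b$; then on every decomposable section $\mathrm{ev}_a=\bigl(\prod_i c_i\bigr)\mathrm{ev}_b$, and since the decomposables span $E_I$ the two evaluation functionals are proportional on all of $E_I$, so $H_a=H_b$ and Lemma \ref{lem-8} gives $\Phi_I(a)=\Phi_I(b)$. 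The point your write-up misses is that equality of zero sets on a spanning family is strictly weaker than proportionality on that family; only the latter passes to the span.
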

\begin{proof} Let us prove that if $\Phi_I(a)=\Phi_I(b)$ and $g_i(a)=0$ for some $g_i\in E_i$ then $g_i(b)=0$. For every $j\neq i$ fix a section $f_j\in E_j$ such that $f_j(a)\neq 0$ and $f_j(b)\neq 0$. For any $g_i \in E_i$ consider the section $\phi = f_1\otimes\dots\otimes f_{i-1}\otimes g_i\otimes f_{i+1}\otimes \dots\otimes f_k \in E_I$. 
By Lemma \ref{lem-8} the conditions $\phi(a)=0$ and $\phi(b)=0$ are equivalent. So the equations $g_i(a)=0$ and $g_i(b)=0$ on a section $g_i \in E_i$ are equivalent.
Conversely, assume that for every $i$ the equations $g_i(a)=0$ and $g_i(b)=0$ for  $g_i \in E_i$ are equivalent. Represent each linear space $E_i$ in the form $E_i^0 \oplus E_i^1$ 
where every section from $E_i^0$ vanishes at the points $a$ and $b$ and the one-dimensional subspace $E_i^1$ is spanned by a section $s_i$ not vanishing at $a$ and $b$. 
The linear space $E_I$ is a sum of $2^k$ subspaces $E^{n_1,\dots,n_k}_I$ where the sum is taken over all $2^k$  $k$-tuples $(n_1,\dots, n_k)$ of indexes $n_i=0,1$, 
and each linear space $E^{n_1,\dots,n_k}_I$ is spanned by the tensor products $f_1\otimes \dots\otimes f_k$ where $f_i\in E_i^0$ if $n_i=0$ and $f_i=s_i$ if $n_i=1$. 
The sections belonging to each summand but not to the one-dimensional subspace $E^{1,\dots, 1}_I$ spanned by the section $s_1\otimes \cdots \otimes s_k$ vanish  
at $a$ and $b$. So the conditions $\phi(a)=0$ and $\phi(b)=0$ for $\phi \in E_I$ are equivalent. By Lemma \ref{lem-8} 
we have $\Phi_I(a)=\Phi_I(b)$ which finishes the proof.
\end{proof}

\begin{Def} \label{def-defect}
With notation as above, we define the {\it defect} $d(J)$ of a subset $J \subset I=\{1, \ldots, k\}$ 
to be the number: $$d(J)=\tau_J-|J|,$$ where $\tau_J$ is the dimension of $Y_J$, the closure of the image of $X$ under the Kodaira map 
$\Phi_J$ and $|J|$ is the number of elements in $J$. 
\end{Def}

\begin{Def} \label{def-independent}
With notation as above, we say that the linear systems $E_1, \ldots, E_k$ are {\it independent} if any subset $J \subset I = \{1, \ldots, k\}$ has nonnegative defect.
\end{Def}

\begin{Th} [Necessary condition for a generic complete intersection to be nonempty] \label{th-nec-cond-nonempty}
Let $E_1, \ldots, E_k$ be base point free linear systems. Suppose $Z_{{\bf g}} \subset X$ is nonempty for a generic choice of ${\bf g} \in {\bf E} = E_1 \times \cdots \times E_k$. 
Then $E_1, \ldots, E_k$ are independent (in the sense of Definition \ref{def-independent}). 
\end{Th}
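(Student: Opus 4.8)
The plan is to prove the contrapositive: if the linear systems $E_1, \ldots, E_k$ are \emph{not} independent, then $Z_{{\bf g}}$ is empty for generic ${\bf g} \in {\bf E}$. So suppose some subset $J \subset I$ has negative defect, i.e. $\tau_J < |J|$. By relabelling we may as well take $J = \{1, \ldots, j\}$ with $\tau_J \leq j - 1$. The key observation is that the equations coming from the systems $E_1, \ldots, E_j$ only ``see'' the variety $X$ through its image $Y_J \subset \p(E_J^*)$ under the Kodaira map $\Phi_J$; this is exactly the content of Lemma~\ref{lem-9-Kodaira}, which says that $\Phi_J(a) = \Phi_J(b)$ forces the vanishing loci of every $g_i \in E_i$ at $a$ and $b$ to coincide.

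First I would make precise the reduction to $Y_J$. A generic section $g_i \in E_i$ vanishes on $X$ precisely along $\Phi_J^{-1}(H_i)$ for a generic hyperplane $H_i \subset \p(E_J^*)$ (more precisely, along the preimage of the intersection of $Y_J$ with a generic hyperplane of $\p(E_i^*)$, pulled back through the projection $E_J^* \to E_i^*$ dual to the inclusion $E_i \hookrightarrow E_J$ built from the tensor structure; here one uses base point freeness so that the hyperplane sections are genuine hyperplane sections and do not contain components of $Y_J$). Consequently
$$\{x \in X \mid g_1(x) = \cdots = g_j(x) = 0\} = \Phi_J^{-1}\big(Y_J \cap H_1 \cap \cdots \cap H_j\big)$$
for generic $g_i$, where the $H_i$ are generic hyperplanes in $\p(E_i^*) $ pulled back to $\p(E_J^*)$. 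Since $\dim Y_J = \tau_J \leq j-1$, a generic intersection of $j$ hyperplane sections of $Y_J$ is empty: cutting a projective variety of dimension $\tau_J$ by $\tau_J + 1 \leq j$ generic hyperplanes gives the empty set. Hence $\Phi_J^{-1}(\emptyset) = \emptyset$, so already the first $j$ equations have no common solution, and a fortiori $Z_{{\bf g}} = \emptyset$. Finally, Lemma~\ref{lem-6} upgrades ``empty for a generic choice within a dense open subset'' to ``empty for generic ${\bf g} \in {\bf E}$'', contradicting the hypothesis. This proves that independence is necessary.

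The main technical obstacle is the bookkeeping in the first step: one must check carefully that a generic member of $E_i$ really does pull back a generic hyperplane of $\p(E_i^*)$ under $\Phi_{E_i}$, and that the identification of the common zero locus of the $g_i$ with a hyperplane-section locus on $Y_J$ is valid — this requires knowing that the Kodaira maps $\Phi_{E_i}$ all factor through $\Phi_J$ (which follows from Lemma~\ref{lem-9-Kodaira}, since the fibres of $\Phi_J$ refine those of each $\Phi_{E_i}$), together with base point freeness to ensure the hyperplane sections meet $Y_J$ properly and that one may genuinely count dimensions. Once this factorization and the dimension count $\dim Y_J + 1 \le j$ are in hand, the emptiness is immediate and the application of Lemma~\ref{lem-6} is routine. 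I would also remark that the argument uses only the $j$ systems in $J$, so it is insensitive to the other $E_i$ with $i \notin J$; this is why it suffices to find a single subset of negative defect.
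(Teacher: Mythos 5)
Your overall strategy --- push everything down to the image of the Kodaira map attached to the negative-defect subset $J$ and kill the intersection there by a dimension count --- is genuinely different from the paper's proof (which assumes a generic $Z_{{\bf g}_J}$ is nonempty, uses Theorem \ref{th-7} to make it a codimension-$|J|$ local complete intersection meeting the open stratum, and gets a contradiction because the $\Phi_J$-fibre through a point of it is contained in it by Lemma \ref{lem-9-Kodaira} and has dimension at least $n-\tau_J > n-|J|$). Your route can be made to work, but the step you yourself flag as the main obstacle is wrong as written, and it is a genuine gap. There is no canonical inclusion $E_i \hookrightarrow E_J$: any inclusion ``built from the tensor structure'' has the form $g \mapsto f_{1}\otimes \cdots \otimes g \otimes \cdots \otimes f_{j}$ for fixed auxiliary sections $f_\ell \in E_\ell$, and the corresponding hyperplane $H_i \subset \p(E_J^*)$ pulls back under $\Phi_J$ to the zero locus of the product section, i.e. to $\{g_i=0\} \cup \bigcup_{\ell \neq i}\{f_\ell = 0\}$, which is strictly larger than $\{g_i=0\}$. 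So the identity $\{g_1=\cdots=g_j=0\} = \Phi_J^{-1}\bigl(Y_J \cap H_1 \cap \cdots \cap H_j\bigr)$ fails; worse, with such a choice the right-hand side contains sets like $\{f_1=0\}\cap\{f_2=0\}$, which are typically nonempty, so the emptiness you want cannot be proved for it. Similarly, the set-theoretic refinement of fibres given by Lemma \ref{lem-9-Kodaira} does not by itself give a morphism from $Y_J$ to $Y_{E_i}$ through which $\Phi_{E_i}$ factors; on the closure $Y_J$ this is at best a rational map, so ``hyperplane sections of $Y_J$'' is not the right object to intersect.

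The repair is to replace $\Phi_J$ by the product map $\Psi = (\Phi_{E_{i_1}}, \ldots, \Phi_{E_{i_j}})\colon X \to \p(E_{i_1}^*)\times\cdots\times\p(E_{i_j}^*)$. By Lemma \ref{lem-8} one has $\{g_i=0\} = \Phi_{E_i}^{-1}(\mathcal{H}_{g_i})$ for the hyperplane $\mathcal{H}_{g_i}\subset \p(E_i^*)$ determined by $g_i$, so the common zero locus of the $g_i$, $i\in J$, equals $\Psi^{-1}$ of the intersection of $\overline{\Psi(X)}$ with the $j$ divisors pulled back from the factors. By Lemma \ref{lem-9-Kodaira} the fibres of $\Psi$ coincide with those of $\Phi_J$, hence $\dim \overline{\Psi(X)} = \tau_J < |J|$; each of the $j$ pulled-back linear systems is base point free on the product, so a generic choice of the $g_i$ cuts $\overline{\Psi(X)}$ down by at least one dimension at each step and therefore to the empty set after $\tau_J+1 \le j$ steps. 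With that substitution your dimension count and the final appeal to Lemma \ref{lem-6} go through, and you obtain a correct proof that avoids Theorem \ref{th-7}, in contrast with the paper's argument.
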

\begin{proof} Assume that the set $J=\{i_1,\dots,i_j\}$ has a negative defect $d(J)$ with respect to the collection of linear systems $E_1,\dots,E_k$. Let us show that for a generic choice of 
${\bf g}_J=(g_{i_1},\dots, g_{i_k}) \in E_{i_1} \times \cdots \times E_{i_j}$, the subvariety $Z_{{\bf g}_J} \subset X$ defined by the equations $g_{i_1}=\dots= g_{i_k}=0$ is empty. 
Suppose this is not the case. Fix a stratification of $X$ with the largest stratum $X_0$. Then by Theorem \ref{th-7} 
the variety $Z_{{\bf g}_J}$ is a local complete intersection of dimension $n-|J|$ and the intersection $Z^0_{{\bf g}_J}=Z_{{\bf g}_J}\cap X_0$ is nonempty and smooth . 
Let $a$ be a point in $Z^0_{{\bf g}_J}$. By Lemma \ref{lem-9-Kodaira} the smooth variety $Z^0_{{\bf g}_J}$ should contain the set of all points  $x\in X_0$ such that 
$\Phi_J(a)=\Phi_J(x)$. The dimension of the set $K_a=\Phi_J^{-1}(\Phi_J(a)) \cap X_0$ is greater than or equal to $n-\tau_J$. Indeed, $\Phi_J$ restricted to $X_0$ is a surjective morphism from the smooth variety $X_0$ of dimension $n$ to  the variety $\Phi_J(X_0)$ of dimension $\tau_J$ . But  $d(J)=\tau_J-|J|<0$ so  $n-\tau_J> n-|J|$ which is impossible because $K_a\subset Z^0_{{\bf g}_J}$. The contradiction proves that $Z_{{\bf g}_J}$ is empty. This shows that $Z_{{\bf g}}$ is empty as well.
\end{proof}

The rest of this section is devoted to proving the converse of Theorem \ref{th-nec-cond-nonempty} (Theorem \ref{th-suff-cond-nonempty}).

First we introduce a foliation on $X$ using the Kodaira map of a linear system. 
Let $E$ be a base point free finite dimensional linear subspace of global sections of a line bundle $\L$ on $X$.  
Let $\Phi_E: X\to \p(E^*)$ be the corresponding Kodaira map. Below we use the following notation:
$S$ is the singular locus of the variety $X$. The number $\tau_E$ is the dimension of $Y_E$, the closure of the image of the Kodaira map $\Phi_E$. The set 
$S_E$ is the singular locus of $Y_E$ and $U$ is the Zariski open set $X \setminus (S \cup \Phi_E^{-1}(S_E))$ in $X$. Finally, $\Sigma_c \subset U$ is the set of critical points of  
$\Phi_E$ restricted to $U \subset X$.


\begin{Def} 
Let $a \in U \setminus \Sigma_c$. Let $\tilde{F}(a)$ be the subspace of the tangent space $T_aU$ defined by the linear equations $dg_a = 0$ for all $g \in E$.
The collection of subspaces $\tilde{F}(a)$ defines an $(n - \tau_E)$-dimensional distribution $\tilde{F}$ on the Zariski open set $U \setminus \Sigma_c$ in $X$.
\end{Def}

The next lemma is a corollary of the Implicit Function Theorem.
\begin{Lem} \label{lem-11}
The foliation $\tilde{F}$ in $U\setminus \Sigma_c$  is completely integrable. Its leaves are  connected components of the preimages under the Kodaira map 
$\Phi_E: U \setminus \Sigma_c \to Y_E$ of the points in $\Phi_E( U\setminus \Sigma_c)$.
\end{Lem}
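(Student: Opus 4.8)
The plan is to invoke the Implicit Function Theorem at a point $a \in U \setminus \Sigma_c$, where by construction the Kodaira map $\Phi_E$ restricted to $U \setminus \Sigma_c$ is a submersion onto a smooth open subset of $Y_E$. First I would choose sections $g_0, \ldots, g_{\tau_E} \in E$ with $g_0(a) \neq 0$ such that the functions $g_1/g_0, \ldots, g_{\tau_E}/g_0$ furnish local affine coordinates on an affine chart of $\p(E^*)$ near $\Phi_E(a)$; after shrinking $U \setminus \Sigma_c$ to a Zariski open neighborhood of $a$, the map $\Phi_E$ is given in these coordinates by $x \mapsto (g_1/g_0(x), \ldots, g_{\tau_E}/g_0(x))$. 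Since $a$ is not a critical point and $\Phi_E(a)$ is a smooth point of $Y_E$, these $\tau_E$ functions have independent differentials at $a$. The level sets of $\Phi_E$ near $a$ are therefore the common level sets of these $\tau_E$ functions, which by the Implicit Function Theorem form a smooth foliation of dimension $n - \tau_E$ near $a$.

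Next I would identify the tangent spaces of these level sets with the distribution $\tilde F$. The tangent space at $a$ to the fiber $\Phi_E^{-1}(\Phi_E(a))$ is cut out by the differentials $d(g_i/g_0)_a = 0$ for $i = 1, \ldots, \tau_E$. A direct computation shows $d(g_i/g_0)_a = (g_0(a)\, dg_i|_a - g_i(a)\, dg_0|_a)/g_0(a)^2$, so the vanishing of all $d(g_i/g_0)_a$ is equivalent, together with a choice of scaling, to the vanishing of $dg_a$ for all $g \in E$ (one checks that the span of $\{dg_0|_a, \ldots, dg_{\tau_E}|_a\}$ equals the span of all $\{dg|_a : g \in E\}$, because $\Phi_E$ has rank $\tau_E$ there and the remaining sections contribute nothing new). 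Hence the tangent space to the level set at $a$ coincides with $\tilde F(a)$. Since this holds at every point of $U \setminus \Sigma_c$, the distribution $\tilde F$ is exactly the tangent distribution to the level-set foliation, so it is completely integrable with the stated leaves.

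Finally I would pass from local level sets to global leaves: the leaves of $\tilde F$ are by definition the maximal connected integral submanifolds, and these are precisely the connected components of the fibers $\Phi_E^{-1}(p) \cap (U \setminus \Sigma_c)$ for $p \in \Phi_E(U \setminus \Sigma_c)$, since two nearby points on the same fiber lie on the same local integral submanifold and connectedness then propagates. The main obstacle is bookkeeping rather than conceptual: one must be careful that the ambient space $U \setminus \Sigma_c$ is genuinely smooth (this is why $S$, $\Phi_E^{-1}(S_E)$, and $\Sigma_c$ are all removed) so that the Implicit Function Theorem applies in the holomorphic category, and one must verify that the rank of the Jacobian of $\Phi_E$ in the chosen chart is constantly $\tau_E$ on the relevant open set — which is exactly the statement that $\Sigma_c$ has been excised — so that the dimension count $n - \tau_E$ for the distribution is uniform. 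Once this is in place the argument is a routine application of the Frobenius/Implicit Function Theorem package.
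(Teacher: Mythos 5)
Your argument is correct and follows exactly the route the paper intends: the paper states this lemma without proof, remarking only that it is a corollary of the Implicit Function Theorem, and your write-up simply supplies the routine details (local coordinates $g_i/g_0$ on $Y_E$ near the smooth image point, the rank-$\tau_E$ computation identifying the level-set tangent spaces with $\tilde F$, and the passage from local level sets to connected components of fibers). Minor imprecisions --- the $g_i/g_0$ are local coordinates on $Y_E$ rather than on all of $\p(E^*)$, and the differentials $dg_a$ should be read in the trivialization by $g_0$ --- do not affect the argument.
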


Take a point $a \in U \setminus \Sigma_c$ and a section $g \in E$ such that $g(a)=0$ and $dg(a)\neq 0$. 
Let $H$ be the hyperplane in the tangent space $T_aU$  defined by $dg=0$. The following is straightforward:
\begin{Lem} \label{lem-12}
1) The hyperplane $H$ contains the $(n-\tau_E)$-plane $\tilde{F}(a)$. 2) For  any hyperplane $H \subset T_aU$ which contains the plane $\tilde{F}(a)$ there is a section 
$g \in E$ such that $g(a)=0$ and $H$ is defined by the equation $dg_a=0$.
\end{Lem}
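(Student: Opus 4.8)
The plan is to work entirely in the local coordinates provided by the Kodaira map on the good open set $U \setminus \Sigma_c$. Fix $a \in U \setminus \Sigma_c$. Since $a \notin \Sigma_c$ the map $\Phi_E$ restricted to $U$ is a submersion onto $Y_E$ near $a$, and since $a \notin \Phi_E^{-1}(S_E)$ the target $Y_E$ is smooth at $\Phi_E(a)$, so we may choose affine coordinates $z_0, z_1, \ldots, z_{\tau_E}$ on an affine chart of $\p(E^*)$ around $\Phi_E(a)$ with $z_0 \neq 0$ there, and identify the sections of $E$ (up to the common line-bundle twist, which is invertible near $a$ and contributes only an overall nonvanishing scalar factor) with the affine-linear functionals $z_0, z_1, \ldots, z_{\tau_E}$ on this chart pulled back to $X$. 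Concretely, after this identification $E$ is spanned, as a space of functions near $a$, by $s_0 := z_0$ and $s_i := z_i$ for $i = 1, \ldots, \tau_E$, where $s_0(a) \neq 0$.

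For part (1): by definition $\tilde F(a) = \bigcap_{g \in E} \ker(dg_a)$. If $g \in E$ vanishes at $a$, then $g$ is in particular one of the functionals in $E$, so $dg_a$ is among the defining equations of $\tilde F(a)$ by definition; hence $\tilde F(a) \subseteq \ker(dg_a) = H$. This half is essentially immediate from the definition of the distribution $\tilde F$.

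For part (2): let $H \subset T_a U$ be any hyperplane with $\tilde F(a) \subseteq H$. First note $dim \tilde F(a) = n - \tau_E$: the differentials $d(s_i/s_0)_a$, $i = 1, \ldots, \tau_E$, are the pullbacks of the differentials of the coordinate functions on $Y_E$ at $\Phi_E(a)$, and since $\Phi_E$ is a submersion onto the $\tau_E$-dimensional smooth variety $Y_E$ at $a$ these are linearly independent on $T_a U$; meanwhile $\tilde F(a)$ is cut out by $\{dg_a : g \in E\}$, and modulo the nonvanishing scalar $s_0(a)$ these differentials span the same codimension-$\tau_E$ subspace as the $d(s_i/s_0)_a$ (using $dg_a$ for $g = s_i$ and $g = s_0$, together with $s_0(a) \neq 0$, one recovers each $d(s_i/s_0)_a$, and conversely). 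Thus $\tilde F(a)$ has codimension exactly $\tau_E$, so the quotient $T_a U / \tilde F(a)$ is $\tau_E$-dimensional and is spanned by the images of $d(s_1/s_0)_a, \ldots, d(s_{\tau_E}/s_0)_a$ regarded as functionals, equivalently the space of functionals vanishing on $\tilde F(a)$ is spanned by these. The hyperplane $H \supseteq \tilde F(a)$ corresponds to a nonzero functional $\ell$ vanishing on $\tilde F(a)$, so $\ell = \sum_{i=1}^{\tau_E} c_i\, d(s_i/s_0)_a$ for some constants $c_i$. Now set $g := \sum_{i=1}^{\tau_E} c_i\big(s_i - \lambda s_0\big) \in E$ where $\lambda := \big(\sum_i c_i s_i(a)\big)/s_0(a)$ is chosen so that $g(a) = 0$; one checks $dg_a = s_0(a)\,\ell + (\text{term proportional to } ds_{0,a}\cdot(\text{scalar}))$, and after absorbing the $ds_{0,a}$ contribution by adjusting $\lambda$ (or, cleaner: work with the functions $s_i/s_0$ directly, which are genuine local functions since $s_0(a)\neq 0$, take $g := \sum c_i(s_i/s_0 - (s_i/s_0)(a))\cdot s_0$, an element of $E$ vanishing at $a$ with $dg_a = s_0(a)\ell$), we get $dg_a$ a nonzero multiple of $\ell$, hence $\{dg_a = 0\} = H$. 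The overall line-bundle trivialization factor, being nonvanishing at $a$, does not affect which hyperplane is cut out.

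The only mildly delicate point — and the one I'd be most careful about — is the bookkeeping with the line-bundle twist and the homogeneous-vs-affine coordinates on $\p(E^*)$: one must make sure that "the differential of a section vanishes" is a well-defined condition at a point where the section vanishes (it is: $dg_a$ is intrinsic when $g(a)=0$) and that dividing by $s_0$ to pass to affine coordinates is legitimate near $a$ (it is, since $s_0(a) \neq 0$ and the twisting line bundle is locally trivial). Once this is set up correctly, both parts are linear algebra on $T_a U$ together with the submersion property guaranteed by $a \in U \setminus \Sigma_c$.
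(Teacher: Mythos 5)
Your proposal is correct in substance; the paper in fact offers no proof of this lemma (it is stated as straightforward), and what you write is the expected verification: part (1) is immediate from the definition of $\tilde{F}(a)$, and part (2) amounts to the observation that at a point $a \in U\setminus\Sigma_c$ the functionals $\{dg_a : g\in E,\ g(a)=0\}$ form exactly the $\tau_E$-dimensional annihilator of $\tilde{F}(a)=\ker d(\Phi_E)_a$, so every hyperplane containing $\tilde{F}(a)$ is cut out by such a $dg_a$; your explicit section $g=\sum_i c_i\bigl(s_i-\tfrac{s_i(a)}{s_0(a)}s_0\bigr)$, which vanishes at $a$ and satisfies $dg_a=s_0(a)\,\ell$, does precisely this.

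Two small points to tighten. First, an affine chart of $\p(E^*)$ carries $\dim E-1$ coordinates, not $\tau_E$, and $Y_E$ may be a proper subvariety of $\p(E^*)$ of dimension $\tau_E<\dim E-1$; so you should work with a full basis $s_0,\dots,s_d$ of $E$ (with $s_0(a)\neq 0$, $d=\dim E-1\ge \tau_E$) and use only that the differentials $d(s_i/s_0)_a$, $1\le i\le d$, span a space of dimension exactly $\tau_E$, because $a$ is not a critical point of $\Phi_E$ and $Y_E$ is smooth at $\Phi_E(a)$. Your construction of $g$ works verbatim for an arbitrary linear combination of all of these differentials, so nothing else in the argument changes. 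Second, the parenthetical claim that one passes back and forth between the $ds_{i,a}$ and the $d(s_i/s_0)_a$ ``and conversely'' is not valid in an arbitrary local trivialization (one cannot recover $ds_{0,a}$ from the quotient differentials); but your own ``cleaner'' route removes the issue: trivialize the line bundle near $a$ by $s_0$ itself, so that the sections of $E$ become the functions $g/s_0$ and $ds_{0,a}=0$ in this trivialization. This is also the reading under which the definition of $\tilde{F}(a)$ by the equations $dg_a=0$ for all $g\in E$ (including sections not vanishing at $a$) agrees with the intrinsic description via sections vanishing at $a$ and with $\ker d(\Phi_E)_a$, as required for consistency with Lemma \ref{lem-11}.
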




We will need a linear algebra statement about transversality of a collection of hyperplanes.
Let $F_1,\dots,F_k\subset T$ be  subspaces in a vector space $T$. For any nonempty set $J=\{i_1,\dots,i_j\} \subset \{1, \ldots, k\}$ let 
$F_J$ denote the the subspace $F_{i_1} \cap \dots \cap F_{i_j}$.
The following theorem gives a necessary and sufficient condition for the existence of hyperplanes 
$H_1, \dots, H_k \subset T$ such that: (1) $F_i\subset H_i$ for $1\leq i\leq k$, and (2) the hyperplanes $H_1,\dots, H_k$ are mutually transverse.
\begin{Th} \label{th-13}
The hyperplanes $H_1,\dots,H_k$ satisfying the above conditions exist if and only if for any subset $J$ the codimension of $F_J$ in $T$ is greater than or equal to $|J|$.
\end{Th}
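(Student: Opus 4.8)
The plan is to prove both directions separately, with the necessity being a short dimension count and the sufficiency being the substantive part, handled by induction on $k$. For necessity, suppose hyperplanes $H_1, \dots, H_k$ exist with $F_i \subseteq H_i$ and the $H_i$ mutually transverse. Mutual transversality means that for every nonempty $J = \{i_1, \dots, i_j\}$ the codimension of $H_J = H_{i_1} \cap \dots \cap H_{i_j}$ in $T$ equals $|J|$ (each $H_i$ is a hyperplane and they meet as transversally as possible). Since $F_J = F_{i_1} \cap \dots \cap F_{i_j} \subseteq H_{i_1} \cap \dots \cap H_{i_j} = H_J$, we get $\codim_T F_J \geq \codim_T H_J = |J|$, which is exactly the stated condition. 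I would record at the outset the elementary fact that a collection of hyperplanes $H_1, \dots, H_k$ is mutually transverse if and only if $\codim H_J = |J|$ for all $J$, or equivalently if and only if the linear functionals $\ell_1, \dots, \ell_k$ cutting them out are linearly independent; this reformulation will be convenient throughout.

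For sufficiency, assume $\codim_T F_J \geq |J|$ for every nonempty $J \subseteq \{1, \dots, k\}$, and induct on $k$. The base case $k = 1$ is trivial: $\codim_T F_1 \geq 1$, so $F_1$ is contained in some hyperplane $H_1$. For the inductive step, first apply the induction hypothesis to $F_1, \dots, F_{k-1}$ (the defect condition for subsets of $\{1, \dots, k-1\}$ is inherited) to obtain mutually transverse hyperplanes $H_1, \dots, H_{k-1}$ with $F_i \subseteq H_i$. Let $\ell_1, \dots, \ell_{k-1}$ be the corresponding independent functionals. I now need to produce a hyperplane $H_k \supseteq F_k$ whose functional $\ell_k$ is independent from $\ell_1, \dots, \ell_{k-1}$, i.e. $\ell_k \notin \span(\ell_1, \dots, \ell_{k-1})$. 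Equivalently, writing $W = \span(\ell_1, \dots, \ell_{k-1}) \subseteq T^*$, I must find a functional $\ell_k$ vanishing on $F_k$ but not lying in $W$. Such an $\ell_k$ exists precisely when $F_k^\perp \not\subseteq W$, i.e. when $\dim(F_k^\perp) > \dim(F_k^\perp \cap W)$, where $F_k^\perp \subseteq T^*$ is the annihilator of $F_k$. Since $\dim F_k^\perp = \codim_T F_k$, the obstruction is precisely the possibility that $F_k^\perp \subseteq W$.

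The heart of the argument, and the main obstacle, is ruling out $F_k^\perp \subseteq W$ using only the defect hypothesis on $F_k$ together with subsets involving the other indices. The key point: a functional in $F_k^\perp \cap W$ is a linear combination $\sum_{i \in J} c_i \ell_i$ over some $J \subseteq \{1, \dots, k-1\}$ (with all $c_i \neq 0$) that annihilates $F_k$; but $\ell_i$ annihilates $F_i \supseteq F_{J \cup \{k\}}$... more carefully, I would argue that $F_k^\perp \cap W$ is spanned by functionals each of which annihilates $F_k$ and is built from some $\ell_i$, and track which $F_i$'s are involved to show $\dim(F_k^\perp \cap W) \leq \codim_T F_k - 1$ would force a defect violation. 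The cleanest route may be contrapositive: if \emph{every} functional annihilating $F_k$ lies in $W = \span(\ell_1, \dots, \ell_{k-1})$, I want to derive $\codim_T F_J < |J|$ for some $J$ containing $k$. Choosing a minimal subset $J' \subseteq \{1, \dots, k-1\}$ with $F_k^\perp \subseteq \span(\ell_i : i \in J')$, one expects $\codim F_k = \dim F_k^\perp \leq |J'|$ while simultaneously $F_{J'} + F_k$ having its annihilator contained in $\span(\ell_i : i \in J')$ of dimension $|J'|$, giving $\codim_T(F_{J' \cup \{k\}}) \leq |J'| < |J' \cup \{k\}|$, contradicting the hypothesis. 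Pinning down this dimension bookkeeping — precisely, showing that the minimality of $J'$ forces the $\ell_i$, $i \in J'$, to restrict to linearly independent functionals on $F_{J'}$ or some similar statement that yields the codimension estimate — is the delicate step I would expect to spend the most care on; everything else is routine linear algebra.
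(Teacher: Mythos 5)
There is a genuine gap in the sufficiency direction, and it is located exactly where you flagged the ``delicate step.'' Two problems. First, the dimension bookkeeping you sketch conflates the sum with the intersection: the annihilator of $F_{J'}+F_k$ is $F_{J'}^\perp\cap F_k^\perp$, and your containment $F_k^\perp\subset \span(\ell_i: i\in J')$ only bounds $\codim_T(F_{J'}+F_k)$ by $|J'|$; it says nothing about $\codim_T F_{J'\cup\{k\}}=\codim_T(F_{J'}\cap F_k)$, whose annihilator is $F_{J'}^\perp+F_k^\perp$ and for which you would need $F_{J'}^\perp\subset\span(\ell_i:i\in J')$ --- false in general, since each $F_i^\perp$ is usually much larger than $\span(\ell_i)$. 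Second, and more fundamentally, the statement you are trying to prove at the inductive step is simply false: if $H_1,\dots,H_{k-1}$ are produced by a bare appeal to the induction hypothesis, the defect conditions do \emph{not} rule out $F_k^\perp\subset W$. Take $T=\c^2$, $F_1=\{0\}$, $F_2$ a line $L$; all codimension conditions hold, but the induction hypothesis for $k-1=1$ may hand you $H_1=L$, and then $F_2^\perp=\span(\ell_1)=W$ and no transverse $H_2\supset F_2$ exists. So no contradiction with the hypotheses can be derived, and the plan ``fix transverse hyperplanes for $F_1,\dots,F_{k-1}$ first, then extend by one'' cannot work without a stronger inductive statement (e.g.\ that the $H_i$ can be chosen generically, or compatibly with the remaining $F_J$'s).

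The paper's induction avoids this by reversing the order of choices: it first picks the \emph{new} hyperplane $H_k\supset F_k$ generically, so that $H_k$ contains no $F_J$ that is not already inside $F_k$ (hence every such $F_J$ meets $H_k$ in codimension one in $F_J$), and then applies the induction hypothesis \emph{inside} $H_k$ to the sliced subspaces $F_i\cap H_k$, $i<k$, after checking that they inherit the codimension conditions in $H_k$ (splitting into the cases $F_{J^*}\subset F_k$ and $F_{J^*}\not\subset F_k$). The resulting hyperplanes $H_i'\subset H_k$ are then enlarged to hyperplanes $H_i\subset T$ containing $F_i$. If you want to keep your dual/functional language, you would need to build this genericity of the choice into your induction; as written, the argument has no mechanism to prevent the bad choices that block the extension.
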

\begin{proof} 
First suppose the subspaces $H_1, \ldots, H_k$ satisfying the above conditions exist. For any nonempty subset $J = \{i_1, \ldots, i_j\}$ let $H_J$ denote the 
subspace $H_{i_1} \cap \cdots \cap H_{i_j}$. Then $F_J \subset H_J$ and the codimension of $H_J$ in $T$ is $|J|$. So the codimension of $F_J$ is greater than or equal to $|J|$. 
We prove the converse statement by induction on $k$.
Suppose that the statement holds for any collection of $(k-1)$ subspaces. Choose $H_k$ to be a hyperplane such that $H_k$ contains $F_k$ but it does not contain any 
subspace $F_J$ not inside $F_k$ (so $F_J \cap  H_k$ has codimension $1$ in $F_J$).
Now we apply the induction hypothesis to the $k-1$ hyperplanes $F'_i = F_i \cap H_k$, $i=1, \ldots, k-1$, in the vector space $H_k$. Let us verify that these satisfy the conditions in the theorem.
For any subset $J \subset \{1, \ldots, k\}$ let $J^* = J \setminus \{k\}$. 
If $F_{J^*}\subset F_k$ then $F_{J^*}=F_{J^*}\cap F_k=F_J$ where $J = J^* \cap \{k\}$. The codimension of $F_{J^*} \cap H_k$ in $H_k$ is equal to the codimension of $F_J$ in $T$ minus $1$. By 
the assumption, $\codim(F_J) - 1$  is greater than or equal to  $(|J^*|+1)-1=|J^*|$. On the other hand, 
if $F_{J^*}$ is not contained in $F_k$ then its codimension in $T$ is equal to the codimension of  
$F_{J^*}\cap H_k$ in $H_k$. Again by the assumption in the theorem this is greater than or equal to $|J^*|$.
Finally by the induction hypothesis there are mutually transverse hyperplanes $H_1',\dots, H_{k-1}'$ in $H_k$, such that $F_{i} \cap H_k \subset H_i'$ for $i<k$. Enlarge each $H_i'$ to a 
hyperplane $H_i$ in $T$ such that $H_i$ contains $F_i$. The collection of hyperplanes $H_1, \ldots, H_k$ has the required properties.
\end{proof}

We can now prove the converse of Theorem \ref{th-nec-cond-nonempty}.
\begin{Th} [Sufficient condition for a generic complete intersection to be nonempty] \label{th-suff-cond-nonempty}
Let $E_1, \ldots, E_k$ be a collection of base point free linear systems. Suppose that $E_1, \ldots, E_k$ are independent in the sense of Definition \ref{def-independent}. 
Then a generic complete intersection $Z_{\bf g} \subset X$, where ${\bf g} \in {\bf E} = E_1 \times \cdots E_k$, is nonempty.
\end{Th}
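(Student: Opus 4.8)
The plan is to deduce the statement from the dichotomy of Lemma~\ref{lem-6}: the set $B=\{\,{\bf g}\in{\bf E}\mid Z_{{\bf g}}\neq\emptyset\,\}$ is constructible (it is the image of a morphism), so it is enough to show that $B$ is dense in ${\bf E}$, and then Lemma~\ref{lem-6} forces $Z_{{\bf g}}\neq\emptyset$ for generic ${\bf g}$. (We may assume $X$ irreducible, as throughout this section.) To produce many ${\bf g}$ with $Z_{{\bf g}}\neq\emptyset$ I will start from a \emph{generic} point $a\in X$, use the integrable distribution attached to a Kodaira map to turn the problem into a linear-algebra problem in $T_aX$, solve that problem with Theorem~\ref{th-13}, realize the resulting hyperplanes by actual sections via Lemma~\ref{lem-12}, and finally organize the pairs $({\bf g},a)$ so obtained into an incidence variety whose projection to ${\bf E}$ is dominant. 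For a nonempty $J=\{i_1,\dots,i_j\}\subseteq I$ let $\tilde F_J$ denote the $(n-\tau_J)$-dimensional distribution associated, as in the definition preceding Lemma~\ref{lem-11}, to the linear system $E_J$, and let $U_0\subseteq X$ be the dense Zariski-open intersection, over all such $J$, of the open sets on which $\tilde F_J$ is defined; in particular every point of $U_0$ is a smooth point of $X$.

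\emph{Local step (the heart).} Fix $a\in U_0$ and set $F_i=\tilde F_{\{i\}}(a)\subseteq T_aX$, so $\codim F_i=\tau_{\{i\}}$. First I would check that $F_{i_1}\cap\cdots\cap F_{i_j}\subseteq\tilde F_J(a)$ for every nonempty $J$: the space $E_J$ is spanned by tensors $\phi=g_{i_1}\otimes\cdots\otimes g_{i_j}$, and for $v\in F_{i_1}\cap\cdots\cap F_{i_j}$ the Leibniz rule gives $d\phi_a(v)=\sum_{\ell}\bigl(\prod_{m\neq\ell}g_{i_m}(a)\bigr)(dg_{i_\ell})_a(v)=0$ since $(dg_{i_\ell})_a(v)=0$ for each $\ell$; by linearity $d\psi_a(v)=0$ for all $\psi\in E_J$, i.e. $v\in\tilde F_J(a)$. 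Hence $\codim\bigl(F_{i_1}\cap\cdots\cap F_{i_j}\bigr)\geq\codim\tilde F_J(a)=\tau_J\geq|J|$, the last inequality being precisely the independence of $E_1,\dots,E_k$. By Theorem~\ref{th-13} applied with $T=T_aX$ there exist mutually transverse hyperplanes $H_1,\dots,H_k\subseteq T_aX$ with $F_i\subseteq H_i$, and by Lemma~\ref{lem-12} there are sections $g_i\in E_i$ with $g_i(a)=0$ and $\ker(dg_i)_a=H_i$. For ${\bf g}=(g_1,\dots,g_k)$ we then have $a\in Z_{{\bf g}}$, and since the $H_i$ are mutually transverse the differentials $(dg_1)_a,\dots,(dg_k)_a$ are linearly independent; by the implicit function theorem $Z_{{\bf g}}$ is smooth of codimension $k$ in a neighbourhood of $a$, and in particular nonempty.

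\emph{Globalization.} Let $W=\{\,({\bf g},x)\in{\bf E}\times X\mid g_1(x)=\cdots=g_k(x)=0\,\}$ and let $\pi\colon W\to{\bf E}$ be the projection. Because the $E_i$ are base point free, the surjective evaluation map of vector bundles ${\bf E}\times X\to\bigoplus_i\L_i$ over $X$ has kernel a subbundle of rank $\dim{\bf E}-k$ whose total space is $W$; thus $W$ is irreducible of dimension $n+\dim{\bf E}-k$. Let $W^{\circ}\subseteq W$ be the open locus where $x\in U_0$ and $(dg_1)_x,\dots,(dg_k)_x$ are linearly independent. By the local step $W^{\circ}$ maps onto $U_0$, so it is a nonempty open, hence dense, subset of $W$ and $\dim W^{\circ}=n+\dim{\bf E}-k$. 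For any ${\bf g}\in\pi(W^{\circ})$ the fibre $\pi|_{W^{\circ}}^{-1}({\bf g})$ is the open subset of $Z_{{\bf g}}$ consisting of points at which $Z_{{\bf g}}$ is smooth of codimension $k$, so it is pure of dimension $n-k$; hence every nonempty fibre of $\pi|_{W^{\circ}}$ has dimension exactly $n-k$, and in particular so does the generic one. Therefore $\dim\overline{\pi(W^{\circ})}=\dim W^{\circ}-(n-k)=\dim{\bf E}$, and since ${\bf E}$ is irreducible $\pi(W^{\circ})$ is dense in ${\bf E}$. As $\pi(W^{\circ})\subseteq B$, Lemma~\ref{lem-6} now gives $Z_{{\bf g}}\neq\emptyset$ for generic ${\bf g}\in{\bf E}$, which completes the proof.

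\emph{Expected main obstacle.} Theorem~\ref{th-13} is exactly the black box that builds transverse hyperplanes at a point, so the real content of the local step is verifying its hypothesis, i.e. the inclusion $\bigcap_{i\in J}F_i\subseteq\tilde F_J(a)$ together with the equality $\dim\tilde F_J(a)=n-\tau_J$; the latter is what forces $a$ to be chosen outside the critical loci of \emph{all} the maps $\Phi_J$ simultaneously (and outside the singular loci of $X$ and of the $Y_J$), whence the definition of $U_0$. The second delicate point is the globalization: passing from ``for each $a\in U_0$ some ${\bf g}$ works'' to ``$B$ is dense in ${\bf E}$'' genuinely requires the full-dimensional incidence variety $W^{\circ}$ and the fibre-dimension estimate above, because the sections attached to a single point, or produced by a rational section $a\mapsto{\bf g}(a)$, need not sweep out a dense subset of ${\bf E}$.
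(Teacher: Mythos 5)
Your proof is correct, and its heart coincides with the paper's: you work on a dense open set where all the distributions $\tilde F_J$ attached to the Kodaira maps $\Phi_J$ are defined, verify the codimension hypothesis of Theorem \ref{th-13} from independence, and then use Lemma \ref{lem-12} to realize the transverse hyperplanes by sections $g_i$ vanishing at $a$; your Leibniz-rule verification of $\bigcap_{i\in J}F_i\subseteq\tilde F_J(a)$ plays the role of the paper's appeal to Lemma \ref{lem-9-Kodaira} (which in fact gives equality of the corresponding leaves). Where you genuinely diverge is the endgame. The paper finishes in one stroke: since the differentials $(dg_i)_a$ are independent, the implicit function theorem shows $Z_{\tilde{\bf g}}\neq\emptyset$ for every $\tilde{\bf g}$ in a classical neighbourhood of the constructed ${\bf g}$, so the locus $\{{\bf g}\mid Z_{\bf g}=\emptyset\}$ cannot be generic (a semialgebraic set of codimension at least one has empty interior), and Lemma \ref{lem-6} concludes. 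You instead introduce the incidence variety $W$ (the total space of the kernel of the evaluation map, irreducible of dimension $n+\dim{\bf E}-k$), restrict to the transversality locus $W^\circ$, and run a fibre-dimension count: every nonempty fibre of $\pi|_{W^\circ}$ is pure of dimension $n-k$, so $\overline{\pi(W^\circ)}$ has dimension $\dim{\bf E}$ and $\pi(W^\circ)$ is dense. This is valid and has the merit of staying entirely within the Zariski topology, but it is not, as your closing remark suggests, genuinely required: persistence of a single transverse intersection point under small perturbation of the defining sections already shows that emptiness fails on a set with nonempty interior, which together with Lemma \ref{lem-6} is all the globalization one needs.
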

\begin{proof} 
Let $J=\{i_1,\dots,i_j\}$ be a nonempty subset of $I=\{1,\ldots,k\}$. Consider the Kodaira map $\Phi_J$ associated with the space 
$E_J$.  Using  Lemma \ref{lem-11} one can find a smooth Zariski open subst $U\subset X$ and integrable foliations $\tilde{F}_J$ in $U$ of codimensions $\tau_J$ whose leaves are connected components of the manifolds $\Phi^{-1}_J(b) \cap U$ where $b \in \Phi_J(U)$. Take a point $a \in U$ and let $T=T_aU$ be the tangent space to $U$ at $a$. Let
$F_1,\dots, F_k$ be the subspaces in $T$ tangent to the leaves of these foliations. For any nonempty $J = \{i_1, \ldots, i_j\}$ 
the intersection $F_J = F_{i_1} \cap \cdots \cap F_{i_j} \subset T$ coincides with the subspace tangent to the leaf of the foliation $\tilde{F}_J$ passing through $a$ 
(Lemma \ref{lem-9-Kodaira}). 
The codimension $\tau_J$ of $\tilde{F}_J$ in $T$ is  $d(J)+|J|\geq |J|$. Thus applying Theorem \ref{th-13} we can find mutually transverse hyperplanes $H_1\dots, H_k\subset T$ 
such that $\tilde{F}_i \subset H_i$ for $i=1, \ldots, k$. By Lemma \ref{lem-12} there are sections $g_i \in E_i$, $i=1, \ldots, k$ such that $g_i(a) = 0$ and 
the tangent hyperplane to $\{g_i = 0\}$ at $a$ is $\tilde{F}_i$. Hence the hypersurfaces $g_i=0$ at a neighborhood of the point $a$ are smooth and mutually transverse. 
By the Implicit Function Theorem we then know that for a $k$-tuple of sections $\tilde {\bf g}= (\tilde g_1,\dots, \tilde g_k)$ close enough to $g$ the variety $Z_{\tilde g}$ is nonempty. 
Thus it is not true that for a generic ${\bf g}$ the variety $Z_{{\bf g}}$ is empty. This finishes the proof of the theorem.
\end{proof}

\subsection{The $h^{p,0}$ numbers of a complete intersection} \label{subsec-h-p0}
The material in this section are taken from \cite{Askold-new}.
Let $\L$ be a line bundle on a smooth projective variety $X$ and let $D$ be a divisor of $\L$. We denote by $\O(X, \L)$ or $\O(X, D)$ the germ of regular sections of 
$\L$. We denote the $i$-th sheaf cohomology group of $X$ with coefficients in the sheaf $\O(X, \L)$ 
by $H^i(X, \L)$ or $H^i(X, D)$. We will write $H^i(X)$ for the $i$-th cohomology group of the zero divisor, that is, $H^i(X)$ is
the $i$-th  cohomology of $X$ with coefficients in $\mathcal{O}_X$, the sheaf of germs of regular functions. For an integer $p \geq 0$, we have $\dim(H^p(X)) = h^{p, 0}(X)$, 
the dimension of the space of holomorphic $p$-forms on $X$. In particular, $h^{0,0}(X)$ is 
the number of irreducible components of $X$ and $h^{p,0}(X) = 0$ for $p > \dim(X)$.



Fix a collection $\L_1, \ldots, \L_k$ of globally generated line bundles on $X$. Recall that a line bundle is globally generated if for any $x \in X$ there is 
a global section that does not vanish at $x$. For each $i=1, \ldots, k$ let 
$f_i \in H^0(X, \L_i)$ be such that the divisor $D_i$ defined by $f_i = 0$ is a smooth hypersurface. Moreover, assume that the divisors $D_1, \ldots, D_k$ intersect transversely.
We will be interested in the local complete intersection $X_k = D_1 \cap \cdots \cap D_k$. From transversality it follows that this intersection is a smooth subvaritey of $X$. 

Given the dimensions of the cohomology groups: 
$$H^i(X, \L_1^{\otimes m_1} \otimes \cdots \otimes \L_k^{\otimes m_k}), \quad m_1, \ldots, m_k \in \{0, -1\},$$ of the ambient variety $X$, one 
can obtain much information about the $h^{p,0}$ numbers of the complete intersection $X_k$. In particular one can compute the arithmetic genus of $X_k$.
Below we recall how this can be done (\cite{Askold-genus}).


For $m=1, \ldots, k$ let $X_m = D_1 \cap \cdots \cap D_m$. We then have a sequence of smooth subvarieties 
$X_k \subset \cdots \subset X_0 = X$ where each variety is a hypersurface in the next one. For a line bundle $\L$ consider the exact sequence of sheaves: 
$$ 0\to \O(X_{m-1}, \L \otimes \L^{-1}_m) \stackrel{i}\to \O(X_{m-1}, \L)  \stackrel{j}\to \hat{\O}(X_{m-1}, \L)\to 0.$$

Here $\O (X_{m-1},\L\otimes \L^{-1}_m)$  is the sheaf on $X_{m-1}$ of  regular sections of the line bundle $\L\otimes \L^{-1}_m$  on $X_{m-1}$.
The sheafs $\O(X_{m-1}, \L)$ and $\O(X_{m},  \L)$ have analogues definitions. The sheaf $\hat{\O}(X_{m-1}, \L)$ is the trivial  extension of the sheaf  $\O(X_{m},\L)$ on 
$X_m$ to a sheaf on $X_{m-1}$. The homomorphism  $i$ maps a section $g$ of $\L\otimes \L_m^{-1}$ to a section $g \otimes f_m$ where $f_m$ is a global section of 
$\L_m$ defining the divisor $D_m$, and the homomorphism $j$ at $a \in X_m$ maps a section of $\L$ on  $X_{m-1}$ to its restriction to $X_m$, and at a point 
$a \in X_{m-1} \setminus X_m$ the homomorphism $j$ is trivial. The long exact sequence of the cohomology groups corresponding to the above exact sequence of sheaves 
is as follows:
\begin{equation} \label{equ-11}
0 \to H^0(X_{m-1}, \L \otimes \L^{-1}_m) \to H^0(X_{m-1}, \L) \to H^0 (X_{m}, \L) \to \cdots,
\end{equation}
(the cohomology of the sheaves $\O(X_{m}, \L)$ and  $\hat{\O}(X_{m-1}, \L)$ are canonically isomorphic).

For a complete variety $X$ with a sheaf $\mathcal{F}$ we denote the Euler characteristic of $X$ with coefficients in $\mathcal{F}$ by $\chi(X, \mathcal{F})$:
$$\chi(X, \mathcal{F}) = \sum_{i=0}^n (-1)^i \dim(H^i(X, \mathcal{F})).$$
In particular, we write $\chi(X, \L)$ for the Euler characterisitic of $X$ with coefficients in the sheaf $\O(X, \L)$ of sections of a line bundle $\L$.

The Euler characteristic is additive, i.e. if $\mathcal{G}$ is a sub-sheaf of sheaf $\mathcal{F}$ on $X$ then:
$$\chi(X, \mathcal{F}) = \chi(X, \mathcal{G}) + \chi(X, \mathcal{F}/\mathcal{G}),$$ where $\mathcal{F} / \mathcal{G}$ is the quotient sheaf.
The exact sequence \eqref{equ-11} then allows us to find the Euler characteristic $\chi (X_k, \L)$. We give the answer for the trivial bundle, that is, $\chi (X_k)$:
\begin{Th} \label{th-genus-complete-int}
The arithmetic genus  $\chi(X_k)$ of the smooth variety $X_k$ is equal to:
$$\chi (X) - \sum_{i_1} \chi(X,\L_i^{-1}) + \sum_{i_1<i_2} \chi(X,\L_{i_1}^{-1} \otimes \L_{i_2}^{-1}) - \cdots + (-1)^k \chi(X, \bigotimes_{1\leq i \leq k} \L_i^{-1}).$$
\end{Th}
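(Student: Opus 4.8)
The strategy is to induct on $k$, using the long exact sequence \eqref{equ-11} together with the additivity of the Euler characteristic. First I would record the key reduction: for a line bundle $\L$ on $X$ and the sequence of sheaves
$$0 \to \O(X_{m-1}, \L \otimes \L_m^{-1}) \to \O(X_{m-1}, \L) \to \hat{\O}(X_{m-1}, \L) \to 0$$
on $X_{m-1}$, additivity of $\chi$ and the canonical isomorphism between the cohomology of $\hat{\O}(X_{m-1}, \L)$ and that of $\O(X_m, \L)$ give the recursion
$$\chi(X_m, \L) = \chi(X_{m-1}, \L) - \chi(X_{m-1}, \L \otimes \L_m^{-1}).$$
This holds for every line bundle $\L$ and every $m = 1, \ldots, k$; note $X_m$ is complete and smooth since $X$ is projective and the $D_i$ meet transversely, so all the Euler characteristics are finite and the formalism applies.

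Next I would iterate this recursion. Applying it with $\L = \O_X$ (the trivial bundle) and peeling off $D_k, D_{k-1}, \ldots, D_1$ one at a time, each step replaces $\chi(X_m, \L)$ by a difference of two Euler characteristics on $X_{m-1}$ with twists $\L$ and $\L \otimes \L_m^{-1}$. After $k$ steps one is left entirely with Euler characteristics on $X_0 = X$ of the bundles $\bigotimes_{i \in J} \L_i^{-1}$ as $J$ ranges over all subsets of $\{1, \ldots, k\}$, each appearing with sign $(-1)^{|J|}$. Formally this is a clean induction on $k$: assuming the claimed formula computes $\chi(X_{k-1}, \L)$ as an alternating sum over subsets of $\{1, \ldots, k-1\}$ of $\chi(X, \L \otimes \bigotimes_{i \in J}\L_i^{-1})$, one substitutes $\L = \O_X$ and $\L = \L_k^{-1}$ into that formula and combines via the $m=k$ recursion; the subsets of $\{1,\ldots,k\}$ split into those containing $k$ and those not, matching the two terms. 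Finally, $\chi(X_k) = \chi(X_k, \O_{X_k})$ by definition of the arithmetic genus (using that $X_k$ is smooth and complete), which identifies the left-hand side.

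The only genuinely nontrivial input is the exactness of the sheaf sequence above and the identification of the cohomology of the trivial extension $\hat{\O}(X_{m-1}, \L)$ with $H^\bullet(X_m, \L)$ — but both are standard and are essentially asserted in the setup preceding the statement, so I would simply cite \eqref{equ-11}. The main thing to be careful about is bookkeeping of signs and the subset combinatorics in the inductive step; there is no geometric obstacle, since transversality (guaranteeing all $X_m$ are smooth) has already been assumed. I expect the writeup to be short: state the $m$-step recursion, then either do the explicit $k$-fold iteration or the one-line induction, then invoke additivity once more to reduce $\chi(X_k, \O_{X_k})$ to $\chi(X_k)$.
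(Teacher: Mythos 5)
Your proposal is correct and follows essentially the same route the paper takes (and sketches): the recursion $\chi(X_m,\L)=\chi(X_{m-1},\L)-\chi(X_{m-1},\L\otimes\L_m^{-1})$ coming from the exact sequence \eqref{equ-11} and additivity of the Euler characteristic, iterated/inducted over $m=1,\ldots,k$ for an arbitrary twist $\L$ and then specialized to $\L=\mathcal{O}_X$. No substantive difference from the paper's argument.
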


For a nonempty set $J \subset \{1,\dots, k\}$ let $\L_J^{-1}=\bigotimes_{i \in J}\L_i^{-1}$.
\begin{Th} \label{th-22}
We have the following upper bound for the $h^{i,0}$ numbers of the complete intersection $X_k$:
\begin{equation} \label{equ-*}
h^{i,0} (X_{k}) \leq h^{i,0}(X)+ \sum_{J \neq \emptyset} \dim(H^{i+|J|}(X, \L_J^{-1})).
\end{equation}
\end{Th}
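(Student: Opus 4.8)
The strategy is to iterate the long exact cohomology sequence \eqref{equ-11} down the tower $X_k \subset X_{k-1} \subset \cdots \subset X_0 = X$, one hypersurface at a time, and at each step read off an inequality relating the cohomology of $X_m$ to that of $X_{m-1}$. Concretely, fix $m \in \{1, \ldots, k\}$ and apply \eqref{equ-11} with $\L = \O_{X_{m-1}}$ the trivial bundle. The relevant piece of the long exact sequence reads
\[
\cdots \to H^{i}(X_{m-1}) \to H^{i}(X_m) \to H^{i+1}(X_{m-1}, \L_m^{-1}) \to \cdots,
\]
where we have used that the cohomology of the trivial extension $\hat\O(X_{m-1}, \O)$ agrees with $H^\bullet(X_m)$. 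Exactness immediately gives the bound
\[
h^{i,0}(X_m) = \dim H^i(X_m) \leq \dim H^i(X_{m-1}) + \dim H^{i+1}(X_{m-1}, \L_m^{-1}).
\]
More generally, running the same argument with $\L$ a tensor product of various $\L_j^{-1}$ gives, for every $i$ and every subset $S \subseteq \{1,\dots,k\}\setminus\{m\}$,
\[
\dim H^i(X_m, \textstyle\bigotimes_{j\in S}\L_j^{-1}) \;\le\; \dim H^i(X_{m-1}, \textstyle\bigotimes_{j\in S}\L_j^{-1}) + \dim H^{i+1}(X_{m-1}, \L_m^{-1}\otimes\textstyle\bigotimes_{j\in S}\L_j^{-1}).
\]

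**Iterating.** Now I would induct on $k$. The base case $k=0$ is the trivial identity $h^{i,0}(X_0) = h^{i,0}(X)$. For the inductive step, apply the one-step inequality above with $m=k$, $S=\emptyset$:
\[
h^{i,0}(X_k) \le \dim H^i(X_{k-1}) + \dim H^{i+1}(X_{k-1}, \L_k^{-1}).
\]
By the induction hypothesis applied to the tower $X_{k-1}\subset\cdots\subset X_0=X$ built from the line bundles $\L_1,\dots,\L_{k-1}$, the first term satisfies
\[
\dim H^i(X_{k-1}) \le h^{i,0}(X) + \sum_{\emptyset \neq J \subseteq \{1,\dots,k-1\}} \dim H^{i+|J|}(X, \L_J^{-1}).
\]
For the second term, $\dim H^{i+1}(X_{k-1}, \L_k^{-1})$, I would need a version of the induction hypothesis with a twist by the fixed line bundle $\L_k^{-1}$: iterating the twisted inequality from the previous paragraph (with $S$ always containing the index $k$, or rather carrying $\L_k^{-1}$ as a passive coefficient through each of the $k-1$ exact sequences) yields
\[
\dim H^{i+1}(X_{k-1}, \L_k^{-1}) \le \dim H^{i+1}(X, \L_k^{-1}) + \sum_{\emptyset \neq J \subseteq \{1,\dots,k-1\}} \dim H^{i+1+|J|}(X, \L_k^{-1}\otimes\L_J^{-1}).
\]
Adding the two bounds, the first produces the terms of \eqref{equ-*} indexed by subsets $J \subseteq\{1,\dots,k-1\}$ (including $J=\emptyset$, giving $h^{i,0}(X)$), while the second produces exactly the terms indexed by subsets $J \ni k$, namely $\dim H^{i+|J|}(X,\L_J^{-1})$ with $|J| = |J'|+1$ where $J = J'\cup\{k\}$. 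Together these exhaust all nonempty $J \subseteq \{1,\dots,k\}$ plus the $h^{i,0}(X)$ term, which is precisely \eqref{equ-*}.

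**The main obstacle.** The genuinely delicate point is bookkeeping the twisted statement correctly: one must set up the induction so that the hypothesis is not merely "for the trivial bundle on $X_{k-1}$" but "for every coefficient line bundle of the form $\bigotimes_{j\in S}\L_j^{-1}$ on every $X_m$ in the tower," and then check that the recursion closes — that is, that the index sets produced by the two summands in the one-step inequality are disjoint and together cover all nonempty subsets of $\{1,\dots,k\}$. There is also a minor point to verify: that the line bundles $\L_j$ restrict to globally generated bundles on each smaller $X_m$ (immediate, since restriction of a globally generated bundle is globally generated) so that the exact sequence \eqref{equ-11} is available at every stage, and that $X_m$ is smooth (which is Theorem~\ref{th-2}(4), given the transversality hypothesis). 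No vanishing theorems or positivity is needed; the bound follows purely from exactness, which is why it is only an inequality rather than the exact formula one gets for the Euler characteristic in Theorem~\ref{th-genus-complete-int}.
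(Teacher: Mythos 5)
Your proposal is correct and follows essentially the same route as the paper: both derive the one-step estimate $\dim H^j(X_m,\L) \le \dim H^j(X_{m-1},\L) + \dim H^{j+1}(X_{m-1},\L\otimes\L_m^{-1})$ from the exact sequence \eqref{equ-11} and then induct on $k$ with a strengthened hypothesis allowing a twisting line bundle (the paper states it for an arbitrary $\L$, which subsumes your tensor products of the $\L_j^{-1}$), closing the recursion by the same $J = J^*$ versus $J = J^*\cup\{k\}$ bookkeeping.
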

\begin{proof} 
We can rewrite \eqref{equ-*} as $h^i (X_{k})\leq \sum_J \dim(H^{i+|J|}(X, \L_J^{-1}))$. Let $\L$ be any line bundle on $X$.
We will prove the following more general inequality which coincides with \eqref{equ-*} when $\L$ is the trivial line bundle:
\begin{equation} \label{equ-**}
\dim(H^i (X_{k}, \L)) \leq \sum_J \dim(H^{i+|J|}(X, \L\otimes \L_J^{-1})). 
\end{equation}
We prove \eqref{equ-**} by induction on $k$.
Let $j \geq 0$ and $1 \leq m \leq k$. From the piece:
$$
\to H^j (X_{m-1},\L)\to H^j (X_{m},\L)\to
H^{j+1} (X_{m-1},\L\otimes \L_m^{-1})\to \dots
$$
of the exact sequence \eqref{equ-11} we obtain that:
\begin{equation} \label{equ-12}
\dim(H^j (X_{m},\L)) \leq \dim(H^j(X_{m-1},\L)) + \dim(H^{j+1} (X_{m-1},\L\otimes \L^{-1}_m)).
\end{equation}
For $k=1$,  the inequality \eqref{equ-**} coincides with  \eqref{equ-12} for
$j=i$ and $m=1$. Assume that \eqref{equ-**} is proved for $k-1$.
For  $J\subset \{1,\dots,k\}$ let $J^*=J \cap \{1,\dots,k-1\}$. 
Then  either $J=J^*$ or $J = J^* \cup \{k\}$. In the first case we have:
\begin{equation} \label{equ-13}
|J| = |J^*|   \textup{ and } \L \otimes \L_{J}^{-1}=\L \otimes \L_{ J^*}^{-1}. 
\end{equation}
In the second case we have:
\begin{equation} \label{equ-14}
|J|=|J^*|+1  \textup{ and } \L\otimes \L_ {J}^{-1}=\L\otimes \L_ {J^*}^{-1}\otimes \L_ k^{-1}.
\end{equation}
By induction hypothesis we can assume that for any line bundle $\L$ and $i \geq 0$ the following inequality holds:
\begin{equation} \label{equ-15}
\dim(H^i (X_{k-1},\L)) \leq \sum_{J^*} \dim(H^{i+|J^*|}(X,\L\otimes \L_{J^*}^{-1})),
\end{equation}
where the summation is over all the subsets  $J^*\subset \{1,\dots,k-1 \}$.
Also by induction hypothesis we know that for the line bundle $\L\otimes  \L_k^{-1}$ and any 
$i+1$ the inequality:
\begin{equation} \label{equ-16}
\dim(H^{i+1}(X_{k-1}, \L\otimes \L_k^{-1})) \leq \sum_{J^*} \dim
(H^{i+1+|J^*|}(X, \L \otimes \L_k^{-1} \otimes \L_{J^*}^{-1})),
\end{equation}
holds. Here also the summation is taken over all subsets $J^*\subset \{1,\dots,k-1\}$.
Now instead of the numbers $\dim(H^i(X_{k-1}, \L))$ and $\dim(H^{i+1}(X_{k-1}, \L \otimes \L_k^{-1}))$ 
plug the righthand sides of \eqref{equ-15} and \eqref{equ-16} into \eqref{equ-12}. Using \eqref{equ-13} and \eqref{equ-14} we obtain the required 
inequality \eqref{equ-**}. The theorem is proved.
\end{proof}

We have the following direct corollary of Theorem \ref{th-22}:
\begin{Cor} \label{cor-23}
Assume that for some integer $i$ the cohomology groups $H^{i+|J|}(X, \L_J^{-1})$ vanish for any nonempty $J \subset \{1, \ldots, k\}$. Then $h^{i,0}(X_k) \leq h^{i,0}(X)$.
\end{Cor}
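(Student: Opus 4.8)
The statement is an immediate consequence of Theorem~\ref{th-22}, so the plan is simply to specialize that inequality. Recall that Theorem~\ref{th-22} asserts
\begin{equation*}
h^{i,0}(X_k) \leq h^{i,0}(X) + \sum_{J \neq \emptyset} \dim\bigl(H^{i+|J|}(X, \L_J^{-1})\bigr),
\end{equation*}
where the sum ranges over all nonempty subsets $J \subset \{1, \ldots, k\}$. Under the hypothesis of the corollary, every cohomology group $H^{i+|J|}(X, \L_J^{-1})$ appearing on the right-hand side is zero, hence the entire sum vanishes, and we are left with $h^{i,0}(X_k) \leq h^{i,0}(X)$.

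Concretely, the only thing to check is that the index set over which one quantifies the vanishing hypothesis matches the index set of the sum in Theorem~\ref{th-22}: both are precisely the collection of nonempty subsets $J \subset I = \{1, \ldots, k\}$, and for each such $J$ the relevant line bundle is $\L_J^{-1} = \bigotimes_{i \in J} \L_i^{-1}$ and the relevant cohomological degree is $i + |J|$. Since these agree term by term, the substitution is legitimate and no further argument is required.

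I do not anticipate any obstacle here: there is no induction to run and no transversality or genericity input to invoke beyond what already went into Theorem~\ref{th-22} (namely that $X$ is smooth projective, the $\L_i$ are globally generated, and the $D_i$ are smooth and meet transversely, so that $X_k$ is a smooth local complete intersection). Thus the proof is a single line: apply Theorem~\ref{th-22} and drop the vanishing terms.
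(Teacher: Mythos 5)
Your proof is correct and is exactly the paper's argument: the paper states Corollary \ref{cor-23} as a direct consequence of Theorem \ref{th-22}, obtained by noting that the vanishing hypothesis kills every term of the sum $\sum_{J \neq \emptyset} \dim\bigl(H^{i+|J|}(X, \L_J^{-1})\bigr)$. Nothing further is needed.
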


\section{Complete intersections in toric varieties} \label{sec-toric-12}
The material in this section are taken from \cite{Askold-genus} and \cite{Askold-new}. We explain the results on genus of complete intersections in 
torus $(\c^*)^n$ from \cite{Askold-genus} concerning the case when the Newton polytopes involved are full dimensional, as well as  
their extensions in \cite{Askold-new} to the case when the polytopes are not necessarily full dimensional.

Let $\Delta_1, \ldots, \Delta_k$ be integral polytopes in $\r^n$ (i.e. with vertices in $\z^n$). Let $f_1, \ldots, f_k$ be a generic $k$-tuple of Laurent polynomials
with Newton polytopes $\Delta_1, \ldots, \Delta_k$ respectively. Let $Z$ be the subvariety of $(\c^*)^n$ defined by $f_1 = \cdots = f_k = 0$.
 

Let $X$ be a smooth projective toric variety which is {\it sufficiently complete} with respect to the polytopes $\Delta_1, \ldots, \Delta_k$ in the sense of \cite{Askold-toroidal}.
We let $D_\infty$ denote the divisor at infinity of a generic Laurent polynomial with Newton polytope $\Delta = \Delta_1 + \cdots + \Delta_k$ on $X$. It is a divisor supported
on the complement of the open orbit $(\c^*)^n$ and hence is torus invariant. Let $\L_\infty$ be the line bundle
corresponding to $D_\infty$.

Since the divisor $D_\infty$ is torus invariant one can use theory of toric varieties to compute the sheaf cohomology groups of $D_\infty$ in terms of 
the polytope $\Delta$ (\cite[Section 4]{Askold-toroidal}). We recall the answer below.
We need a bit of notation: 
for a polytope $\Delta$ we denote the number of integral points in $\Delta$ by $N(\Delta)$. Also $N^\circ(\Delta)$ denotes the number of integral points 
in the interior of the polytope $\Delta$. Here the interior is with respect to the topology of the affine span of $\Delta$. We also write $N'(\Delta)$ for 
$(-1)^{\dim(\Delta)} N^\circ(\Delta)$. \footnote{In \cite{Askold-toroidal, Askold-genus} instead of our notation $N(\Delta)$, $N^\circ(\Delta)$ and $N'(\Delta)$ respectively
the notation $T(\Delta)$, $B^+(\Delta)$ and $B(\Delta)$ is used.} 

\begin{Th} \label{th-24}
We have:
$$ \dim(H^i(X, \L_\infty^{-1})) = 
\begin{cases} 
0, \quad i \neq  \dim(\Delta)\\
N^\circ(\Delta), \quad i=\dim(\Delta) \\
\end{cases}$$
\end{Th}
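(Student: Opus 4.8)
The plan is to reduce the computation to a standard fact about cohomology of torus-invariant divisors on smooth complete toric varieties, expressed combinatorially. Since $D_\infty$ is supported on the complement of the open orbit $(\c^*)^n$, it is a torus-invariant divisor, so we may write $D_\infty = \sum_\rho a_\rho D_\rho$ where the sum is over the rays $\rho$ of the fan $\Sigma$ of $X$ and $D_\rho$ is the corresponding prime toric divisor. The key point is to identify the coefficients $a_\rho$: because $D_\infty$ is the pole divisor on $X$ of a generic Laurent polynomial with Newton polytope $\Delta = \Delta_1 + \cdots + \Delta_k$, we have $a_\rho = -\min_{m \in \Delta}\langle m, u_\rho\rangle = \max_{m \in \Delta}\langle m, -u_\rho\rangle$ up to sign conventions, where $u_\rho$ is the primitive generator of $\rho$; equivalently the polytope of global sections of $\mathcal{O}_X(D_\infty)$ is (a translate of) $-\Delta$, or $\Delta$ itself depending on orientation. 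This is exactly the statement that "$X$ is sufficiently complete with respect to $\Delta_1,\dots,\Delta_k$" is designed to guarantee: the support function of $D_\infty$ is the one whose associated polytope is $\Delta$.

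The second step is to invoke the combinatorial description of sheaf cohomology of toric divisors. For a smooth complete toric variety $X$ with torus $T$ and a torus-invariant divisor $D$, the cohomology groups $H^i(X, \mathcal{O}_X(D))$ carry a $T$-grading, and for each character $m$ the graded piece $H^i(X,\mathcal{O}_X(D))_m$ is computed by the reduced cohomology of a certain subcomplex of the fan (the "negative support" of the piecewise-linear function $\psi_D - m$); this is the Demazure/Danilov description. For $D = D_\infty$ the polytope is $\Delta$, so the nonzero graded pieces of $H^0$ are exactly the lattice points of $\Delta$ and all higher cohomology vanishes. Applying this to $-D_\infty = D_{-\infty}$, or directly computing $H^i(X, \mathcal{L}_\infty^{-1}) = H^i(X, \mathcal{O}_X(-D_\infty))$: the relevant combinatorial complex is now governed by the complement of $\Delta$, and by the toric analogue of Serre duality (or directly from the cohomology-of-fan computation), the only nonvanishing group sits in cohomological degree equal to $\dim(\Delta)$ and its dimension equals the number of lattice points in the relative interior of $\Delta$, i.e. $N^\circ(\Delta)$. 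When $\Delta$ is full-dimensional this is the classical statement $H^i(X,\mathcal{L}_\infty^{-1}) = 0$ for $i \neq n$ and $\dim H^n(X,\mathcal{L}_\infty^{-1}) = N^\circ(\Delta)$; the general case follows because a non-full-dimensional $\Delta$ makes $D_\infty$ a pullback of a divisor from the toric subvariety (or toric quotient) corresponding to the affine span of $\Delta$, reducing dimension accordingly, which shifts the cohomological degree down to $\dim(\Delta)$.

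I would organize the write-up as: (1) recall $D_\infty = \sum_\rho a_\rho D_\rho$ and that sufficient completeness gives the section polytope of $\mathcal{O}_X(D_\infty)$ equal to $\Delta$ (citing \cite{Askold-toroidal}); (2) quote the graded cohomology formula for toric divisors; (3) read off the answer for $\mathcal{L}_\infty^{-1}$. The main obstacle — really the only substantive point — is step (1): carefully pinning down the support function of $D_\infty$ and checking that "sufficiently complete" is precisely the hypothesis under which the relevant cones of $\Sigma$ refine the normal fan of $\Delta$ finely enough that the naive polytope-theoretic cohomology count is valid on $X$ itself (rather than only on the toric variety of the normal fan of $\Delta$). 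Since this is exactly the content of \cite[Section 4]{Askold-toroidal}, in the paper it suffices to cite that reference; I would include a one- or two-sentence reminder of why sufficient completeness makes the normal-fan computation transfer to $X$, and otherwise treat Theorem \ref{th-24} as a recollection rather than reprove it from scratch.
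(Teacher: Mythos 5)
Your plan is correct and matches the paper's treatment: the paper does not reprove Theorem \ref{th-24} but recalls it from \cite[Section 4]{Askold-toroidal}, exactly as you propose, and your sketch (identify the support function/polytope of $D_\infty$ as $\Delta$ using sufficient completeness, invoke the Demazure--Danilov graded description of cohomology of torus-invariant divisors, and reduce the non-full-dimensional case to the toric variety of the affine span of $\Delta$ via pullback) is the standard argument behind that reference. No gaps worth flagging.
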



We note that if $X$  is sufficiently complete for the polytope $\Delta = \Delta_1+\cdots+\Delta_k$ then it is also sufficiently complete for $m_1\Delta_1+ \cdots + m_k\Delta_k$ 
for any integers $m_i \geq 0$. In particular, the cohomology groups of the line bundles associated to $m_1\Delta_1+ \cdots + m_k\Delta_k$ are also given by Theorem \ref{th-24}.
\begin{Cor} \label{cor-26}
Let  $X$ be a sufficiently complete for
$\Delta$. Then the Euler characteristic $\chi(X, \L_\infty^{-1})$ is equal to $N'(\Delta)$.
\end{Cor}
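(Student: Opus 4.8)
The plan is to deduce Corollary~\ref{cor-26} directly from Theorem~\ref{th-24} by unwinding the definitions of the Euler characteristic and of $N'(\Delta)$. Recall that $\chi(X, \L_\infty^{-1}) = \sum_{i=0}^n (-1)^i \dim(H^i(X, \L_\infty^{-1}))$. Theorem~\ref{th-24} tells us that all the terms in this alternating sum vanish except the one with $i = \dim(\Delta)$, and that remaining term is $\dim(H^{\dim(\Delta)}(X, \L_\infty^{-1})) = N^\circ(\Delta)$. So the whole sum collapses to a single summand, and we get
\[
\chi(X, \L_\infty^{-1}) = (-1)^{\dim(\Delta)} N^\circ(\Delta).
\]

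The only remaining step is to observe that the right-hand side is exactly what was denoted $N'(\Delta)$ in the paragraph preceding Theorem~\ref{th-24}, namely $N'(\Delta) = (-1)^{\dim(\Delta)} N^\circ(\Delta)$. This finishes the proof. I should also note at the outset that the hypothesis that $X$ is sufficiently complete for $\Delta$ is precisely what is needed to invoke Theorem~\ref{th-24}, so there is nothing further to check there.

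There is essentially no obstacle here: this is a bookkeeping corollary whose entire content is substituting the case-by-case formula of Theorem~\ref{th-24} into the definition of $\chi$ and recognizing the resulting sign-weighted lattice-point count as $N'(\Delta)$. If I wanted to be careful about one minor point, it would be to remark that $\dim(\Delta) \leq n$ so that the index $i = \dim(\Delta)$ indeed falls within the range $0 \leq i \leq n$ of the defining sum for $\chi$, but this is immediate since $\Delta \subset \r^n$. I would present the proof in two or three lines accordingly.
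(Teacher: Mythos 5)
Your proof is correct and is exactly the argument the paper intends (the corollary is stated without an explicit proof precisely because it follows by substituting Theorem \ref{th-24} into the definition of $\chi$, just as you do). Nothing further is needed.
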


\begin{Cor} \label{cor-27} 
For any smooth projective toric variety $X$ we have $h^{0, 0}(X) = 1$ and $h^{i, 0}(X) = 0$ for $i>0$.
\end{Cor}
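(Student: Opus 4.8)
$h^{0,0}(X)=1$ and $h^{i,0}(X)=0$ for $i>0$ for any smooth projective toric variety $X$.

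The plan is to deduce this from the cohomology vanishing for line bundles coming from torus-invariant divisors supported at infinity, as recorded in Theorem \ref{th-24}, combined with the additivity of Euler characteristics. First I would observe that $h^{0,0}(X)=1$ is immediate since a smooth projective toric variety is irreducible (it contains the open dense torus orbit), so $H^0(X,\O_X)=\c$. The content is therefore the vanishing $h^{i,0}(X)=0$ for $i>0$; equivalently $H^i(X,\O_X)=0$ for $i>0$. Since we have Theorem \ref{th-24} which computes $\dim H^i(X,\L_\infty^{-1})$ in terms of interior lattice points of the polytope $\Delta$, the natural route is to take $\Delta$ to be the trivial polytope, i.e. a single lattice point (or $\Delta=\{0\}$): then $\dim(\Delta)=0$, the number of interior lattice points $N^\circ(\Delta)=1$, so $\L_\infty$ is the trivial bundle $\O_X$, and Theorem \ref{th-24} directly gives $\dim H^i(X,\O_X)=0$ for $i\neq 0$ and $=1$ for $i=0$. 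One must check that $X$ is automatically ``sufficiently complete'' with respect to a point (the completeness hypothesis is vacuous, or trivially satisfied, for the zero polytope), which is a routine matter of unwinding the definition in \cite{Askold-toroidal}.

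Alternatively — and this is probably the cleaner way to present it — I would invoke the general fact that $h^{p,0}$ numbers are birational invariants (recalled in the introduction of the paper): a smooth projective toric variety $X$ is rational, hence birational to $\p^n$, and $h^{i,0}(\p^n)=0$ for $i>0$ while $h^{0,0}(\p^n)=1$. This gives the result with essentially no computation. The two approaches are complementary: the birational-invariance argument is shorter, while the argument via Theorem \ref{th-24} keeps everything self-contained within the toric framework of the section and prepares the reader for how Theorem \ref{th-genus-complete-int} will be fed by Corollaries \ref{cor-26} and \ref{cor-27}.

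The main (minor) obstacle is the bookkeeping around the ``sufficiently complete'' hypothesis: Theorem \ref{th-24} and its surrounding statements are phrased for a toric variety that is sufficiently complete with respect to a given polytope, so to apply them with $\Delta=\{0\}$ one needs to confirm that every smooth projective toric variety qualifies. Since $X$ is already complete (projective) and the polytope is a point, the transversality-at-infinity conditions in the definition of sufficient completeness impose nothing, so this check is immediate. No other difficulty arises; the corollary is essentially a triviality once Theorem \ref{th-24} (or birational invariance of $h^{p,0}$) is in hand, and it is stated here only because it is one of the two inputs — the other being Corollary \ref{cor-26} — needed to turn Theorem \ref{th-genus-complete-int} into the combinatorial genus formula \eqref{equ-intro-genus-torus}.
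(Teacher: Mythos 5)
Your proposal is correct and matches the paper's own proof, which in fact gives both of your arguments: it first notes that the $h^{i,0}$ are birational invariants and any toric variety is birational to $\c\p^n$, and then also deduces the statement from Theorem \ref{th-24} applied to $\Delta=\{0\}$ (where $\dim(\Delta)=0$, $N^\circ(\Delta)=1$, and every smooth projective toric variety is sufficiently complete for $\{0\}$ with $D_\infty=0$). No gaps; the bookkeeping you flag about sufficient completeness is handled exactly as you anticipate.
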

\begin{proof} The numbers $h^{i,0}$ are birational invariants and any $n$-dimensional toric variety is birationaly equivalent to $\c\p^n$. 
For $\c\p^n$ the corollary is obvious. Nevertheless let us deduce the corollary from Theorem \ref{th-24}. Let $\Delta=\{0\}$ be the polytope consisting of the single point $0$. 
Then $\dim(\Delta)=0$ and $N^\circ(\Delta)=1$.  Any smooth projective toric variety $X$ is sufficiently complete for $\Delta=\{0\}$ and the divisor $D_\infty$ on 
$X$ is $\{0\}$. Now apply Theorem \ref{th-24} for $\Delta=\{0\}$.
\end{proof}


We now use the results in Section \ref{subsec-h-p0} to give a condition for when a generic complete intersection $Z$ is irreducible. We also 
prove a result about the $h^{p,0}$ numbers of $Z$.

For an integral polytope $\Delta \subset \r^n$ let $L_\Delta$ denote the linear subspace of Laurent polynomials in $(\c^*)^n$ spanned by all the monomials $x^\alpha$ where 
$\alpha \in \Delta \cap \z^n$. It is easy to see that $L_\Delta$ has no base point on $(\c^*)^n$. 
Let $\Phi_\Delta: (\c^*)^n \to \p(L_\Delta^*)$ denote its Kodaira map. One observes that 
the dimension of the image of $\Phi_\Delta$ is equal to $\dim(\Delta)$.

As above, let $\Delta_1, \ldots, \Delta_k$ be integral polytopes in $\r^n$. For each $i=1, \ldots, k$ let $L_i = L_{\Delta_i}$ be the corresponding subspace of Laurent polynomials. 
From the above it follows that the defect of a subset $J \subset \{1, \ldots, k\}$ is equal to:
$$d(J) = \dim(\Delta_J) - |J|,$$
where $\Delta_J = \sum_{i \in J} \Delta_i$. We call $\Delta_1, \ldots, \Delta_k$ {\it independent} if the corresponding subspaces $L_1, \ldots, L_k$ 
are independent (see Definition \ref{def-independent}). In other words, 
$\Delta_1, \ldots, \Delta_k$ are independent if $\dim(\sum_{i \in J} \Delta_i) \geq |J|$ for any subset $J \subset \{1, \ldots, k\}$.

Now let 
\begin{equation} \label{equ-1-f_i}
f_1(x) = \cdots = f_k(x) = 0 
\end{equation}
be a generic system of Laurent polynomials with Newton polytopes
$\Delta_1, \ldots, \Delta_k$ respectively defining a complete intersection $Z$ in $(\c^*)^n$.
We will assume that the polytopes $\Delta_1, \ldots, \Delta_k$ are independent. 
This guarantees that $Z$ is nonempty (Theorem \ref{th-nec-cond-nonempty}). 
As before we let $X$ be a fixed smooth projective toric variety with $(\c^*)^n$ as the open orbit whose fan is a subdivision of the normal fan of
the polytope $\Delta = \Delta_1 + \cdots + \Delta_k$ (then $X$ is sufficiently complete with respect to the polytopes $\Delta_1, \ldots, \Delta_k$). Also let
$X_\infty$ denote the sum of 
prime divisors in $X$ which lie in the complement of the open orbit $(\c^*)^n$. It is well-known that since $X$ is smooth, the divisor 
$X_\infty$ has normal crossings.

To apply the result in Sections \ref{subsec-h-p0} to the variety $Z$ we need the following lemma. For a proof see \cite{Askold-toroidal}.
\begin{Lem} \label{lem-28}
With notation as above, let $D_i$ be the closure of the hypersurface defined by $f_i = 0$ in $X$, for $i=1, \ldots, k$. Then $D_i$ is a smooth hypersurface and 
moreover all the  divisors $D_i$ and the closures of all the $(n-1)$-dimensional orbits are mutually transverse in $X$.
\end{Lem}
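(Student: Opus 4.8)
The plan is to recognize each $D_i$ as the zero divisor of a \emph{generic} global section of the line bundle $\L_i$ on $X$ whose associated polytope is $\Delta_i$ (up to translation), and then run the transversality machinery of Section~\ref{sec-transverse} relative to the stratification of $X$ by torus orbits. First I would record two standard facts about the sufficiently complete smooth toric variety $X$: since the fan of $X$ subdivides the normal fan of $\Delta=\Delta_1+\cdots+\Delta_k$, it subdivides the normal fan of each $\Delta_i$, so $\L_i$ is globally generated, and $H^0(X,\L_i)$ is canonically identified with $L_{\Delta_i}$, the span of the monomials $x^\alpha$, $\alpha\in\Delta_i\cap\z^n$. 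Under this identification a generic Laurent polynomial with Newton polytope $\Delta_i$ is precisely a generic section of $\L_i$, whose divisor on $X$ is $D_i$; moreover the linear systems $E_1,\ldots,E_k$ of Section~\ref{subsec-nonempty}, applied to $X$, are just $L_{\Delta_1},\ldots,L_{\Delta_k}$, which are base point free on $X$. Equip $X$ with its finite $T$-orbit stratification $\{Y_j\}$.

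Second I would verify the nonemptiness hypothesis needed to invoke Theorem~\ref{th-7}. For a nonempty $J\subseteq\{1,\ldots,k\}$ the Kodaira map of $E_J=L_{\Delta_J}$ (with $\Delta_J=\sum_{i\in J}\Delta_i$) agrees on the open torus with $x\mapsto (x^\alpha)_{\alpha\in\Delta_J\cap\z^n}$, so $Y_{E_J}$ is the projective toric variety of $\Delta_J$ and $\tau_J=\dim(\Delta_J)$; hence the defect is $d(J)=\dim(\Delta_J)-|J|$, which is $\geq 0$ for every $J$ by the standing assumption that $\Delta_1,\ldots,\Delta_k$ are independent. By Theorem~\ref{th-suff-cond-nonempty}, the generic complete intersection on $X$ cut out by sections of $\{E_i\}_{i\in J}$ is nonempty, for every $J$.

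Finally I would apply Theorem~\ref{th-7} to each of the finitely many sub-collections $\{E_i\}_{i\in J}$: for each $J$ it produces a set of coefficient tuples whose complement has codimension at least one, off which $\bigcap_{i\in J}D_i$ is a local complete intersection of codimension $|J|$ transverse to $\{Y_j\}$. Intersecting over all $J$, a generic tuple $(f_1,\ldots,f_k)$ works simultaneously; taking $|J|=1$ and using that $X$ is smooth, Theorem~\ref{th-2}(4) gives that each $D_i$ is a smooth hypersurface. For mutual transversality with the toric boundary, recall that $X_\infty=\sum_\rho\overline{O}_\rho$ already has normal crossings since $X$ is smooth, and argue pointwise: a point $p$ lies on a unique orbit $O_\tau$, the boundary divisors through $p$ are exactly the $\overline{O}_\rho$ with $\rho\preceq\tau$, and their local equations at $p$ have differentials spanning the conormal space $N_p^*O_\tau$; transversality of $\bigcap_{i\in J_p}D_i$ (with $J_p=\{i:p\in D_i\}$) to the stratum $O_\tau$ says, by the definition in Section~\ref{subsec-strat}, that the differentials at $p$ of local equations of the $D_i$, $i\in J_p$, are independent modulo $N_p^*O_\tau$. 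Combining, the differentials at $p$ of all members of $\{D_i\}\cup\{\overline{O}_\rho:\dim\overline{O}_\rho=n-1\}$ through $p$ are independent, which is exactly the asserted mutual transversality. The step that needs the most care is this last translation between ``transverse to the $T$-stratification'' in the sense of Section~\ref{subsec-strat} and the classical normal-crossings conclusion; the toric input ($\L_i$ globally generated and $H^0(X,\L_i)\cong L_{\Delta_i}$, both consequences of sufficient completeness) is routine and would just be cited.
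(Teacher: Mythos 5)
Your argument is essentially correct, but note that the paper itself gives no proof of Lemma \ref{lem-28}: it refers to \cite{Askold-toroidal}, where the classical argument restricts each $f_i$ to every torus orbit $O_\tau$ (after dividing by a suitable monomial, the restriction is a generic Laurent polynomial on that orbit whose Newton polytope is the face of $\Delta_i$ determined by $\tau$) and then applies Bertini--Sard orbit by orbit. Your route instead reuses the paper's own Section \ref{sec-transverse}: you view the $f_i$ as generic sections of the globally generated bundles $\L_i$ on $X$ (for generic coefficients the zero divisor of the section is exactly $D_i$, with no boundary components), apply Theorem \ref{th-7} to every sub-collection $J$ --- the nonemptiness hypothesis being supplied by the standing independence assumption via Theorem \ref{th-suff-cond-nonempty} --- and then convert transversality to the orbit stratification at $p\in O_\tau$ into independence of the differentials of the local equations of the $D_i$ through $p$ modulo the conormal of $O_\tau$, which is spanned by the differentials of the boundary divisors through $p$ because $X_\infty$ has normal crossings. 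Running Theorem \ref{th-7} over all sub-collections, not just the full one, is exactly the right move, since transversality of $D_1\cap\cdots\cap D_k$ alone says nothing at points where only some of the $D_i$ meet; this makes your proof a legitimate self-contained derivation from the paper's machinery, while the cited classical proof is more elementary and also exhibits the face structure of the restrictions. Two small repairs: $E_J$ is spanned by monomials with exponents in the sumset $\sum_{i\in J}(\Delta_i\cap\z^n)$, so it may be a proper subspace of $L_{\Delta_J}$; this is harmless since the convex hull of that sumset is $\Delta_J$, giving $\dim Y_{E_J}=\dim\Delta_J$ as you need. And since the definition in Section \ref{subsec-strat} only provides \emph{some} defining system with independent restricted differentials, either pass to the natural equations $g_i$ by noting that any local system $h_i=\sum_j A_{ij}g_j$ in their ideal satisfies $dh_i|_p=\sum_j A_{ij}(p)\,dg_j|_p$, so independence descends to the $g_j$, or observe that the Bertini--Sard proof of Theorem \ref{th-7} yields the statement for the given sections directly --- the point you flagged yourself.
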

Assume that the variety $Z$ is nonempty (i.e. the generic system \eqref{equ-1-f_i} has solutions) and the conditions of Lemma \ref{lem-28} hold. 
Then the closure of $Z$ in $X$ is the intersection of the smooth divisors  $D_1, \ldots, D_k$. As before, for each $i$ let $D_{i, \infty}$ denote the 
divisor at infinity on the toric variety $X$ associated to the polytope $\Delta_i$ and let $\L_{i, \infty}$ be its corresponding line bundle.
Theorem \ref{th-24} and the remark after it, give us the needed information about the dimensions of the cohomology groups 
$H^i(X, \L_{1, \infty}^{\otimes m_1} \otimes \cdots \otimes \L_{k, \infty}^{\otimes m_k})$, $m_1, \ldots, m_k \in \{0,-1\}$.
\begin{Th} \label{th-29}
The arithmetic genus $\chi(Z)$ of $Z$, defined by a genreic system \eqref{equ-1-f_i}, is given by: 
\begin{equation} \label{equ-17}
\chi(X)=1 - \sum _{i_1} N'(\Delta_{i_1}) + \sum_{i_1<i_2} N'(\Delta_{i_1} + \Delta_{i_2})  - \cdots
 + (-1)^k N'(\Delta_1 + \cdots + \Delta_k).
\end{equation}
Recall that $N'(\Delta)$ denotes $(-1)^{\dim(\Delta)} N^\circ(\Delta)$ and $N^\circ(\Delta)$ is the number of integral points in the interior of $\Delta$.
\end{Th}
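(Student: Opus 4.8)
The plan is to compactify: I pass to the closure $\overline{Z}$ of $Z$ inside the smooth projective toric variety $X$, recognize $\overline{Z}$ as a smooth projective complete intersection, and then feed it into the inclusion--exclusion formula of Theorem \ref{th-genus-complete-int} together with the toric cohomology computation of Theorem \ref{th-24}.

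First I would record that $\overline{Z} = D_1 \cap \cdots \cap D_k$, where $D_i$ is the closure in $X$ of the affine hypersurface $\{f_i = 0\}$. By Lemma \ref{lem-28} each $D_i$ is a smooth hypersurface and the $D_i$ are mutually transverse, so $\overline{Z}$ is a smooth projective variety and a local complete intersection of codimension $k$ in $X$. Since $Z = \overline{Z} \cap (\c^*)^n$ is open and dense in $\overline{Z}$, and $h^{p,0}$ is a birational invariant of smooth projective varieties, we get $h^{p,0}(Z) = h^{p,0}(\overline{Z})$ for all $p$; by Hodge symmetry this gives $\chi(Z) = \sum_p (-1)^p h^{p,0}(\overline{Z}) = \chi(\overline{Z}, \O_{\overline{Z}})$. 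So it remains to compute the arithmetic genus of the smooth projective complete intersection $\overline{Z}$.

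Next, for a nonempty $J \subset I = \{1, \ldots, k\}$ put $\Delta_J = \sum_{i \in J}\Delta_i$, let $D_{J,\infty}$ be the divisor at infinity on $X$ associated to $\Delta_J$, and let $\L_{J,\infty}$ be its line bundle, so that $\L_{J,\infty} \cong \bigotimes_{i \in J}\L_{i,\infty}$. Viewing the Laurent polynomial $f_i$ as a rational section of $\L_{i,\infty} = \O_X(D_{i,\infty})$, its divisor is $D_i - D_{i,\infty}$, hence $D_i \sim D_{i,\infty}$ and $\O_X(D_i) \cong \L_{i,\infty}$. Applying Theorem \ref{th-genus-complete-int} to $\overline{Z} = D_1 \cap \cdots \cap D_k$ with $\L_i := \O_X(D_i)$, and using $\bigotimes_{i \in J}\O_X(D_i)^{-1} \cong \L_{J,\infty}^{-1}$, gives
\begin{multline*}
\chi(\overline{Z}) = \chi(X) - \sum_{i_1}\chi(X, \L_{i_1,\infty}^{-1}) + \sum_{i_1 < i_2}\chi(X, \L_{\{i_1,i_2\},\infty}^{-1}) \\ - \cdots + (-1)^k\chi(X, \L_{I,\infty}^{-1}).
\end{multline*}

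Finally I would evaluate the individual Euler characteristics. Since $X$ is sufficiently complete for $\Delta = \Delta_1 + \cdots + \Delta_k$ it is sufficiently complete for every $\Delta_J$, so Corollary \ref{cor-26} gives $\chi(X, \L_{J,\infty}^{-1}) = N'(\Delta_J)$; and Corollary \ref{cor-27} gives $h^{0,0}(X) = 1$, $h^{i,0}(X) = 0$ for $i > 0$, hence $\chi(X) = \chi(X, \O_X) = 1$. Substituting these values into the displayed identity yields exactly \eqref{equ-17}. I expect the only steps needing real input rather than bookkeeping to be the two toric facts packaged in Lemma \ref{lem-28} (smoothness of the $D_i$ together with their mutual transversality, proved in \cite{Askold-toroidal}), which is what lets $\overline{Z}$ be treated as a smooth complete intersection in the framework of Section \ref{subsec-h-p0}, and the linear equivalence $D_i \sim D_{i,\infty}$, which is what makes the twisted cohomology of $X$ computable via Theorem \ref{th-24}; the remaining manipulation of the alternating sum is purely formal.
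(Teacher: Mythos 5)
Your proposal is correct and follows essentially the same route as the paper: the paper's proof of Theorem \ref{th-29} is exactly the combination of Theorem \ref{th-genus-complete-int} with Corollary \ref{cor-26} (and implicitly Corollary \ref{cor-27} and Lemma \ref{lem-28}), applied to the closure of $Z$ in the sufficiently complete toric variety $X$ after identifying $\O_X(D_i)$ with $\L_{i,\infty}$. Your write-up merely makes explicit the bookkeeping (birational invariance of $h^{p,0}$, $D_i \sim D_{i,\infty}$, $\chi(X)=1$) that the paper leaves to the reader.
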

\begin{proof} 
Theorem follows immediately from Theorem \ref{th-genus-complete-int} and Corollary \ref{cor-26}.
\end{proof}

\begin{Rem}
For $k=n$ the righthand side of the \eqref{equ-17} is equal to $n!V(\Delta_1,\dots, \Delta_n)$ where $V$ denotes the mixed volume of convex bodies (\cite{Bernstein}). 
So the Bernstein-Kushnirenko theorem follows from 
Theorem \ref{th-29} (note that if $\dim(Z)=0$ then $\chi(Z)$ is equal to the number of points in $Z$).
The formula for $\chi (Z)$ above and deduction of the Bernstein-Kushnerenko theorem from this formula are from \cite{Askold-genus}.
\end{Rem}

Finally we have the following theorems about the $h^{p,0}$ numbers of the complete intersection $Z$:
\begin{Th} \label{th-30}
For any integer $i \geq 0$ the following holds: 
$$h^{i,0}(Z)\leq \sum_{\{J| \dim(\Delta_J)-|J| = i \textup{ and } J \neq \emptyset \}} N^\circ(\Delta_J)+ \delta_0^i,$$  
where  $\delta_0^i=0$  for $i\neq 0$ and $\delta_0^0=1$.
\end{Th}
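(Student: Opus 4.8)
The plan is to derive Theorem~\ref{th-30} as a direct consequence of the general cohomological estimate of Theorem~\ref{th-22}, applied to a suitable smooth projective toric compactification of $(\c^*)^n$ on which the hypersurfaces $\{f_i = 0\}$ behave well. First I would reduce from $Z$ to its closure: by independence of $\Delta_1, \ldots, \Delta_k$ the complete intersection $Z$ is nonempty (the sufficient condition of Theorem~\ref{th-suff-cond-nonempty}). Fixing a smooth projective toric variety $X$ with open orbit $(\c^*)^n$ whose fan is a subdivision of the normal fan of $\Delta = \Delta_1 + \cdots + \Delta_k$, Lemma~\ref{lem-28} tells us that the closures $D_i \subset X$ of $\{f_i = 0\}$ are smooth and mutually transverse (and transverse to the boundary divisors), so $\overline{Z} = D_1 \cap \cdots \cap D_k$ is a smooth projective variety containing $Z$ as a dense Zariski open subset. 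Since the numbers $h^{p,0}$ are birational invariants, $h^{i,0}(Z) = h^{i,0}(\overline{Z})$, and it remains to estimate the latter.

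Next I would pin down the relevant line bundles. Put $\L_i = \O(D_i)$; since $X$ is sufficiently complete for $\Delta_i$, the Laurent polynomial $f_i$, viewed as a rational function on $X$, has divisor $D_i - D_{i,\infty}$, where $D_{i,\infty}$ is the torus-invariant divisor at infinity attached to $\Delta_i$, so $\L_i \cong \L_{i,\infty}$; this bundle is globally generated because the fan of $X$ subdivides the normal fan of $\Delta_i$. More generally, for $\emptyset \neq J \subset \{1,\dots,k\}$ one obtains $\L_J^{-1} = \bigotimes_{i \in J} \L_i^{-1} \cong \L_{J,\infty}^{-1}$, the inverse of the line bundle associated to the polytope $\Delta_J = \sum_{i\in J}\Delta_i$; and by the remark following Theorem~\ref{th-24}, $X$ is sufficiently complete for every such $\Delta_J$.

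With this in hand, Theorem~\ref{th-22} applied to $X_k = \overline{Z}$ gives
\[
h^{i,0}(\overline{Z}) \le h^{i,0}(X) + \sum_{\emptyset \neq J} \dim\big(H^{i+|J|}(X, \L_{J,\infty}^{-1})\big).
\]
By Corollary~\ref{cor-27}, $h^{i,0}(X) = \delta_0^i$. By Theorem~\ref{th-24} applied to the polytope $\Delta_J$, the term $\dim H^{i+|J|}(X, \L_{J,\infty}^{-1})$ equals $N^\circ(\Delta_J)$ when $i + |J| = \dim(\Delta_J)$ and vanishes otherwise. Substituting these values, and rewriting the condition $i + |J| = \dim(\Delta_J)$ as $\dim(\Delta_J) - |J| = i$, yields exactly the asserted inequality.

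The bulk of the argument is conceptual rather than computational: the single point requiring genuine care is the identification $\L_J^{-1} \cong \L_{J,\infty}^{-1}$ together with the verification that the hypotheses of Theorem~\ref{th-22} (smoothness and transversality of the $D_i$, global generation of the $\L_i$) are met — both of which rest on the sufficient-completeness of $X$ and on Lemma~\ref{lem-28}. Once these are in place, the remainder is just the bookkeeping of matching the index $i+|J|$ against $\dim(\Delta_J)$.
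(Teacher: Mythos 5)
Your proposal is correct and follows essentially the same route as the paper, which deduces Theorem \ref{th-30} directly from Theorem \ref{th-22}, Theorem \ref{th-24} and Corollary \ref{cor-27}; you have simply made explicit the steps the paper leaves implicit (passing to the closure $\overline{Z}$ in a sufficiently complete toric variety, the identification $\L_i \cong \L_{i,\infty}$, and the transversality input from Lemma \ref{lem-28}). No gaps.
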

\begin{proof} The theorem follows
from Theorem \ref{th-22}, Theorem \ref{th-24} and  the identity  $h^{i,0}(X)=\delta_0^i$  (see Corollary \ref{cor-27}).
\end{proof}

\begin{Def} \label{def-critical-number-toric} 
Let $\Delta_1, \ldots, \Delta_k$ be a $k$-tuple of independent integral polytopes.
We say that a number  $i \geq 0$ is {\it critical}
for $\Delta_1, \ldots, \Delta_k$ if there is a nonempty set $J\subset \{1,\ldots, k\}$ such that $N^\circ(\Delta_J)>0$ and $\dim(\Delta_J )-|J|=i$.
\end{Def}

\begin{Th} \label{th-31}
Let $\Delta_1, \ldots, \Delta_k$ be a $k$-tuple of independent integral polytopes.
\begin{itemize}
\item[(a)] If $0$ is a non-critical number for the collection of the $\Delta_i$ then the variety $Z$ defined by a generic system \eqref{equ-1-f_i} is irreducible.
\item[(b)] If $i > 0$ is a non-critical number for
$\Delta_1,\dots, \Delta_k$ then $h^{i,0}(Z)=0$.
\end{itemize}
\end{Th}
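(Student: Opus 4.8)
The plan is to deduce both parts directly from the upper bound of Theorem~\ref{th-30} together with the definition of a critical number (Definition~\ref{def-critical-number-toric}); essentially no new argument is needed beyond a bookkeeping remark about $h^{0,0}$.

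For part (b), fix a non-critical integer $i>0$. By definition of criticality, for every nonempty $J\subset\{1,\dots,k\}$ with $\dim(\Delta_J)-|J|=i$ one must have $N^\circ(\Delta_J)=0$. Hence every summand on the right-hand side of the inequality in Theorem~\ref{th-30} vanishes, and since $i>0$ we also have $\delta_0^i=0$. Therefore $h^{i,0}(Z)\leq 0$, and since $h^{i,0}(Z)\geq 0$ always, we conclude $h^{i,0}(Z)=0$.

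For part (a), I would first recall that, because $\Delta_1,\dots,\Delta_k$ are independent, the generic system \eqref{equ-1-f_i} has a solution, so $Z\neq\emptyset$; moreover, by Lemma~\ref{lem-28} and the transversality statement of Theorem~\ref{th-7}, the closure $\bar Z$ of $Z$ in the sufficiently complete smooth projective toric variety $X$ is a transverse intersection of the smooth divisors $D_1,\dots,D_k$, hence a smooth projective variety, and it is a smooth completion of $Z$. Then $h^{0,0}(Z)=\dim H^0(\bar Z,\mathcal{O}_{\bar Z})$ counts the connected components of $\bar Z$, which for a smooth variety coincides with the number of irreducible components of $\bar Z$, and hence of $Z$ since $Z$ is dense in $\bar Z$; so $Z$ is irreducible precisely when $h^{0,0}(Z)=1$. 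Now applying Theorem~\ref{th-30} with $i=0$: since $0$ is non-critical, $N^\circ(\Delta_J)=0$ for every nonempty $J$ with $\dim(\Delta_J)=|J|$, so the sum drops out and we obtain $h^{0,0}(Z)\leq 0+\delta_0^0=1$. Together with $h^{0,0}(Z)\geq 1$ (as $Z\neq\emptyset$) this forces $h^{0,0}(Z)=1$, i.e. $Z$ is irreducible.

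The only subtle point — and therefore the main obstacle — is the identification of $h^{0,0}(Z)$ with the number of irreducible components of $Z$: this requires knowing that $Z$ admits a smooth completion, which is exactly what Lemma~\ref{lem-28} (smoothness of the $D_i$ and their transversality with the toric boundary) supplies, combined with the birational invariance of the $h^{p,0}$ numbers recorded at the beginning of Section~\ref{subsec-h-p0} to pass to the smooth projective model $\bar Z\subset X$. Once this is in place, everything else is a purely formal consequence of the estimate in Theorem~\ref{th-30} and Definition~\ref{def-critical-number-toric}.
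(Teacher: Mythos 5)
Your proposal is correct and follows essentially the same route as the paper: both parts are read off from the bound of Theorem~\ref{th-30} (the sum vanishes for a non-critical $i$, leaving $h^{i,0}(Z)\leq \delta_0^i$), combined with the fact that $h^{0,0}(Z)$ equals the number of irreducible components of the nonempty variety $Z$. Your extra justification of that last fact via the smooth completion $\bar Z\subset X$ from Lemma~\ref{lem-28} and birational invariance is simply an elaboration of what the paper records at the start of Section~\ref{subsec-h-p0} and takes for granted in its one-line proof.
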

\begin{proof} For a non-critical $i$ the inequality in Theorem \ref{th-30} becomes $h^{i,0}(Z) \leq \delta_0^i$.  But the number $h^{0,0}(Z)$ is equal to the number of irreducible components of  
$Z$ and therefore it is strictly positive. The numbers $h^{i,0}(Z)$ are nonnegative.
\end{proof}

Theorem \ref{th-30} implies the following  improvement of a result in \cite{Askold-genus}:
\begin{Cor} \label{cor-32} 
If all the numbers $0\leq
i<n-k$ are not critical for $\Delta_1,\dots, \Delta_k$ then  $h^{0,0}(Z)=1$, $h^{n-k,0}(Z)=(-1)^{(n-k)}(\chi (Z)-1)$ and $h^{p,0}(Z)=0$ for $p \neq 0$ and $p \neq n-k$.
\end{Cor}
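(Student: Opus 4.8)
The plan is to combine the upper bounds of Theorem~\ref{th-30} with the definition of the arithmetic genus; no geometric input beyond Section~\ref{sec-toric-12} is needed. First I would record the basic geometry of $Z$: since $\Delta_1,\dots,\Delta_k$ are independent, Theorem~\ref{th-suff-cond-nonempty} guarantees that a generic $Z$ is nonempty, and by Theorem~\ref{th-7} such a $Z$ is a smooth local complete intersection of pure codimension $k$ in $(\c^*)^n$, hence $\dim(Z)=n-k$; in particular $h^{p,0}(Z)=0$ for every $p>n-k$. The noncompactness of $Z$ is harmless here: the $h^{p,0}$ and $\chi$ are computed on the smooth projective closure $\overline{Z}=D_1\cap\dots\cap D_k$ inside the sufficiently complete toric variety $X$ of Lemma~\ref{lem-28} (using birational invariance of the $h^{p,0}$), and this reduction is already built into the statement of Theorem~\ref{th-30}.

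Next, fix $i$ with $0\le i<n-k$. By hypothesis $i$ is not critical for $\Delta_1,\dots,\Delta_k$, so there is no nonempty $J\subset\{1,\dots,k\}$ with $\dim(\Delta_J)-|J|=i$ and $N^\circ(\Delta_J)>0$; since the sum in Theorem~\ref{th-30} ranges exactly over $J$ with $\dim(\Delta_J)-|J|=i$, every one of its summands $N^\circ(\Delta_J)$ vanishes, and we obtain $h^{i,0}(Z)\le\delta_0^i$. For $0<i<n-k$ this forces $h^{i,0}(Z)=0$. For $i=0$ it gives $h^{0,0}(Z)\le 1$; since $h^{0,0}(Z)$ equals the number of irreducible components of $Z$ and $Z\neq\emptyset$, we conclude $h^{0,0}(Z)=1$, i.e.\ $Z$ is irreducible (this simultaneously reproves parts (a) and (b) of Theorem~\ref{th-31}).

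Finally I would substitute these values into $\chi(Z)=\sum_{p=0}^{n-k}(-1)^p h^{p,0}(Z)$: all terms with $0<p<n-k$ vanish, $h^{0,0}(Z)=1$, and there are no terms with $p>n-k$, so the alternating sum collapses to $\chi(Z)=1+(-1)^{n-k}h^{n-k,0}(Z)$, whence $h^{n-k,0}(Z)=(-1)^{n-k}(\chi(Z)-1)$. The genuine work lies upstream — in Theorems~\ref{th-24}, \ref{th-22}, \ref{th-30} and the genus formula Theorem~\ref{th-genus-complete-int} — so I do not expect a serious obstacle in the corollary itself; the only point demanding a little care is bookkeeping the noncompactness of $Z$ via its toric compactification, which as noted is already subsumed in Theorem~\ref{th-30}.
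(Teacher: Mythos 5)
Your proposal is correct and follows essentially the same route as the paper: apply Theorem~\ref{th-30} with the non-criticality hypothesis to get $h^{i,0}(Z)\le\delta_0^i$ for $0\le i<n-k$ (with $h^{0,0}(Z)=1$ since it counts irreducible components of the nonempty $Z$), kill $h^{p,0}(Z)$ for $p>n-k$ by $\dim(Z)=n-k$, and read off $h^{n-k,0}(Z)$ from the alternating sum defining $\chi(Z)$. The paper's own proof is just a terser version of this, with the compactification bookkeeping likewise delegated to the results upstream of the corollary.
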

\begin{proof} 
From Theorem \ref{th-30} we have $h^{0,0}(Z)=1$ and $h^{p,0}(Z)=0$ for $0 < p < n-k$. The dimension of the smooth variety $Z$ is $n-k$ and therefore $h^{p,0}(Z)=0$ for $n-k < p$.
\end{proof}


\section{Virtual polytopes} \label{sec-virtual}
In this section we recall some basic facts from the theory of finitely additive measures on virtual polytopes developed in \cite{Kh-P}. We will need them later in 
Section \ref{sec-spherical}. This theory extends the theory of valuations on convex polyhtopes due to Peter McMullen (\cite{McMullen}) and uses the integration with respect to the 
Euler characteristic developed by Oleg Viro (\cite{Viro}).

\subsection{Ring $Z(\Lambda)$ of convex $\Lambda$-chains} \label{subsec-gp-convex-chains}

For a fixed natural number $N$ let $\frac{1}{N} \z^n$ be the lattice:
$$\frac{1}{N} \z^n = \{  a/N \mid a \in \z^n\} \subset \r^n.$$
In the rest of this section $\Lambda \subset \r^n$ denotes an additive subgroup of $\r^n$ which is either equal to $\r^n$ itself or is equal to $\frac{1}{N} \z^n$ for some natural number $N$.
We denote by $\mathcal{P}(\Lambda)$ the collection of all convex polytopes in $\r^n$ with vertices in $\Lambda$. We call the elements of $\mathcal{P}(\Lambda)$ the 
\emph{$\Lambda$-polytopes}. When $\Lambda = \r^n$ (respectively $\Lambda = \z^n$) we refer to the $\Lambda$-polytopes simply as polytopes (respectively integral polytopes).

\begin{Def}
A \emph{convex $\Lambda$-chain} (or a \emph{$\Lambda$-chain} for short) is a function $\alpha: \r^n \to \z$ which can be represented as a finite sum $\alpha = \sum_i n_i \chi_{\Delta_i}$, 
where $\chi_Y$ denotes the characteristic function of a set $Y$,  the $\Delta_i$ are convex polytopes in $\mathcal{P}(\Lambda)$ and  $n_i \in \z$.  
We denote  the  additive  group  of convex  $\Lambda$-chains  by  $Z(\Lambda)$.
\end{Def}

Let $\Delta^\circ$ be the set of interior points of a polyhedron $\Delta$, in the topology of the affine space spanned by $\Delta$. It is easy to see that 
$$\chi_{\Delta^\circ}= \sum_{\Delta_i \in \Gamma (\Delta)} (-1)^{\dim \Delta_i}\chi_{\Delta_i},$$ 
where $\Gamma(\Delta)$ is the set of all faces of $\Delta$, including $\Delta$ itself. Therefore if 
$\Delta$ is in $\mathcal{P}(\Lambda) $ then $\chi_{\Delta^\circ}$ is also in $Z(\Lambda)$.

It is shown in \cite{Kh-P} that the Minkowski sum of convex polytopes extends in a unique way to an opration $*$: 
$$*: Z(\Lambda) \times  Z(\Lambda) \to  Z(\Lambda),$$
on the additive group of $\Lambda$-convex chains. That is, for two convex polytopes $\Delta_1, \Delta_2 \in \mathcal{P}(\Lambda)$ we have: 
$$\chi_{\Delta_1} * \chi_{\Delta_2} = \chi_{(\Delta_1+\Delta_2)}.$$
We call the operation $*$ the \emph{(Minkowski) multiplication of chains}. 
The group $Z(\Lambda)$  together with this multiplication is a commutative ring called the \emph{ring of $\Lambda$-convex chains}.

The characteristic function of the origin $\chi_{\{0\}}$ is the unit in the ring $Z(\Lambda)$. We call any invertible element of the ring of $\Lambda$-chains a {\it $\Lambda$-virtual polytope}.
When $\Lambda = \r^n$ (respectively $\Lambda = \z^n$) we simply call an invertible element a {\it virtual polytope} (respectively an {\it integral virtual polytope}).
It turns out that the characteristic function $\chi_{\Delta}$ of a polytope $\Delta\in \mathcal{P}(\Lambda)$ is invertible in the ring $Z(\Lambda)$:
\begin{Th} [$\Lambda$-virtual polytopes] \label{th-lambda-virtual}
The following statements hold:
\begin{itemize}
\item[(1)] For $\Delta\in \mathcal{P}(\Lambda)$ the multiplicative inverse $\chi^{-1}_{\Delta}$ of $\chi_{\Delta} \in Z(\Lambda)$ is  $$(-1)^{\dim \Delta} \chi _{(-\Delta^\circ)}.$$ 
Here $-\Delta^\circ$ is the set of interior points of $-\Delta$.
\item[(2)] Each $\Lambda$-virtual polytope can be written in the form $\chi_{\Delta_1} * \chi_{\Delta_2}^{-1}$, where $\Delta_1, \Delta_2 \in \mathcal{P}(\Lambda)$.
\end{itemize}
\end{Th}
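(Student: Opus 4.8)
The plan is to prove (1) first and then deduce (2) from it. For (1), since $\chi_{\{0\}}$ is the unit of $Z(\Lambda)$, it suffices to verify the single identity $\chi_\Delta * \chi_{-\Delta^\circ} = (-1)^{\dim\Delta}\,\chi_{\{0\}}$ in $Z(\Lambda)$; statement (1) is then immediate. I would establish this identity in either of two equivalent ways. Combinatorially: apply the displayed relation for $\chi_{(\,\cdot\,)^\circ}$ preceding the statement to the polytope $-\Delta$ (whose faces are exactly the $-F$, $F \in \Gamma(\Delta)$, with $\dim(-F)=\dim F$) to expand $\chi_{-\Delta^\circ}$ as a $\z$-combination of the $\chi_{-F}$; then, using bilinearity of $*$ together with $\chi_\Delta * \chi_{-F} = \chi_{\Delta - F}$, one obtains $\chi_\Delta * \chi_{-\Delta^\circ}$ as a signed sum of the $\chi_{\Delta - F}$, and evaluating this chain at a point $z$ reduces to summing signs over those faces $F$ with $F \cap (\Delta - z) \neq \emptyset$. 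Analytically: using the description of $*$ as convolution with respect to integration against the Euler characteristic (following Viro, as in \cite{Kh-P}), one computes $(\chi_\Delta * \chi_{-\Delta^\circ})(z) = \chi\big(\Delta \cap (z + \Delta^\circ)\big)$, the combinatorial Euler characteristic of a convex set. At $z = 0$ this set is $\Delta^\circ$, an open cell of dimension $\dim\Delta$, so the value is $(-1)^{\dim\Delta}$, which matches $(-1)^{\dim\Delta}\chi_{\{0\}}$ at the origin.

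The crux of (1) is therefore the claim that $\chi\big(\Delta \cap (z + \Delta^\circ)\big) = 0$ for every $z \neq 0$ (equivalently, that the signed face-count above vanishes for $z \neq 0$). If $\Delta \cap (z + \Delta^\circ) = \emptyset$ there is nothing to prove, so assume it is nonempty. I would write it as $P \setminus Q$, where $P = \Delta \cap (z + \Delta)$ is a nonempty $\Lambda$-polytope and $Q = \Delta \cap \partial(z + \Delta)$ is the part of the boundary sphere of the translated polytope $z+\Delta$ lying inside $\Delta$; then $\chi\big(\Delta \cap (z + \Delta^\circ)\big) = \chi(P) - \chi(Q) = 1 - \chi(Q)$. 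The point is that $Q$ is nonempty and geodesically convex on the sphere $\partial(z+\Delta)$, hence contractible with $\chi(Q) = 1$, so the difference is $0$. The two facts that make this work are: a bounded polytope admits no translate with $z+\Delta \subseteq \Delta^\circ$ other than $z = 0$ (a volume comparison), and the facet normals of a bounded polytope positively span the ambient space, so for $z \neq 0$ at least one facet inequality of $\Delta$ and at least one of $z+\Delta$ is strict on $\Delta \cap (z+\Delta^\circ)$. Carrying out this Euler-characteristic bookkeeping carefully is the main obstacle in the proof of (1).

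Part (2) is then deduced from (1). By (1) every $\chi_\Delta$ with $\Delta \in \mathcal{P}(\Lambda)$ is a unit of the commutative ring $Z(\Lambda)$, and since $\chi_\Delta * \chi_{\Delta'} = \chi_{\Delta+\Delta'}$, any product of such units and their inverses can be rearranged — collecting all positive factors into one $\Lambda$-polytope $\Delta_1$ and all negative factors into another $\Delta_2$ — into the form $\chi_{\Delta_1} * \chi_{\Delta_2}^{-1}$. Hence $\{\chi_{\Delta_1} * \chi_{\Delta_2}^{-1} : \Delta_1, \Delta_2 \in \mathcal{P}(\Lambda)\}$ is precisely the subgroup of units generated by the $\chi_\Delta$, and it only remains to check that every $\Lambda$-virtual polytope lies in this subgroup. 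For this the plan is a common-denominator argument: given $\alpha = \sum_i n_i \chi_{\Delta_i}$, produce a single (sufficiently large) $\Lambda$-polytope $\Delta_2$ such that $\alpha * \chi_{\Delta_2}$ is the characteristic function $\chi_{\Delta_1}$ of a genuine $\Lambda$-polytope — the translates $\Delta_2 + \Delta_i$ overlapping enough that the signed sum $\sum_i n_i \chi_{\Delta_2 + \Delta_i}$ collapses to $\chi_{\Delta_1}$ — and then invoke (1) to conclude $\alpha = \chi_{\Delta_1} * \chi_{\Delta_2}^{-1}$. Verifying that multiplication by a large enough polytope really does reduce $\alpha$ to the characteristic function of a polytope is the delicate point in (2).
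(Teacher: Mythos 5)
The paper itself gives no proof of this theorem (Section \ref{sec-virtual} recalls it from \cite{Kh-P}), so your proposal has to stand on its own, and it does not quite close either part. Your reduction of (1) is correct: by the Euler--convolution description of $*$ one has $(\chi_\Delta * \chi_{-\Delta^\circ})(z)=\mu\bigl(\Delta\cap(z+\Delta^\circ)\bigr)$, the value at $z=0$ is $(-1)^{\dim\Delta}$, and everything hinges on showing $\mu\bigl(\Delta\cap(z+\Delta^\circ)\bigr)=0$ for $z\neq 0$, i.e.\ $\chi(Q)=1$ for $Q=\Delta\cap\partial(z+\Delta)$. But this crux is exactly what you do not prove. ``$Q$ is geodesically convex on the sphere $\partial(z+\Delta)$, hence contractible'' is not a valid principle for the intersection of a convex body with the boundary of another convex body: already in the plane, if $K$ is a long thin horizontal rectangle and $L$ a long thin vertical rectangle crossing it, then $K\cap\partial L$ is two disjoint segments ($\chi=2$); in $\r^3$ one similarly gets annuli. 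What makes the claim true here is that $\Delta$ and $z+\Delta$ are translates of the same polytope, and your argument never uses this; the two facts you invoke (no nonzero translate of $\Delta$ sits inside $\Delta^\circ$, facet normals positively span) only show that $Q$ is a nonempty proper subset of $\partial(z+\Delta)$, which does not control $\chi(Q)$. Since you yourself defer this as ``the main obstacle,'' part (1) is incomplete at its core. (It can be closed, e.g.\ by identifying $Q$ with $\{y\in\partial\Delta:\ y+z\in\Delta\}$ and projecting along $z$: over the convex set of chords of $\Delta$ in direction $z$ of length at least $|z|$, this set deformation retracts onto the graph of the piecewise-linear ``lower endpoint'' function, hence is contractible.) A small caution on your alternative ``combinatorial'' route: the displayed expansion of $\chi_{\Delta^\circ}$ preceding the theorem is off by a factor $(-1)^{\dim\Delta}$ (test it on a segment), so quoting it literally would produce the inverse without the sign; your convolution computation has the correct sign.

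For (2), the half you do carry out (the quotients $\chi_{\Delta_1}*\chi_{\Delta_2}^{-1}$ form exactly the subgroup of units generated by the $\chi_\Delta$) is fine, but the ``common denominator'' plan for the remaining half cannot work as stated. With the paper's definition of a $\Lambda$-virtual polytope as an arbitrary invertible element of $Z(\Lambda)$, the literal statement already fails for $\alpha=-\chi_{\{0\}}$: it is a unit (its own inverse), yet $\alpha*\chi_{\Delta_2}=-\chi_{\Delta_2}$ is never the characteristic function of a polytope, and more robustly the Euler integral is multiplicative under $*$, so every $\chi_{\Delta_1}*\chi_{\Delta_2}^{-1}$ has Euler integral $1$ while $\int\alpha\,d\mu=-1$. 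So the ``delicate point'' you postpone --- that multiplying an arbitrary unit by a large polytope collapses it to a single characteristic function --- is not merely delicate, it is false in this generality; any correct argument must either read ``virtual polytope'' as an element of the group generated by the $\chi_\Delta$ (in which case your first observation already finishes (2) and no collapsing lemma is needed, this being how \cite{Kh-P} sets things up), or must isolate and use an extra property of the unit $\alpha$ that rules out examples like $-\chi_{\{0\}}$. Your proposal does neither, so part (2) is also not established as written.
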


\subsection{Integration over the Euler characteristic and multiplication in the ring $Z(\Lambda)$} \label{subsec-int-Euler-char}

There is a way to define the multiplication $*$ in the ring of convex chains $Z(\Lambda)$ which does not require representation of convex chains as linear combinations 
of characteristic functions of polytopes. This definition instead uses \emph{integration with respect to the Euler characteristic} (see \cite{Viro}). Below we explain this notion in more detail.

Let us say that a subset $Y$ in a finite dimensional real vector space $V$ is \emph{semi-convex} if $Y$ can be represented as a disjoint union
\begin{equation} \label{equ-semi-convex} 
Y = \bigcup_{i} \Delta^\circ_i,
\end{equation}
of relative interiors of finitely many convex polytopes $\Delta_i \in \mathcal{P}(\Lambda)$. 
By definition the {\it Euler characteristic} $\mu(Y)$ of a semi-convex set $Y$ is $\sum_i (-1)^{\dim \Delta_i}$. It is known (see \cite{Viro}) that $\mu(Y)$ is well-defined, i.e. is 
independent of a choice of the representation \eqref{equ-semi-convex}. The Euler characteristic is a finitely additive measure on the collection of semi-convex sets. For closed semi-convex 
sets this measure coincides with the usual Euler characteristic in topology, but for a general semi-convex set it is not the topological Euler characteristic. 
One defines the integral $\int fd\mu$ of a convex chain $f \in Z(\r^n)$ with respect to the Euler characteristic by: $$\int f d\mu = \sum_{a \in \z} a \mu(f^{-1}(a)).$$ 
(Note that $f$ takes only finitely many values and that each level set  $f^{-1}(a)$ is a semi-convex set.)

\begin{Th} [Multiplication in $Z(\Lambda)$ using the Euler characteristic] 
The multiplication $\alpha\ast\beta$ of $\alpha,\beta\in Z(\Lambda)$ is equal to the convolution with respect to the integration over Euler characteristic of the functions $\alpha,\beta$:
$$(\alpha * \beta)(x)=\int \alpha(z)\beta (x-z)d\mu(z).$$
\end{Th}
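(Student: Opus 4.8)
The plan is to reduce the statement to the definition of Minkowski multiplication of chains via bilinearity, exactly as in \cite{Kh-P}. Both the operation $\ast$ and the prospective convolution operation $(\alpha,\beta) \mapsto \int \alpha(z)\beta(x-z)\,d\mu(z)$ are bilinear in $\alpha$ and $\beta$ over $\z$: for $\ast$ this is part of Theorem \ref{th-lambda-virtual} and the construction of the ring $Z(\Lambda)$, while for the convolution it follows from the finite additivity of the Euler characteristic $\mu$, since $\int f\,d\mu = \sum_a a\,\mu(f^{-1}(a))$ depends additively on level sets and the level sets of a sum of chains refine into semi-convex pieces. Since every element of $Z(\Lambda)$ is a finite $\z$-linear combination of characteristic functions $\chi_\Delta$ of polytopes $\Delta \in \mathcal{P}(\Lambda)$, it suffices to verify the identity when $\alpha = \chi_{\Delta_1}$ and $\beta = \chi_{\Delta_2}$ for two polytopes $\Delta_1, \Delta_2$.

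So the crux is to show
$$(\chi_{\Delta_1} \ast \chi_{\Delta_2})(x) = \int \chi_{\Delta_1}(z)\,\chi_{\Delta_2}(x-z)\,d\mu(z) = \mu\big(\Delta_1 \cap (x - \Delta_2)\big),$$
and to check that the right-hand side equals $\chi_{\Delta_1 + \Delta_2}(x)$, which by definition of $\ast$ on characteristic functions is the left-hand side. The integrand $z \mapsto \chi_{\Delta_1}(z)\chi_{\Delta_2}(x-z)$ is the characteristic function of the set $\Delta_1 \cap (x-\Delta_2)$, so its integral with respect to $\mu$ is $\mu(\Delta_1 \cap (x-\Delta_2))$. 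Now $\Delta_1 \cap (x-\Delta_2)$ is either empty or a nonempty compact convex polytope (an intersection of two polytopes), and in the latter case $x \in \Delta_1 + \Delta_2$. Conversely, $x \in \Delta_1+\Delta_2$ exactly when this intersection is nonempty. Finally, the Euler characteristic of any nonempty compact convex set is $1$ (for closed semi-convex sets $\mu$ agrees with the topological Euler characteristic, and a nonempty convex compact set is contractible), and $\mu(\emptyset)=0$. Hence $\mu(\Delta_1\cap(x-\Delta_2)) = \chi_{\Delta_1+\Delta_2}(x)$, as needed.

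The main obstacle — really the only nontrivial point — is justifying that $\mu$ of a nonempty compact convex polytope equals $1$ and, more importantly, that the integral $\int g\, d\mu$ of the characteristic function of a single convex polytope behaves as expected; this rests on Viro's theory of integration over the Euler characteristic and the fact that this measure is finitely additive and coincides with the topological Euler characteristic on closed semi-convex sets, both of which are recalled in Section \ref{subsec-int-Euler-char} and proved in \cite{Viro}. Once these facts are in hand, the computation above is purely formal. I would also remark that one should check the convolution of two $\Lambda$-chains is again a $\Lambda$-chain, but this is automatic once the identity with $\ast$ is established, since $Z(\Lambda)$ is closed under $\ast$. Thus the proof is: (1) reduce to characteristic functions of polytopes by bilinearity; (2) identify the convolution integrand as $\chi_{\Delta_1 \cap (x-\Delta_2)}$; (3) evaluate $\mu$ of this convex set using Viro's theory to get $\chi_{\Delta_1+\Delta_2}(x)$; (4) conclude by comparison with the definition of $\ast$.
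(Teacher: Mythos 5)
Your proposal is correct and follows the standard argument: the paper states this theorem without proof (it is recalled from \cite{Kh-P} and Viro's theory \cite{Viro}), and your route --- reduce by bilinearity of both $\ast$ and the Euler-characteristic convolution to $\alpha=\chi_{\Delta_1}$, $\beta=\chi_{\Delta_2}$, identify the integrand as $\chi_{\Delta_1\cap(x-\Delta_2)}$, and use that a nonempty compact convex set has $\mu=1$ while $\Delta_1\cap(x-\Delta_2)\neq\emptyset$ exactly when $x\in\Delta_1+\Delta_2$ --- is precisely the intended verification. The only point worth noting is that $\Delta_1\cap(x-\Delta_2)$ need not lie in $\mathcal{P}(\Lambda)$, but since the integral is defined on $Z(\r^n)$ this causes no difficulty.
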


\subsection{Finitely additive measures on convex $\Lambda$-chains} \label{subsec-finitely-add-measure}
A {\it (real valued  finitely additive) measure} on $Z(\Lambda)$ is by definition an additive
homomorphism $\phi: Z(\Lambda) \to \r$. A measure is called an {\it invariant measure} (respectively a {\it polynomial measure of degree
$\leq d$}) if for each $\alpha \in Z(\Lambda)$ the function $h \mapsto \phi(\alpha_h)$ on $h \in \Lambda$ is constant (respectively is the restriction of a 
polynomial of degree $\leq d$ to $\Lambda \subset \r^n$). Here $\alpha_h \in Z(\Lambda)$ is the convex chain defined by 
$\alpha_h(x)=\alpha(x-h)$ (i.e. $\alpha_h$ is the shift of $\alpha$ by $h$).

One can check the following easy facts:
\begin{Lem} \label{lem-virtual-1}
Let $\phi: Z(\r^n)\to \r$ be a measure. Then:
\begin{itemize}
\item[(1)] For  $\beta\in Z(\r^n)$ the function $\phi_{\beta}:Z(\r^n)\to \r$  defined by
$\phi_{\beta}(\alpha)=\phi(\alpha \ast \beta)$ is also a measure on $Z(\r^n)$.
\item[(2)] The restriction of $\phi$ to $Z(\Lambda)\subset Z(\r^n) $ is a measure on $Z(\Lambda)$.
\end{itemize}
\end{Lem}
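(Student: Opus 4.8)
Both statements reduce to the elementary observation that a composition (or restriction) of additive homomorphisms is again additive, once one records the ring structure on $Z(\r^n)$ from Theorem \ref{th-lambda-virtual} and the paragraph preceding it.

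For part (1), the key point is that the Minkowski multiplication $\ast$ makes $Z(\r^n)$ a commutative ring, so in particular $\ast$ is distributive: for fixed $\beta \in Z(\r^n)$ the map $M_\beta \colon Z(\r^n) \to Z(\r^n)$, $M_\beta(\alpha) = \alpha \ast \beta$, is a homomorphism of the underlying additive group. Then $\phi_\beta = \phi \circ M_\beta$ is a composition of additive homomorphisms, hence additive; that is, for $\alpha_1, \alpha_2 \in Z(\r^n)$ one computes $\phi_\beta(\alpha_1 + \alpha_2) = \phi\big((\alpha_1+\alpha_2)\ast\beta\big) = \phi(\alpha_1 \ast \beta + \alpha_2 \ast \beta) = \phi(\alpha_1\ast\beta) + \phi(\alpha_2\ast\beta) = \phi_\beta(\alpha_1) + \phi_\beta(\alpha_2)$. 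Since the codomain is $\r$, this is exactly the definition of a measure on $Z(\r^n)$.

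For part (2), one first notes that $Z(\Lambda)$ is an additive subgroup of $Z(\r^n)$: every $\Lambda$-polytope is in particular an $\r^n$-polytope, so $\mathcal{P}(\Lambda) \subset \mathcal{P}(\r^n)$, and a finite $\z$-linear combination of characteristic functions of $\Lambda$-polytopes is such a combination of characteristic functions of $\r^n$-polytopes; closure under addition and negation is immediate. The restriction of the additive homomorphism $\phi \colon Z(\r^n) \to \r$ to the subgroup $Z(\Lambda)$ is then an additive homomorphism $Z(\Lambda) \to \r$, i.e.\ a measure on $Z(\Lambda)$.

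There is no real obstacle here; the only thing to be careful about is to invoke the distributive law for $\ast$ on all of $Z(\r^n)$ (not merely on characteristic functions of polytopes), which is part of the assertion that $(Z(\r^n), +, \ast)$ is a ring.
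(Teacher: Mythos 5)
Your proof is correct: since the paper defines a measure simply as an additive homomorphism to $\r$, part (1) is exactly the additivity of $\alpha \mapsto \phi(\alpha \ast \beta)$ coming from distributivity of $\ast$ in the ring $Z(\r^n)$, and part (2) is the restriction of an additive homomorphism to the subgroup $Z(\Lambda)$. The paper gives no proof (it labels these as easy facts to check), and your argument is precisely the routine verification that is being left to the reader.
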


\begin{Lem} \label{lem-virtual-2}
Let $f: \r^n \to \r$ be a polynomial of degree $\leq d$. Then the function  $\phi=\int f$ on $Z(\r^n)$ defined by
$\phi(\alpha)= \int f(x)\alpha(x)dx,$ where  $dx$ is the standard Lebesgue measure on $\r^n$, is a polynomial measure of degree $\leq d$ on $Z(\r^n) $.
\end{Lem}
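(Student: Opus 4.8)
The plan is to verify directly the two defining properties of a polynomial measure of degree $\leq d$: that $\phi$ is an additive homomorphism $Z(\r^n)\to\r$, and that for every fixed convex chain $\alpha$ the function $h\mapsto\phi(\alpha_h)$ on $\r^n$ is the restriction of a polynomial of degree $\leq d$.

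First I would record the only two facts that make the integrals make sense. Every convex chain $\alpha\in Z(\r^n)$ is a bounded, integer-valued function with compact support, since by definition it is a finite linear combination $\sum_i n_i\chi_{\Delta_i}$ of characteristic functions of polytopes; and $f$, being a polynomial, is continuous, hence locally integrable. Therefore $f\cdot\alpha$ is a bounded compactly supported function and $\int f(x)\alpha(x)\,dx$ is a well-defined real number depending only on the function $\alpha$ itself, not on any choice of representation of $\alpha$ as a combination of characteristic functions. Additivity, $\phi(\alpha+\beta)=\phi(\alpha)+\phi(\beta)$, is then immediate from linearity of the Lebesgue integral, so $\phi$ is a measure on $Z(\r^n)$.

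For polynomiality I would start from the definition $\alpha_h(x)=\alpha(x-h)$ and change variables $y=x-h$:
\[
\phi(\alpha_h)=\int f(x)\alpha(x-h)\,dx=\int f(y+h)\alpha(y)\,dy .
\]
By the multivariate Taylor formula, $f(y+h)=\sum_{|\gamma|\leq d}c_\gamma(y)\,h^\gamma$, where the sum runs over multi-indices $\gamma$ of total degree at most $d$ and each $c_\gamma$ is a polynomial in $y$. Substituting this expansion and interchanging the finite sum with the integral gives
\[
\phi(\alpha_h)=\sum_{|\gamma|\leq d}\Bigl(\int c_\gamma(y)\alpha(y)\,dy\Bigr)h^\gamma ,
\]
and every coefficient $\int c_\gamma(y)\alpha(y)\,dy$ is a finite real number by the same compact-support argument as above. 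Hence $h\mapsto\phi(\alpha_h)$ agrees with a polynomial of degree $\leq d$, which is exactly the required property.

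I do not expect a genuine obstacle here. The only points needing a word of care are the well-definedness of $\int f\alpha\,dx$ on chains — which is automatic, since the integral is taken of the function $\alpha$ itself and not of a representation — and the finiteness of the integrals, both of which follow from the compactness of the support of a convex chain. If anything, the \emph{hard part} is purely organizational: arranging the Taylor expansion of $f(y+h)$ so that the dependence on $h$ is manifestly polynomial of degree $\leq d$ while all the $y$-dependence is absorbed into the coefficients.
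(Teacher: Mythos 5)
Your proof is correct: additivity follows from linearity of the integral (and the value $\int f\alpha\,dx$ is intrinsic to the function $\alpha$, so no representation issues arise), and the change of variables $y=x-h$ followed by expanding $f(y+h)$ in powers of $h$ up to total degree $d$ gives exactly the required polynomiality of $h\mapsto\phi(\alpha_h)$. The paper states this lemma without proof, as one of the easy facts drawn from the theory of virtual polytopes in the cited work of Khovanskii--Pukhlikov, and your direct verification is precisely the standard argument one would supply.
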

In particular, Lemma \ref{lem-virtual-2} states that the map $\Delta \mapsto \int_{\Delta}f(x)dx$ on the space of polytopes $\mathcal{P}(\r^n)$ uniquely extends to a measure 
on virtual polytopes. This measure is the usual volume of a polytope if $f$ is the constant polynomial $1$.

\begin{Lem} \label{lem-virtual-3}
Let $f:\r^n\to \r$ be a polynomial of degree $\leq d$ and let $\beta \in Z(\r^n)$ be a convex chain. Then the function $\phi$
on integral convex chains $Z(\z^n)$ defined by: $$\phi(\alpha) = \sum_{x\in \z^n} f(x) (\beta * \alpha) (x),$$ is a polynomial measure of degree 
$\leq d$ on integral convex chains.
\end{Lem}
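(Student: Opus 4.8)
The plan is to read off the two defining properties of a polynomial measure of degree $\le d$ directly from the formula for $\phi$, using only that the Minkowski multiplication $\ast$ is bilinear and that a convex chain has compact support. First I would check that $\phi$ is a measure on $Z(\z^n)$, i.e. additive in $\alpha$. Since $\ast:Z(\r^n)\times Z(\r^n)\to Z(\r^n)$ is bilinear, the map $\alpha\mapsto \beta\ast\alpha$ is an additive homomorphism $Z(\z^n)\to Z(\r^n)$. Any $g\in Z(\r^n)$ is by definition a finite $\z$-linear combination $\sum_i n_i\chi_{\Delta_i}$ of characteristic functions of polytopes, so $g$ takes only finitely many (integer) values and vanishes outside the compact set $\bigcup_i\Delta_i$; hence $\sum_{x\in\z^n}f(x)g(x)$ is a finite sum and is additive in $g$. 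Composing the two, $\phi(\alpha)=\sum_{x\in\z^n}f(x)(\beta\ast\alpha)(x)$ is additive in $\alpha$.

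Next I would prove polynomiality of the shift function. Fix $\alpha\in Z(\z^n)$ and set $g=\beta\ast\alpha\in Z(\r^n)$. The key identity is $\beta\ast\alpha_h=(\beta\ast\alpha)_h=g_h$ for $h\in\z^n$, which follows from $\Delta_1+(\Delta_2+h)=(\Delta_1+\Delta_2)+h$ together with bilinearity of $\ast$ (equivalently, from the convolution formula for $\ast$). Therefore
\[
\phi(\alpha_h)=\sum_{x\in\z^n}f(x)\,g(x-h)=\sum_{y\in\z^n}f(y+h)\,g(y),
\]
and this is a finite sum because $g$ has compact support. Now $f(y+h)$ is a polynomial in the pair $(y,h)$ of total degree $\le d$, so I would expand $f(y+h)=\sum_{|\gamma|\le d}p_\gamma(y)\,h^\gamma$ with suitable polynomials $p_\gamma$, and substitute to get
\[
\phi(\alpha_h)=\sum_{|\gamma|\le d}\Bigl(\sum_{y\in\z^n}p_\gamma(y)\,g(y)\Bigr)h^\gamma,
\]
where each inner sum is a finite constant. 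This exhibits $h\mapsto\phi(\alpha_h)$ as the restriction to $\z^n$ of a polynomial of degree $\le d$, completing the proof.

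I do not expect a genuine obstacle here; the only points needing care are the finiteness of the sums and the legitimacy of interchanging summations, both of which are immediate once one records that a convex chain is compactly supported and integer-valued, and the shift compatibility $\beta\ast\alpha_h=(\beta\ast\alpha)_h$, which is immediate from the definition of $\ast$. I would remark that the statement is the discrete counterpart of Lemma \ref{lem-virtual-2}: the special case $\beta=\chi_{\{0\}}$ says that $\alpha\mapsto\sum_{x\in\z^n}f(x)\alpha(x)$ is a polynomial measure of degree $\le d$ on $Z(\z^n)$, and the general case then follows formally since convolving with a fixed chain preserves the property of being a polynomial measure of degree $\le d$ (one applies the definition to the chain $\beta\ast\alpha$), in the spirit of Lemma \ref{lem-virtual-1}(1). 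Note, however, that one genuinely needs to restrict shifts to $\Lambda=\z^n$: for $h\in\r^n$ the set of lattice points meeting the support of $g_h$ jumps, so the analogous function is not polynomial on all of $\r^n$.
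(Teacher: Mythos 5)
Your proof is correct, and the paper in fact gives no proof of Lemma \ref{lem-virtual-3} (it is stated as one of the easy facts quoted from the Khovanskii--Pukhlikov theory of virtual polytopes), so your direct verification --- finiteness and additivity from the compact support of chains and bilinearity of $\ast$, the lattice-shift identity $\beta\ast\alpha_h=(\beta\ast\alpha)_h$, and the expansion of $f(y+h)$ in powers of $h$ with coefficients summed against $g$ --- is exactly the intended routine argument. The only caveat is that your closing ``formal'' reduction via Lemma \ref{lem-virtual-1}(1) is slightly loose: the base measure $g\mapsto\sum_{x\in\z^n}f(x)g(x)$ on $Z(\r^n)$ is polynomial only under shifts by $h\in\z^n$ (as you yourself note at the end), so the direct computation in the body of your argument, which uses only lattice shifts, is the correct way to phrase the proof.
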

In particular Lemma \ref{lem-virtual-3} implies the following: let $\Delta_\beta$ be a fixed (not necessarily integral) convex polytope. Consider 
the map $\phi : \Delta \mapsto \sum_{x \in (\Delta + \Delta_\beta) \cap \z^n} f(x)$ on convex polytopes. Then this map extends uniquely to a measure on virtual 
convex polytopes. If $f$ is the constant polynomial $1$ then $\phi(\Delta)$ is the number of integral points in $\Delta + \Delta_\beta$.
 


We now state the main statement about polynomial measures on virtual polytopes.
Fix a $k$-tuple of $\Lambda$-virtual polytopes $\alpha_1, \ldots, \alpha_k \in Z(\Lambda)$ as well as a polynomial measure $\phi$ of degree $\leq d$ on $Z(\Lambda)$.
Let $\Phi$ be the function on $\z^k$ defined by:
$$\Phi(m_1, \ldots, m_k) = \phi(\alpha_1^{m_1} * \cdots * \alpha^{m_k}),$$
where $(m_1, \ldots, m_k) \in \z^k$ and $\alpha_i^{m_i}$ denotes the $m_i$-th power of $\alpha_i$ in the ring $Z(\Lambda)$.

\begin{Th} \label{th-virtual-1}
With the above notation, the function $\Phi$ is the restriction of a polynomial of degree $\leq n+d$ on $\r^k$ to $\z^k$.
\end{Th}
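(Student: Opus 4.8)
The plan is to reduce the statement to a single-variable instance of the fact that the lattice-point-count of a dilated virtual polytope is a polynomial, and then bootstrap to $k$ variables. First I would fix the virtual polytopes $\alpha_1, \ldots, \alpha_k$ and the measure $\phi$, and consider the function $\Phi(m_1, \ldots, m_k) = \phi(\alpha_1^{m_1} * \cdots * \alpha_k^{m_k})$ on $\z^k$. The key structural observation is that $\Phi$ is ``separately polynomial'' in each variable: by Lemma \ref{lem-virtual-1}(1), for any fixed convex chain $\beta$ the functional $\phi_\beta(\gamma) = \phi(\gamma * \beta)$ is again a polynomial measure of the same degree $\leq d$ on $Z(\Lambda)$. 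So if I freeze all but one variable, say I fix $m_2, \ldots, m_k$ and set $\beta = \alpha_2^{m_2} * \cdots * \alpha_k^{m_k}$, then $m_1 \mapsto \Phi(m_1, \ldots, m_k) = \phi_\beta(\alpha_1^{m_1})$ is a one-variable problem: I need that $m \mapsto \psi(\alpha^m)$ is a polynomial in $m$ of controlled degree, for any polynomial measure $\psi$ of degree $\leq d$ and any virtual polytope $\alpha$.

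For that one-variable claim I would proceed as follows. Write $\alpha = \chi_{\Delta_1} * \chi_{\Delta_2}^{-1}$ with $\Delta_1, \Delta_2 \in \mathcal{P}(\Lambda)$, using Theorem \ref{th-lambda-virtual}(2). For $m \geq 0$ this gives $\alpha^m * \chi_{\Delta_2}^m = \chi_{\Delta_1}^m = \chi_{m\Delta_1}$, and by Theorem \ref{th-lambda-virtual}(1) we have $\chi_{\Delta_2}^{-m} = (-1)^{m\dim\Delta_2}\chi_{(-m\Delta_2)^\circ}$ up to the obvious bookkeeping, so $\alpha^m = \chi_{m\Delta_1} * (\pm)\chi_{(-m\Delta_2)^\circ}$; applying $\psi$ and using that $\psi$ is linear, the task becomes showing $m \mapsto \psi(\chi_{m\Delta_1} * \chi_{m\Delta_2'}^{-1})$ agrees for large $m$ with a polynomial, which by the convolution picture of the product is a statement about counting lattice points (or integrating a polynomial) over $m\Delta_1 + (\text{a fixed or scaling shift})$. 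The cleanest route is to invoke Lemma \ref{lem-virtual-3} (or Lemma \ref{lem-virtual-2} in the real case): these say precisely that $\Delta \mapsto \sum_{x \in \z^n} f(x)(\beta * \chi_\Delta)(x)$ extends to a measure on virtual polytopes, and a standard Ehrhart-type argument shows that the measure $\psi$ evaluated on $m$-dilates of a \emph{fixed} virtual polytope is eventually polynomial of degree $\leq n + d$; the virtual-polytope framework of \cite{Kh-P} is designed so that ``eventually polynomial'' can be upgraded to ``polynomial for all $m \in \z$'' since a polynomial measure is by definition polynomial on all of $\Lambda$, hence the two polynomials agreeing infinitely often must coincide. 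This yields: for each virtual polytope $\alpha$ and polynomial measure $\psi$ of degree $\leq d$, the map $m \mapsto \psi(\alpha^m)$ is the restriction to $\z$ of a polynomial of degree $\leq n + d$.

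Now I would assemble the multivariate statement from the separate-variable one. By the argument in the first paragraph, $\Phi$ is, in each variable separately, the restriction of a polynomial of degree $\leq n + d$. A function $\z^k \to \r$ that is separately polynomial of bounded degree $\leq D$ in each variable is the restriction of a genuine polynomial on $\r^k$ (standard: a function on $\z^k$ which for each fixed value of the other coordinates is polynomial of degree $\leq D$ in the remaining coordinate can be expanded in the basis of binomial coefficients $\binom{m_i}{j_i}$, $0 \leq j_i \leq D$, with coefficients that are themselves such functions in fewer variables; induct on $k$). This shows $\Phi$ is the restriction of a polynomial on $\r^k$; tracking degrees through the separate-variable bound gives total degree $\leq n + d$ — one has to check the degree does not inflate to $k(n+d)$, which follows because the bound $n+d$ in the one-variable step is on the \emph{total} degree contribution and the binomial-expansion coefficients inherit the complementary degree budget, or more simply because on each ``diagonal-type'' line $m_i = c_i t$ the function $t \mapsto \Phi(c_1 t, \ldots, c_k t) = \phi\big((\alpha_1^{c_1} * \cdots * \alpha_k^{c_k})^t\big)$ is again of the one-variable form with the single virtual polytope $\alpha_1^{c_1} * \cdots * \alpha_k^{c_k}$, hence polynomial of degree $\leq n + d$ in $t$, forcing total degree $\leq n + d$.

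The main obstacle is the one-variable polynomiality-plus-degree-bound step: showing that $m \mapsto \psi(\alpha^m)$ is genuinely polynomial (not merely quasi-polynomial or eventually polynomial) of degree exactly bounded by $n + d$. In the honest lattice case $\Lambda = \frac1N\z^n$ this is where the virtual-polytope machinery of \cite{Kh-P} does real work — the usual Ehrhart polynomial of a lattice polytope has degree equal to its dimension $\leq n$, and convolving with a fixed chain and summing against a degree-$d$ polynomial weight adds at most $d$ to the degree; the subtlety is that $-\Delta^\circ$ (the inverse chain) is the interior, not the closed polytope, so one must use the interior/closed Ehrhart reciprocity built into the definition of the product in $Z(\Lambda)$, and this is exactly the content encoded in Theorem \ref{th-lambda-virtual} and Lemma \ref{lem-virtual-3}. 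I expect the write-up to lean on these lemmas as black boxes and spend its effort only on the degree bookkeeping and the separate-to-jointly-polynomial passage.
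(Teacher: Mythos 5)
The paper does not actually prove Theorem \ref{th-virtual-1}: it is quoted as one of the basic facts of the Khovanskii--Pukhlikov theory of finitely additive measures on virtual polytopes, with \cite{Kh-P} as the reference. So the question is whether your argument stands on its own, and it does not: the reduction you perform is sound, but it reduces the theorem to a statement that is just as strong as the theorem itself, and that statement is where your proof has a genuine gap. Your multivariate bootstrap is fine (separate polynomiality in each variable of bounded degree does imply joint polynomiality by interpolation in binomial/Lagrange bases, and the restriction-to-lines trick $t \mapsto \phi\bigl((\alpha_1^{c_1} * \cdots * \alpha_k^{c_k})^t\bigr)$ does control the total degree), and your observation that $\phi_\beta$ is again a polynomial measure of degree $\leq d$ is correct even though Lemma \ref{lem-virtual-1}(1) only states measurability --- the point is that $(\alpha * \beta)_h = \alpha_h * \beta$. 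But everything now hinges on the one-variable claim: for an \emph{arbitrary} polynomial measure $\psi$ of degree $\leq d$ and an \emph{arbitrary} virtual polytope $\alpha$, the function $m \mapsto \psi(\alpha^m)$ is, on all of $\z$, the restriction of a polynomial of degree $\leq n+d$. Your justification of this is circular and misuses the available lemmas. Lemmas \ref{lem-virtual-2} and \ref{lem-virtual-3} only \emph{construct particular examples} of polynomial measures (integration against a polynomial, weighted lattice-point counts); they say nothing about an abstract finitely additive polynomial measure $\psi$, and they assert no polynomiality in a dilation parameter. Likewise, ``a standard Ehrhart-type argument'' applies to (weighted) lattice-point counting, not to a general finitely additive measure on $Z(\Lambda)$: the polynomiality of $\phi(\chi_{m_1\Delta_1 + \cdots + m_k\Delta_k})$ in nonnegative $m_i$ for a general (invariant or polynomial) measure is McMullen's theorem and its Pukhlikov--Khovanskii extension --- i.e.\ precisely the substance of \cite{Kh-P} that you would need to prove or cite explicitly, not something that follows from the lemmas quoted in Section \ref{sec-virtual}.

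The second, independent gap is the passage from $m \geq 0$ to all $m \in \z$. You argue that ``a polynomial measure is by definition polynomial on all of $\Lambda$, hence the two polynomials agreeing infinitely often must coincide,'' but the definition of a polynomial measure concerns behavior under \emph{translations} $\alpha \mapsto \alpha_h$, $h \in \Lambda$; it says nothing about dilations $\Delta \mapsto m\Delta$ or about the inversion $\chi_\Delta^{-1} = (-1)^{\dim \Delta}\chi_{(-\Delta^\circ)}$. To apply the ``two polynomials agreeing infinitely often'' argument you would first need to know that the values $\psi(\alpha^m)$ for $m < 0$ are given by \emph{some} polynomial, and you have not produced one: for $m = -p < 0$ one has $\alpha^m = \chi_{p\Delta_2} * \chi_{p\Delta_1}^{-1}$, and showing that $\psi$ of such an expression is the value at a negative integer of the polynomial determined by the nonnegative powers is exactly the reciprocity-type content packaged into Theorem \ref{th-virtual-1} (and, in the counting case, into Theorem \ref{th-lambda-virtual}(1) plus Ehrhart reciprocity). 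So the hard core of the theorem --- polynomiality under Minkowski dilation for an abstract polynomial measure, together with the extension of that polynomial law to inverses in the ring $Z(\Lambda)$ --- is asserted rather than proved. (Minor slip along the way: $\chi_{\Delta_2}^{-m} = \chi_{m\Delta_2}^{-1} = (-1)^{\dim \Delta_2}\chi_{(-m\Delta_2)^\circ}$; the sign is $(-1)^{\dim\Delta_2}$, not $(-1)^{m\dim\Delta_2}$.)
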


The next corollary follows from Theorem \ref{th-virtual-1} applied to the measure in Lemma \ref{lem-virtual-3}.
\begin{Cor} \label{cor-virtual-1} 
Let $f: \r^n \to \r$ be a degree $d$ polynomial, $\beta$ a  convex chain and $\alpha_1, \ldots, \alpha_k$ a $k$-tuple of integral virtual polytopes. Then the function $\Phi$ 
on the lattice $\z^k$ defined by: $$\Phi(m_1, \ldots, m_k)=\sum_ {x \in \z^n} f(x)(\beta * \alpha_1^{m_1} * \cdots * \alpha_k^{m_k})(x),$$ where $(m_1,\dots, m_k) \in \z^k$, 
is the restriction of a polynomial on $\r^k$ of degree $\leq n+d$ to $\z^k$.
\end{Cor}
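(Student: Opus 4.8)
The plan is to obtain the statement as a direct consequence of Theorem \ref{th-virtual-1}, by recognizing the summation operator
\[
\alpha \mapsto \sum_{x \in \z^n} f(x)\,(\beta * \alpha)(x)
\]
as a polynomial measure on the ring $Z(\z^n)$ of integral convex chains, and then feeding the $k$-tuple $\alpha_1, \ldots, \alpha_k$ into that theorem.

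First I would apply Lemma \ref{lem-virtual-3} with the given polynomial $f$ of degree $d$ and the given convex chain $\beta$: it provides exactly the function $\phi : Z(\z^n) \to \r$, $\phi(\alpha) = \sum_{x \in \z^n} f(x)(\beta * \alpha)(x)$, and asserts that $\phi$ is a polynomial measure of degree $\leq d$ on $Z(\z^n)$. Here one should note that $\beta$ need not be integral; the convolution $\beta * \alpha$ still lies in $Z(\r^n)$ and is compactly supported, and for $\alpha \in Z(\z^n)$ it is precisely the values of $\beta * \alpha$ at lattice points that enter the sum, so the expression is well defined. Next I would invoke Theorem \ref{th-virtual-1} with $\Lambda = \z^n$, the measure $\phi$ just constructed, and the $k$-tuple $\alpha_1, \ldots, \alpha_k$, which by hypothesis are integral virtual polytopes, i.e. invertible elements of $Z(\z^n)$; hence the products $\alpha_1^{m_1} * \cdots * \alpha_k^{m_k}$ are genuine elements of $Z(\z^n)$ for every $(m_1, \ldots, m_k) \in \z^k$, including negative exponents. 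Theorem \ref{th-virtual-1} then says that $(m_1, \ldots, m_k) \mapsto \phi(\alpha_1^{m_1} * \cdots * \alpha_k^{m_k})$ is the restriction to $\z^k$ of a polynomial on $\r^k$ of degree $\leq n + d$. Finally, unravelling the definition of $\phi$,
\[
\phi(\alpha_1^{m_1} * \cdots * \alpha_k^{m_k}) = \sum_{x \in \z^n} f(x)\,(\beta * \alpha_1^{m_1} * \cdots * \alpha_k^{m_k})(x) = \Phi(m_1, \ldots, m_k),
\]
which is precisely the function in the statement, so the proof is complete.

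I do not expect any genuine obstacle here: all the substantive work has already been carried out in Lemma \ref{lem-virtual-3} and Theorem \ref{th-virtual-1}, and the corollary is merely their composition. The only points deserving care are bookkeeping ones — verifying that the hypotheses of Theorem \ref{th-virtual-1} are met over the lattice $\Lambda = \z^n$ (additivity and polynomiality of $\phi$, invertibility of the $\alpha_i$), and tracking the degree bound, which is $n$ added to the degree $d$ of the measure, giving $\leq n + d$ as claimed.
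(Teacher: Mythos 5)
Your proposal is correct and is precisely the paper's argument: the paper proves the corollary by applying Theorem \ref{th-virtual-1} to the polynomial measure of degree $\leq d$ furnished by Lemma \ref{lem-virtual-3}, exactly as you do, yielding the degree bound $\leq n+d$. The additional care you take over $\beta$ being a possibly non-integral chain and the $\alpha_i$ being invertible in $Z(\z^n)$ is sound bookkeeping and does not change the route.
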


Finally we have the following variation of Corollary \ref{cor-virtual-1}. We will use it later in Section \ref{subsec-Euler-char-sph} and Section \ref{subsec-genus-sph}.
Below $\Lambda = \frac{1}{N} \z^n \subset \r^n$ for some ineteger $N > 0$. 


\begin{Cor} \label{cor-virtual-3}
Let $f:\r^n \to \r$ be a polynomial of degree $d$ and let $\gamma_1, \ldots, \gamma_k \in \mathcal{P}(\Lambda)$ be a $k$-tuple of $\Lambda$-virtual polytopes. 
Let $a_1, \ldots, a_k$ be fixed points in the lattice $\Lambda$ and consider the function $\Psi: \z^k \to \r$ defined by:
$$\Psi(m_1, \ldots, m_k)=\sum_ {x \in (m_1a_1+ \cdots + m_ka_k) + \z^n} f(x)(\gamma_1^{m_1} * \cdots * \gamma_k^{m_k})(x).$$ 
Then $\Psi$ is the restriction of a polynomial on $\r^k$ to $\z^k$.
\end{Cor}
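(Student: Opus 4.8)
The plan is to reduce the statement to Corollary \ref{cor-virtual-1} by a change of the summation index that turns the moving lattice $(m_1 a_1 + \cdots + m_k a_k) + \z^n$ into the fixed lattice $\z^n$, at the price of translating the chains $\gamma_i$ and Taylor-expanding $f$. Put $S = S(m) = m_1 a_1 + \cdots + m_k a_k \in \Lambda$ and $\delta = \delta(m) = \gamma_1^{m_1} * \cdots * \gamma_k^{m_k} \in Z(\Lambda)$. First I would substitute $x = y + S$ in the defining sum; since $x$ runs over $S + \z^n$, the variable $y$ runs over $\z^n$, so
\[
\Psi(m) \;=\; \sum_{y \in \z^n} f(y + S)\,\delta(y + S).
\]

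Next I would translate the chain $\delta$ by $-S$. Translation of a convex chain by a vector $h$ is the Minkowski product with $\chi_{\{h\}}$, and $\chi_{\{h_1\}} * \chi_{\{h_2\}} = \chi_{\{h_1 + h_2\}}$; hence $\delta(y + S) = (\chi_{\{-S\}} * \delta)(y)$, and as $\chi_{\{-S\}} = \chi_{\{-a_1\}}^{m_1} * \cdots * \chi_{\{-a_k\}}^{m_k}$, commutativity of $*$ yields $\chi_{\{-S\}} * \delta = \tilde\gamma_1^{m_1} * \cdots * \tilde\gamma_k^{m_k}$, where $\tilde\gamma_i := \chi_{\{-a_i\}} * \gamma_i$ is the translate of $\gamma_i$ by $-a_i$. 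The role of subtracting the base point $a_i$ is that $\tilde\gamma_i$ then lies in $Z(\z^n)$, i.e.\ is an integral convex chain; this is precisely the input Corollary \ref{cor-virtual-1} requires, and is what the shift in the original summation index exists to exploit. Thus
\[
\Psi(m) \;=\; \sum_{y \in \z^n} f(y + S)\,(\tilde\gamma_1^{m_1} * \cdots * \tilde\gamma_k^{m_k})(y),
\]
an honest sum over $\z^n$ of integral chains, except that $f$ is evaluated at $y + S$.

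Finally I would remove that residual $m$-dependence. Since $f$ has degree $d$, Taylor's formula is the finite exact identity $f(y + S) = \sum_{|\nu| \leq d} \frac{1}{\nu!}(\partial^\nu f)(y)\,S^\nu$, in which $(\partial^\nu f)(y)$ is a polynomial in $y$ of degree $\leq d$ and $S^\nu = (m_1 a_1 + \cdots + m_k a_k)^\nu$ is a polynomial in $m$. Substituting and interchanging the two finite sums gives $\Psi(m) = \sum_{|\nu| \leq d} (S^\nu/\nu!)\,\Psi_\nu(m)$, where $\Psi_\nu(m) = \sum_{y \in \z^n}(\partial^\nu f)(y)\,(\tilde\gamma_1^{m_1} * \cdots * \tilde\gamma_k^{m_k})(y)$. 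Applying Corollary \ref{cor-virtual-1} with the polynomial $\partial^\nu f$, the unit chain $\beta = \chi_{\{0\}}$, and the integral virtual polytopes $\tilde\gamma_1, \ldots, \tilde\gamma_k$ shows each $\Psi_\nu$ is the restriction to $\z^k$ of a polynomial on $\r^k$. Hence $\Psi$ is a finite sum of products of such restrictions with the polynomials $S^\nu/\nu!$, so it is itself the restriction of a polynomial on $\r^k$ to $\z^k$, which is the assertion.

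The one substantive point is the middle step: after translating away the base points, the chains become integral and the summation lattice becomes the fixed $\z^n$ demanded by Corollary \ref{cor-virtual-1}; without this the sum would in general only be a quasi-polynomial in $m$ (the number of lattice points in a dilating rational polytope is the standard example). Everything else is bookkeeping with Taylor's formula and the ring structure of convex chains, and one could, if desired, track degrees to obtain $\deg \Psi \leq n + 2d$.
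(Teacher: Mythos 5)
Your reduction differs in mechanics from the paper's. You shift the summation variable onto $\z^n$, absorb the shift into the translated chains $\tilde\gamma_i=\chi_{\{-a_i\}}*\gamma_i$, and remove the residual $m$-dependence of the integrand by a finite Taylor expansion of $f$, invoking Corollary \ref{cor-virtual-1} once per Taylor term. The paper instead passes to $\r^{n+k}=\r^n\times\r^k$, replaces $\gamma_i$ by $\rho_i=\gamma_i*\delta_{(-a_i,e_i)}$ so that the extra coordinates record $(m_1,\ldots,m_k)$, and applies Corollary \ref{cor-virtual-1} a single time to the polynomial $h(x,m)=f(x+m_1a_1+\cdots+m_ka_k)$; this avoids the Taylor expansion (and, incidentally, your bookkeeping actually gives $\deg\Psi\le n+d$ rather than $n+2d$, since $\deg\partial^\nu f\le d-|\nu|$ while $S^\nu$ has degree $|\nu|$). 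As a reduction to Corollary \ref{cor-virtual-1}, your scheme is sound.

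The one step that is wrong as written is exactly the one you single out as substantive: it does not follow from the hypotheses that $\tilde\gamma_i=\chi_{\{-a_i\}}*\gamma_i$ lies in $Z(\z^n)$. The corollary only assumes $\gamma_i\in Z(\Lambda)$ with $\Lambda=\frac{1}{N}\z^n$ and $a_i\in\Lambda$, and translating a $\frac{1}{N}\z^n$-chain by a point of $\frac{1}{N}\z^n$ leaves it a $\frac{1}{N}\z^n$-chain, not an integral one. For instance with $n=k=1$, $N=2$, $a_1=0$, $\gamma_1=\chi_{[0,1/2]}$, $f\equiv 1$, your $\tilde\gamma_1=\gamma_1$ is not integral, Corollary \ref{cor-virtual-1} does not apply, and indeed $\Psi(m)=\lfloor m/2\rfloor+1$ for $m\ge 0$ is only a quasi-polynomial. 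So the integrality of the $\tilde\gamma_i$ (equivalently, that each $\gamma_i$ is the translate by $a_i$ of an integral chain) is a genuine compatibility condition among the $\gamma_i$, the $a_i$ and the counting lattice $\z^n$; it must be taken as the (implicitly intended) hypothesis, not derived. To be fair, the paper's own proof leans on the very same point: its $\rho_i$ are integral chains in $\r^{n+k}$ precisely when your $\tilde\gamma_i$ are integral in $\r^n$, and the printed appeal to Corollary \ref{cor-virtual-1} for ``$\Lambda_1$-chains'' glosses this. Once you state that integrality explicitly as the condition under which Corollary \ref{cor-virtual-1} is invoked, your argument is complete and stands as a legitimate alternative to the paper's lifting construction.
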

\begin{proof} 
We reduce the claim to Corollary \ref{cor-virtual-1}. Consider the space $\r^{n+k}=\r^n \times \r^k$, the lattice $\Lambda_1=\frac {1}{N} \z^{n+k}\subset \r^{n+k}$ 
and the $\Lambda_1$-chains $\rho_1, \ldots, \rho_k$ defined by $\rho_i=\gamma_i * \delta_{(-a_i, e_i)}$. Here $e_i$ is the $i$-th standard basis vector in $\r^k$ and 
$\delta_{(a,b)}$ denotes the characteristic function of a point $(a, b) \in \r^n \times \r^k$. Then $\rho_1^{m_1} * \cdots * \rho_k^{m_k}=\gamma_1^{m_1} 
* \cdots * \gamma_k^{m_k} * \delta_{(-m_1a_1 - \cdots - m_ka_k, m_1e_1+ \cdots + m_ke_k)}$. 
Also consider the polynomial $h: \r^n \times \r^k \to \r$ defined by: $h(x, m_1, \ldots, m_k)= f(x+a_1m_1+\cdots+a_km_k)$. The claim now follows from 
Corollary \ref{cor-virtual-1} applied to the vector space $\r^{n+k}$, the lattice $\Lambda_1=\frac{1}{N} \z^{n+k} \subset \r^{n+k}$, $\Lambda_1$-chains 
$\rho_1, \ldots, \rho_k$ and the polynomial $h$ (we take $\beta$ in Corollary \ref{cor-virtual-1} to be $0$).
\end{proof}

\section{Complete intersections in spherical varieties} \label{sec-spherical}
In the rest of the paper we will use the following notation about reductive groups:
\begin{itemize}
\item[-] $G$ denotes a connected complex reductive algebraic group.
\item[-] $B$ a Borel subgroup of $G$ and $T$, $U$ the maximal torus
and maximal unipotent subgroups contained in $B$ respectively. 
\item[-] {$\Lambda$ is the weight lattice of $G$, 
$\Lambda^+$ is the subset
of dominant weights and $\Lambda_\r = \Lambda \otimes_{\z} \r$. The cone generated by $\Lambda^+$ is the positive Weyl chamber denoted by $\Lambda^+_\r$.}
\item[-] $V_\lambda$ denotes the irreducible $G$-module corresponding to a dominant weight $\lambda$. Also
$v_\lambda$ denotes a highest weight vector in $V_\lambda$. 
\item[-] $G/H$ denotes a spherical homogeneous space.
\item[-] $\Lambda' = \Lambda(G/H) \subset \Lambda$ denotes the weight lattice of $G/H$, i.e. the sublattice of $\Lambda$ consisting of 
weights of $B$-eigenfucntions in $\c(G/H)$.
\end{itemize}

\subsection{Preliminaries on spherical varieties} \label{subsec-prelim-sph}
A $G$-variety $X$ is called spherical if a Borel subgroup (and hence any Borel subgroup) has a dense orbit. 
If $X$ is spherical it has a finite number of $G$-orbits as well as a finite number of 
$B$-orbits. Spherical varieties are a generalization of toric varieties for actions of reductive groups. Analogous to toric varieties, the geometry of spherical varieties 
can be read off from associated convex polytopes and convex cones. For a nice overview of the theory of spherical varieties we refer the reader to \cite{Perrin}.

It is a well-known fact that if $\L$ is a $G$-linearized line bundle on a spherical variety then the space of sections $H^0(X, \L)$ is a multiplicity free $G$-module.
For a quasi-projective $G$-variety $X$ this is actually equivalent to $X$ being spherical.

Below are some important examples of spherical varieites and spherical homogeneous spaces:
\begin{itemize}
\item[(1)] When $G$ is a torus, the spherical $G$-varieties are exactly toric varieties. 
\item[(2)] The flag variety $G/B$ and the parital flag varieties $G/P$ are spherical $G$-varieties by the Bruhat decomposition.
\item[(3)] Let $G \times G$ act on $G$ from left and right. Then the stabilizer of the identity is $G_{diag} = \{(g, g) \mid g \in G\}$. Thus $G$ can be identified with the homogeneous space 
$(G \times G) / G_{diag}$. Again by the Bruhat decomposition this is a spherical $(G \times G)$-homogeneous space.
\item[(4)]
Consider the set $\mathcal{Q}$ of all smooth quadrics in $\p^n$. The group $G = \textup{PGL}(n+1,\c)$ acts transitively on $\mathcal{Q}$. 
The stabilizer of the quadric $x_0^2 + \cdots + x_n^2 = 0$ (in the homogeneous coordinates) is $H = \textup{PO}(n+1, \c)$ and hence $\mathcal{Q}$ can be identified with the 
homogeneous space $\textup{PGL}(n+1, \c) / \textup{PO}(n+1, \c)$ . The subgroup $\textup{PO}(n+1,\c)$ is the fixed point set of the involution $g \mapsto (g^t)^{-1}$ of $G$ and hence 
$\mathcal{Q}$ is a symmetric homogeneous space. In particular, $\mathcal{Q}$ is spherical. 
Let $V$ be the vector space of all quadratic forms in $n+1$ variables and $V^*$ its dual. The map which assigns to a quadric $C$ its homogeneous equation 
(respectively equation of the dual quadric $C^*$ ) gives an embedding of $\mathcal{Q}$ in $\p(V)$ (respectively $\p(V^*)$). Let $X$ be the closure of the set of all quadrics 
$(C, C^*)$ in $\p(V) \times \p(V^*)$. It is called the \emph{variety of complete quadrics}. It is well-known that $X$ is a smooth variety (see \cite[Theorem 3.1]{DP}). 
This variety plays an important role in classical enumerative geometry.
\end{itemize}

Throughout the rest of the paper we will fix a spherical homogeneous space $G/H$. 

\begin{Def}
Let $\Lambda(G/H)$ be the lattice of $B$-weights for the action of $B$ on the field of rational functions $\c(G/H)$, i.e. the set of all 
$\lambda \in \Lambda$ which appear as the weight of a 
$B$-eigenfunction in $\c(G/H)$. Clearly $\Lambda(G/H)$ is a sublattice of $\Lambda$. We will denote the lattice of $B$-weights of $G/H$ simply by $\Lambda'$. 
\end{Def}

\begin{Rem} \label{rem-Lambda'}
Let $\c(G/H)^{(B)}$ denote the multiplicative group of nonzero $B$-eigenfunctions in $\c(G/H)$. If two $B$-eigenfunctions $f$ and $g$ have the same weight then 
$f/g$ is a $B$-invariant rational function on $G/H$. Since $X$ has an open $B$-orbit we conclude that $f/g$ is constant. This proves that the map which sends a $B$-eigenfunction to 
its weight gives an isomorphism between $\c(G/H)^{(B)} / \c^*$ and the lattice $\Lambda'$.
\end{Rem}

The following theorem about $h^{p,0}$ numbers of spherical varieties will be used later. It is a generalization of Corollary \ref{cor-27} for toric varieties. 
\begin{Th}[$h^{p,0}$ numbers of spherical varieties] \label{th-h-p-sph}
Let $X$ be a smooth projective spherical variety. Then $h^{0,0}(X) = 1$ and $h^{p,0}(X) = 0$ if $p > 0$. 
\end{Th}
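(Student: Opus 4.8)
The plan is to reduce the statement to the known toric case (Corollary \ref{cor-27}) by exploiting that $h^{p,0}$ are birational invariants and that a smooth projective spherical variety, while not birational to projective space, carries a large open subset on which the structure is essentially toric. First I would recall that if $X$ is spherical, it contains a dense $B$-orbit $\mathcal{O}$, and this orbit is an open subset isomorphic (as a variety) to an affine space, or more precisely $\mathcal{O}$ is $B$-equivariantly a vector-space bundle type object; in any case, since $B$ is a connected solvable group, the open $B$-orbit is isomorphic to $U/U_x \times T/T_x$, a product of affine space and a torus, hence a rational variety. Thus every smooth projective spherical variety $X$ is rational, and therefore $h^{p,0}(X) = h^{p,0}(\mathbb{P}^n)$, which gives $h^{0,0} = 1$ and $h^{p,0} = 0$ for $p > 0$.

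Alternatively — and this is probably closer to what the authors intend given that they explicitly phrase Theorem \ref{th-h-p-sph} as a generalization of Corollary \ref{cor-27} — I would deduce the statement from the cohomology vanishing for $G$-linearized line bundles (Theorem \ref{th-coh-line-bundle}, Brion's theorem, referenced but not displayed in the excerpt). Concretely, $h^{p,0}(X) = \dim H^p(X, \mathcal{O}_X)$, and $\mathcal{O}_X$ is the structure sheaf of the trivial $G$-linearized line bundle. Brion's description of $H^\bullet(X, \mathcal{L})$ for $\mathcal{L}$ a $G$-line bundle on a projective spherical variety should, when specialized to $\mathcal{L} = \mathcal{O}_X$, yield that $H^0(X, \mathcal{O}_X) = \mathbb{C}$ (since $X$ is projective and connected) and $H^p(X, \mathcal{O}_X) = 0$ for $p > 0$. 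This parallels exactly how Corollary \ref{cor-27} was deduced from Theorem \ref{th-24} by taking $\Delta = \{0\}$: there one took the trivial bundle and read off vanishing from the polytope being a point; here one takes the trivial bundle and reads off vanishing from Brion's formula.

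The main obstacle, and the reason for preferring the birational-invariance route as a self-contained backup, is that I must be careful the reduction step is legitimate in the generality claimed: $X$ need only be smooth and projective spherical, not, say, a flag variety or a complete toric variety. For the birational argument the key point to nail down is that a spherical homogeneous space $G/H$ is a rational variety — this follows because the open $B$-orbit is an open subset of $G/H$ and, as a homogeneous space under the connected solvable group $B$, is isomorphic to a product of a unipotent group (hence affine space) and a torus, both of which are rational, so $G/H$ is rational and hence so is any $X$ containing it as a dense open subset. Once rationality is established, invoking birational invariance of $h^{p,0}$ (recalled in Section \ref{subsec-h-p0}) together with the values for $\mathbb{P}^n$ finishes the proof. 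I would present the argument in this order: (1) $X$ contains a dense $B$-orbit; (2) that orbit, being homogeneous under a connected solvable group, is rational; (3) hence $X$ is rational; (4) by birational invariance of the $h^{p,0}$ numbers, $h^{p,0}(X) = h^{p,0}(\mathbb{P}^n)$; (5) conclude. If one instead wishes to stay within the line-bundle-cohomology framework, replace steps (2)--(4) by specializing Theorem \ref{th-coh-line-bundle} to the trivial line bundle.
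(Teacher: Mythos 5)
Your proposal is correct, but both of your routes are genuinely different from the paper's own argument. The paper proves Theorem \ref{th-h-p-sph} by observing that the maximal torus $T$ acts on a smooth projective spherical $X$ with isolated fixed points and then invoking the Carrell--Lieberman theorem \cite{CL}, which yields the stronger off-diagonal vanishing $h^{p,q}(X)=0$ for all $p\neq q$; the stated values then follow from irreducibility of $X$. Your first route (the open $B$-orbit is a homogeneous space of the connected solvable group $B$, hence a product of an affine space and a torus, so $X$ is rational, and $h^{p,0}(X)=h^{p,0}(\p^n)$ by birational invariance) is sound and is precisely the analogue of the paper's own proof of the toric case, Corollary \ref{cor-27}; the only repairs needed are to replace the loose identification of the orbit with $U/U_x\times T/T_x$ by the standard structure result (Rosenlicht: in characteristic zero a homogeneous space of a connected solvable group is isomorphic to $\c^p\times(\c^*)^q$), and to note that your opening remark that $X$ is ``not birational to projective space'' contradicts your own correct conclusion that $X$ is rational. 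Your second route --- applying Theorem \ref{th-coh-line-bundle}(a) to the trivial $G$-linearized bundle to get $H^p(X,\mathcal{O}_X)=0$ for $p>0$, using irreducibility for $h^{0,0}(X)=1$ and Hodge symmetry (as in Section \ref{subsec-h-p0}) to identify $h^{p,0}(X)$ with $\dim H^p(X,\mathcal{O}_X)$ --- is also valid, involves no circularity since that theorem is quoted from Brion, and mirrors the paper's second deduction of Corollary \ref{cor-27} from Theorem \ref{th-24}. The trade-off: the paper's fixed-point argument buys the full statement $h^{p,q}=0$ for $p\neq q$, while your routes are more economical, staying within birational invariance or the line-bundle vanishing already set up in the paper, and suffice for exactly what the theorem claims.
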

\begin{proof}
One shows that if $X$ is a spherical variety then the maximal torus $T$ has isolated fixed points.
The theorem then follows from the more general theorem that if $X$ is a smooth projective variety with an action of a torus with isolated fixed points then $h^{p,q}(X) = 0$ if 
$p \neq q$. We refer to \cite{CL} for this vanishing result as well as its generalization to holomorphic vector fields on K\"ahler manifolds.
\end{proof}

\subsection{Cohomology of line bundles} \label{subsec-coh-line}
Let $X$ be an $n$-dimensional projective spherical variety with a globally generated $G$-linearized line bundle $\L$.
The following result of Michel Brion determines when the cohomology groups of the line bundles $\L$ and $\L^{-1}$ vanish (\cite{Brion-vanishing}).
It generalizes similar statements for toric varieties as well as the Borel-Weil-Bott theorem for flag varieites.
Below $\kappa = \kappa(\L)$ denotes the \emph{Itaka dimension} of $\L$, i.e. the dimension of the image of the Kodaira map of $\L^{\otimes m}$ for sufficiently large $m$.
\begin{Th}[Brion] \label{th-coh-line-bundle}
Let $X$ be a projective spherical variety with a globally generated $G$-linearized line bundle $\L$. Then:
\begin{itemize}
\item[(a)] For any $i>0$, $H^i(X, \L) = \{0\}$.
\item[(b)] For all $i \neq \kappa$, $H^i(X, \L^{-1}) = \{0\}$.
\end{itemize}
\end{Th}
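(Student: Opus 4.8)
\textbf{Proof proposal for Theorem \ref{th-coh-line-bundle}.}

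The plan is to deduce both vanishing statements from the structure of $G$-linearized line bundles on spherical varieties, using the multiplicity-free decomposition of their cohomology as $G$-modules together with an equivariant resolution of singularities. First I would reduce to the smooth case: by equivariant resolution of singularities of spherical varieties there is a proper birational $G$-morphism $\pi : \tilde X \to X$ with $\tilde X$ smooth and spherical, and since $X$ has rational singularities (a standard fact for spherical varieties, or more generally for any variety dominated by a smooth variety with the relevant vanishing $R^i\pi_*\O_{\tilde X} = 0$ for $i>0$), one has $H^i(X, \L) = H^i(\tilde X, \pi^*\L)$ for all $i$ and all line bundles $\L$; the Iitaka dimension is a birational invariant, so it suffices to prove (a) and (b) on $\tilde X$. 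So from now on assume $X$ smooth.

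For part (a), the key point is that $H^0(X,\L)$ is a multiplicity-free $G$-module (the spherical condition), and more is true: the higher cohomology also carries an action of $G$. The cleanest route is to combine a Frobenius-splitting / Kodaira-type vanishing input with the globally generated hypothesis. Concretely, since $\L$ is globally generated it is nef; on a spherical variety one has that the anticanonical bundle is "big enough" in a suitable sense, or one invokes that smooth spherical varieties admit a Frobenius splitting compatible with the boundary divisors that makes every nef (in fact every globally generated) line bundle have vanishing higher cohomology. Alternatively, one can argue inductively: write $\L = \O(D)$ with $D$ an effective $B$-stable divisor (possible since $\L$ is globally generated and $H^0$ contains a $B$-eigensection), and run induction on the number of $G$-orbits by restricting to a $G$-stable divisor component of $D$ and using the long exact sequence, the base case being the open orbit $G/H$ where everything is affine-like after passing to the quotient. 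I would present the Frobenius-splitting argument as the main line since it is the shortest and is the one actually available in the literature (Brion--Inamdar, Brion's paper \cite{Brion-vanishing}).

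For part (b), the strategy is Serre duality combined with (a). On the smooth $X$ we have $H^i(X,\L^{-1}) \cong H^{n-i}(X, \L \otimes K_X)^*$. The anticanonical divisor $-K_X$ on a spherical variety is effective and $B$-stable, equal to the sum of all $G$-stable prime divisors plus the $B$-stable-but-not-$G$-stable prime divisors (the "colors"), each with multiplicity one; in particular $\L \otimes K_X = \L \otimes \O(-\sum D_j)$. The heart of the matter is to show $H^{n-i}(X, \L\otimes K_X)$ vanishes unless $n - i = n - \kappa$, i.e. $i = \kappa$. Here I would use a vanishing theorem of Grauert--Riemenschneider type adapted to the spherical setting, or equivalently run the long exact sequences for the divisors $D_j$ one at a time: peeling off each $D_j$ relates cohomology of $\L\otimes\O(-\sum_{j} D_j)$ on $X$ to that on the $D_j$'s (which are again spherical, of smaller dimension), and a dimension/Iitaka-dimension bookkeeping shows only the degree $n-\kappa$ survives. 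The precise claim that the surviving degree equals the codimension (which is $n - \kappa$, since the Kodaira image has dimension $\kappa$ and the general fiber of the Iitaka fibration contributes a cohomologically trivial factor) is exactly the content one needs.

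The main obstacle will be part (b): controlling which single cohomological degree is nonzero for $\L^{-1}$ requires genuinely knowing the geometry of the Iitaka fibration of $\L$ on a spherical variety — that it is itself a $G$-equivariant morphism to a smaller spherical variety with spherical (in fact "cohomologically trivial" along the relevant twist) fibers — and then matching the Leray spectral sequence degrees against Serre duality. Part (a), by contrast, is essentially a citation to the Frobenius-splitting technology once the reduction to the smooth case is in place, and the reduction itself is routine given rationality of spherical singularities and existence of equivariant resolutions, both of which the paper is entitled to assume.
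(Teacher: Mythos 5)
The first thing to note is that the paper contains no proof of this statement: Theorem \ref{th-coh-line-bundle} is quoted as Brion's theorem with the citation \cite{Brion-vanishing} and is used later as a black box (e.g.\ in Corollaries \ref{cor-dim-H^0-polynomial} and \ref{cor-dim-H^k-L^-1}). So there is no internal argument to compare yours with; the only question is whether your sketch would stand on its own, and as written it would not.

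Two concrete gaps. For (a), your main line rests on the principle that a Frobenius splitting (compatible with the boundary) forces vanishing of higher cohomology of every globally generated line bundle. As a general principle this is false: $\O_X$ itself is globally generated, and Frobenius split projective varieties can have $H^i(X,\O_X)\neq 0$ (an ordinary abelian variety is split). Splitting alone gives vanishing for ample bundles; for merely semiample ones one needs either a splitting relative to an ample divisor or spherical-specific input --- essentially $H^i(X,\O_X)=0$ together with control of the semiample fibration defined by $\L$ --- which is precisely the content of the theorem being proved, so this step is either circular or a citation to Brion/Brion--Inamdar rather than a proof; one would also have to address the passage from characteristic $p$ splittings back to the characteristic-zero statement, which the sketch omits. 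For (b), which you yourself flag as the main obstacle, the decisive steps are missing: after Stein-factorizing the map defined by $\L$ into $f:X\to Y$ with $\L=f^*\mathcal{M}$, $\mathcal{M}$ ample and $\dim Y=\kappa$, the identification $H^i(X,\L^{-1})\cong H^i(Y,\mathcal{M}^{-1})$ requires $f_*\O_X=\O_Y$ and $R^jf_*\O_X=0$ for $j>0$, and one then still needs a Kodaira-type vanishing on the possibly singular spherical image $Y$; neither is established by the proposed ``bookkeeping,'' and the alternative induction peeling off $B$-stable divisors is not carried out (as a side remark, the coefficients of the colors in $-K_X$ need not all equal one). Your reduction to the smooth case via equivariant resolution and rational singularities is fine (modulo normality of $X$), and the toolbox you list is the relevant one, but at the two decisive points the argument appeals to statements that are false in the stated generality or are exactly Brion's theorem; the paper handles this by citing \cite{Brion-vanishing}, and so should you, unless you supply these missing steps.
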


As before we let $\chi(X, \L)$ denote the Euler characteristic of the bundle $\L$ defined by $\chi(X, \L) = \sum_{i=0}^n (-1)^i \dim(H^i(X, \L))$.
\begin{Cor} \label{cor-dim-H^0-polynomial}
Let $X$ be a smooth projective spherical variety and $\L_1, \ldots, \L_k$ globally generated $G$-linearized line bundles on $X$.
For integers $m_1, \ldots, m_k$ put:
$$\psi(m_1, \ldots, m_k) = \chi(X, \L_1^{\otimes m_1} \otimes \cdots \otimes \L_k^{\otimes m_k}).$$
Then $\psi$ is a polynomial in the $m_i$. Moreover when the $m_i$ are nonnegative we have:
$$\psi(m_1, \ldots, m_k) = \dim(H^0(X, \L_1^{\otimes m_1} \otimes \cdots \otimes \L_k^{\otimes m_k})).$$ 
\end{Cor}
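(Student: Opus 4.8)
The plan is to obtain polynomiality from Theorem \ref{th-coh-line-bundle} together with the machinery of virtual polytopes developed in Section \ref{sec-virtual}. First I would recall the Newton--Okounkov construction for the multigraded section ring: choosing a valuation on $\c(X)$ with values in $\z^n$ (equivalently realizing $X$ via a Kodaira map into a large enough projective space and using the standard construction), each globally generated line bundle $\L_i$ gives rise to an integral convex polytope, and by the results of \cite{Kh-P} recalled here the semigroup generated by these polytopes extends to the ring of integral virtual polytopes $Z(\z^n)$, in which each $\chi_{\Delta_i}$ is invertible (Theorem \ref{th-lambda-virtual}). Under this dictionary, for $m_1,\dots,m_k \geq 0$ the dimension $\dim H^0(X, \L_1^{\otimes m_1}\otimes\cdots\otimes\L_k^{\otimes m_k})$ equals the number of integral points in $m_1\Delta_1 + \cdots + m_k\Delta_k$, i.e.\ the value at $(m_1,\dots,m_k)$ of the function
$$
\Phi(m_1,\dots,m_k) = \sum_{x \in \z^n} (\chi_{\Delta_1}^{m_1} * \cdots * \chi_{\Delta_k}^{m_k})(x),
$$
which by Corollary \ref{cor-virtual-1} (applied with $f \equiv 1$, $\beta = \chi_{\{0\}}$, and the $\alpha_i = \chi_{\Delta_i}$) is the restriction to $\z^k$ of a genuine polynomial on $\r^k$ of degree $\leq n$.

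Next I would connect $\Phi$ to $\psi$. By part (a) of Theorem \ref{th-coh-line-bundle}, for $m_1,\dots,m_k \geq 0$ the line bundle $\L_1^{\otimes m_1}\otimes\cdots\otimes\L_k^{\otimes m_k}$ is again globally generated and $G$-linearized, so all its higher cohomology vanishes and $\psi(m_1,\dots,m_k) = \dim H^0 = \Phi(m_1,\dots,m_k)$; this already gives the "moreover" clause and shows $\psi$ agrees with the polynomial $\Phi$ on the positive orthant $\z_{\geq 0}^k$. A polynomial on $\r^k$ is determined by its values on $\z_{\geq 0}^k$ (these points are Zariski dense), so it only remains to check that $\psi$ itself is a polynomial on all of $\z^k$, not merely that it agrees with one on a cone. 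For this I would invoke that $\chi(X, \cdot)$ is a homomorphism from $\Pic(X)$ (or from the additive group of Cartier divisor classes) to $\z$ composed with the Snapper/Hilbert polynomiality of Euler characteristics: for any line bundles $\L_1,\dots,\L_k$ on a complete variety, $(m_1,\dots,m_k)\mapsto \chi(X,\L_1^{\otimes m_1}\otimes\cdots\otimes\L_k^{\otimes m_k})$ is a numerical polynomial. Hence $\psi$ is a polynomial on $\z^k$, and since it coincides with the polynomial $\Phi$ on the full-dimensional cone $\z_{\geq 0}^k$, we have $\psi = \Phi$ identically.

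The main obstacle I anticipate is making the Newton--Okounkov dictionary precise enough that the identity $\dim H^0(X,\L_1^{\otimes m_1}\otimes\cdots\otimes\L_k^{\otimes m_k}) = \#\big((m_1\Delta_1+\cdots+m_k\Delta_k)\cap\z^n\big)$ holds \emph{on the nose} rather than only asymptotically: in general Okounkov bodies only compute leading-order growth, and one needs either that the chosen valuation has a finitely generated, saturated value semigroup for the multi-Rees algebra, or one must pass to the spherical setting where the value semigroup is explicitly the set of lattice points of the cone over the moment/Newton--Okounkov polytope (as developed in \cite{Okounkov-spherical, AB, KKh-reductive}) and hence the count is exact. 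An alternative route that sidesteps this issue entirely is to use only the Snapper polynomiality of $\chi(X,\cdot)$ for the polynomiality of $\psi$, and Theorem \ref{th-coh-line-bundle}(a) for the identification $\psi = \dim H^0$ on the positive orthant; this proves the corollary as stated without needing the exact lattice-point count, which is really only needed later when the polytopes themselves enter the genus formula. I would present the Snapper-plus-vanishing argument as the main proof and remark on the virtual-polytope interpretation, since that is the viewpoint the rest of Section \ref{sec-spherical} builds on.
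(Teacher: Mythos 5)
Your main argument---vanishing of higher cohomology from Theorem \ref{th-coh-line-bundle}(a) to identify $\psi$ with $\dim(H^0)$ on the nonnegative orthant, combined with general polynomiality of the Euler characteristic---is correct and is essentially the paper's own proof, which cites the Hirzebruch--Riemann--Roch theorem for the polynomiality step where you cite Snapper's theorem (either works since $X$ is smooth and projective). The virtual-polytope/Newton--Okounkov detour in your first paragraph is not needed here (and, as you yourself note, the exact lattice-point count it relies on requires the spherical machinery developed only later in the paper), but you correctly set it aside in favor of the vanishing-plus-polynomiality argument.
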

\begin{proof}
The corollary follows from the Hirzbruch-Riemann-Roch theorem and Theorem \ref{th-coh-line-bundle}(a). 
\end{proof}

In particular, let $\L$ be a globally generated $G$-linearized line bundle on a projective spherical variety $X$. 
Let $\psi$ be the polynomial in $m \in \z$ such that $\psi(m) = \dim(H^0(X, \L^{\otimes m}))$ for any nonnegative integer $m$. 

\begin{Cor} \label{cor-dim-H^k-L^-1}
As above let $\kappa = \kappa(\L)$ be the Itaka dimension of $\L$.
We can compute the dimension of $H^\kappa(X, \L^{-1})$ by:
$$\dim(H^\kappa(X, \L^{-1})) = (-1)^\kappa \psi(-1).$$
(Note that by Theorem \ref{th-coh-line-bundle}(b) all other cohomology groups of $\L^{-1}$ are $0$.)
\end{Cor}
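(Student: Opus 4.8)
The plan is to deduce the formula directly from the two cohomology statements already available: Theorem~\ref{th-coh-line-bundle}(b), which tells us that $H^i(X,\L^{-1}) = \{0\}$ for all $i \neq \kappa$, and Corollary~\ref{cor-dim-H^0-polynomial}, which produces the polynomial $\psi$ with $\psi(m) = \dim(H^0(X,\L^{\otimes m}))$ for nonnegative $m$ and, crucially, with $\psi(m) = \chi(X,\L^{\otimes m})$ for \emph{all} integers $m$. The key point is that $\psi$ is a genuine polynomial identity in $m$, so it is legitimate to evaluate it at $m = -1$.

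First I would write, using Corollary~\ref{cor-dim-H^0-polynomial} applied with a single line bundle $\L$, the identity $\psi(m) = \chi(X,\L^{\otimes m})$ for every $m \in \z$; in particular $\psi(-1) = \chi(X,\L^{-1})$. Next I would expand $\chi(X,\L^{-1}) = \sum_{i=0}^{n} (-1)^i \dim(H^i(X,\L^{-1}))$ and apply Theorem~\ref{th-coh-line-bundle}(b): every summand with $i \neq \kappa$ vanishes, leaving only the single term $(-1)^\kappa \dim(H^\kappa(X,\L^{-1}))$. Combining the two displays gives $\psi(-1) = (-1)^\kappa \dim(H^\kappa(X,\L^{-1}))$, and multiplying through by $(-1)^\kappa$ yields the claimed formula $\dim(H^\kappa(X,\L^{-1})) = (-1)^\kappa \psi(-1)$.

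There is essentially no obstacle here; the only thing requiring a word of care is the justification that $\psi$ agrees with the Euler characteristic $\chi(X,\L^{\otimes m})$ for negative $m$ as well as nonnegative $m$. This is exactly the content of Corollary~\ref{cor-dim-H^0-polynomial}: the Euler characteristic $m \mapsto \chi(X,\L^{\otimes m})$ is a polynomial (by Hirzebruch--Riemann--Roch, since the relevant Chern-class integral is polynomial in $m$), and it coincides with $\dim H^0$ for $m \geq 0$ because the higher cohomology vanishes there by Theorem~\ref{th-coh-line-bundle}(a); hence the two polynomials $\psi$ and $\chi(X,\L^{\otimes \bullet})$ agree on infinitely many points and are therefore identically equal. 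Once that identification is in hand, the corollary is immediate, and one should also remark (as the statement already parenthetically does) that Theorem~\ref{th-coh-line-bundle}(b) guarantees $H^i(X,\L^{-1}) = \{0\}$ for $i \neq \kappa$, so $\dim H^\kappa(X,\L^{-1})$ together with the trivial groups determines the entire cohomology of $\L^{-1}$.
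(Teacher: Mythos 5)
Your proof is correct and follows essentially the same route as the paper: the paper's own argument is precisely "Hirzebruch--Riemann--Roch plus Theorem \ref{th-coh-line-bundle}(b)", i.e. identify $\psi$ with the Euler characteristic polynomial $m \mapsto \chi(X,\L^{\otimes m})$ (which is what Corollary \ref{cor-dim-H^0-polynomial} records) and then use the vanishing in part (b) to reduce $\chi(X,\L^{-1})$ to the single term $(-1)^\kappa \dim H^\kappa(X,\L^{-1})$. Your extra remark that the two polynomials agree because they coincide on all nonnegative integers is exactly the implicit step in the paper, so nothing is missing.
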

\begin{proof}
The corollary follows from the Hirzbruch-Riemann-Roch theorem and Theorem \ref{th-coh-line-bundle}(b).
\end{proof}

\subsection{Moment polytope} \label{subsec-moment}
In this section we discuss the notion of the moment polytope $\Delta(R)$ of a graded $G$-algebra $R = \bigoplus_m R_m$. When $R$ is the homogeneous coordinate ring of 
a smooth projective $G$-variety (equivariantly embedded in a projective space) the moment polytope can be identified with the moment polytope (or Kirwan polytope) of $X$ in 
symplectic geometry (see Remark \ref{rem-moment-polytope-symplectic} below). We will be interested in the moment polytopes 
of graded $G$-linear systems $R$ over a spherical variety $X$. When $X$ is normal, the integral points in the dilated polytope $m\Delta(R)$ correspond to the irreducible $G$-modules 
$V_\lambda$ appearing in $R_m$ (Theorem \ref{th-H^0-moment-poly}). We will express our main formula for the genus of a complete intersection in terms of integral points in 
moment polytopes (Theorem \ref{th-main}).

\subsubsection{Moment polytope of a graded $G$-algebra}
Let $R = \bigoplus_m R_m$ be a graded $G$-algebra over $\c$ where the $R_m$ are finite dimensional. 
Moreover, assume that $R$ is contained in a finitely generated graded algebra. 
The rings $R$ we will be interested in the upcoming sections are $G$-invariant graded linear systems on a spherical variety or a homogeneous space. That is, graded subalgebras of
rings of sections of $G$-line bundles on a spherical variety.

Consider the additive semigroup $S(R) \subset \n \times \Lambda$ defined by:
$$S(R) = \bigcup_{m} \{ (m, \lambda) \mid V_\lambda \textup{ appears in } R_m \}.$$ 
Let $C(R)$ denote the closure of the convex hull of 
$S(R)$ in the vector space $\r \times \Lambda_\r$. It is a closed convex cone with apex at the origin.
One defines the \emph{moment convex body} $\Delta(A)$ to be the slice of the cone $C(R)$ at $m=1$ (\cite{Brion-moment} and \cite{KKh-reductive}). That is:
$$\Delta(R) = C(R) \cap (\{1\} \times \Lambda_\r).$$

Alternatively, after projection $\r \times \Lambda_\r \to \Lambda_\r$ on the second factor, the polytope $\Delta(R)$ can be defined as:
$$\Delta(R) = \overline{\bigcup_{m > 0} \{ \lambda / m \mid V_\lambda \textup{ appears in } R_m \}}.$$

\begin{Rem}
If the algebra $R$ is finitely generated then one shows that the semigroup $S(R)$ is a finitely generated semigroup and hence $\Delta(R)$ is a rational convex polytope.
In this case we will refer to $\Delta(R)$ as the \emph{moment polytope}. The moment polytope is also called the \emph{Brion polytope}.
It was first introduced in the paper \cite{Brion-moment}.
\end{Rem}

\begin{Rem}[Connection with moment polytope in symplectic geometry] \label{rem-moment-polytope-symplectic}
Let $K$ be a compact Lie group and let $X$ be a compact Hamiltonian $K$-manifold with the moment map
$\phi: X \to \Lie(K)^*$. It is a well-known result due to F. Kirwan
that the intersection of the image of the moment map with the positive Weyl chamber is a
convex polytope usually called the {\it moment polytope} or {\it Kirwan polytope} of the
Hamiltonian $K$-space $X$.

As usual let $G$ be a complex connected reductive group. Let $G$ act linearly on a finite dimensional complex vector space $V$.
Let $X$ be a closed irreducible $G$-stable subvariety of the projective space $\p(V)$. Let $R$ denote the homogenous coordinate ring of $X$.
Fix a $K$-invariant inner product on $V$ where $K$ is a maximal compact subgroup of $G$. This induces a $K$-invariant symplectic structure on $\p(V)$ and hence on 
the smooth locus  of $X$.
\begin{enumerate}
\item[(1)] With this symplectic structure, the smooth locus of $X$ is a Hamiltonian $K$-manifold.
\item[(2)] When $X$ is smooth it can be proved that the Kirwan polytope of $X$ coincides with $\Delta(R)$ (see \cite{Ness},
\cite{G-S} and \cite{Brion-moment}). More precisely, the Kirwan polytope identifies with $\Delta(R)$ 
after taking the involution $\lambda \mapsto \lambda^*$, where $\lambda^* = -w_0 \lambda$.
\item[(3)] The above is still true if $X$ is non-smooth. In this case one considers the moment map of $\p(V)$ (as a Hamiltonian $K$-space) and restricts it to $X$.
Then the intersection of the image of $X$ (under the restricted moment map) with the positive Weyl chamber can be identified with $\Delta(R)$. 
\end{enumerate}
\end{Rem}

\subsubsection{Moment polytope of a line bundle on a spherical variety} \label{subsubsec-moment-poly-line-bundle}
In this section we describe the linear inequalities defining the moment polytope of the ring of sections of a $G$-linearized line bundle 
on a normal projective spherical variety. We will use it to describe the space of sections of the line bundle, as a $G$-module, in terms of the lattice points in its moment polytope.

Let $\L$ be a $G$-linearized line bundle on a normal projective spherical $G$-variety $X$. Then $H^0(X, \L)$ is a finite dimensional $G$-module. 
Let us assume that $H^0(G, \L) \neq \{0\}$. 
By the \emph{ring of sections of $\L$} we mean the algebra $R(X, \L)$ defined by:
$$R(X, \L) = \bigoplus_{m} H^0(X, \L^{\otimes m}).$$ 
We denote the moment convex body of this algebra by $\Delta(X, \L)$.


Since $H^0(X, \L)$ is a finite dimensional $G$-module there is a $B$-eigensection $\sigma$ in $H^0(X, \L)$.
Then the divisor $D$ of the section $\sigma$ is a $B$-stable divisor. 
Let $D_1, \ldots, D_s$ be all the $B$-stable prime divisors in $X$.
Thus we can write:
$$D = \sum_i a_i D_i.$$
Then for a rational function $f \in \c(X)$ the corresponding mermorphic section $f\sigma$ belongs to $H^0(X, \L)$ if and only if it satisfies:
$$\ord_{D_i}(f) \geq -a_i, \quad \forall i=1, \ldots, s,$$
where $\ord_{D_i}$ is the order of zero-pole along the prime divisor $D_i$
(notice that here we are using the assumption that $X$ is normal).
Via restriction, the function $\ord_{D_i}$ now defines a linear function $\ell_{D_i}$ on the lattice $\Lambda' = \Lambda(G/H) \cong \c(G/B)^{(B)} / \c^*$ (see Remark \ref{rem-Lambda'}).
Let $\alpha$ denote the weight of the $B$-eigensection $\sigma \in H^0(X, \L)$ that we fixed. Then the $G$-spectrum of $H^0(X, \L)$ can be described as:
\begin{multline}
\Spec_G(H^0(X, \L)) = \alpha + \{ \gamma \in \Lambda' \mid \ell_{D_i}(\gamma) \geq -a_i, 1 \leq i \leq s\}, \\
= \{ \lambda \in \alpha + \Lambda' \mid \ell_{D_i}(\lambda) \geq -a_i + \ell_{D_i}(\alpha), 1 \leq i \leq s \}.
\end{multline} 
Applying this to all the $H^0(X, \L^{\otimes m})$ for $m>0$, we get the following description of the moment polytope of the ring of sections $R(X, \L)$:
\begin{equation} \label{equ-moment-polytope}
\Delta(X, \L) = \{ x \in \alpha + \Lambda'_\r \mid \ell_{D_i}(x) \geq -a_i + \ell_{D_i}(\alpha), 1 \leq i \leq s\}.
\end{equation}
($\Lambda_\r'$ is the vector space spanned by the lattice $\Lambda'$.)

Conversely, suppose $\lambda = \alpha + \gamma \in \alpha + \Lambda'$ is a shifted lattice point which lies in the polytope $\Delta(X, \L)$. By \eqref{equ-moment-polytope} 
for any $i = 1, \ldots, s$ we have $\ell_{D_i}(\gamma) \geq -a_i$. And hence if $f$ is a $B$-eigenfunction with weight $\gamma$ then for any $i$ we have $\ord_{D_i}(f) \geq -a_i$
which implies that $f\sigma \in H^0(X, \L)$. Thus we have proved:
\begin{Th}[Brion] \label{th-H^0-moment-poly}
$$H^0(X, \L) = \bigoplus_{\lambda \in \Delta(X, \L) \cap (\alpha + \Lambda')} V_\lambda.$$
\end{Th}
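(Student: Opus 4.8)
The plan is to combine the description of the $G$-spectrum of $H^0(X,\L)$ worked out above with the fact that, $X$ being spherical, $H^0(X,\L)$ is a multiplicity-free $G$-module. Because of multiplicity-freeness it suffices to show that $V_\lambda$ occurs in $H^0(X,\L)$ precisely when $\lambda\in\Delta(X,\L)\cap(\alpha+\Lambda')$; and since a $G$-module contains $V_\lambda$ if and only if it contains a $B$-eigenvector of weight $\lambda$, the whole statement reduces to identifying the set of weights of $B$-eigensections of $\L$ with $\Delta(X,\L)\cap(\alpha+\Lambda')$.

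First I would fix the $B$-eigensection $\sigma\in H^0(X,\L)$ of weight $\alpha$ whose divisor is $D=\sum_i a_iD_i$, supported on the finitely many $B$-stable prime divisors $D_1,\dots,D_s$ (the divisor of any $B$-eigensection is $B$-stable, hence supported on these). Every other $B$-eigensection is of the form $f\sigma$ with $f$ a $B$-eigenfunction in $\c(X)$, and by Remark \ref{rem-Lambda'} the map $f\mapsto\operatorname{wt}(f)$ is an isomorphism $\c(X)^{(B)}/\c^*\xrightarrow{\ \sim\ }\Lambda'$, with $\operatorname{wt}(f\sigma)=\alpha+\operatorname{wt}(f)$. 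Using normality of $X$, the meromorphic section $f\sigma$ is regular if and only if its divisor is effective, and since that divisor is $B$-stable this happens precisely when $\ord_{D_i}(f)+a_i\geq 0$ for every $i$, i.e. $\ell_{D_i}(\gamma)\geq -a_i$ where $\gamma=\operatorname{wt}(f)$. Hence the weights of $B$-eigensections of $H^0(X,\L)$ form exactly the set $\alpha+\{\gamma\in\Lambda'\mid\ell_{D_i}(\gamma)\geq -a_i,\ 1\leq i\leq s\}$.

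It then remains to match this set with $\Delta(X,\L)\cap(\alpha+\Lambda')$. Running the same argument for each power $\L^{\otimes m}$, with $\sigma^m$ as $B$-eigensection of weight $m\alpha$ and divisor $mD$, gives $\Spec_G H^0(X,\L^{\otimes m})=m\alpha+\{\gamma\in\Lambda'\mid\ell_{D_i}(\gamma)\geq -ma_i\}=m\cdot P$, where $P=\{x\in\alpha+\Lambda'_\r\mid\ell_{D_i}(x)\geq -a_i+\ell_{D_i}(\alpha),\ 1\leq i\leq s\}$ is the polytope on the right of \eqref{equ-moment-polytope}. Taking the union of the rescaled spectra and its closure, and using that $P$ is closed (being cut out by finitely many closed half-spaces), shows $\Delta(X,\L)\subseteq P$; the reverse inclusion is immediate since each $\gamma\in\Lambda'$ with $\ell_{D_i}(\gamma)\geq -a_i$ already contributes $\alpha+\gamma$ to $\Spec_G H^0(X,\L)$. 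Thus $\Delta(X,\L)=P$, and intersecting with $\alpha+\Lambda'$ yields the claimed decomposition.

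The step needing the most care is the assertion that a $B$-eigensection is regular exactly when its divisor is effective, together with the fact that this divisor is automatically supported on the $D_i$: this is where the normality of $X$ and the spherical structure (finiteness and $B$-stability of the relevant prime divisors) genuinely enter, everything else being bookkeeping with weights through Remark \ref{rem-Lambda'}.
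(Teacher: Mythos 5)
Your argument is correct and is essentially the paper's own proof: fix a $B$-eigensection $\sigma$ of weight $\alpha$ with $B$-stable divisor $D=\sum_i a_iD_i$, use normality to characterize regularity of $f\sigma$ by the inequalities $\ell_{D_i}(\gamma)\geq -a_i$, and apply this to all $H^0(X,\L^{\otimes m})$ to identify the moment polytope as in \eqref{equ-moment-polytope}, exactly as in the paragraphs preceding the theorem. The only overstatement is calling the inclusion of the inequality polytope into $\Delta(X,\L)$ ``immediate'' from the $m=1$ lattice points (in general one needs the rescaled lattice points from all $m$, the polytope being merely rational), but that inclusion is not needed for the theorem: the inclusion $\Delta(X,\L)$ into the inequality polytope together with your description of the spectrum already yields both containments of the claimed decomposition.
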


Theorem \ref{th-H^0-moment-poly} immediately gives us the dimension of the space of sections $H^0(X, \L)$:
\begin{equation} \label{equ-H^0-dim}
\dim(H^0(X, \L)) = \sum_{\lambda \in \Delta(X, \L) \cap (\alpha + \Lambda')} \dim(V_\lambda) 
\end{equation}

For a rational polytope $\Delta \subset \Lambda_\r$ and $a \in \Lambda$ we define the number $S(\Delta, a)$ by:
\begin{equation} 
S(\Delta, a) = \sum_{\lambda \in \Delta \cap (a + \Lambda')} f(\lambda), 
\end{equation}
where $f$ is the {\it Weyl polynomial} given by the formula:
\begin{equation} \label{equ-Weyl-poly}
f(\lambda) = \dim(V_\lambda) = \prod_{\alpha \in \Phi^+} \langle \lambda+\rho, \alpha \rangle / \langle \rho, \alpha \rangle.
\end{equation}
By the Weyl dimension formula, for any dominant weight $\lambda$, the dimension of $V_\lambda$ is equal to $f(\lambda)$.
With this notation in place, we can restate \eqref{equ-H^0-dim} as:

\begin{Prop} \label{prop-H^0-S}
For any interger $m > 0$ we have:
$$\dim(H^0(X, \L^{\otimes m})) = S(m\Delta(X, \L), m\alpha).$$
\end{Prop}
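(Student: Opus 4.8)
The plan is to deduce the proposition from Theorem \ref{th-H^0-moment-poly} applied not to $\L$ itself but to each of its tensor powers $\L^{\otimes m}$, combined with the elementary observation that the moment polytope of $\L^{\otimes m}$ is the $m$-fold dilation of $\Delta(X,\L)$. Concretely, I would fix as in Section \ref{subsubsec-moment-poly-line-bundle} a $B$-eigensection $\sigma\in H^0(X,\L)$ of weight $\alpha$, with divisor $D=\sum_i a_iD_i$ supported on the $B$-stable prime divisors $D_1,\dots,D_s$, and observe that for $\L^{\otimes m}$ the natural choice of $B$-eigensection is $\sigma^{\otimes m}$: it is a nonzero $B$-eigensection of weight $m\alpha$ (since $\sigma$ has weight $\alpha$), and its divisor is $mD=\sum_i(ma_i)D_i$, supported on the same prime divisors $D_1,\dots,D_s$. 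In particular $H^0(X,\L^{\otimes m})\neq\{0\}$, and $X$ is still the normal projective spherical variety we started with, so Theorem \ref{th-H^0-moment-poly} applies to $\L^{\otimes m}$.

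Next I would identify $\Delta(X,\L^{\otimes m})$ with $m\,\Delta(X,\L)$. Feeding the data $(\sigma^{\otimes m},\,m\alpha,\,\{ma_i\})$ into the inequality description \eqref{equ-moment-polytope} for the line bundle $\L^{\otimes m}$ gives
$$\Delta(X,\L^{\otimes m})=\{\,x\in m\alpha+\Lambda'_\r \mid \ell_{D_i}(x)\geq -ma_i+\ell_{D_i}(m\alpha),\ 1\le i\le s\,\}.$$
Because $\Lambda'_\r$ is a linear subspace, $m\alpha+\Lambda'_\r=m(\alpha+\Lambda'_\r)$, and because each $\ell_{D_i}$ is linear the substitution $x=my$ turns this system into the one defining $\Delta(X,\L)$; hence $\Delta(X,\L^{\otimes m})=m\,\Delta(X,\L)$. (One could alternatively argue from the cone definition of the moment body, using that $R(X,\L^{\otimes m})$ is the $m$-th Veronese subalgebra of $R(X,\L)$; the inequality description is convenient because it makes the equality hold exactly, not merely up to closure.)

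Finally I would apply Theorem \ref{th-H^0-moment-poly} to $\L^{\otimes m}$ with the eigensection $\sigma^{\otimes m}$ and take dimensions. This gives
$$H^0(X,\L^{\otimes m})=\bigoplus_{\lambda\in\Delta(X,\L^{\otimes m})\cap(m\alpha+\Lambda')}V_\lambda=\bigoplus_{\lambda\in m\Delta(X,\L)\cap(m\alpha+\Lambda')}V_\lambda,$$
so by \eqref{equ-H^0-dim} and the Weyl dimension formula \eqref{equ-Weyl-poly},
$$\dim\bigl(H^0(X,\L^{\otimes m})\bigr)=\sum_{\lambda\in m\Delta(X,\L)\cap(m\alpha+\Lambda')}f(\lambda)=S(m\Delta(X,\L),m\alpha),$$
which is exactly the assertion of the proposition. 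I do not expect a genuine obstacle here: the argument is essentially a bookkeeping reduction of the statement for $\L^{\otimes m}$ to Brion's already-established Theorem \ref{th-H^0-moment-poly} for a single line bundle. The only points requiring (minor) care are verifying that $\L^{\otimes m}$ admits a $B$-eigensection whose weight and divisor are the expected multiples of those of $\sigma$ — which is why I single out $\sigma^{\otimes m}$ — and confirming that the dilation identity $\Delta(X,\L^{\otimes m})=m\,\Delta(X,\L)$ holds on the nose.
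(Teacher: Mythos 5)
Your argument is correct and is essentially the paper's own (implicit) reasoning: the inequality description \eqref{equ-moment-polytope} was obtained precisely by applying the order-of-vanishing analysis (with the eigensection of weight $m\alpha$ and divisor $mD$) to every $H^0(X,\L^{\otimes m})$, so Theorem \ref{th-H^0-moment-poly} for $\L^{\otimes m}$ together with the dilation identity $\Delta(X,\L^{\otimes m})=m\Delta(X,\L)$ and the Weyl dimension formula gives the proposition exactly as you describe. The paper simply states this as an immediate restatement of \eqref{equ-H^0-dim}; your write-up just makes the bookkeeping with $\sigma^{\otimes m}$ explicit.
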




Next we consider the line bundle $\L^{-1}$. To describe the cohomology group $H^\kappa(X, \L^{-1})$, where $\kappa = \kappa(\L)$, we need some more notation.
Let $\Delta$ be a rational polytope in $\Lambda_\r$ and $a \in \Lambda$. By Corollary \ref{cor-virtual-3}, the function:
$\Psi(m) = S(m\Delta, ma)$ is a polynomial. We put $S^\circ(\Delta, a)$ to be $(-1)^{\dim(\Delta)+d_\Delta} \Psi(-1)$, 
where $d_\Delta$ is the degree of the Weyl polynomial $f$ restricted to the affine span of $\Delta$ in $\Lambda_\r$. 
In the light of Theorem \ref{th-lambda-virtual}(1), $S^\circ(\Delta, a)$ can also be defined as: 
\begin{equation} \label{equ-S^circ}
S^\circ(\Delta, a) = (-1)^{d_\Delta} \sum_{\lambda \in \Delta^\circ \cap (a + \Lambda')} f(-\lambda).
\end{equation}
As usual $\Delta^\circ$ denotes the interior of $\Delta$ (in the topology of the affine span of $\Delta$).

The following theorem determines the dimension of the cohomology group $H^\kappa(X, \L)$ where $\kappa = \kappa(\L)$ is the Itaka dimension. We note that 
by Theorem \ref{th-H^0-moment-poly} the Itaka dimension $\kappa = \kappa(\L)$ is equal to $\dim(\Delta(X, \L)) + d_{\Delta(X, \L)}$.
\begin{Th} \label{th-coh-L^-1}
$\dim(H^\kappa(X, \L^{-1})) = S^\circ(\Delta(X, \L), \alpha).$
\end{Th}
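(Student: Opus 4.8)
The plan is to combine the polynomiality result of Corollary~\ref{cor-virtual-3} with Brion's vanishing theorem (Theorem~\ref{th-coh-line-bundle}(b)) and the Hirzebruch--Riemann--Roch machinery already packaged in Corollaries~\ref{cor-dim-H^0-polynomial} and \ref{cor-dim-H^k-L^-1}. Recall from Proposition~\ref{prop-H^0-S} that for $m>0$ we have $\dim H^0(X,\L^{\otimes m}) = S(m\Delta(X,\L),m\alpha)$. By Corollary~\ref{cor-virtual-3} applied with $k=1$, the single virtual polytope $\gamma_1 = \chi_{\Delta(X,\L)}$, the shift $a_1 = \alpha$, and the Weyl polynomial $f$ in place of the generic polynomial, the function $\Psi(m) = S(m\Delta(X,\L),m\alpha)$ is the restriction to $\z$ of a genuine polynomial in $m$. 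On the other hand, Corollary~\ref{cor-dim-H^0-polynomial} tells us that $\psi(m) = \chi(X,\L^{\otimes m})$ is a polynomial agreeing with $\dim H^0(X,\L^{\otimes m})$ for $m\geq 0$. Two polynomials agreeing on all positive integers are equal, so $\psi(m) = \Psi(m)$ as polynomials in $m$; in particular $\psi(-1) = S^\circ$-type evaluation of $\Psi$ at $-1$.

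Next I would feed this into Corollary~\ref{cor-dim-H^k-L^-1}: since $\kappa = \kappa(\L)$ and all cohomology of $\L^{-1}$ in degrees $\neq\kappa$ vanishes by Theorem~\ref{th-coh-line-bundle}(b), we get $\dim H^\kappa(X,\L^{-1}) = (-1)^\kappa\psi(-1) = (-1)^\kappa\Psi(-1)$. Now I invoke the remark recorded just before the theorem, namely that $\kappa = \dim(\Delta(X,\L)) + d_{\Delta(X,\L)}$ (this follows from Theorem~\ref{th-H^0-moment-poly}: the leading behaviour of $\dim H^0(X,\L^{\otimes m})$ is governed by the volume of $m\Delta$ weighted by the degree-$d_\Delta$ Weyl polynomial, so $\deg\psi = \dim\Delta + d_\Delta = \kappa$). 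Writing $\Delta = \Delta(X,\L)$ for brevity, we obtain
\[
\dim H^\kappa(X,\L^{-1}) = (-1)^{\dim\Delta + d_\Delta}\Psi(-1),
\]
which is precisely the definition of $S^\circ(\Delta,\alpha)$ given in the paragraph preceding the theorem. This finishes the first, "polynomial-extrapolation" description of the answer.

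To match the alternative closed form \eqref{equ-S^circ}, I would unwind the virtual-polytope reciprocity. By Theorem~\ref{th-lambda-virtual}(1), the multiplicative inverse of $\chi_\Delta$ in the ring $Z(\Lambda)$ is $(-1)^{\dim\Delta}\chi_{-\Delta^\circ}$, so evaluating the polynomial measure $\alpha\mapsto \sum_{\lambda} f(\lambda)(\text{shifted }\alpha)(\lambda)$ at $m=-1$ replaces the dilate $m\Delta$ by $-\Delta^\circ$ (up to the sign $(-1)^{\dim\Delta}$) and replaces the lattice shift $m\alpha$ by $-\alpha$; since the lattice $\z^n\supset\Lambda'$ is symmetric under negation, summing $f$ over $-\Delta^\circ\cap(-\alpha+\Lambda')$ is the same as summing $f(-\lambda)$ over $\Delta^\circ\cap(\alpha+\Lambda')$. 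Pulling out the extra sign $(-1)^{d_\Delta}$ coming from the leading coefficient convention built into the definition of $S^\circ$, one lands on $(-1)^{d_\Delta}\sum_{\lambda\in\Delta^\circ\cap(\alpha+\Lambda')} f(-\lambda)$, which is \eqref{equ-S^circ}.

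I expect the main obstacle to be the bookkeeping of the two sign conventions and of the shift: one must be careful that the $\Lambda'$-coset shift interacts correctly with the reciprocity $\chi_\Delta^{-1} = (-1)^{\dim\Delta}\chi_{-\Delta^\circ}$ (the shifted point $\alpha$ is not in $\Lambda'$, only in a coset, so one should think of the whole configuration living in $\r\times\Lambda_\r$ and slice at $m=\pm1$), and that the degree $d_\Delta$ of the Weyl polynomial restricted to the affine span of $\Delta$ enters both in the definition of $S^\circ$ and implicitly through $\kappa = \dim\Delta + d_\Delta$. Everything else is a formal consequence of results already in hand: Corollary~\ref{cor-virtual-3} for polynomiality of $\Psi$, Corollaries~\ref{cor-dim-H^0-polynomial}--\ref{cor-dim-H^k-L^-1} for the HRR/vanishing input, and Theorem~\ref{th-lambda-virtual} for the virtual-polytope reciprocity that converts the evaluation at $-1$ into a sum over the interior.
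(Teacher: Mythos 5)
Your argument is correct and follows essentially the same route as the paper: polynomiality of $\Psi(m)=S(m\Delta(X,\L),m\alpha)$ via Corollary \ref{cor-virtual-3}, identification with $\chi(X,\L^{\otimes m})$ for $m\geq 0$ via Proposition \ref{prop-H^0-S} and Corollary \ref{cor-dim-H^0-polynomial}, then evaluation at $m=-1$ through Corollary \ref{cor-dim-H^k-L^-1} together with $\kappa=\dim\Delta+d_\Delta$. The extra reciprocity bookkeeping you add to recover \eqref{equ-S^circ} is fine but is already built into the definition of $S^\circ$ given before the theorem, so the paper does not need it in the proof itself.
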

\begin{proof}
By Corollary \ref{cor-virtual-3} we know that $\Psi(m) = S(m\Delta(X, \L), m\alpha)$ is a polynomial in $m$. Moreover, by Corollary \ref{cor-dim-H^0-polynomial}
we know that for any nonnegative integer $m$:
$$\chi(X, \L^{\otimes m}) = \dim(H^0(X, \L^{\otimes m})) = S(m\Delta(X, \L), m\alpha),$$
where $\chi(X, \L^{\otimes m})$ denotes the Euler characteristic of the bundle $\L^{\otimes m}$.  
Putting these together we conclude that $\dim(H^\kappa(X, \L^{-1})) = (-1)^{\kappa} \Psi(-1)$. Note that 
the polynomial $\chi(X, \L^{\otimes m})$ has degree $\kappa$ which is equal to $\dim(\Delta) + d_\Delta$. 
This finishes the proof.
\end{proof}

\subsection{Newton-Okounkov polytope} \label{subsec-NO-poly}
Fix a reduced decomposition $\w$ for the longest element $w_0$ in the Weyl group of $G$.
To any $\lambda$ in the positive Weyl chamber $\Lambda_\r^+$ 
one can associate a convex rational polytope $\Delta_\w(\lambda) \subset \r^N$ called the {\it string polytope} associated to $\lambda$ (and $\w$).
Here $N$ is the number of positive roots. The polytope $\Delta_\w(\lambda)$ has the property that, when $\lambda$ is a dominant weight, 
the number of integral points in $\Delta_\w(\lambda)$ is equal to the dimension of 
the irreducible $G$-module $V_\lambda$, i.e.:
\begin{equation} \label{equ-dimV-lambda-string-poly}
\dim(V_\lambda) = \#(\Delta_\w(\lambda) \cap \z^N).
\end{equation} 
In fact, the integral points in $\Delta_\w(\lambda)$ are in one-to-one correspondence with the so-called {\it canonical basis} for $V_\lambda$
(\cite{Littelmann, BZ}). 

The string polytopes generalize the well-known Gelfand-Zetlin polytopes $\Delta_{\textup{GZ}}(\lambda)$ associated to irreducible representations of $\GL(n, \c)$ (\cite{GZ}). That is, for a specific choice of a reduced 
decomposition, namely $w_0 = (s_1)(s_2s_1)(s_3s_2s_1)иии(s_ns_{n-1} \cdots s_1)$, for the longest element of the Weyl group of $\GL(n, \c)$ the string polytopes can be 
identified with the Gelfand-Zetlin polytopes. 

\begin{Rem} \label{rem-string-poly-add}
The dependence of $\Delta_\w(\lambda)$ is piecewise linear, in the sense that there is a rational polyhedral cone $C_\w$ (with apex at the origin) in the vector space 
$\r^N \times \Lambda_\r$ such that for each $\lambda \in \Lambda_\r^+$ the string polytope $\Delta_\w(\lambda)$ is the slice of the cone $C_\w$ at $\lambda$, i.e. 
$\Delta_\w(\lambda) = C_\w \cap \pi^{-1}(\lambda)$ where $\pi: \r^N \times \Lambda_\r \to \Lambda_\r$ is the projection on the second factor. Note that we can then define 
$\Delta_\w(\lambda)$ for all $\lambda \in \Lambda_\r$ (not just integral $\lambda$).

Moreover, when $G = \GL(n, \c)$ it is easy to see 
from the defining inequalities of the Gelfand-Zetlin polytopes that $\lambda \mapsto \Delta_{\textup{GZ}}(\lambda)$ is additive, namely for any $\lambda, \gamma \in \Lambda_\r^+$ we have:
$$\Delta_{\textup{GZ}}(\lambda + \gamma) = \Delta_{\textup{GZ}}(\lambda) + \Delta_{\textup{GZ}}(\gamma).$$
\end{Rem}


Now let $X$ be a spherical $G$-variety with a $G$-linearized line bundle $\L$. Let $R = \bigoplus_m R_m$ be a graded $G$-subalgebra of the ring of sections
$\bigoplus_m H^0(X, \L^m)$. Moreover assume that $R_m$ is finite dimensional for all $m$ (this is automatic if $X$ is projective). We call $R$ a {\it graded 
$G$-linear system}. We would like to associate a polytope $\tilde{\Delta}(R)$ to $R$ which is responsible for the dimensions of the homogeneous pieces $R_m$.
\begin{Def} \label{def-NO-poly}
Let $\tilde{\Delta}(R)$ denote the Newton-Okounkov polytope of $R$, that is, the polytope over $\Delta(R)$ with string polytopes $\Delta_{\w}(\lambda)$ as fibers:
\begin{equation} \label{equ-NO-poly}
\tilde{\Delta}(R) = \bigcup_{\lambda \in \Delta(R)} (\{ \lambda \} \times \Delta_{\w}(\lambda)) \subset \Lambda^+_\r \times \r^N. 
\end{equation}
If $\L$ is a $G$-linearized line bundle on a projective spherical $G$-variety $X$ we denote the Newton-Okounkov polytope of the ring of sections
$R(X, \L)$ by $\tilde{\Delta}(X, \L)$.
\end{Def}

\begin{Rem} \label{rem-NO-body}
The above notion of the Newton-Okounkov polytope of a graded $G$-linear system over a spherical variety is a special case of the more general notion of a 
{\it Newton-Okounkov body} of a graded linear system on an arbitrary variety (see \cite{LM, KKh-Annals} and the references therein).
\end{Rem}

Let $\L$ be a $G$-linearized line bundle on a normal projective spherical $G$-variety $X$ which contains $G/H$ as the open $G$-orbit. 
As usual $\Lambda' = \Lambda(G/H)$ denotes the lattice of weights of $B$-eigenfunctions in $\c(G/H)$. 
Let us assume that $H^0(X, \L) \neq \{0\}$ and fix a weight $\alpha$ of a $B$-eigensection in $H^0(X, \L)$. 

Let $\tilde{\Delta}$ be a rational polytope in $\r^n$. Fix a lattice $L \subset \r^n$ and a point $a \in \r^n$.
We denote by $N(\tilde{\Delta}, a)$ (respectively $N^\circ(\tilde{\Delta}, a)$) 
the number of points in the shifter lattice $a + L$ which lie in $\tilde{\Delta}$ (respectively in the interior of $\tilde{\Delta}$ in the topology of its affine span). 
We note that, by Theorem \ref{th-lambda-virtual}(1), $N^\circ(\tilde{\Delta}, a)$ is equal to the $(-1)^{\dim(\tilde{\Delta})}$ times the value of the polynomial 
$m \mapsto N(m\tilde{\Delta}, ma)$ at $m=-1$. These notations slightly extend $N$ and $N^\circ$ defined in Section \ref{sec-toric-12}.  

\begin{Prop}[Newton-Okounkov polytope and dimension of space of sections] \label{prop-NO-poly-dim-H^0}
Fix the lattice $\Lambda' \times \z^N$ in the (real) vector space $\Lambda_\r \times \r^N$. Consider $\alpha \in \Lambda$ as an element of $\Lambda \times \z^N$.
Then for any integer $m>0$ we have:
$$\dim(H^0(X, \L^{\otimes m})) = N(m\tilde{\Delta}(X, \L), m\alpha).$$
Moreover:
$$\dim(H^\kappa(X, \L^{-1})) = N^\circ(\tilde{\Delta}(X, \L), \alpha),$$
where as before $\kappa = \kappa(\L)$ is the Itaka dimension. 
\end{Prop}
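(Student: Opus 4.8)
The plan is to derive both identities by combining the moment-polytope description of sections (Theorem~\ref{th-H^0-moment-poly} and Proposition~\ref{prop-H^0-S}) with the fibration \eqref{equ-NO-poly} of the Newton--Okounkov polytope over the moment polytope, whose fibers are the string polytopes. First I would recall that by \eqref{equ-dimV-lambda-string-poly} each fiber $\Delta_\w(\lambda)$ over a lattice point $\lambda \in m\Delta(X,\L) \cap (m\alpha + \Lambda')$ contains exactly $\dim(V_\lambda)$ integral points of $\z^N$, and that dilation by $m$ is compatible with the cone structure $C_\w$ of Remark~\ref{rem-string-poly-add}, so that $m\tilde\Delta(X,\L)$ fibers over $m\Delta(X,\L)$ with fiber $\Delta_\w(\lambda)$ over $\lambda$. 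Summing over the lattice points of the base and using Theorem~\ref{th-H^0-moment-poly} together with $\dim(V_\lambda) = \#(\Delta_\w(\lambda)\cap\z^N)$ gives
$$\dim(H^0(X,\L^{\otimes m})) = \sum_{\lambda \in m\Delta(X,\L)\cap(m\alpha+\Lambda')} \#(\Delta_\w(\lambda)\cap\z^N) = N(m\tilde\Delta(X,\L), m\alpha),$$
where the last equality is just the observation that a point of $(m\alpha + \Lambda'\times\z^N)$ lying in $m\tilde\Delta(X,\L)$ is exactly a choice of a base lattice point $\lambda$ in $m\Delta(X,\L)\cap(m\alpha+\Lambda')$ together with an integral point in the fiber $\Delta_\w(\lambda)$. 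This proves the first assertion.

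For the second assertion, the idea is to pass from the positivity statement for $m>0$ to the value at $m=-1$ by polynomiality, exactly as in the proof of Theorem~\ref{th-coh-L^-1}. By Corollary~\ref{cor-virtual-3} applied to the constant polynomial $f\equiv 1$, the lattice $\Lambda'\times\z^N$, the $\Lambda$-virtual polytope $\tilde\Delta(X,\L)$ and the shift vector $\alpha$, the function $m\mapsto N(m\tilde\Delta(X,\L), m\alpha)$ agrees with a polynomial $\Psi(m)$ on all of $\z$. By the first part $\Psi(m) = \dim(H^0(X,\L^{\otimes m}))$ for $m>0$, and by Corollary~\ref{cor-dim-H^0-polynomial} this equals $\chi(X,\L^{\otimes m})$ for $m\geq 0$; hence the two polynomials coincide and $\Psi(m) = \chi(X,\L^{\otimes m})$ for all $m\in\z$. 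Evaluating at $m=-1$ and invoking Theorem~\ref{th-coh-line-bundle}(b), which says $H^i(X,\L^{-1})=\{0\}$ for all $i\neq\kappa$, we get $\Psi(-1) = \chi(X,\L^{-1}) = (-1)^\kappa \dim(H^\kappa(X,\L^{-1}))$, so $\dim(H^\kappa(X,\L^{-1})) = (-1)^\kappa\Psi(-1)$.

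It remains to identify $(-1)^\kappa\Psi(-1)$ with $N^\circ(\tilde\Delta(X,\L),\alpha)$. Here I would use the definition of $N^\circ(\tilde\Delta,a)$ given just before the Proposition: it is declared to be $(-1)^{\dim(\tilde\Delta)}$ times the value at $m=-1$ of the polynomial $m\mapsto N(m\tilde\Delta, ma)$, which is precisely $(-1)^{\dim(\tilde\Delta)}\Psi(-1)$. So the only thing to check is the sign bookkeeping, namely that $\dim(\tilde\Delta(X,\L)) = \kappa$. But the total space of the fibration \eqref{equ-NO-poly} has dimension $\dim(\Delta(X,\L)) + (\text{generic fiber dimension})$, and the generic fiber $\Delta_\w(\lambda)$ has dimension equal to $d_{\Delta(X,\L)}$, the degree of the Weyl polynomial restricted to the affine span of $\Delta(X,\L)$ (since $\dim V_\lambda = f(\lambda)$ is a polynomial of that degree in $\lambda$ and its generic order of growth matches the dimension of the string polytope); by the remark preceding Theorem~\ref{th-coh-L^-1} this sum $\dim(\Delta(X,\L)) + d_{\Delta(X,\L)}$ is exactly $\kappa$. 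This last dimension-count — matching the fiber dimension of the Newton--Okounkov polytope to the degree of the Weyl polynomial, and hence $\dim\tilde\Delta(X,\L) = \kappa$ — is the step I expect to require the most care, since it is where the string-polytope geometry and the representation-theoretic growth of $\dim V_\lambda$ have to be reconciled; everything else is a formal consequence of Theorems~\ref{th-H^0-moment-poly} and~\ref{th-coh-line-bundle} and Corollary~\ref{cor-virtual-3}.
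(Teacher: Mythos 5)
Your proposal is correct and takes essentially the same route as the paper: the first identity is exactly the fibration/lattice-point count behind Proposition \ref{prop-H^0-S} (Theorem \ref{th-H^0-moment-poly} together with \eqref{equ-dimV-lambda-string-poly} and the cone structure of Remark \ref{rem-string-poly-add}), and the second is the polynomiality-at-$m=-1$ argument of Theorem \ref{th-coh-L^-1}, simply rerun with $N(m\tilde{\Delta}(X,\L), m\alpha)$ in place of $S(m\Delta(X,\L), m\alpha)$. The dimension bookkeeping you flag, $\dim\tilde{\Delta}(X,\L)=\dim\Delta(X,\L)+d_{\Delta(X,\L)}=\kappa$, is precisely the identification the paper itself asserts in the remark preceding Theorem \ref{th-coh-L^-1}, so your write-up fills in the same glue the paper leaves implicit.
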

\begin{proof}
The proposition follows from Proposition \ref{prop-H^0-S} and Theorem \ref{th-coh-L^-1}.
\end{proof}


\subsection{Euler characteristic of line bundles over a projective spherical variety} \label{subsec-Euler-char-sph}
Let $X$ be a projective spherical $G$-variety. 
In this section we consider the cases where the moment polytope or the Newton-Okounkov polytope are additive. In these cases, extending the case of toric varieties, 
the formula for Euler characteristic of line bundles on $X$ can be expressed as polynomial measures on convex chains (Section \ref{subsec-finitely-add-measure}).

Let $\L_1,\ldots, \L_k$ be $G$-linearized line bundles over a projective spherical variety $X$. We assume that the bundles are globally generated 
(i.e. the $H^0(X, L_i)$ are base point free linear systems on $X$). Consider the bundle $\L_1^{\otimes m_1} \otimes \cdots \otimes\L_k^{\otimes m_k}$.
Consider any collection of non negative numbers $m_1,\dots,m_k$, such that at least one number is not zero. Below we will assume that one the following two conditions on these bundles hold:
\begin{itemize}
\item[(I)] The Newton-Okounkov polytope $\tilde{\Delta}$ of the bundle $\L_1^{\otimes m_1} \otimes \cdots \otimes \L_k^{\otimes m_k}$ is 
equal to the Minkowski sum $\tilde{\Delta}_1 + \cdots + \tilde{\Delta}_k$ where $\tilde{\Delta}_1, \ldots, \tilde{\Delta}_k$ are 
the Newton-Okounkov polytope of the bundles $\L_1, \ldots, \L_k$ respectively.

\item[(II)] The moment polytope $\Delta$ of the bundle $\L_1^{\otimes m_1} \otimes \cdots \otimes \L_k^{\otimes m_k}$ is equal to the Minkowski 
sum $\Delta_1 + \cdots + \Delta_k$ where $ \Delta_1, \ldots, \Delta_k$ are the  moment polytopes of the bundles $\L_1,\dots,\L_k$ respectively.
\end{itemize}

\begin{Rem} \label{rem-additive}
(1) For $k=1$ The Conditions I and II always hold but when $k>1$ they do not always hold.

(2) The condition (II) above holds in the classes of horospherical homogeneous spaces as well as the group case 
(see Sections \ref{subsec-horosph} and \ref{subsec-gp-embed}). The condition (I) holds in these
two classes of examples provided that the string polytopes are also additive. This for example happens for the well-known Gelfand-Zetlin polytopes 
(which are a special case of the string polytopes for $G = \GL(n, \c)$).
\end{Rem}

Let as usual $\Lambda'$ denotes weight lattice of $X$, i.e. the sublattice of the weight lattice $\Lambda$ consisting of all the weights of $B$-eigenfunctions on $X$.
For each $i=1, \ldots, k$ fix a weight $a_i \in \Lambda$ of a $B$-eigensection in $H^0(X, \L_i)$.  
\begin{Th}
Assume that for the bundles $\L_1, \ldots, \L_k$ the condition (I) holds. Then for any $k$-tuple of integral numbers $m_1,\dots,m_k$ the Euler characteristic of $X$ with 
the coefficients in the bundle $\L_1^{\otimes m_1} \otimes \cdots \otimes \L_k^{\otimes m_k}$ is given by: 
\begin{equation} \label{equ-Euler-char-1}
\chi(X, \L_1^{\otimes m_1} \otimes \cdots \otimes \L_k^{\otimes m_k}) = 
\sum_{x \in (m_1a_1 + \cdots + m_ka_k)+(\Lambda' \times \z^N)} \tilde{\gamma}_1^{m_1}* \cdots * \tilde{\gamma}_k^{m_k},
\end{equation}
where $\tilde{\gamma}_i$ is the characteristic function of the Newton-Okounkov polytope $\tilde{\Delta}(X, \L_i)$.
\end{Th}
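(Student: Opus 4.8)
The plan is to reduce equation \eqref{equ-Euler-char-1} to the already-established Proposition \ref{prop-NO-poly-dim-H^0} together with the polynomiality statement Corollary \ref{cor-virtual-3}. The idea is the standard ``a polynomial identity that holds on a Zariski-dense set holds everywhere'' argument: both sides of \eqref{equ-Euler-char-1} are polynomial functions of $(m_1,\ldots,m_k)\in\z^k$, and they agree when all the $m_i$ are nonnegative, hence they agree for all integer tuples.

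**First I would** set up the right-hand side. Let $\tilde\gamma_i = \chi_{\tilde\Delta(X,\L_i)}$ as in the statement, and let $\Psi(m_1,\ldots,m_k)$ denote the right-hand side of \eqref{equ-Euler-char-1}, i.e. the sum of the convex chain $\tilde\gamma_1^{m_1}\ast\cdots\ast\tilde\gamma_k^{m_k}$ over the shifted lattice $(m_1a_1+\cdots+m_ka_k)+(\Lambda'\times\z^N)$, weighted by the Weyl polynomial $f$ of \eqref{equ-Weyl-poly} (note: the displayed formula should be read with this weight $f$, which is implicit in the $N$-notation from Section \ref{subsec-NO-poly}). By Corollary \ref{cor-virtual-3}, applied with the polynomial $f$ of degree $d = \deg f$, the $\Lambda$-virtual polytopes $\tilde\gamma_1,\ldots,\tilde\gamma_k$ in $\Lambda_\r\times\r^N$ and the lattice points $a_1,\ldots,a_k$, the function $\Psi$ is the restriction to $\z^k$ of a genuine polynomial on $\r^k$. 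This is the step I expect to be the crux: one must check that the hypotheses of Corollary \ref{cor-virtual-3} are genuinely met, in particular that $\tilde\gamma_i$ is indeed an invertible element of $Z(\Lambda)$ for a suitable lattice $\Lambda = \frac1N\z^{\dim\Lambda_\r + N}$ — that is, that the Newton-Okounkov polytope $\tilde\Delta(X,\L_i)$ is a rational polytope and hence, by Theorem \ref{th-lambda-virtual}, its characteristic function is invertible. Rationality follows because $X$ is projective, so the ring of sections is finitely generated, $\Delta(X,\L_i)$ is a rational polytope, and the string-polytope cone $C_\w$ of Remark \ref{rem-string-poly-add} is rational polyhedral.

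**Next** I would handle the left-hand side. By Corollary \ref{cor-dim-H^0-polynomial}, $\psi(m_1,\ldots,m_k) = \chi(X,\L_1^{\otimes m_1}\otimes\cdots\otimes\L_k^{\otimes m_k})$ is a polynomial in $(m_1,\ldots,m_k)$, and for nonnegative $m_i$ it equals $\dim H^0(X,\L_1^{\otimes m_1}\otimes\cdots\otimes\L_k^{\otimes m_k})$. Now fix nonnegative $m_1,\ldots,m_k$ not all zero, and set $\L = \L_1^{\otimes m_1}\otimes\cdots\otimes\L_k^{\otimes m_k}$. Condition (I) says that $\tilde\Delta(X,\L) = \tilde\Delta_1 + \cdots + \tilde\Delta_k$, but I need the stronger dilated version: applying condition (I) to the tuple $(rm_1,\ldots,rm_k)$ for each $r>0$ gives $\tilde\Delta(X,\L^{\otimes r}) = r(\tilde\Delta_1+\cdots+\tilde\Delta_k)$, i.e. the Newton-Okounkov polytope of the ring of sections $R(X,\L)$ is exactly the Minkowski sum $\tilde\Delta_1+\cdots+\tilde\Delta_k$. (Here I would note that a $B$-eigensection of $\L$ has weight $\alpha = m_1a_1+\cdots+m_ka_k$, by additivity of the moment polytope under the map $m\mapsto\alpha_m$, or directly from the semigroup description.) Then Proposition \ref{prop-NO-poly-dim-H^0} gives $\dim H^0(X,\L) = N(\tilde\Delta_1+\cdots+\tilde\Delta_k,\ m_1a_1+\cdots+m_ka_k)$. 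Finally, since Minkowski sum of polytopes corresponds to the $\ast$-product of their characteristic functions (Section \ref{subsec-gp-convex-chains}), $\chi_{\tilde\Delta_1+\cdots+\tilde\Delta_k} = \tilde\gamma_1\ast\cdots\ast\tilde\gamma_k = \tilde\gamma_1^{m_1}\ast\cdots\ast\tilde\gamma_k^{m_k}$ whenever $m_i\in\{0,1\}$ — and more generally, iterating, for all nonnegative $m_i$ one has $\tilde\gamma_1^{m_1}\ast\cdots\ast\tilde\gamma_k^{m_k} = \chi_{m_1\tilde\Delta_1+\cdots+m_k\tilde\Delta_k}$, so $\psi$ and $\Psi$ agree on $\z_{\geq 0}^k\setminus\{0\}$.

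**To conclude**, two polynomials on $\r^k$ that agree on the Zariski-dense subset $\z_{\geq 0}^k$ (minus the single point $0$, which is still Zariski dense) must be identically equal; hence $\chi(X,\L_1^{\otimes m_1}\otimes\cdots\otimes\L_k^{\otimes m_k}) = \Psi(m_1,\ldots,m_k)$ for every $(m_1,\ldots,m_k)\in\z^k$, which is \eqref{equ-Euler-char-1}. I would make one final remark that the case $(m_1,\ldots,m_k) = (0,\ldots,0)$ is covered separately and trivially: the left side is $\chi(X,\O_X) = 1$ by Theorem \ref{th-h-p-sph}, and the right side, being the value at $0$ of the polynomial $\Psi$, is forced to be $1$ as well by the identity just proven on the dense set. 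The main obstacle, as noted, is verifying the hypotheses of Corollary \ref{cor-virtual-3} cleanly — specifically pinning down the lattice and confirming rationality/invertibility of the Newton-Okounkov chains — and correctly tracking the shift $\alpha = \sum m_i a_i$ through the identification of $H^0$ with lattice points; the rest is bookkeeping with the dictionary between Minkowski sums and $\ast$-products.
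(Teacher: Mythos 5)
Your overall route is the paper's own: show both sides of \eqref{equ-Euler-char-1} are polynomials in $(m_1,\ldots,m_k)$ --- the left side via Hirzebruch--Riemann--Roch and Brion's vanishing (Corollary \ref{cor-dim-H^0-polynomial}), the right side via Corollary \ref{cor-virtual-3} --- check they agree for nonnegative exponents using condition (I) and Proposition \ref{prop-NO-poly-dim-H^0}, and conclude by density of $\z_{\geq 0}^k$ in $\r^k$ in the Zariski sense. Your write-up is in fact a more detailed version of the paper's four-line proof, and the bookkeeping with $\tilde\gamma_1^{m_1}\ast\cdots\ast\tilde\gamma_k^{m_k}=\chi_{m_1\tilde{\Delta}_1+\cdots+m_k\tilde{\Delta}_k}$, the shift $\alpha=\sum_i m_ia_i$, and the rationality/invertibility of the chains is handled correctly.

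There is one concrete error, though: your parenthetical claim that the right-hand side of \eqref{equ-Euler-char-1} ``should be read with the weight $f$'' of \eqref{equ-Weyl-poly}, allegedly implicit in the $N$-notation. It should not. In the Newton--Okounkov formulation the count is unweighted: $N(m\tilde{\Delta}(X,\L),m\alpha)$ is the plain number of points of the shifted lattice $m\alpha+(\Lambda'\times\z^N)$ lying in $m\tilde{\Delta}(X,\L)$, and this already equals $\dim H^0(X,\L^{\otimes m})=\sum_\lambda \dim V_\lambda$, because the fiber of $\tilde{\Delta}$ over $\lambda$ is the string polytope $\Delta_\w(\lambda)$, whose lattice points number exactly $\dim V_\lambda$ by \eqref{equ-dimV-lambda-string-poly}. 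Inserting the Weyl polynomial would double count these dimensions, and then your own matching step --- where you invoke the unweighted Proposition \ref{prop-NO-poly-dim-H^0} to identify $\Psi$ with $\dim H^0$ for nonnegative $m_i$ --- would no longer be consistent with your definition of $\Psi$. The repair is immediate: apply Corollary \ref{cor-virtual-3} with the constant polynomial $f\equiv 1$ to get polynomiality of the unweighted sum; the Weyl-polynomial weight belongs only to the companion formula \eqref{equ-Euler-char-2} for moment polytopes under condition (II). With that correction the argument is complete and matches the paper's.
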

\begin{proof}
By the Hirzbruch-Riemann-Roch theorem the Euler characteristic is a polynomial in the $m_1,\dots,m_k$. Moreover by Proposition \ref{prop-NO-poly-dim-H^0}
it coincides with the righthand side of \eqref{equ-Euler-char-1} for $m_i\geq 0$. On the other hand, by Corollary \ref{cor-virtual-3} 
the righthand side of \eqref{equ-Euler-char-1} is also a polynomial in $m_1,\dots,m_k$. Thus the two polynomials must coincide.
\end{proof}
\begin{Th} 
Assume that for the bundles $\L_1,\dots, \L_k$ the condition (II) holds. Then for any $k$-tuple of integral numbers $m_1,\dots,m_k$ the 
Euler characteristic of $X$ with the coefficients in the bundle $\L_1^{\otimes m_1} \otimes \cdots \otimes \L_k^{\otimes m_k}$ is given by: 
\begin{equation} \label{equ-Euler-char-2}
\chi(X, \L_1^{\otimes m_1} \otimes \cdots \otimes \L_k^{\otimes m_k}) 
= \sum_{x \in (m_1a_1 + \cdots + m_ka_k) + \Lambda'} f(x) \gamma_1^{m_1} * \cdots* \gamma_k^{m_k},
\end{equation}
where $\gamma_i$ is the characteristic function of the moment polytope $\Delta(X, \L_i)$ and $f$ is the Weyl polynomial (see \eqref{equ-Weyl-poly}).
\end{Th}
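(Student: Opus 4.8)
The plan is to follow the proof of the preceding (Condition (I)) theorem verbatim, with the moment polytope in place of the Newton--Okounkov polytope and the Weyl-polynomial-weighted lattice sum $S$ in place of the plain count $N$. The argument rests on two polynomiality statements. First, by the Hirzebruch--Riemann--Roch theorem the left-hand side of \eqref{equ-Euler-char-2} is, as a function of $(m_1,\dots,m_k)\in\z^k$, the restriction of a polynomial, and by Theorem \ref{th-coh-line-bundle}(a) this polynomial agrees with $\dim H^0(X,\L_1^{\otimes m_1}\otimes\cdots\otimes\L_k^{\otimes m_k})$ whenever all $m_i\geq 0$. Second, by Corollary \ref{cor-virtual-3} applied to the Weyl polynomial $f$, to the $\Lambda'$-virtual polytopes $\gamma_i=\chi_{\Delta(X,\L_i)}$, and to the weights $a_i$, the right-hand side of \eqref{equ-Euler-char-2} is also the restriction of a polynomial in $(m_1,\dots,m_k)$. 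Since a polynomial vanishing on $\z_{\geq 0}^k$ vanishes identically, it suffices to check \eqref{equ-Euler-char-2} for nonnegative $m_i$.

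For $m_1,\dots,m_k\geq 0$ I would argue as follows. Fix for each $i$ a $B$-eigensection $\sigma_i\in H^0(X,\L_i)$ of weight $a_i$; then $\sigma_1^{\otimes m_1}\otimes\cdots\otimes\sigma_k^{\otimes m_k}$ is a $B$-eigensection of $\L_1^{\otimes m_1}\otimes\cdots\otimes\L_k^{\otimes m_k}$ of weight $m_1a_1+\cdots+m_ka_k$, so Theorem \ref{th-H^0-moment-poly} (equivalently \eqref{equ-H^0-dim}) applies to the tensor product bundle and writes its $\dim H^0$ as the sum of $f$ over the lattice points of its moment polytope $\Delta^{(\underline m)}$ lying in $(m_1a_1+\cdots+m_ka_k)+\Lambda'$. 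Condition (II) gives $\Delta^{(\underline m)}=m_1\Delta(X,\L_1)+\cdots+m_k\Delta(X,\L_k)$; since Minkowski addition of polytopes corresponds to the operation $*$ in $Z(\Lambda')$ and $\chi_{m\Delta}=\chi_\Delta^{\,m}$ (by induction on $m$ from $\chi_{\Delta+\Delta'}=\chi_\Delta*\chi_{\Delta'}$), we obtain $\chi_{\Delta^{(\underline m)}}=\gamma_1^{m_1}*\cdots*\gamma_k^{m_k}$. Feeding this and Theorem \ref{th-coh-line-bundle}(a) into the $\dim H^0$ formula produces exactly the right-hand side of \eqref{equ-Euler-char-2}, with the degenerate case $\underline m=0$ reducing to $f(0)=\dim V_0=1$ on both sides.

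The only step that is not automatic is the uniformity in $\underline m$ of the translation from the geometric identity ``the moment polytope of the tensor product is the Minkowski sum of the moment polytopes'' to the chain identity for $\chi_{\Delta^{(\underline m)}}$, together with packaging the data so that Corollary \ref{cor-virtual-3} applies literally. Concretely I would fix an identification of $\Lambda'$ with a standard lattice, write $\Delta(X,\L_i)=a_i+Q_i$ with $Q_i$ a rational polytope in $\Lambda'_\r$, and verify that the lattice sum above equals $\sum_{y} h(y,\underline m)\,(\chi_{Q_1}^{m_1}*\cdots*\chi_{Q_k}^{m_k})(y)$ over the relevant lattice, where $h(y,\underline m)=f(y+m_1a_1+\cdots+m_ka_k)$ is a polynomial --- precisely the setup of Corollary \ref{cor-virtual-3}. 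I expect no conceptual obstacle here: this is the identical bookkeeping already needed in the Condition (I) proof.
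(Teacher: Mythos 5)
Your proposal is correct and follows essentially the same route as the paper: Hirzebruch--Riemann--Roch gives polynomiality of the left-hand side, Theorem \ref{th-coh-line-bundle}(a) together with Theorem \ref{th-H^0-moment-poly} (i.e.\ Proposition \ref{prop-H^0-S} combined with Condition (II)) identifies it with the right-hand side for nonnegative $m_i$, Corollary \ref{cor-virtual-3} gives polynomiality of the right-hand side, and the two polynomials must coincide. The only difference is that you spell out the bookkeeping (weight of the product eigensection, $\chi_{m_1\Delta_1+\cdots+m_k\Delta_k}=\gamma_1^{m_1}*\cdots*\gamma_k^{m_k}$, and the translation by $\sum m_ia_i$ needed to invoke Corollary \ref{cor-virtual-3}) that the paper leaves implicit.
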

\begin{proof} 
By the Hirzbruch-Riemann-Roch theorem  
the Euler characteristic is a polynomial in the $m_1,\dots,m_k$. Also by Proposition \ref{prop-H^0-S}
it coincides with the righthand side of \eqref{equ-Euler-char-2} for $m_i \geq 0$. Again 
by Corollary \ref{cor-virtual-3} the righthand side is also polynomial and the two polynomials must coincide.
\end{proof}

\subsection{Invariant linear systems on a spherical homogeneous space}
In this section we discuss invariant linear systems on a spherical homogenous space $G/H$ and their associated moment polytopes. 
First we consider the case of invariant subspaces of regular functions and then the general case of invariant linear systems. 
\subsubsection{Moment polytope of an invariant subspace of regular functions}
Let us assume that the homogeneous space $G/H$ is quasi-affine. In this section we consider the case of trivial line bundles, i.e. we have 
finite dimensional subspaces of regular functions on $G/H$. This situation is closer to the classical toric case and Newton polytope theory which is concerned with finite dimensional 
subspaces of Laurent polynomials spanned by monomials. 
\begin{Ex}
\begin{itemize}
\item[(a)] Let $H = U$ be a maximal unipotent subgroup of $G$. One can show that $G/U$ is quasi-affine and as a $G$-module (for the left $G$-action on $G/U$) 
the space of regular functions $\c[G/U]$ decomposes as:
$$\c[G/U] = \bigoplus_{\lambda \in \Lambda^+} V_\lambda.$$
\item[(b)] Consider the left-right action of $G \times G$ on $G$. Then $G \cong (G \times G) / G_{diag}$ is affine and the space of 
regular functions on $G$, as a $(G \times G)$-module, decomposes as:
$$\c[G] = \bigoplus_{\lambda \in \Lambda^+} \End(V_\lambda).$$ 
\end{itemize}
\end{Ex}

As usual let $\Lambda' = \Lambda(G/H) \subset \Lambda$ be the lattice of weights of $B$-eigenfunction in $\c(G/H)$. Also let $\Lambda'^+ = \Lambda^+(G/H)$ denote the 
weights of the $B$-eigenfunctions in the algebra $\c[G/H]$. It is clear that $\Lambda'^+$ is a semigroup in $\Lambda$.
One knows that the $G$-algebra $\c[G/H]$ decomposes into finite dimensional irreducible $G$-modules. Thus every $B$-eigenfunction in 
$\c[G/H]$ is actually a highest weight vector for $G$ and generates an irreducible $G$-module. Thus: 
$$\Lambda'^+ = \{ \lambda \in \Lambda^+ \mid V_\lambda \textup{ appear in } \c[G/H] \}.$$ 
The following is well-known (see \cite{Timashev}):
\begin{Prop}
The semigroup $\Lambda'^+$ generates the lattice $\Lambda'$. 
\end{Prop}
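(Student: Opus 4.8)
The plan is to show that $\Lambda'^+$, the semigroup of highest weights appearing in $\c[G/H]$, generates $\Lambda'$ as a group. The key point is the description of $\Lambda'$ from Remark \ref{rem-Lambda'}: the map sending a $B$-eigenfunction in $\c(G/H)$ to its weight gives an isomorphism $\c(G/H)^{(B)}/\c^* \cong \Lambda'$. So every element of $\Lambda'$ is the weight of some $B$-eigenfunction $f$ in the \emph{field} $\c(G/H)$, and the task is to realize $f$, up to an element of the group generated by $\Lambda'^+$, by eigenfunctions that are actually \emph{regular}.

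First I would observe that since $G/H$ is quasi-affine, $\c[G/H]$ is a nonzero $G$-stable subalgebra of $\c(G/H)$ whose field of fractions is all of $\c(G/H)$ (quasi-affineness is exactly the statement that regular functions separate points and generate the function field in the appropriate sense; more precisely $G/H$ embeds as a dense subvariety of an affine variety with coordinate ring $\c[G/H]$). Hence any $B$-eigenfunction $f \in \c(G/H)^{(B)}$ can be written as $f = g/h$ with $g, h \in \c[G/H]$. The second step is to arrange that $g$ and $h$ themselves can be taken to be $B$-eigenfunctions: decompose $g$ and $h$ into their $B$-isotypic (equivalently, by the remark after the statement, $G$-isotypic) components $g = \sum_\lambda g_\lambda$, $h = \sum_\mu h_\mu$, and use that $f$ has a definite weight $\nu$ to pick out, from the relation $f h = g$, a pair of $B$-eigenfunctions $g_\lambda, h_\mu \in \c[G/H]^{(B)}$ with $g_\lambda = f h_\mu$, hence with $\lambda - \mu = \nu$. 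Both $\lambda$ and $\mu$ lie in $\Lambda'^+$ by the description $\Lambda'^+ = \{\lambda \in \Lambda^+ \mid V_\lambda \text{ appears in } \c[G/H]\}$ given just before the statement, so $\nu = \lambda - \mu$ lies in the group generated by $\Lambda'^+$. Since $\nu \in \Lambda'$ was arbitrary, $\Lambda'^+$ generates $\Lambda'$.

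The step I expect to be the main obstacle — or at least the one requiring the most care — is the second one: extracting $B$-eigenfunctions $g_\lambda, h_\mu$ from the isotypic decompositions of $g$ and $h$ so that their ratio is still $f$. The clean way is to work with the $U$-invariants: $\c[G/H]^U$ is a finitely generated algebra graded by $\Lambda'^+$ (its $\lambda$-graded piece is one-dimensional, spanned by the highest weight vector of the copy of $V_\lambda$ in $\c[G/H]$, since $\c[G/H]$ is multiplicity-free by sphericity), and $\c(G/H)^U$ is its field of fractions with $\Lambda'$-grading. Then $f \in \c(G/H)^U$ homogeneous of degree $\nu$ is literally a ratio of two homogeneous elements of $\c[G/H]^U$, and homogeneous elements of $\c[G/H]^U$ of degree $\lambda$ exist precisely when $\lambda \in \Lambda'^+$. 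This reduces the statement to the elementary fact that the group generated by the degrees of homogeneous elements of a graded domain equals the set of degrees of homogeneous elements of its fraction field — which is immediate. I would cite \cite{Timashev} for the structural facts about $\c[G/H]^U$ and keep the write-up short.
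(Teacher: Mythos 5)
Your final (``clean'') argument is correct, and since the paper offers no proof of this Proposition at all --- it is labelled as well-known with a reference to \cite{Timashev} --- there is no internal proof to compare against; your reduction via $U$-invariants is precisely the kind of standard argument that citation points to. Two remarks. First, the preliminary version of your second step does not work as literally stated: the $G$-isotypic components $g_\lambda$, $h_\mu$ of $g$ and $h$ are not themselves $B$-eigenfunctions, and multiplication by $f$ is not $G$-equivariant (only $B$-equivariant up to the character $\nu$), so one cannot extract a matching pair with $g_\lambda = f h_\mu$ from $fh=g$ at the level of $G$-isotypic decompositions. You rightly flag this as the delicate point and replace it by the passage to $R=\c[G/H]^U$, which is sound: $R$ is $\Lambda'^+$-graded with one-dimensional homogeneous pieces by sphericity, and a $T$-eigenvector of the quotient field is a ratio of homogeneous elements of $R$ (linear independence of distinct $T$-eigencharacters makes this part genuinely immediate, as you claim). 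Second, the only substantive input, and the only place quasi-affineness is used, is the equality $\c(G/H)^U = \textup{Frac}\bigl(\c[G/H]^U\bigr)$, which you cite rather than prove; it deserves at least a sentence, and it has a short proof that in fact shortcuts your whole reduction. Given $f \in \c(G/H)^{(B)}$ of weight $\nu$, the ideal of denominators $I=\{h \in \c[G/H] \mid fh \in \c[G/H]\}$ is nonzero (because $\c(G/H)=\textup{Frac}(\c[G/H])$ for an irreducible quasi-affine variety) and $B$-stable; since the $B$-action on $\c[G/H]$ is locally finite, Lie--Kolchin produces a $B$-eigenfunction $h \in I$, of some weight $\mu \in \Lambda'^+$, and then $g=fh \in \c[G/H]$ is a $B$-eigenfunction of weight $\nu+\mu \in \Lambda'^+$, so $\nu=(\nu+\mu)-\mu$ lies in the group generated by $\Lambda'^+$. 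This direct version avoids introducing the graded algebra $\c[G/H]^U$ altogether. Minor quibbles: $\c[G/H]$ (hence $\c[G/H]^U$) need not be finitely generated, and $G/H$ need not sit densely in an affine variety whose coordinate ring is all of $\c[G/H]$; neither point is needed --- only $\textup{Frac}(\c[G/H])=\c(G/H)$ is.
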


Now let $\A \subset \Lambda'^+$ be a finite subset. To $\A$ there corresponds the finite dimensional $G$-invariant subspace: 
$$L_\A = \bigoplus_{\lambda \in \A} V_\lambda \subset \c[G/H].$$

Since $L_\A$ is $G$-invariant it is automatically base point free and hence its Kodaira map is defined everywhere on $G/H$.
Let $\Phi_\A: G/H \to \p(L_\A^*)$ denote the Kodaira map of the $G$-invariant subspace $L_\A$. It is a $G$-equivariant morphism. Also let 
$Y_\A$ denote the closure of the image of $G/H$ in the projective space $\p(L_\A^*)$. We note that the dimension of the projective variety $Y_\A$ could 
possibly be smaller than that of $G/H$. 

Let $\overline{R(\A)}$ be the integral closure of the algebra $R(\A) = \bigoplus_{m} L_\A^m$ in $\bigoplus_m \c(G/H)$. We know that:
$$\overline{R(\A)} = \bigoplus_m \overline{L_\A^m},$$ where $\overline{L}$ denotes the integral closure (or completion) of a subspace $L$ in the field of rational functions 
$\c(G/H)$ (see \cite[Appendix 4]{SZ}). We denote the moment polytope of this graded algebra by $\Delta(\A)$. It is a convex polytope containing $\A$. 

We have the following description of the $G$-modules $\overline{L_\A^m}$ in terms of the moment polytope $\Delta(\A)$:
\begin{Th} \label{th-subspace-moment-poly}
For every integer $m > 0$, the $G$-module $\overline{L_\A^m}$ decomposes as:
$$\overline{L_\A^m} = \bigoplus_{\lambda \in m\Delta(\A) \cap \Lambda'} V_\lambda.$$
\end{Th}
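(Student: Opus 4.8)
The plan is to reduce Theorem~\ref{th-subspace-moment-poly} to Theorem~\ref{th-H^0-moment-poly} by passing to a suitable normal projective spherical compactification of $G/H$ on which the subspace $L_\A$ becomes the space of sections of a $G$-linearized line bundle. First I would form the graded algebra $R(\A) = \bigoplus_m L_\A^m$ and its integral closure $\overline{R(\A)} = \bigoplus_m \overline{L_\A^m}$ inside $\bigoplus_m \c(G/H)$. Since $\A$ is finite, $R(\A)$ is finitely generated, and hence so is its integral closure (as $G/H$ is a variety, using \cite[Appendix~4]{SZ}); thus $\overline{R(\A)}$ is a finitely generated graded $G$-algebra, and its $\Proj$ is a normal projective $G$-variety $X_\A$ which is spherical (it is a $G$-equivariant compactification of the image $Y_\A$ of the Kodaira map, or more precisely of the normalization of $Y_\A$, and in fact contains an open $G$-orbit which is a quotient $G/H'$ of $G/H$; when the Kodaira map $\Phi_\A$ is birational onto its image, $X_\A$ is a compactification of $G/H$ itself). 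Because $\overline{R(\A)}$ is integrally closed, the line bundle $\L_\A = \O_{X_\A}(1)$ is globally generated and its ring of sections $R(X_\A, \L_\A) = \bigoplus_m H^0(X_\A, \L_\A^{\otimes m})$ coincides with $\overline{R(\A)}$ degree by degree; this is the standard fact that the saturated/integrally closed Rees-type algebra computes all sections on the normal $\Proj$.

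Next I would invoke Theorem~\ref{th-H^0-moment-poly} applied to $(X_\A, \L_\A^{\otimes m})$: since $X_\A$ is a normal projective spherical variety and $\L_\A$ is $G$-linearized and globally generated (so $H^0 \neq \{0\}$), we get
\[
H^0(X_\A, \L_\A^{\otimes m}) = \bigoplus_{\lambda \in \Delta(X_\A, \L_\A^{\otimes m}) \cap (\alpha_m + \Lambda'(X_\A))} V_\lambda,
\]
where $\alpha_m$ is the weight of a $B$-eigensection of $\L_\A^{\otimes m}$ and $\Lambda'(X_\A)$ is the weight lattice of the open orbit of $X_\A$. By the definition of the moment polytope as a slice of the cone $C(\overline{R(\A)})$ at height one, $\Delta(X_\A, \L_\A^{\otimes m}) = m\,\Delta(\A)$ and the shifted lattice $\alpha_m + \Lambda'(X_\A)$ is exactly the set of weights occurring at height $m$, which since every such weight is dominant (the eigenfunctions in $\c[G/H]$ are highest-weight vectors) lies in $\Lambda^+$; moreover the relevant lattice is $\Lambda'$ by the identification $\Lambda'(X_\A) = \Lambda(G/H) = \Lambda'$ (the weight lattice only depends on the open $B$-orbit, which is the same for $G/H$ and $X_\A$). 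Combining with the identification $H^0(X_\A, \L_\A^{\otimes m}) = \overline{L_\A^m}$ gives the claimed decomposition $\overline{L_\A^m} = \bigoplus_{\lambda \in m\Delta(\A) \cap \Lambda'} V_\lambda$.

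The main obstacle I anticipate is the bookkeeping around the lattice and the possibility that $\Phi_\A$ is not birational, so that the open orbit of $X_\A$ is a proper quotient $G/H'$ of $G/H$ rather than $G/H$ itself. One must check that this does not affect the statement: the weights of $B$-eigenfunctions appearing in $\overline{R(\A)}$ are by construction a subset of $\Lambda'^+ \subset \Lambda'$, and the moment polytope $\Delta(\A)$ of the algebra is intrinsically defined from the semigroup $S(\overline{R(\A)})$, so whether we regard $X_\A$ as compactifying $G/H$ or $G/H'$ the set $m\Delta(\A) \cap \Lambda'$ records exactly the $\lambda$ with $V_\lambda$ occurring in $\overline{L_\A^m}$. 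The one genuinely technical point is the identification of degree-$m$ sections on the normal projective variety $\Proj \overline{R(\A)}$ with $\overline{L_\A^m}$; this is where normality and integral closedness of $\overline{R(\A)}$ are essential, and I would cite \cite[Appendix~4]{SZ} together with the standard description of sections of $\O(1)$ on $\Proj$ of an integrally closed graded domain. Everything else is a direct application of Theorem~\ref{th-H^0-moment-poly} and the definition of $\Delta(\A)$.
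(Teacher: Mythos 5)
Your overall strategy (identify $\overline{L_\A^m}$ with the degree-$m$ sections of a $G$-line bundle on a normal projective spherical variety and invoke Theorem~\ref{th-H^0-moment-poly}) is the same as the paper's, but your choice of model introduces a gap the paper's construction avoids. The paper takes a normal projective completion $X$ of $G/H$ itself on which $\Phi_\A$ extends (built as in Theorem~\ref{th-suff-full-exists}: close up the graph of $\Phi_\A$ inside $Z\times\p(L_\A^*)$ for an equivariant projective completion $Z$ of $G/H$, then pass to an equivariant normalization/resolution), so the open $G$-orbit is honestly $G/H$ and Theorem~\ref{th-H^0-moment-poly} applies with the lattice $\Lambda'=\Lambda(G/H)$ and with $\alpha=0$. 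Your $X_\A=\Proj\,\overline{R(\A)}$ instead has open $G$-orbit the normalization of the open orbit of $Y_\A$ in the algebraic closure of $\c(Y_\A)$ inside $\c(G/H)$; when $\dim(Y_\A)<\dim(G/H)$ this is a proper quotient $G/H'$ of $G/H$, and its weight lattice $\Lambda(G/H')$ can be a proper, lower-rank sublattice of $\Lambda'$. Your assertion that ``the weight lattice only depends on the open $B$-orbit, which is the same for $G/H$ and $X_\A$'' is false in exactly this situation, and Theorem~\ref{th-H^0-moment-poly} applied to $X_\A$ only yields a decomposition indexed by $m\Delta(\A)\cap(\text{coset of }\Lambda(G/H'))$. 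The paragraph where you address this (``the set $m\Delta(\A)\cap\Lambda'$ records exactly the $\lambda$ with $V_\lambda$ occurring'') assumes the statement of Theorem~\ref{th-subspace-moment-poly} rather than proving it: the whole content of the theorem is that \emph{every} point of $m\Delta(\A)\cap\Lambda'$ occurs, i.e.\ that the weight semigroup is saturated with respect to $\Lambda'$, not merely with respect to the smaller lattice your model sees.

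The gap is reparable in two ways. The clean fix is to follow the paper and compactify $G/H$ itself, which makes the lattice bookkeeping trivial. Alternatively, you can keep $X_\A$ but then you must prove the missing comparison: show that $\Lambda(G/H')$ is saturated in $\Lambda'$ inside its real span (if $\lambda\in\Lambda'$ has $k\lambda\in\Lambda(G/H')$, compare a $B$-eigenfunction of weight $\lambda$ in $\c(G/H)$ with one of weight $k\lambda$ in $\c(G/H')$; sphericity forces their ratio of appropriate powers to be constant, and since $\c(G/H')$ is algebraically closed in $\c(G/H)$ by construction of $\overline{R(\A)}$, one gets $\lambda\in\Lambda(G/H')$), and observe that $\Delta(\A)$ lies in the corresponding affine subspace, so that $m\Delta(\A)\cap\Lambda'$ and $m\Delta(\A)\cap(\text{coset of }\Lambda(G/H'))$ coincide. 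Separately, note that your identification $H^0(X_\A,\O_{X_\A}(m))=\overline{L_\A^m}$ needs a word of care when $\overline{R(\A)}$ is not generated in degree one (invertibility of $\O_{X_\A}(1)$); this is of the same ``standard fact'' caliber as the identification the paper itself leaves to the reader, but as written your proof leans on it in a form that is not literally standard.
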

\begin{proof}
As in the proof of Theorem \ref{th-suff-full-exists}, given the $G$-invariant subspace $L_\A$ 
we can find a normal projective spherical $G$-variety $X$ which contains $G/H$ as the open orbit and moreover
the Kodaira map $\Phi_\A$ extends to the whole $X$.  
Let $\L = \Phi_\A^*(\mathcal{O}(1))$ be the pull-back of the line bundle $\mathcal{O}(1)$ on the projective space to 
$X$. One shows that for each $m>0$ the integral closure $\overline{L_\A^m}$ can be identified with $H^0(X, \L^{\otimes m})$.
The theorem now follows from Theorem \ref{th-H^0-moment-poly}. Note that $\alpha$ is the identity character because $\c[G/H]$ contains the constant function $1$ 
which is invariant under the action of $G$ and hence is a $B$-eigensection with weight $0$.
\end{proof}

\subsubsection{Moment polytope of an invariant linear system} \label{subsub-lin-system}
Let $\E$ be a $G$-linearized line bundle on $G/H$. 
Then the space of sections $H^0(G/H, \E)$ is a $G$-module. Moreover, since $G/H$ is spherical, $H^0(G/H, \E)$ is multiplicity-free, that is, 
every irreducible $G$-module appears in it with multiplicity $0$ or $1$.

Take a finite nonempty subset $\A$ of the $G$-spectrum of the space of global sections $H^0(G/H, \E)$. Let $E_\A$ denote the $G$-invariant subspace of $H^0(G/H, \E)$
determined by $\A$, that is:
$$E_\A = \bigoplus_{\lambda \in \A} V_\lambda.$$
Since $E_\A$ is $G$-invariant then the base locus of $E_\A$, i.e. the locus of points where all the sections in $E_\A$ vanish, is a $G$-invariant subvariety of $G/H$. 
But as $E_\A$ is nonzero we conclude that the base locus of $E_\A$ is empty, in other words, $E_\A$ is base point free. 
The base point free linear system $E_\A$ gives rise to a Kodaira map $\Phi_{\A}: X \to \p(E_\A^*)$, the projective space of the dual space $E_\A^*$. We denote the 
closure of the image of the Kodaira map by $Y_\A$. The Kodaira map is $G$-equivariant and hence $Y_\A$ is a $G$-stable subvariety of the projective space $\p(E_\A^*)$. Note that the 
dimension of the projective variety $Y_\A$ may be smaller than that of $X$. To $\A$ we can associate a graded algebra:
$$R(\A) = \bigoplus_{m} E_\A^m,$$
where $E_\A^m$ denotes the image of $E_\A \otimes \cdots \otimes E_\A$ ($m$ times) in $H^0(X, \E^{\otimes m})$ under the product map $H^0(X, \E) \otimes \cdots \otimes H^0(X, \E)$ ($m$ times).
The algebra $R(\A)$ is a graded $G$-subalgebra of the ring of sections $\bigoplus_{m} H^0(X, \E^{\otimes m})$. 
The homogneous coordinate ring of the projective variey $Y_\A \subset \p(E_\A^*)$ can naturally be identified with $R(\A)$. 

Let $\c(\E)$ denote the space of meromorphic sections of $\E$. It can be identified with $\c(G/H)$ via a choice of a nonzero section $\sigma \in \c(\E)$. 
We denote the integral closure of the algebra $R(\A)$ in $\bigoplus_{m} \c(\E^{\otimes m})$ by $\overline{R(\A)}$.

We have the following extension of Theorem \ref{th-subspace-moment-poly}:
\begin{Th} \label{th-lin-system-moment-poly}
For every integer $m > 0$, the $G$-module $\overline{E_\A^m}$ decomposes as:
$$\overline{E_\A^m} = \bigoplus_{\lambda \in m\Delta(\A) \cap (m\alpha + \Lambda')} V_\lambda.$$
Here $\alpha$ denotes the weight of a $B$-eigensection $\sigma$ in $H^0(G/H, \E)$.
\end{Th}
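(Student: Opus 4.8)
The plan is to reduce Theorem \ref{th-lin-system-moment-poly} to the already-proven Theorem \ref{th-subspace-moment-poly} (and ultimately to Brion's Theorem \ref{th-H^0-moment-poly}) by passing to a suitable normal projective spherical compactification on which the linear system $E_\A$ becomes the full space of sections of a globally generated $G$-linearized line bundle. First I would invoke the existence (as in the proof of Theorem \ref{th-subspace-moment-poly}, and the referenced construction of a ``sufficiently full'' spherical embedding) of a normal projective spherical $G$-variety $X$ containing $G/H$ as its open $G$-orbit such that the Kodaira map $\Phi_\A \colon G/H \to \p(E_\A^*)$ extends to a morphism $\Phi_\A \colon X \to \p(E_\A^*)$. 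Set $\L = \Phi_\A^*(\O(1))$, a globally generated $G$-linearized line bundle on $X$. The $B$-eigensection $\sigma \in H^0(G/H,\E)$ of weight $\alpha$ corresponds, under the identification of $\c(\E)$ with $\c(G/H)$, to a $B$-eigensection of $\L$ (after possibly adjusting $X$ so that $\sigma$ and its relevant powers extend regularly); this fixes the shift $\alpha$ appearing in the statement.

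The key step is to identify $\overline{E_\A^m}$ with $H^0(X, \L^{\otimes m})$ for every $m > 0$. One inclusion is clear: the product map sends $E_\A^m$ into $H^0(X,\L^{\otimes m})$, and since $X$ is normal and $H^0(X,\L^{\otimes m})$ is ``saturated'' (a rational section lies in it iff its orders along the $B$-stable prime divisors satisfy the right bounds, exactly as in the discussion preceding Theorem \ref{th-H^0-moment-poly}), the integral closure $\overline{E_\A^m}$ of $R(\A)_m$ inside $\c(\E^{\otimes m})$ is also contained in $H^0(X,\L^{\otimes m})$. For the reverse inclusion one uses that $\L$ is globally generated and that $\Phi_\A$ is the morphism associated to $|\L|$ restricted to (the sections coming from) $E_\A$, together with the fact that $Y_\A \subset \p(E_\A^*)$ has homogeneous coordinate ring $R(\A)$: the sections of $\L^{\otimes m}$ are, up to integral closure, generated by degree-$m$ monomials in $E_\A$, because the projective normalization of $Y_\A$ in degree $m$ is computed by $\overline{R(\A)_m}$. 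Once $\overline{E_\A^m} \cong H^0(X,\L^{\otimes m})$ is established, Theorem \ref{th-H^0-moment-poly} applied to $\L^{\otimes m}$ gives $$H^0(X,\L^{\otimes m}) = \bigoplus_{\lambda \in \Delta(X,\L^{\otimes m}) \cap (m\alpha + \Lambda')} V_\lambda,$$ and since $\Delta(X,\L^{\otimes m}) = m\,\Delta(X,\L) = m\,\Delta(\A)$ (the moment body scales linearly with tensor powers, as it is a slice of the cone $C(R)$), this is exactly the asserted decomposition.

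I expect the main obstacle to be the careful justification that $\overline{E_\A^m} = H^0(X,\L^{\otimes m})$, i.e. that the chosen compactification $X$ is ``large enough'' that the extended Kodaira map really realizes $E_\A$ as (a system generating) $H^0(X,\L)$ and that taking $m$-th powers followed by integral closure recovers all of $H^0(X,\L^{\otimes m})$ — this is where normality of $X$, the structure of the ring of sections via orders along $B$-stable divisors, and the identification of $R(\A)$ with the homogeneous coordinate ring of $Y_\A$ all have to be combined. A secondary technical point is ensuring the weight shift is precisely $\alpha$: one must check that the $B$-eigensection of $\L$ extending $\sigma$ still has weight $\alpha$ and that it is the one producing the shift in Theorem \ref{th-H^0-moment-poly}, which follows because the $B$-weight of a meromorphic section is intrinsic and unchanged by the identification $\c(\E) \cong \c(G/H)$. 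The remaining verifications (linearity of $\Delta$ in tensor powers, base-point-freeness of $E_\A^m$, $G$-equivariance of $\Phi_\A$) are routine given the results already in the excerpt.
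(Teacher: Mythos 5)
Your proposal is correct and follows essentially the same route as the paper: the paper's proof is literally ``as in the proof of Theorem \ref{th-subspace-moment-poly}'', i.e.\ extend the Kodaira map $\Phi_\A$ to a normal projective spherical completion $X$ (via the construction in Theorem \ref{th-suff-full-exists}), set $\L = \Phi_\A^*(\O(1))$, identify $\overline{E_\A^m}$ with $H^0(X,\L^{\otimes m})$, and apply Brion's Theorem \ref{th-H^0-moment-poly}, with the shift now given by the weight $\alpha$ of the $B$-eigensection $\sigma$. The extra details you supply for the identification $\overline{E_\A^m} = H^0(X,\L^{\otimes m})$ and for the weight bookkeeping are exactly the points the paper leaves as ``one shows that'', so there is nothing to correct.
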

\begin{proof}
As in the proof of Theorem \ref{th-subspace-moment-poly}.
\end{proof}

\subsection{Simultaneous resolution of singularities} \label{subsec-resolution}
Let $\E_1, \ldots, \E_k$ be globally generated $G$-linearized line bundles on a spherical homogeneous space $G/H$. 
For each $i=1, \ldots, k$ let $E_i$ be a nonzero $G$-invariant linear system for $\E_i$ i.e. $E_i$ is a finite dimensional $G$-invariant subspace of 
$H^0(X, \E_i)$. 
Each $E_i$ is $G$-invariant and hence it is base point free. Thus the Kodaira map $\Phi_{E_i}$ is defined on the whole $G/H$. As usual we denote the closure of the image of the 
Kodaira map $\Phi_{E_i}$ by $Y_{E_i}$. It is a projective $G$-subvariety of $\p(E_i^*)$.

We will be interested in a generic complete intersection of $E_1, \ldots, E_k$ in $G/H$. To apply topological methods, we need to work with a 
compactification of $G/H$ which behaves nicely with respects to the linear systems $E_i$. This is the content of the next definition. It is an extension of 
the similar notion for toric varieties (\cite{Askold-toroidal}).
\begin{Def}
Let $X$ be a $G$-equivariant completion of $G/H$, that is, $X$ is a complete spherical $G$-variety which contains $G/H$ as the open orbit.
Let us say that $X$ is {\it sufficiently complete} with respect to the linear systems $E_1, \ldots, E_k$ if: 
\begin{itemize}
\item[(a)] $X$ is smooth.
\item[(b)] For each $i=1, \ldots, k$, the Kodaira map $\Phi_{E_i}$ extends to 
a morphism on the whole $X$.
\end{itemize}
\end{Def}

\begin{Th} \label{th-suff-full-exists}
Given base point free $G$-invariant linear systems $E_1, \ldots, E_k$ on $G/H$ there exists a projective spherical variety $X$ which is sufficiently complete with respect to 
$E_1, \ldots, E_k$.
\end{Th}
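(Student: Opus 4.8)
The plan is to construct $X$ in two stages: first build \emph{some} $G$-equivariant completion of $G/H$ on which all the Kodaira maps $\Phi_{E_i}$ extend, and then apply equivariant resolution of singularities to make it smooth without destroying this property. For the first stage, recall that each $E_i$ is a finite-dimensional $G$-invariant subspace of $H^0(G/H,\E_i)$, so the Kodaira map $\Phi_{E_i}\colon G/H \to \p(E_i^*)$ is a $G$-equivariant morphism with image a $G$-stable locally closed subvariety; let $Y_{E_i}$ be its closure. Taking the product of all these maps gives a $G$-equivariant morphism $\Phi = (\Phi_{E_1},\dots,\Phi_{E_k})\colon G/H \to Y_{E_1}\times\cdots\times Y_{E_k}$, and we let $X'$ be the closure of the image of $G/H$ under the map $(\iota,\Phi)$, where $\iota$ is an embedding of $G/H$ into some $G$-equivariant completion (for instance, a normal projective spherical completion $\bar X$ obtained from any choice of colored fan, which exists by the Luna–Vust theory of spherical embeddings). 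Then $X'$ is a projective $G$-variety containing $G/H$ as a dense orbit — hence a spherical variety — and by construction each projection $X' \to Y_{E_i}$ extends $\Phi_{E_i}$. So $X'$ already satisfies condition (b); what it may fail is (a), smoothness.

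For the second stage, invoke equivariant resolution of singularities for spherical varieties: there is a projective spherical $G$-variety $X$ together with a proper birational $G$-equivariant morphism $\pi\colon X \to X'$ which is an isomorphism over the smooth locus of $X'$, in particular over the open orbit $G/H$, and with $X$ smooth. (The existence of such equivariant resolutions is part of the toolkit the paper says it relies on; it can be deduced from Hironaka's theorem together with averaging over $G$, or from the combinatorial description of spherical embeddings via subdivision of colored fans.) Since $\pi$ is an isomorphism over $G/H$ we identify $G/H$ with the corresponding open orbit in $X$, so $X$ is a $G$-equivariant completion of $G/H$ and condition (a) holds. Finally, for condition (b): the composition $X \xrightarrow{\pi} X' \to Y_{E_i}$ is a morphism defined on all of $X$ whose restriction to $G/H$ is exactly $\Phi_{E_i}$, so it \emph{is} the extension of $\Phi_{E_i}$ to $X$. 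Hence $X$ is sufficiently complete with respect to $E_1,\dots,E_k$.

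The routine points to check are that $X'$ is genuinely spherical (immediate: it has a dense $G/H$ hence a dense $B$-orbit since $G/H$ does) and that the extended maps restrict correctly to the open orbit (a matter of tracking the identification under $\pi$). The one genuine technical input — and the step I would flag as the main obstacle to write out carefully — is the existence of the $G$-equivariant resolution $\pi\colon X\to X'$ that is an isomorphism over $G/H$; this is where one must cite the equivariant resolution theorem (and, if one wants to stay within spherical geometry, verify that the resolution can be taken spherical, which follows from subdividing the colored fan of $X'$ into a smooth one while leaving the open orbit untouched). Everything else is formal manipulation of closures of graphs of $G$-equivariant maps.
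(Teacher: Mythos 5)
Your proposal is correct and follows essentially the same route as the paper: the paper also takes the closure of the graph-type map $x \mapsto (x, \Phi_{E_1}(x), \ldots, \Phi_{E_k}(x))$ inside $Z \times Y_{E_1} \times \cdots \times Y_{E_k}$ for a projective spherical completion $Z$, applies $G$-equivariant resolution of singularities for spherical varieties, and extends each $\Phi_{E_i}$ as the composition of the resolution with the projection to $Y_{E_i}$.
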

\begin{proof}
One knows that the homogeneous space $G/H$ has an equivariant projective completion. 
Let $Z$ be such a projective completion, i.e. $Z$ is a projective spherical $G$-variety which contains $G/H$ as the open orbit.
Consider the map $\Phi: G/H \to Z \times Y_{E_1} \times \cdots \times Y_{E_k}$ given by:
$$\Phi(x) = (x, \Phi_{E_1}(x),  \ldots, \Phi_{E_k}(x)),$$
and let $Y$ be the closure of the image of $\Phi$ . The map $\Phi$ is a $G$-equivariant embedding and $Y$ is a projective 
subvariety of $Z \times Y_{E_1} \times \cdots \times Y_{E_k}$. We note that $Y$ contains $G/H$ as an open orbit and hence is a spherical $G$-variety.
We have the following commutative diagram:
\begin{equation}
\xymatrix{
G/H \ar[r] \ar[rd]^{\Phi_{E_1 \cdots E_k}} & \p(E_1^*) \times \cdots \times \p(E_k^*) \ar[d] \\
& \p((E_1 \cdots E_k)^*) \\
}
\end{equation}
where the vertical arrow is the Segre map and $E_1 \ldots E_k$ is the linear system which is the image of $E_1 \otimes \cdots \otimes E_k$ in $H^0(G/H, \E_1 \otimes \cdots \otimes \E_k)$. 
Thus in defining $Y$, instead of the Kodaira maps $\Phi_{E_1}, \ldots, \Phi_{E_k}$, 
we alternatively could use the Kodaira map $\Phi_{E_1 \cdots E_k}$.
 
The following theorem guarantees that $Y$ has a $G$-equivariant resolution of singularities (see \cite{Perrin}).
\begin{Th}[Resolution of singularities for spherical varieties] \label{th-spherical-res-sing}
Every spherical $G$-variety has a $G$-equivariant resolution of singularities. 
\end{Th}

Let $\pi: X \to Y$ be a $G$-equivariant resolution of singularities of $Y$ as in the above theorem.
For each $i$ let $\pi_i$ be the projection $Z \times Y_{E_1} \times \cdots \times Y_{E_k} \to Y_{E_i}$ restricted to $Y$.
For each $i=1, \ldots, k$ we have a commutative diagram:
\begin{equation} \label{equ-comm-diag}
\xymatrix{
X \ar[r]^{\pi} & Y \ar[d]^{\pi_i} \\
G/H \ar[ur]^{\Phi}  \ar[r]^{\Phi_{E_i}} & Y_{E_i} \\
}
\end{equation}
Note that $\Phi$ (respectively $\pi$) is an isomorphisms from $G/H$ (respectively the open orbit in $X$) to the open orbit in $Y$. Let us identify the homogeneous space 
$G/H$ with the open orbit in $X$ via $\pi^{-1} \circ \Phi$.
Now for each $i$ define $\tilde{\Phi}_{E_i}$ to be $\pi_i \circ \pi$. Clearly $\tilde{\Phi}_{E_i}$ is $G$-equivariant and extends the Kodaira map 
$\Phi_{E_i}: G/H \to Y_{E_i}$. This finishes the proof of the theorem.
\end{proof}

The following is a direct corollary of Thom's transversality theorem (Theorem \ref{th-7}):
\begin{Th}[Transversality of generic hyperplane sections]
As above let $E_1, \ldots, E_k$ be $G$-invariant nonzero linear systems on a spherical homogeneous space $G/H$ and $X$ 
a sufficiently complete completion of $G/H$ with respect to the $E_i$.
For $f_i \in E_i$ let $H_i = \overline{\{ x \in G/H \mid f_i(x) = 0\}} \subset X$ be the closure of the hypersurface defines by $f_i$. Let $f_i \in E_i$ be generic. We then have:
\begin{itemize}
\item[(1)] For each $i$ the hypersurface $H_i \subset X$ is smooth.
\item[(2)] For each $i$ the hypersurace $H_i$ intersects all the $G$-orbits in $X$ transversely.
\item[(3)] Either the intersection of the hypersurfaces $H_1, \ldots, H_k$ is empty or they intersect transversely.
\end{itemize}
\end{Th}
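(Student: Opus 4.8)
The plan is to deduce everything from the version of Thom's transversality theorem (Theorem \ref{th-7}) together with the structural facts about $G$-stratifications established earlier in the excerpt. The key observation is that on a sufficiently complete compactification $X$ of $G/H$, the $G$-orbits form a $G$-stratification (they are finite in number, each orbit is a smooth quasi-projective variety, and the standard stratification conditions hold), and the linear systems we need are base point free: by hypothesis $X$ is sufficiently complete, so each Kodaira map $\Phi_{E_i}$ extends to all of $X$, which means the pulled-back linear system $\tilde{E}_i \subset H^0(X, \tilde{\E}_i)$ generating $\Phi_{E_i}$ is base point free on $X$. Thus Theorem \ref{th-7} applies to $X$ with its $G$-stratification $\{Y_i\}$, provided a generic complete intersection $Z_{\bf g}$ is nonempty; and by Lemma \ref{lem-6} (dichotomy) together with our hypothesis that we are in the nonempty case — or, in item (3), the conclusion is stated conditionally to cover the empty case automatically.

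First I would set up the compactification: fix $X$ sufficiently complete with respect to $E_1, \ldots, E_k$ (Theorem \ref{th-suff-full-exists}), let $\tilde{\E}_i$ be the line bundles on $X$ whose sections realize the extended Kodaira maps, and let $\tilde{E}_i \subset H^0(X, \tilde{\E}_i)$ be the corresponding base point free linear systems. The closures $H_i = \overline{\{f_i = 0\}}$ are exactly the divisors $Z_{g_i}$ cut out by the generic section $g_i \in \tilde{E}_i$ extending $f_i$. Second, I would apply Theorem \ref{th-7} to the $G$-stratification $\{Y_i\}$ of $X$: for generic ${\bf g} = (g_1, \ldots, g_k) \in \tilde{E}_1 \times \cdots \times \tilde{E}_k$, the subvariety $Z_{\bf g}$ is a local complete intersection of codimension $k$ transverse to $\{Y_i\}$ — this is precisely the transversality-to-all-$G$-orbits statement, giving (2). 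Then (1) and (4)-type smoothness follow from Theorem \ref{th-2}(4): $X$ is smooth, so $Z_{\bf g}$ is smooth; applying the same to each individual $H_i$ (the $k=1$ case) gives that each $H_i$ is smooth, which is (1). For (3): if the generic complete intersection is empty there is nothing to prove; if it is nonempty then by Theorem \ref{th-7} the $Z_{\bf g}$ is a genuine local complete intersection of codimension $k$, and being a local complete intersection of the expected codimension means precisely that the $H_i$ meet transversely (one can check this orbit-stratum by orbit-stratum using Theorem \ref{th-2}(1), or note that transversality of the defining equations in $T_aX$ at every point is part of the local complete intersection condition combined with smoothness of $X$).

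There is a subtlety I would need to handle carefully: Theorem \ref{th-7} is a statement about generic ${\bf g}$ in the product $\tilde{E}_1 \times \cdots \times \tilde{E}_k$, whereas the assertion concerns generic $f_i \in E_i$ on $G/H$. These are compatible because restriction from $X$ to the open orbit $G/H$ identifies $\tilde{E}_i$ with $E_i$ (the Kodaira map of $E_i$ on $G/H$ agrees with the restriction of the extended one), so genericity upstairs corresponds to genericity downstairs, and the base-point-freeness of $\tilde{E}_i$ on all of $X$ is exactly what sufficient completeness buys us beyond base-point-freeness of $E_i$ on $G/H$. I expect the main obstacle — though it is really a bookkeeping point rather than a deep one — to be verifying cleanly that ``local complete intersection of codimension $k$ transverse to the $G$-stratification'' unpacks into the three stated conclusions, in particular that transversality to all orbits plus smoothness of the ambient $X$ forces the closed hypersurfaces $H_i$ to be pairwise (indeed mutually) transverse in the classical sense on the nose; this is where one invokes Theorem \ref{th-2}(1) stratum by stratum and assembles the local pictures. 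Apart from that, the proof is a direct citation of Thom transversality applied to the $G$-orbit stratification, exactly as asserted in the statement (``a direct corollary of Thom's transversality theorem'').
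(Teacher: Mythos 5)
Your proposal is correct and takes essentially the same route as the paper, which offers no separate argument and simply treats the statement as a direct corollary of Theorem \ref{th-7} applied to the $G$-orbit stratification of the smooth, sufficiently complete completion $X$ (base point freeness coming from the extended Kodaira maps), with smoothness from Theorem \ref{th-2}(4) and the empty/nonempty dichotomy from Lemma \ref{lem-6}, exactly as you argue. The one point you assert rather than verify --- that $H_i$ coincides with the zero locus on $X$ of the extended generic section --- is harmless and in fact follows from the same transversality conclusion, since a local complete intersection transverse to the stratification cannot contain any $G$-orbit, hence no boundary divisor of $X$, so every component of that zero locus meets $G/H$ and the two sets agree.
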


\subsection{Genus and $h^{p,0}$ numbers of a complete intersection} \label{subsec-genus-sph}
As in Section \ref{subsec-resolution} let $\E_1, \ldots, \E_k$ be a $G$-linearized line bundle on $G/H$. For each $i=1, \ldots, k$ fix a dominant weight $\alpha_i$ for a 
$B$-eigensection in $H^0(G/H, \E_i)$. Also for each $i=1, \ldots, k$ let $\A_i$ be a finite subset of $\Spec_G(H^0(G/H, \E_i))$ and: 
$$E_i = \bigoplus_{\lambda \in \A_i} V_\lambda \subset H^0(G/H, \E_i),$$ 
the corresponding $G$-invariant linear system.
Also we denoted by $\Delta(E_i)$ the moment polytope of the $G$-algebra $\overline{\bigoplus_m E_i^m}$. 

By Theorem \ref{th-suff-full-exists} we can find a sufficiently complete projective completion $X$ of $G/H$ with respect to $E_1, \ldots, E_k$. 
For each $i=1, \ldots, k$ let $\L_i = \Phi_{E_i}^*(\mathcal{O}(1))$ be the pull-back of the line bundle $\mathcal{O}(1)$ on the projective space $\p(E_i^*)$ 
to $X$. The line bundle $\L_i$ is globally generated because it is the pull-back of a globally generated line bundle.

One shows that for any $m>0$ the integral closure $\overline{E^m_i}$ can be identified with the space of sections $H^0(X, \L_i^{\otimes m})$.
Hence the moment polytope $\Delta(X, \L_i)$ coincides with $\Delta(E_i)$, that is the moment polytope of the integral closure $\overline{\bigoplus_m E^m_i}$ (in particular the 
moment polytope $\Delta(X, \L_i)$ is independent of the sufficiently complete completion $X$).

The next theorem gives a necessary and sufficient condition for a generic complete intersection $X_k$ from $E_1, \ldots, E_k$ to be nonempty, 
in terms of the dimensions of the Newton-Okounkov or moment polytopes. It is a direct corollary of 
Theorems \ref{th-nec-cond-nonempty} and \ref{th-suff-cond-nonempty}.
\begin{Th} \label{th-sph-nonempty}
A generic complete intersection $X_k$ from $E_1, \ldots, E_k$ is nonempty if and only if $E_1, \ldots, E_k$ are independent. That is,
for any $J \subset \{1,\ldots, k\}$ we have $\dim(\tilde{\Delta}_J) \geq |J|$ (equivalently $\dim(\Delta_J)+d_J \geq |J|$).
Here $\tilde{\Delta}_J$ (respectively $\Delta_J$) is the Newton-Okounkov polytope (respectively the moment polytope) of the linear system $E_J = \prod_{i \in J} E_i$ and
$d_J$ is the degree of the Weyl polynomial restricted to the affine span of $\Delta_J$. 
\end{Th}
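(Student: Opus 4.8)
The plan is to read off the statement from the general nonemptiness criteria of Section~\ref{subsec-nonempty}. Since each $E_i$ is a nonzero $G$-invariant linear system on the quasi-projective variety $G/H$, it is automatically base point free, so Theorems~\ref{th-nec-cond-nonempty} and~\ref{th-suff-cond-nonempty} apply to $G/H$ with the systems $E_1, \ldots, E_k$ and say that a generic complete intersection $X_k$ is nonempty if and only if $E_1, \ldots, E_k$ are independent in the sense of Definition~\ref{def-independent}, i.e.\ every nonempty $J \subset \{1, \ldots, k\}$ has nonnegative defect $d(J) = \tau_J - |J|$ (Definition~\ref{def-defect}), where $\tau_J = \dim Y_J$ is the dimension of the closure $Y_J$ of the image of the Kodaira map $\Phi_J$ of the product system $E_J = \prod_{i \in J} E_i$. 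Hence it suffices to prove, for every nonempty $J$, the identity
\[
\tau_J \;=\; \dim \tilde{\Delta}_J \;=\; \dim \Delta_J + d_J ,
\]
after which the independence hypothesis becomes exactly the stated condition $\dim \tilde{\Delta}_J \geq |J|$ (equivalently $\dim \Delta_J + d_J \geq |J|$) for all nonempty $J$, and Theorems~\ref{th-nec-cond-nonempty} and~\ref{th-suff-cond-nonempty} finish the argument.

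To establish this identity I would first fix a projective completion $X$ of $G/H$ which is sufficiently complete with respect to $E_1, \ldots, E_k$, using Theorem~\ref{th-suff-full-exists}. Then the Kodaira map of each $E_i$, and therefore of each product $E_J$ (which factors through the $\Phi_{E_i}$ and a Segre embedding), extends to a morphism $\tilde{\Phi}_{E_J}\colon X \to Y_J$; since $G/H$ is dense in the complete variety $X$, the image of $\tilde{\Phi}_{E_J}$ is exactly $Y_J$, so $\tau_J = \dim Y_J$ is unchanged. Put $\L_J = \tilde{\Phi}_{E_J}^{*}(\mathcal{O}(1))$, a globally generated $G$-linearized line bundle on $X$. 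Because $\L_J$ is pulled back along $\tilde{\Phi}_{E_J}\colon X \to Y_J$, passing to the Stein factorization shows its Iitaka dimension satisfies $\kappa(\L_J) = \dim Y_J = \tau_J$.

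Next I would compute $\kappa(\L_J)$ convex-geometrically. By the identification recalled in Section~\ref{subsec-genus-sph}, $\overline{E_J^m} = H^0(X, \L_J^{\otimes m})$ for all $m > 0$, so $\Delta(X, \L_J) = \Delta(E_J) = \Delta_J$ and $\tilde{\Delta}(X, \L_J) = \tilde{\Delta}(E_J) = \tilde{\Delta}_J$ (in particular these polytopes do not depend on the chosen $X$). By Theorem~\ref{th-lin-system-moment-poly} (or Theorem~\ref{th-subspace-moment-poly} in the quasi-affine case) together with Propositions~\ref{prop-H^0-S} and~\ref{prop-NO-poly-dim-H^0},
\[
\dim \overline{E_J^m} \;=\; \dim H^0(X, \L_J^{\otimes m}) \;=\; S(m\Delta_J,\, m\alpha_J) \;=\; N\!\left(m\tilde{\Delta}_J,\, m\alpha_J\right),
\]
where $\alpha_J = \sum_{i \in J}\alpha_i$. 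By Corollary~\ref{cor-virtual-3} the right-hand side is a polynomial in $m$; being an Ehrhart-type counting function of the polytope $\tilde{\Delta}_J$ (which meets the relevant shifted lattice, since $\overline{E_J^m} \neq \{0\}$), its degree equals $\dim \tilde{\Delta}_J$. On the other hand the degree of this same polynomial is the Iitaka dimension $\kappa(\L_J)$, which by the remark preceding Theorem~\ref{th-coh-L^-1} is $\dim \Delta_J + d_J$. Comparing all three, $\tau_J = \kappa(\L_J) = \dim \tilde{\Delta}_J = \dim \Delta_J + d_J$, which is what we wanted.

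The step I expect to be the main obstacle is the asymptotic identification $\overline{E_J^m} = H^0(X, \L_J^{\otimes m})$ for the \emph{products} $E_J$ rather than only the individual $E_i$, together with the accompanying equality $\kappa(\L_J) = \dim Y_J$; this is where normality of the sufficiently complete completion $X$ and the behaviour of sections under the morphism $X \to Y_J$ really enter. Everything else is a routine comparison of degrees of polynomials using the valuation-theoretic and virtual-polytope machinery of Sections~\ref{sec-virtual} and~\ref{subsec-moment}.
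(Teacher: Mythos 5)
Your proposal is correct and follows essentially the same route as the paper, which simply states the theorem as a direct corollary of Theorems~\ref{th-nec-cond-nonempty} and~\ref{th-suff-cond-nonempty}, leaving implicit the identification of the defect $\tau_J-|J|$ with $\dim\tilde{\Delta}_J-|J|=\dim\Delta_J+d_J-|J|$. Your filling-in of that implicit step (extending the Kodaira maps of the products $E_J$ to a sufficiently complete completion, equating $\tau_J$ with $\kappa(\L_J)$ via Stein factorization, and reading off the degree of the counting polynomial from Propositions~\ref{prop-H^0-S} and~\ref{prop-NO-poly-dim-H^0} and the remark before Theorem~\ref{th-coh-L^-1}) is exactly the intended argument.
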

For the rest of the paper we assume that the condition in Theorem \ref{th-sph-nonempty} is satisfied and hence a generic complete intersection $X_k$ in nonempty.

Let us recall some notation. As in Proposition \ref{prop-NO-poly-dim-H^0} 
fix the lattice $\Lambda' \times \z^N$ in the vector space $\Lambda_\r \times \r^N$. Recall that for a rational polytope $\tilde{\Delta} \subset \Lambda_\r \times \r^N$
and a point $a \in \Lambda_\r \times \z^N$, we denote by $N(\tilde{\Delta}, a)$ (respectively $N^\circ(\tilde{\Delta}, a)$) the number of points in the shifted lattice 
$a + (\Lambda' \times \z^N)$ which lie in $\tilde{\Delta}$ (respectively in the interior of $\tilde{\Delta}$). 
Moreover, we define $N'(\tilde{\Delta}, a)$ to be: $$N'(\tilde{\Delta}, a) = (-1)^{\dim(\tilde{\Delta})} N^\circ(\tilde{\Delta}, a).$$ 
In fact, $N'(\tilde{\Delta}, a)$ is the value of the polynomial $m \mapsto N(m\tilde{\Delta}, ma)$ at $m = -1$. Similarly if $\Delta$ is a rational polytope in the 
vector space $\Lambda_\r$ and $\alpha \in \Lambda$ a weight, we denote by $S(\Delta, \alpha)$ the sum of values of the Weyl polynomial $f$ on 
the shifted lattice points $\alpha + \Lambda'$ which lie in $\Delta$. Moreover, we denote by $S'(\Delta, \alpha)$ the value of the polynomial 
$m \mapsto S(m\Delta, m\alpha)$ at $m = -1$. Using the notation introduced in the paragraph before Theorem \ref{th-coh-L^-1} we can write: 
$$S'(\Delta, \alpha) = (-1)^{\dim(\Delta) + d_\Delta} S^\circ(\Delta, \alpha).$$ Here $S^\circ(\Delta, \alpha)$ 
is $(-1)^{d_\Delta}$ times the sum of values of the polynomial $f(-\lambda)$ on the shifted lattice points $\alpha + \Lambda'$ which lie in the interior of 
$\Delta$.

Also recall that for each $i = 1, \ldots, k$, $\alpha_i \in \Lambda$ is the weight of a $B$-eigensection in $E_i$. The moment polytope $\Delta(E_i)$ lies in the affine subspace
$\alpha_i + \Lambda'_\r$. We also consider $\alpha_i$ as $(\alpha_i, 0)$ in the larger lattice $\Lambda \times \z^N$. The Newton-Okounkov polytope 
$\tilde{\Delta}(E_i)$ lies in the affine subspace $\alpha_i + (\Lambda'_\r \times \r^N)$.

For each $i$, let $\kappa_i$ be the Itaka dimension of the linear system $E_i$.
From Proposition \ref{prop-H^0-S} and Theorem \ref{th-coh-L^-1}, applied to the line bundles $\L_i$ on $X$, we obtain the following: 
\begin{Th} With notation as above we have:
\begin{itemize}
\item[(a)] $$\dim(H^0(X, \L_i)) = S(\Delta(E_i), \alpha_i) = N(\tilde{\Delta}(E_i), \alpha_i).$$
\item[(b)] $$\dim(H^{\kappa_i}(X, \L_i^{-1})) = S^\circ(\Delta(E_i), \alpha_i) = N^\circ(\tilde{\Delta}(E_i), \alpha_i).$$
In particular, the cohomology groups of $(X, \L_i)$ and $(X, \L_i^{-1})$ only depend on the $E_i$, i.e. are independent of the choice of the 
sufficiently complete completion $X$ for $G/H$.
\end{itemize}
 \end{Th}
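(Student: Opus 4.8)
The plan is to obtain both parts by specializing, at multiplicity $m=1$, the convex-geometric formulas for the cohomology of a globally generated $G$-linearized line bundle on a normal projective spherical variety established in Sections \ref{subsec-moment} and \ref{subsec-NO-poly}, applied to the bundles $\L_i = \Phi_{E_i}^*(\mathcal{O}(1))$ on the sufficiently complete completion $X$. First I would record the three facts that make this work: (i) since $X$ is sufficiently complete with respect to $E_1, \ldots, E_k$, the Kodaira map $\Phi_{E_i}$ extends to $X$, so $\L_i$ is a globally generated $G$-linearized line bundle on the normal projective spherical variety $X$; (ii) for every $m>0$ the integral closure $\overline{E_i^m}$ is identified with $H^0(X, \L_i^{\otimes m})$, whence the ring of sections $R(X, \L_i)$ equals $\overline{\bigoplus_m E_i^m}$ and therefore $\Delta(X, \L_i) = \Delta(E_i)$ and $\tilde{\Delta}(X, \L_i) = \tilde{\Delta}(E_i)$; and (iii) a $B$-eigensection of $E_i$ with weight $\alpha_i$ is also a $B$-eigensection of $H^0(X, \L_i) = \overline{E_i}$, so the weight appearing in the moment-polytope description of $\L_i$ is exactly $\alpha_i$.

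Granting (i)--(iii), part (a) is immediate: Proposition \ref{prop-H^0-S} at $m=1$ gives $\dim(H^0(X, \L_i)) = S(\Delta(X, \L_i), \alpha_i) = S(\Delta(E_i), \alpha_i)$, and the first assertion of Proposition \ref{prop-NO-poly-dim-H^0} at $m=1$ gives $\dim(H^0(X, \L_i)) = N(\tilde{\Delta}(X, \L_i), \alpha_i) = N(\tilde{\Delta}(E_i), \alpha_i)$. Part (b) follows the same way with $\kappa_i = \kappa(\L_i)$ the Iitaka dimension: Theorem \ref{th-coh-L^-1} gives $\dim(H^{\kappa_i}(X, \L_i^{-1})) = S^\circ(\Delta(X, \L_i), \alpha_i) = S^\circ(\Delta(E_i), \alpha_i)$, and the second assertion of Proposition \ref{prop-NO-poly-dim-H^0} gives $\dim(H^{\kappa_i}(X, \L_i^{-1})) = N^\circ(\tilde{\Delta}(X, \L_i), \alpha_i) = N^\circ(\tilde{\Delta}(E_i), \alpha_i)$. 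For the final ``in particular'' clause I would note that all four right-hand sides are computed purely from $\Delta(E_i)$, $\tilde{\Delta}(E_i)$ and $\alpha_i$ --- data attached to the linear system $E_i$, not to $X$ --- and that by Brion's vanishing theorem (Theorem \ref{th-coh-line-bundle}) the only possibly nonzero cohomology of $\L_i$ is $H^0(X, \L_i)$ and the only possibly nonzero cohomology of $\L_i^{-1}$ is $H^{\kappa_i}(X, \L_i^{-1})$; hence the full cohomology of $(X, \L_i)$ and $(X, \L_i^{-1})$ depends only on $E_i$.

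The only step requiring genuine argument, rather than substitution, is fact (ii), the identification $H^0(X, \L_i^{\otimes m}) \cong \overline{E_i^m}$, which I expect to be the main obstacle. Here one has the natural inclusion $E_i^m \hookrightarrow H^0(X, \L_i^{\otimes m})$ obtained by pulling back sections of $\mathcal{O}(m)$ along the extended Kodaira map; since $X$ is normal and complete, $H^0(X, \L_i^{\otimes m})$ is integrally closed inside $\bigoplus_m \c(\E_i^{\otimes m})$, and since the open orbit $G/H$ is dense in $X$ it has the same space of rational-section ``limits'' as $E_i^m$. Together these force $H^0(X, \L_i^{\otimes m})$ to be precisely the integral closure $\overline{E_i^m}$ --- which is exactly the mechanism used in the proofs of Theorem \ref{th-subspace-moment-poly} and Theorem \ref{th-lin-system-moment-poly}, so in practice one would simply invoke that argument with the line bundle $\L_i$ and the completion $X$ produced by Theorem \ref{th-suff-full-exists}. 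Everything else is bookkeeping.
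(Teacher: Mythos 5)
Your proposal is correct and follows essentially the same route as the paper: the paper likewise identifies $\overline{E_i^m}$ with $H^0(X,\L_i^{\otimes m})$ (via the argument of Theorems \ref{th-subspace-moment-poly} and \ref{th-lin-system-moment-poly}), concludes $\Delta(X,\L_i)=\Delta(E_i)$ and $\tilde{\Delta}(X,\L_i)=\tilde{\Delta}(E_i)$, and then reads off (a) and (b) by applying Proposition \ref{prop-H^0-S}, Theorem \ref{th-coh-L^-1} and Proposition \ref{prop-NO-poly-dim-H^0} to the bundles $\L_i$, with Brion's vanishing (Theorem \ref{th-coh-line-bundle}) giving the independence of the completion. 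No gaps to report.
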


We now use the above to compute the genus of a complete intersection from $E_1, \ldots, E_k$ in $G/H$.
Let $f_1, \ldots, f_k$ be generic elements in $E_1, \ldots, E_k$ respectively. 
For each $i =1, \ldots, k$ let $D_i = \{ x \in G/H \mid f_i(x) = 0 \}$ be the hypersurface defined by $f_i$
and let $X_k = D_1 \cap \cdots \cap D_k$.

The next theorem is one of our main results. 
\begin{Th}[Genus of a complete intersection in a spherical homogenous space] \label{th-main}
With notation as above we have:
\begin{multline} \label{equ-arith-genus-smooth-sph1}
\chi(X_k) = 1 - \sum_{i_1} S'(\Delta(E_{i_1}), \alpha_{i_1}) + \sum_{i_1 < i_2} S'(\Delta(E_{i_1}E_{i_2}), \alpha_{i_1}+\alpha_{i_2}) - \cdots \\
+ (-1)^k S'(\Delta(E_1 \cdots E_k), \alpha_{i_1} + \cdots + \alpha_{i_k}).
\end{multline}
or equivalently:
\begin{multline} \label{equ-arith-genus-smooth-sph2}
\chi(X_k) =  1 - \sum_{i_1} N'(\tilde{\Delta}(E_{i_1}), \alpha_{i_1}) + \sum_{i_1 < i_2} N'(\tilde{\Delta}(E_{i_1} E_{i_2}), \alpha_{i_1} + \alpha_{i_2}) - \cdots \\
+ (-1)^k N'(\tilde{\Delta}(E_1 \cdots E_k), \alpha_{i_1} + \cdots + \alpha_{i_k}).
\end{multline}
\end{Th}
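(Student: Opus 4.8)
The plan is to deduce Theorem~\ref{th-main} by combining the general genus formula for complete intersections (Theorem~\ref{th-genus-complete-int}) with Brion's vanishing theorem (Theorem~\ref{th-coh-line-bundle}) and the identification of the relevant Euler characteristics with lattice-point counts in moment and Newton--Okounkov polytopes. First I would pass from the homogeneous space $G/H$ to a sufficiently complete smooth projective completion $X$, which exists by Theorem~\ref{th-suff-full-exists}, and set $\L_i = \Phi_{E_i}^*(\mathcal O(1))$; these are globally generated $G$-linearized line bundles on $X$. By the transversality results of Section~\ref{subsec-resolution} a generic complete intersection $\overline{X_k}$ inside $X$ is smooth, its closure meets all $G$-orbits transversely, and — crucially, since the $h^{p,0}$ are birational invariants and the $D_i$ are the closures in $X$ of the $D_i \subset G/H$ — one has $\chi(X_k) = \chi(\overline{X_k})$. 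So it suffices to compute $\chi(\overline{X_k})$ on the projective variety $X$.

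Next I would apply Theorem~\ref{th-genus-complete-int}, which expresses $\chi(\overline{X_k})$ as the alternating sum
$$\chi(X) - \sum_{i_1}\chi(X,\L_{i_1}^{-1}) + \sum_{i_1<i_2}\chi(X,\L_{i_1}^{-1}\otimes\L_{i_2}^{-1}) - \cdots + (-1)^k\chi\Bigl(X,\bigotimes_{i}\L_i^{-1}\Bigr).$$
The first term is $\chi(X) = h^{0,0}(X) = 1$ by Theorem~\ref{th-h-p-sph}. For a nonempty subset $J$, the line bundle $\L_J = \bigotimes_{i\in J}\L_i$ is globally generated and $G$-linearized, its ring of sections has moment polytope $\Delta(E_J)$ and Newton--Okounkov polytope $\tilde\Delta(E_J)$ (by the identification $\overline{E_J^m}\cong H^0(X,\L_J^{\otimes m})$ noted before Theorem~\ref{th-sph-nonempty}, together with Theorems~\ref{th-H^0-moment-poly} and~\ref{th-lin-system-moment-poly}), and a $B$-eigensection of $\L_J$ has weight $\alpha_J = \sum_{i\in J}\alpha_i$. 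By Brion's theorem~\ref{th-coh-line-bundle}(b), $H^i(X,\L_J^{-1})$ vanishes for all $i\neq\kappa_J$, so $\chi(X,\L_J^{-1}) = (-1)^{\kappa_J}\dim H^{\kappa_J}(X,\L_J^{-1})$, and by Theorem~\ref{th-coh-L^-1} this equals $(-1)^{\kappa_J}S^\circ(\Delta(E_J),\alpha_J)$. Recalling $\kappa_J = \dim(\Delta(E_J)) + d_{\Delta(E_J)}$ and the definitions $S'(\Delta,\alpha) = (-1)^{\dim(\Delta)+d_\Delta}S^\circ(\Delta,\alpha)$ and $S'(\Delta,\alpha) = N'(\tilde\Delta,\alpha)$ from Proposition~\ref{prop-NO-poly-dim-H^0} (which gives $\dim H^{\kappa}(X,\L_J^{-1}) = N^\circ(\tilde\Delta(E_J),\alpha_J)$, together with $\dim(\tilde\Delta(E_J)) = \kappa_J$), we get $\chi(X,\L_J^{-1}) = S'(\Delta(E_J),\alpha_J) = N'(\tilde\Delta(E_J),\alpha_J)$. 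Substituting these term by term into the alternating sum yields both \eqref{equ-arith-genus-smooth-sph1} and \eqref{equ-arith-genus-smooth-sph2}.

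The main obstacle, and the point that needs the most care, is the identification $\overline{E_J^m}\cong H^0(X,\L_J^{\otimes m})$ and hence the independence of the moment/Newton--Okounkov polytopes and the cohomology groups $H^*(X,\L_J^{\pm 1})$ from the particular choice of sufficiently complete completion $X$ — this is what makes the right-hand sides of \eqref{equ-arith-genus-smooth-sph1}--\eqref{equ-arith-genus-smooth-sph2} intrinsic invariants of the linear systems $E_i$ rather than of $X$. Here I would argue that $\L_J$ is globally generated with $\Phi_{\L_J}$ factoring through $Y_{E_J}$, so its section ring is the integral closure of $R(\A_J)$, which is computed on $G/H$ alone; for $m$ large this uses that a sufficiently complete $X$ is normal and that the pull-back of $\mathcal O(1)$ under an extended Kodaira map has sections agreeing with the completion $\overline{E_J^m}$. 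A secondary technical point worth spelling out is that Theorem~\ref{th-genus-complete-int} requires the $D_i$ to be smooth and transverse in $X$, which is supplied by Theorem~\ref{th-7} via sufficient completeness; and that the polynomiality needed to evaluate Euler characteristics at $m=-1$ is exactly Corollary~\ref{cor-virtual-3} applied to the Weyl polynomial, as already used in the proof of Theorem~\ref{th-coh-L^-1}. Everything else is bookkeeping of signs and the combinatorial identity $\alpha_J = \sum_{i\in J}\alpha_i$, $\Delta(E_J)$ vs.\ the $\Delta(E_i)$.
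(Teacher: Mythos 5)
Your proposal is correct and follows essentially the same route as the paper, whose proof simply cites Theorem \ref{th-genus-complete-int}, Corollary \ref{cor-dim-H^k-L^-1}, Theorem \ref{th-coh-L^-1} and Proposition \ref{prop-NO-poly-dim-H^0}; you have merely spelled out the intermediate steps (passing to a sufficiently complete completion, Brion's vanishing, $\chi(X)=1$, and the identification $\overline{E_J^m}\cong H^0(X,\L_J^{\otimes m})$) that the paper leaves implicit from Sections \ref{subsec-coh-line}--\ref{subsec-resolution}.
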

\begin{proof}
The theorem follows from Theorem \ref{th-genus-complete-int}, Corollary \ref{cor-dim-H^k-L^-1}, Theorem \ref{th-coh-L^-1} and Proposition \ref{prop-NO-poly-dim-H^0}.
\end{proof}

\begin{Rem} \label{rem-main-th-additive}
(1) When the moment polytope map $E \mapsto \Delta(E)$ is additive, that is, 
$\Delta(E_1E_2) = \Delta(E_1)+\Delta(E_2)$ for any two $G$-linear systems $E_1$, $E_2$, 
then the formula \eqref{equ-arith-genus-smooth-sph1} can be computed only in terms of the polytopes $\Delta(E_i)$ (see Sections \ref{subsec-horosph} and \ref{subsec-gp-embed}).

(2) Similarly, when the map $E \mapsto \tilde{\Delta}(E)$ is additive, that is, 
$\tilde{\Delta}(E_1E_2) = \tilde{\Delta}(E_1)+\tilde{\Delta}(E_2)$ for any two $G$-linear systems $E_1$, $E_2$, 
then the formula \eqref{equ-arith-genus-smooth-sph2} can be computed only in terms of the 
polytopes $\tilde{\Delta}(E_i)$ (see Sections \ref{subsec-horosph} and \ref{subsec-gp-embed}).
\end{Rem}

The following is the extension of Definition \ref{def-critical-number-toric} to the spherical case:
\begin{Def} \label{def-critical-number-sph}
Let $E_1, \ldots, E_k$ be as above.
We say that a nonnegative integer $i$ is {\it critical}
for the $E_1, \ldots, E_k$ if there is a nonempty set $J\subset \{1,\ldots, k\}$ such that $N^\circ(\tilde{\Delta}_J)>0$ and $\dim(\tilde{\Delta}_J) - |J| = i$
(equivalently $\dim(\Delta_J)+d_J - |J| = i$).
Recall that $\tilde{\Delta}_J$ (respectively $\Delta_J$) is the Newton-Okounkov polytope (respectively the moment polytope) of the linear system $E_J = \prod_{i \in J} E_i$. 
\end{Def}

\begin{Th}[$h^{p, 0}$ numbers of a complete intersection in a spherical homogeneous space] \label{th-h-p-complete-int-sph}
Let $p$ be a nonnegative integer which is not critical for $E_1, \ldots, E_k$ in the sense of Definition \ref{def-critical-number-sph}.
Then: 
$$h^{p,0}(X_k) = \begin{cases} 
1 \quad p = 0 \\
0 \quad p \neq 0.\\
\end{cases}$$
\end{Th}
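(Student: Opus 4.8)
The plan is to deduce this from the machinery assembled in Section~\ref{subsec-h-p0}, applied to the sufficiently complete completion $X$ of $G/H$. First I would recall the setup: by Theorem~\ref{th-suff-full-exists} we fix a smooth projective spherical $X$ which is sufficiently complete with respect to $E_1,\ldots,E_k$, and we set $\L_i = \Phi_{E_i}^*(\mathcal{O}(1))$, a globally generated $G$-linearized line bundle. By the transversality results (Theorem~\ref{th-7} together with the last theorem of Section~\ref{subsec-resolution}) a generic choice of $f_i \in E_i$ cuts out smooth divisors $D_i$ in $X$ meeting transversally, and the closure $\overline{X_k}$ of our complete intersection is $D_1\cap\cdots\cap D_k$, a smooth subvariety; moreover $X_k$ is dense in $\overline{X_k}$ so the $h^{p,0}$ numbers of $X_k$ and $\overline{X_k}$ agree. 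Thus Theorem~\ref{th-22} applies to $\overline{X_k}$.

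Next I would plug the vanishing results into Theorem~\ref{th-22}. By that theorem,
$$h^{p,0}(\overline{X_k}) \leq h^{p,0}(X) + \sum_{J\neq\emptyset}\dim(H^{p+|J|}(X,\L_J^{-1})),$$
where $\L_J^{-1} = \bigotimes_{i\in J}\L_i^{-1}$. By Theorem~\ref{th-h-p-sph} (the spherical analogue of Corollary~\ref{cor-27}) we have $h^{p,0}(X) = 1$ if $p=0$ and $0$ otherwise. For each nonempty $J$, the bundle $\bigotimes_{i\in J}\L_i$ is globally generated with Itaka dimension $\kappa_J = \dim(\Delta_J)+d_J = \dim(\tilde\Delta_J)$, and by Theorem~\ref{th-coh-line-bundle}(b) of Brion, $H^{i}(X,\L_J^{-1})$ vanishes for all $i\neq\kappa_J$. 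Hence the term $\dim(H^{p+|J|}(X,\L_J^{-1}))$ is zero unless $p+|J| = \kappa_J = \dim(\tilde\Delta_J)$, i.e. unless $\dim(\tilde\Delta_J) - |J| = p$. When that equality holds, Theorem~\ref{th-coh-L^-1} (equivalently Proposition~\ref{prop-NO-poly-dim-H^0}) identifies the dimension with $N^\circ(\tilde\Delta_J, \alpha_J)$ where $\alpha_J = \sum_{i\in J}\alpha_i$. The hypothesis that $p$ is \emph{not} critical (Definition~\ref{def-critical-number-sph}) says precisely that for every nonempty $J$ with $\dim(\tilde\Delta_J)-|J| = p$ we have $N^\circ(\tilde\Delta_J) = 0$, so every summand vanishes. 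Therefore $h^{p,0}(X_k) \leq h^{p,0}(X)$, which is $1$ if $p=0$ and $0$ if $p>0$.

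Finally I would upgrade the inequality to an equality. For $p>0$ this is immediate since $h^{p,0}\geq 0$. For $p=0$, note that $h^{0,0}(X_k)$ counts the number of irreducible (equivalently, connected) components of $X_k$, which is at least $1$ since $X_k$ is nonempty by the standing assumption (Theorem~\ref{th-sph-nonempty}); combined with $h^{0,0}(X_k)\leq 1$ this forces $h^{0,0}(X_k)=1$, so in particular $X_k$ is irreducible. I expect the only genuinely delicate point to be the bookkeeping identifying $p+|J| = \kappa_J$ with $\dim(\tilde\Delta_J)-|J| = p$ and matching $\dim H^{\kappa_J}(X,\L_J^{-1})$ with $N^\circ(\tilde\Delta_J,\alpha_J)$ — this requires knowing that $\overline{E_J^m} \cong H^0(X,\L_J^{\otimes m})$ and that $\Delta(X,\L_J) = \Delta(E_J)$ (hence $\tilde\Delta(X,\L_J) = \tilde\Delta_J$), which is exactly the content established just before Theorem~\ref{th-sph-nonempty} and in Proposition~\ref{prop-NO-poly-dim-H^0}. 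Everything else is a direct substitution.
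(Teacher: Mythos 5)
Your proposal is correct and follows essentially the same route as the paper, whose proof is simply the citation ``Follows from Theorem \ref{th-h-p-sph} and Theorem \ref{th-22}''; you have faithfully unpacked the implicit steps (Brion's vanishing from Theorem \ref{th-coh-line-bundle}(b), the identification of $\dim H^{\kappa_J}(X,\L_J^{-1})$ with $N^\circ(\tilde\Delta_J,\alpha_J)$ via Theorem \ref{th-coh-L^-1}, the non-criticality hypothesis killing every summand, and the positivity of $h^{0,0}$ for the case $p=0$). No gaps; the level of detail on passing from $X_k$ to its closure and on $\Delta(X,\L_J)=\Delta(E_J)$ matches what the paper itself establishes before Theorem \ref{th-sph-nonempty}.
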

\begin{proof}
Follows from Theorem \ref{th-h-p-sph} and Theorem \ref{th-22}.
\end{proof}

\begin{Cor} \label{cor-h-p-complete-int-sph}
Suppose all the numbers $0 \leq i < n-k$ are not critical for $E_1, \ldots, E_k$. In particular, this is the case if all the Newton-Okounkov polytopes 
$\tilde{\Delta}_1, \ldots, \tilde{\Delta}_k$ have full dimension $n = \dim(G/H)$.
Then all the numbers $h^{i,0}(X_k)$, $0 \leq i \leq n-k$, are zero 
except for $h^{0,0}(X_k) = 1$ and $h^{n-k, 0}(X_k)$ which can be computed from \eqref{equ-arith-genus-smooth-sph1} or \eqref{equ-arith-genus-smooth-sph2}.
\end{Cor}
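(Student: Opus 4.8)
The plan is to deduce the corollary from Theorem \ref{th-h-p-complete-int-sph} and Theorem \ref{th-main}, together with the elementary fact that the arithmetic genus is the alternating sum of the $h^{p,0}$ numbers, plus one dimension estimate for Newton--Okounkov polytopes of products. First I would pass to the smooth complete model: by the transversality results of Section \ref{subsec-resolution}, for a sufficiently complete completion $X$ of $G/H$ the closure $\overline{X_k}$ of the generic complete intersection is a smooth complete variety of dimension $n-k$ (we are under the standing assumption that $X_k$ is nonempty, Theorem \ref{th-sph-nonempty}). Since the $h^{p,0}$ are birational invariants, they and $\chi(X_k)$ may be computed on $\overline{X_k}$; in particular $h^{p,0}(X_k)=0$ for $p>n-k$, and $\chi(X_k)=\sum_{p=0}^{n-k}(-1)^p h^{p,0}(X_k)$.

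Next, assume all integers $0\le i<n-k$ are non-critical for $E_1,\dots,E_k$ in the sense of Definition \ref{def-critical-number-sph}. Theorem \ref{th-h-p-complete-int-sph} then gives $h^{0,0}(X_k)=1$ and $h^{p,0}(X_k)=0$ for every $p$ with $0<p<n-k$. Plugging these values into the genus identity above collapses it to $\chi(X_k)=1+(-1)^{n-k}h^{n-k,0}(X_k)$, whence $h^{n-k,0}(X_k)=(-1)^{n-k}\bigl(\chi(X_k)-1\bigr)$, with $\chi(X_k)$ supplied explicitly by \eqref{equ-arith-genus-smooth-sph1} or \eqref{equ-arith-genus-smooth-sph2}. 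This establishes the first assertion, modulo checking that the stated sufficient condition really implies non-criticality.

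Finally I would verify the ``in particular'' clause: suppose each $\tilde{\Delta}(E_i)$ has full dimension $n=\dim(G/H)$. For a nonempty $J\subset\{1,\dots,k\}$ the Kodaira map $\Phi_J$ of $E_J=\prod_{i\in J}E_i$ factors, via the Segre embedding, through the product of the maps $\Phi_{E_i}$, $i\in J$ (the commutative diagram in the proof of Theorem \ref{th-suff-full-exists}, applied to the subset $J$). Hence $Y_J$ maps isomorphically onto the closure of the image of $(\Phi_{E_i})_{i\in J}$ inside $\prod_{i\in J}Y_{E_i}$, and in particular $Y_J$ dominates each factor $Y_{E_i}$; therefore $\dim\tilde{\Delta}_J=\dim Y_J\ge \dim Y_{E_i}=\dim\tilde{\Delta}(E_i)=n$. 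Since the Newton--Okounkov polytope of any linear system on the $n$-dimensional space $G/H$ has dimension at most $n$, we conclude $\dim\tilde{\Delta}_J=n$ for every nonempty $J$, so $\dim\tilde{\Delta}_J-|J|=n-|J|\ge n-k$. Thus no integer in $[0,n-k)$ can equal $\dim\tilde{\Delta}_J-|J|$, i.e.\ no such integer is critical, as needed. The only step requiring genuine care is this last dimension comparison for products of $G$-linear systems (the monotonicity $\dim\tilde{\Delta}_J\ge\dim\tilde{\Delta}(E_i)$ for $i\in J$); the rest is bookkeeping with Theorems \ref{th-h-p-complete-int-sph} and \ref{th-main}.
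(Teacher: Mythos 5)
Your proposal is correct and takes essentially the same route as the paper's (very terse) proof: Theorem \ref{th-h-p-complete-int-sph} gives $h^{0,0}(X_k)=1$ and the vanishing for non-critical $0<p<n-k$, the alternating-sum expression for $\chi(X_k)$ on a smooth compactification together with Theorem \ref{th-main} then yields $h^{n-k,0}(X_k)=(-1)^{n-k}(\chi(X_k)-1)$, and the full-dimension hypothesis forces $\dim(\tilde{\Delta}_J)-|J|=n-|J|\geq n-k$, so no $i<n-k$ is critical. The paper simply asserts this last estimate ($d(J)\geq n-k$); your justification via Lemma \ref{lem-9-Kodaira} and the Segre factorization of $\Phi_J$ is exactly the argument it leaves implicit.
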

\begin{proof}
Under the assumptions in the corollary we have $d(J) \geq n - k$ and hence no $p$ with $1 \leq p < n-k$ is a critical number and hence $h^{p,0}(X_k) = 0$ by 
Theorem \ref{th-h-p-complete-int-sph}.
\end{proof}


\section{Examples} \label{sec-examples}
\subsection{Horospherical varieties} \label{subsec-horosph}
A subgroup $H \subset G$ is called {\it horospherical} if it contains a maximal unipotent subgroup. The corresponding homogeneous space $G/H$
is called {\it horospherical homogeneous space}. It can be shown that a horospherical homogeneous space is spherical, that is, it has an open $B$-orbit.
A spherical $G$-variety $X$ is horospherical if the open $G$-orbit is horospherical.

It is well-known that: 
\begin{Prop} \label{prop-horosph-P'}
A subgroup $H \subset G$ is horospherical if and only if there is a parabolic subgroup 
$P \subset G$ such that $P' \subset H \subset P$ (see \cite[Section 2.1]{KKh-horosph} for a proof).
\end{Prop}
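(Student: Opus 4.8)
The plan is to prove the two implications of the biconditional separately; the one that asserts the sandwich condition is \emph{sufficient} for horosphericity is immediate, while its \emph{necessity} carries all the content. For sufficiency: if $P' \subseteq H \subseteq P$ with $P$ parabolic, choose a Borel subgroup $B \subseteq P$ and let $U = R_u(B)$, a maximal unipotent subgroup of $G$. Since $B/U$ is a torus, $U = [B,B] \subseteq [P,P] = P' \subseteq H$, so $H$ contains a maximal unipotent subgroup and is horospherical. This uses only that the derived group of a Borel is its unipotent radical and that every maximal unipotent subgroup of $G$ is the unipotent radical of a Borel.

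For necessity, suppose $H$ contains a maximal unipotent subgroup. All maximal unipotent subgroups of $G$ are conjugate and the conclusion is stable under conjugating $H$, so we may assume $U := R_u(B) \subseteq H$, where $B = TU$ is our fixed Borel and $\{U_\alpha\}_{\alpha \in \Phi}$ are the root subgroups, so that $U$ is directly spanned by the $U_\alpha$ with $\alpha \in \Phi^+$. The natural candidate is $P := \langle B, H\rangle$, the abstract subgroup generated by $B$ and $H$: since it contains a Borel, $P$ is automatically a closed parabolic subgroup, say $P = P_I$ with standard Levi $L = L_I \supseteq T$, and $H \subseteq P$ by construction. It then remains to prove $P' \subseteq H$. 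Writing $P' = [L,L]\cdot R_u(P)$, one has $R_u(P) = \prod_{\alpha \in \Phi^+\setminus\Phi_I^+} U_\alpha \subseteq U \subseteq H$, while $[L,L]$ is generated by the root subgroups $U_{\pm\alpha}$ with $\alpha \in \Phi_I$: those of positive sign lie in $U$, and the negative ones are generated, inside $[L,L]$, by the negative simple root subgroups $U_{-\alpha_i}$ for $\alpha_i$ a simple root of $L$. Hence the whole matter reduces to the claim: if $\alpha_i$ is a simple root with $U_{-\alpha_i} \subseteq P = \langle B, H\rangle$, then already $U_{-\alpha_i} \subseteq H$.

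This claim is where I expect the main difficulty to lie; it is the statement that a closed subgroup containing a maximal unipotent subgroup is already ``standard'' and cannot contain an element crossing a simple reflection without containing the corresponding negative root subgroup. The plan for it is to combine the Bruhat decomposition of $P$ with the Chevalley commutation relations: if $U_{-\alpha_i}\subseteq P$ but $U_{-\alpha_i}\not\subseteq H$, then $H$ contains an element lying in a Bruhat cell $BwB$ with $i$ in the support of $w$, and multiplying this element by elements of $U\subseteq H$ and forming iterated commutators with root subgroups of $U$ forces $U_{-\alpha_i}$ into $H$, a contradiction. Equivalently, one verifies that $H$ normalizes $R_u(P)$, and then $H \subseteq N_G(R_u(P)) = P_I$ with $H$ necessarily containing each $U_{-\alpha_i}$, $\alpha_i$ a simple root of $L$. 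Either way the claim comes down to bookkeeping with root subgroups in a reductive group, the substance of which is the classification of subgroups containing a maximal unipotent subgroup; see \cite[Section 2.1]{KKh-horosph} for the details. Combining $H\subseteq P$ with $P'\subseteq H$ gives $P'\subseteq H\subseteq P$, which is the assertion.
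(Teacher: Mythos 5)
The paper does not actually prove this proposition --- it only points to \cite[Section 2.1]{KKh-horosph} --- so the comparison is necessarily with that omitted argument. What you have written is correct as far as it goes: the sufficiency direction is complete (for reductive $G$ one indeed has $U=[B,B]\subseteq[P,P]=P'$ for any Borel $B\subseteq P$, and every maximal unipotent subgroup arises this way), and the skeleton of the necessity direction is sound: $P=\langle B,H\rangle$ is a standard parabolic $P_I$ by Chevalley's theorem on subgroups containing a Borel, $R_u(P)\subseteq U\subseteq H$, $P'=[L_I,L_I]\,R_u(P)$, and $[L_I,L_I]$ is generated by the $U_{\pm\alpha_i}$ with $\alpha_i\in I$ simple, so everything reduces to the claim that $\alpha_i\in I$ forces $U_{-\alpha_i}\subseteq H$.

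That claim, however, is where your proof stops rather than where it becomes routine: it is equivalent to the classification of closed subgroups containing a maximal unipotent subgroup, i.e.\ essentially to the proposition itself, and the sketch offered does not establish it. Concretely, from $\langle B,H\rangle=P_I$ you can indeed find, for each $i\in I$, some $h\in H\cap BwB$ with $\alpha_i$ in the support of $w$ (otherwise $H\subseteq P_{I'}$ for a proper $I'\subsetneq I$, contradicting generation), and absorbing the unipotent Bruhat factors into $H$ gives $\dot w t\in H$, hence $U_{w\alpha}\subseteq H$ for all $\alpha>0$. But the negative root subgroups obtained this way are exactly the $U_{-\gamma}$ with $\gamma$ a positive root made negative by $w^{-1}$, and this set need not contain $\alpha_i$ even when $s_i$ occurs in $w$: for $w=s_js_i$ it contains $\alpha_i$ and $s_i\alpha_j$ but not $\alpha_j$. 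So one still needs a descent step --- from $U_{-\beta}\subseteq H$ with $\beta$ positive but not simple, use a representative of $s_\beta$ inside $\langle U_\beta,U_{-\beta}\rangle\subseteq H$ to conjugate positive root subgroups and reach $U_{-\gamma}$ for the simple roots $\gamma$ in the support of $\beta$, with an induction on height --- and this bookkeeping is precisely what you defer to \cite{KKh-horosph}. (The aside that ``equivalently one verifies $H$ normalizes $R_u(P)$'' does not help: $H\subseteq P$ is already known by construction, and normalizing $R_u(P)$ says nothing about $U_{-\alpha_i}$ lying in $H$.) As written, then, the hard implication is reduced to, not derived from, the same citation the paper itself uses; to have a self-contained proof you must carry out that root-subgroup induction or invoke the classification theorem (Pauer, Popov) explicitly.
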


\begin{Ex}
Let $U$ be a maximal unipotent subgroup of $G$. Then $G/U$ is a quasi-affine horospherical homogeneous space. The natural map $G/U \to G/B$ is a fibration over the flag variety 
$G/B$ and the fibers are isomorphic to the maximal torus $T$.
If $G = \GL(n, \c)$, $U$ can be taken to be the subgroup of upper-triangular matrices with $1$'s on the diagonal. 
\end{Ex}

\begin{Rem}
The name horospherical comes from hyperbolic geometry (see \cite[Example 7.1]{Timashev}).
\end{Rem}

Let $\lambda \in \Lambda^+$. For a rational $G$-module $M$ let us denote the $\lambda$-isotypic component of 
$M$ by $M_\lambda$, that is, $M_\lambda$ is the sum of all the copies of the irreducible $G$-module $V_\lambda$ in $M$. Clearly, if $M$ is multiplicity free then $M_\lambda$ is either $\{0\}$ or 
isomorphic to $V_\lambda$. In particular, if $\E$ is a $G$-linearized line bundle over a spherical homogeneous space $G/H$ then $H^0(G/H, \E)_\lambda$ is either $\{0\}$ or $V_\lambda$. 
The following is well-known (see \cite{Popov}):
\begin{Th} \label{th-horosph}
Let $G/H$ be a horospherical homogeneous space. Let $\E_1$ and $\E_2$ be $G$-linearized line bundles on $G/H$ with $E_i \subset H^0(G/H, \E_i)$, $i=1, 2$, 
two $G$-invariant linear systems.
Then for any $\lambda, \gamma \in \Lambda$ the product $(E_1)_\lambda (E_2)_\gamma$ lies in $(E_1 E_2)_{\lambda + \gamma}$, where 
$E_1 E_2$ is the linear system in $\E_1 \otimes \E_2$ spanned by all the products $f_1 f_2$, $f_i \in E_i$, and $(E_1 E_2)_{\lambda + \gamma}$ is the 
$(\lambda+\gamma)$-isotypic component of $E_1E_2$.
\end{Th}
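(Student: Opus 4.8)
The plan is to reduce the theorem to the ``universal'' horospherical space $G/U$, where $U$ is a maximal unipotent subgroup of $G$, at which point the statement amounts to the classical multiplicative structure of $\c[G/U]$. Since $H$ is horospherical we may take $U$ to be a maximal unipotent subgroup contained in $H$, so that there is a $G$-equivariant surjection $\pi\colon G/U\to G/H$. Every $G$-linearized line bundle on $G/U$ is equivariantly trivial, because $\Pic^G(G/U)$ is the character group of $U$, which is trivial. Hence, pulling back along $\pi$ and fixing equivariant trivializations, we obtain $G$-equivariant injections
$$\iota_1\colon H^0(G/H,\E_1)\hookrightarrow\c[G/U],\qquad \iota_2\colon H^0(G/H,\E_2)\hookrightarrow\c[G/U],\qquad \iota_{12}\colon H^0(G/H,\E_1\otimes\E_2)\hookrightarrow\c[G/U]$$
which intertwine multiplication of sections with multiplication of functions up to a fixed nonzero scalar (the trivializations are unique up to $\c^*$, since $\c[G/U]^G=\c$). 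Recall that $\c[G/U]=\bigoplus_{\lambda\in\Lambda^+}V_\lambda$ is multiplicity free and is an integral domain.

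The crux is the representation-theoretic identity $V_\lambda\cdot V_\gamma=V_{\lambda+\gamma}$ inside $\c[G/U]$ (equivalently, the $\Lambda^+$-decomposition of $\c[G/U]$ is an algebra grading); this is classical, but the argument I would give is short. Realize $\c[G/U]$ as the right-$U$-invariants in $\c[G]$, so that the copy of $V_\lambda$ consists of the functions $g\mapsto\langle\xi_\lambda,g^{-1}v\rangle$ with $v\in V_\lambda$ and $\xi_\lambda$ spanning the one-dimensional $U$-fixed line of $V_\lambda^*$. Then the multiplication map $m\colon V_\lambda\otimes V_\gamma\to\c[G/U]$ is $\Theta\mapsto\bigl(g\mapsto\langle\xi_\lambda\otimes\xi_\gamma,\,g^{-1}\Theta\rangle\bigr)$. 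The vector $\xi_\lambda\otimes\xi_\gamma$ has weight $-w_0(\lambda+\gamma)$, which is the top weight of $V_\lambda^*\otimes V_\gamma^*$ and has a one-dimensional weight space; hence $\xi_\lambda\otimes\xi_\gamma$ lies in the Cartan component $V_{\lambda+\gamma}^*\subset V_\lambda^*\otimes V_\gamma^*$ and therefore annihilates the $G$-stable complement $W\subset V_\lambda\otimes V_\gamma$ (the sum of the non-Cartan isotypic summands). Since $g^{-1}\Theta\in W$ whenever $\Theta\in W$, we get $m(W)=0$, so $m(V_\lambda\otimes V_\gamma)=m(V_{\lambda+\gamma})$; and $m$ is nonzero on $V_{\lambda+\gamma}$ because $m(v_\lambda\otimes v_\gamma)$ is the product of two nonzero elements of the domain $\c[G/U]$. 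By Schur's lemma $m$ maps $V_{\lambda+\gamma}$ isomorphically onto the unique copy of $V_{\lambda+\gamma}$ in $\c[G/U]$, which is the desired identity.

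Finally I would assemble the pieces. If $V_\lambda$ does not occur in $E_1$ or $V_\gamma$ does not occur in $E_2$, then $(E_1)_\lambda(E_2)_\gamma=0$ and there is nothing to prove, so assume both occur (whence $\lambda,\gamma$ are dominant); then $(E_1)_\lambda=V_\lambda$ and $(E_2)_\gamma=V_\gamma$ as submodules of $H^0(G/H,\E_1)$ and $H^0(G/H,\E_2)$. Because $\c[G/U]$ is multiplicity free, Schur's lemma forces $\iota_1((E_1)_\lambda)$ and $\iota_2((E_2)_\gamma)$ to be the copies of $V_\lambda$ and $V_\gamma$ in $\c[G/U]$; intertwining multiplication, $\iota_{12}\bigl((E_1)_\lambda(E_2)_\gamma\bigr)=V_\lambda\cdot V_\gamma=V_{\lambda+\gamma}\subset\c[G/U]$. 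Since $\iota_{12}$ is a $G$-equivariant injection and $H^0(G/H,\E_1\otimes\E_2)$ is multiplicity free, the $\iota_{12}$-preimage of this copy of $V_{\lambda+\gamma}$ is exactly the $(\lambda+\gamma)$-isotypic component of $H^0(G/H,\E_1\otimes\E_2)$; hence $(E_1)_\lambda(E_2)_\gamma$ lies in that component, and as it also lies in $E_1E_2$ it lies in $(E_1E_2)_{\lambda+\gamma}$. The main obstacle is the middle paragraph, specifically the multiplicity-one statement for the top weight of $V_\lambda^*\otimes V_\gamma^*$ together with the verification that the pullback maps $\iota_i$ really do respect products up to a scalar; granting those, the rest is Schur's lemma.
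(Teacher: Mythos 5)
Your proposal is correct. Note, however, that the paper does not actually prove Theorem \ref{th-horosph}: it states it as well known and cites [Popov86], so there is no ``paper proof'' to compare step by step; what you have written is a self-contained argument that is essentially the standard one underlying the cited fact. Your reduction is sound: since $H$ contains a maximal unipotent $U$, you pull back along $G/U\to G/H$, and since $G$-linearized line bundles on $G/U$ are classified by characters of $U$ (trivial because $U$ is unipotent), the pullbacks are equivariantly trivial; choosing invariant trivializing sections gives equivariant embeddings into $\c[G/U]$ that respect products up to a nonzero constant (the ambiguity being an invertible $G$-invariant function on the homogeneous space $G/U$, hence a scalar). The key computation, that $V_\lambda\cdot V_\gamma\subseteq V_{\lambda+\gamma}$ in $\c[G/U]$, is carried out correctly: in the Peter--Weyl picture the right-$U$-invariants of type $V_\lambda$ are exactly the matrix coefficients against the $U$-fixed covector $\xi_\lambda$ of weight $-w_0\lambda$; the weight $-w_0(\lambda+\gamma)$ is extreme in $V_\lambda^*\otimes V_\gamma^*$ with one-dimensional weight space, so $\xi_\lambda\otimes\xi_\gamma$ generates the Cartan component and, by the standard duality between isotypic components under the perfect $G$-invariant pairing, annihilates the complementary $G$-stable summand $W$ of $V_\lambda\otimes V_\gamma$; hence the multiplication map kills $W$ and lands, by Schur's lemma and multiplicity-freeness, in the unique copy of $V_{\lambda+\gamma}$ (your nonvanishing remark via integrality of $\c[G/U]$ even gives equality, which is more than the theorem asks). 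The final assembly via equivariance and injectivity of the pullback maps is also fine. What your route buys is independence from the reference: it proves the needed multiplicative property of the $\Lambda^+$-decomposition directly, rather than invoking Popov's results on horospherical contractions; the only mild prerequisites you use beyond the paper's toolkit are the classification of equivariant line bundles on $G/U$ and the Peter--Weyl/Cartan-component argument, both standard.
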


Let $\E$ be a $G$-linearized line bundle on $G/H$ and take a finite subset $\A$:  
$$\A \subset \Spec_G(H^0(G/H, \E)) = \{ \lambda \mid V_\lambda \textup{ appears in } H^0(G/H, \E)\}.$$ As before 
we let $E_\A$ denote the linear system:
$$E_\A = \bigoplus_{\lambda \in \A} V_\lambda.$$  
As in \cite{KKh-horosph} one shows that:
\begin{Cor} \label{cor-moment-poly-horosph}
With notation as above, the moment polytope of the graded algebra $\overline{\bigoplus_m E_\A^m}$ coincides with the convex hull of $\A$.
\end{Cor}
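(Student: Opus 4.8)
The plan is to prove the two inclusions $\conv(\A)\subseteq\Delta(\A)$ and $\Delta(\A)\subseteq\conv(\A)$, where $\Delta(\A)$ denotes the moment polytope of the graded $G$-algebra $R=\overline{\bigoplus_m E_\A^m}$. The inclusion $\conv(\A)\subseteq\Delta(\A)$ is immediate: for each $\lambda\in\A$ the module $V_\lambda$ is a summand of $E_\A$, hence of the degree-one piece $\overline{E_\A}$ of $R$, so the pair $(1,\lambda)$ lies in the semigroup $S(R)$ and therefore $\lambda\in\Delta(\A)$; since $\Delta(\A)$ is convex and contains $\A$, it contains $\conv(\A)$.

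For the reverse inclusion I would first work with the algebra $R^\circ=\bigoplus_m E_\A^m$, which is generated in degree one and hence finitely generated, before passing to its completion. Using the isotypic decomposition $E_\A=\bigoplus_{\lambda\in\A}(E_\A)_\lambda$ with $(E_\A)_\lambda\cong V_\lambda$, the product map carries $(E_\A)_{\lambda_1}\otimes\cdots\otimes(E_\A)_{\lambda_m}$ into $(E_\A)_{\lambda_1}\cdots(E_\A)_{\lambda_m}$, and an iterated application of Theorem~\ref{th-horosph} shows this product lies inside the single isotypic component $(E_\A^m)_{\lambda_1+\cdots+\lambda_m}$. Summing over all $m$-tuples of elements of $\A$, every irreducible constituent of $E_\A^m$ has highest weight of the form $\lambda_1+\cdots+\lambda_m$ with all $\lambda_i\in\A$, so dividing by $m$ it lands in $\conv(\A)$. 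Hence $\Delta(R^\circ)\subseteq\conv(\A)$, and combined with the first paragraph $\Delta(R^\circ)=\conv(\A)$.

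It then remains to check that completing the linear systems does not enlarge the moment polytope, i.e. $\Delta(R)=\Delta(R^\circ)$. Since $R^\circ$ is a finitely generated graded domain, its integral closure $R$ is a finite graded $R^\circ$-module, so there is an $N$ with $R_m=\sum_{j\le N}R_j\cdot R^\circ_{m-j}$ for all $m$; as each $R_j$ with $j\le N$ is a fixed finite-dimensional $G$-module and, by the previous paragraph, the highest weights of $R^\circ_{m-j}$ lie in $(m-j)\conv(\A)$, one checks that the highest weights of $R_m$ stay within a bounded distance of $m\conv(\A)$, so that $\Delta(R)=\Delta(R^\circ)$. Together with $\Delta(R^\circ)=\conv(\A)$ this gives $\Delta(\A)=\conv(\A)$. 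I expect the main obstacle to be precisely this last step: controlling the new irreducible constituents that integral closure may introduce. The key point is that a Weyl translate of a dominant weight which lies close to the dominant chamber must already be close to the original weight, so the bounded shifts coming from the fixed modules $R_j$ cannot push constituents out of $\conv(\A)$ in the limit. Alternatively one may simply invoke the standard invariance of the Newton--Okounkov/moment body under passing to the integral closure, as used in the references of Section~\ref{subsec-NO-poly}, or argue exactly as in \cite{KKh-horosph}.
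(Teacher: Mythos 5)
Your strategy is in essence the one the paper intends: the paper itself gives no argument here beyond ``as in \cite{KKh-horosph}'', and the mechanism there is exactly what you use, namely the trivial inclusion $\conv(\A)\subseteq\Delta(\A)$ plus Theorem \ref{th-horosph} to confine the $G$-spectrum of the uncompleted algebra $R^\circ=\bigoplus_m E_\A^m$ to $m\conv(\A)$, followed by invariance under passing to the integral closure. Two points in your third step need tightening, and you correctly sense that this is where the real work lies. First, the finiteness claim: $R=\overline{R^\circ}$ is the integral closure of $R^\circ$ inside $\bigoplus_m \c(\E^{\otimes m})\cong\bigoplus_m\c(G/H)$, not inside the fraction field of $R^\circ$, so Noether finiteness does not apply verbatim; you need either the remark that every integral element is algebraic over $\textup{Frac}(R^\circ)$ and that the relative algebraic closure of $\textup{Frac}(R^\circ)$ in the finitely generated field $\c(G/H)(t)$ is a finite extension, or else the geometric identification $\overline{E_\A^m}=H^0(X,\L^{\otimes m})$ that the paper uses in Theorems \ref{th-subspace-moment-poly} and \ref{th-lin-system-moment-poly}, which gives finiteness by coherence of $\Phi_*\O_X$ on $Y_\A$. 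Second, and more substantively, ``the highest weights of $R_m$ stay within a bounded distance of $m\conv(\A)$'' does not by itself give $\Delta(R)\subseteq\conv(\A)$: the moment polytope is the closure of the set of \emph{all} points $\lambda/m$, so a single constituent sticking out of $m_0\conv(\A)$ at one finite degree $m_0$ would already enlarge it; your asymptotic estimate only controls the limit shape. The standard repair is the power trick: if $V_{\lambda_0}$ occurs in $R_{m_0}$, then, since products of nonzero $B$-eigensections are nonzero $B$-eigensections of the added weight, $V_{k\lambda_0}$ occurs in $R_{km_0}$ for every $k\geq 1$, so your bound gives that $\lambda_0/m_0$ lies within distance $C/(km_0)$ of $\conv(\A)$ for all $k$, hence in $\conv(\A)$. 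With these two insertions your argument is complete; your fallback of simply quoting \cite{KKh-horosph} (or the completion-invariance of moment/Newton--Okounkov data) is precisely what the paper does.
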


\begin{Def} \label{def-NO-polytope-horosph}
For each finite subset $\A \subset \Lambda$ let $\Delta(\A)$ denote the convex hull of $\A$. In fact $\Delta(\A)$ is the moment polytope 
$\Delta(E_\A)$ of its associated linear system. Also 
$$\tilde{\Delta}(\A) = \bigcup_{\lambda \in \Delta(\A)} \{\lambda \} \times \Delta_{\w}(\lambda),$$ the corresponding 
Newton-Okounkov polytope. Recall that $\w$ is a fixed reduced decomposition for the longest element $w_0$ in the Weyl group $W$ and
$\Delta_\w(\lambda) \subset \r^N$ is the string polytope associated to $\lambda$ and $\w$ (see Section \ref{subsec-NO-poly}).
\end{Def}

From Theorem \ref{th-horosph} it follows that for any two finite subsets $\A_1, \A_2$  we have $E_{\A_1}E_{\A_2} = E_{\A_1 + \A_2}$. 
As in \cite{KKh-horosph} we have the following:
\begin{Prop}[Additivity of the moment and Newton-Okounkov polytopes]
\begin{itemize}
\item[(i)]  The map $E \mapsto \Delta(E)$, which associates to an invariant linear system $E$ its moment polytope, is additive. This is basically 
the condition (II) in Section \ref{subsec-Euler-char-sph}.
\item[(ii)]  Suppose the string polytope map $\lambda \mapsto \Delta_\w(\lambda)$ is additive (e.g. $G = \GL(n, \c)$ 
and Gelfand-Zetlin polytopes). Then the map
$E \mapsto \tilde{\Delta}(E)$ is also additive. This is basically the condition (I) in Section \ref{subsec-Euler-char-sph}. 
\end{itemize}
\end{Prop}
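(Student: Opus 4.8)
The plan is to reduce the statement to two results that are already in hand: Theorem~\ref{th-horosph}, in the form of the identity $E_{\A_1}E_{\A_2} = E_{\A_1+\A_2}$ for finite subsets $\A_1,\A_2$ of the relevant $G$-spectra noted above, and Corollary~\ref{cor-moment-poly-horosph}, which says the moment polytope of $\overline{\bigoplus_m E_\A^m}$ is $\conv(\A)$. Everything beyond these two inputs is elementary Minkowski-sum geometry.

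For part (i): since $G/H$ is spherical each $H^0(G/H,\E_i)$ is multiplicity free, so any $G$-invariant linear system $E_i$ has the form $E_{\A_i}$ with $\A_i = \Spec_G(E_i)$ a finite set. The identity $E_{\A_1}E_{\A_2} = E_{\A_1+\A_2}$ shows that the graded $G$-algebra generated by $E_1E_2$ is $\overline{\bigoplus_m E_{\A_1+\A_2}^m}$, so Corollary~\ref{cor-moment-poly-horosph} applied to the finite set $\A_1+\A_2$ gives $\Delta(E_1E_2)=\conv(\A_1+\A_2)$. I then invoke the elementary fact $\conv(\A_1+\A_2)=\conv(\A_1)+\conv(\A_2)$ together with Corollary~\ref{cor-moment-poly-horosph} applied separately to $\A_1$ and $\A_2$ to conclude $\Delta(E_1E_2)=\Delta(E_1)+\Delta(E_2)$, which is precisely condition (II) of Section~\ref{subsec-Euler-char-sph}.

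For part (ii): recall $\tilde{\Delta}(E)=\bigcup_{\lambda\in\Delta(E)}\{\lambda\}\times\Delta_\w(\lambda)$ (Definition~\ref{def-NO-poly}), a set fibered over $\Delta(E)\subset\Lambda^+_\r$ via the projection onto the first factor. I compute the Minkowski sum $\tilde{\Delta}(E_1)+\tilde{\Delta}(E_2)$ fiberwise over this projection: the fiber over a point $\mu$ is the union of the polytopes $\Delta_\w(\lambda)+\Delta_\w(\gamma)$ taken over all $\lambda\in\Delta(E_1)$ and $\gamma\in\Delta(E_2)$ with $\lambda+\gamma=\mu$, where $\mu\in\Lambda^+_\r$ since $\Lambda^+_\r$ is a cone. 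By the assumed additivity of $\lambda\mapsto\Delta_\w(\lambda)$ each such summand equals $\Delta_\w(\mu)$, and the union of sets is nonempty exactly when $\mu\in\Delta(E_1)+\Delta(E_2)$; hence $\tilde{\Delta}(E_1)+\tilde{\Delta}(E_2)=\bigcup_{\mu\in\Delta(E_1)+\Delta(E_2)}\{\mu\}\times\Delta_\w(\mu)$. By part (i) this last set is $\bigcup_{\mu\in\Delta(E_1E_2)}\{\mu\}\times\Delta_\w(\mu)=\tilde{\Delta}(E_1E_2)$, which is condition (I).

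The genuine content sits entirely in the cited Theorem~\ref{th-horosph} — that on a horospherical space the product of two highest-weight sections is again a highest-weight section, so the multiplication creates no lower weights. The only points requiring care are bookkeeping ones: that the additivity hypothesis on $\lambda\mapsto\Delta_\w(\lambda)$ holds at all real points of the positive Weyl chamber (Remark~\ref{rem-string-poly-add}), so that it applies to the generally non-integral $\lambda,\gamma$ arising inside $\Delta(E_1)$ and $\Delta(E_2)$, and that non-uniqueness of the decomposition $\mu=\lambda+\gamma$ is harmless precisely because additivity forces every admissible summand to equal $\Delta_\w(\mu)$.
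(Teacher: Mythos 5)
Your proposal is correct and follows the same route the paper intends: it rests on exactly the two facts the paper cites right before the Proposition, namely $E_{\A_1}E_{\A_2}=E_{\A_1+\A_2}$ (from Theorem \ref{th-horosph}) together with Corollary \ref{cor-moment-poly-horosph}, plus the elementary identities $\conv(\A_1+\A_2)=\conv(\A_1)+\conv(\A_2)$ and the fiberwise Minkowski-sum computation for part (ii). Your attention to the two bookkeeping points (additivity of $\lambda\mapsto\Delta_\w(\lambda)$ at real points of the chamber via Remark \ref{rem-string-poly-add}, and the harmlessness of non-unique decompositions $\mu=\lambda+\gamma$) is exactly what makes the fiberwise argument go through.
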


Let $\E_1, \ldots, \E_k$ be $G$-linearized line bundles. For each $i = 1, \ldots, k$ let $\A_i$ be a finite subset of $\Spec_G(H^0(G/H, \E_i)) \subset \Lambda^+$ 
and $E_i \subset H^0(G/H, \E_i)$ the corresponding $G$-invariant linear system. Also to simplify the notation, 
for each $i$ let $\Delta_i$ denote the convex polytope $\Delta(\A_i)$, i.e. the convex hull of the finite set $\A_i$.

\begin{Cor}[Genus of a complete intersection in a horospherical homogenous space] \label{cor-genus-horosph}
Let $f_1, \ldots, f_k$ be generic elements in $E_1, \ldots, E_k$ respectively. For each $i =1, \ldots, k$ let $D_i = \{ x \in G/H \mid f_i(x) = 0 \}$ be the 
hypersurface defined by $f_i$
and let $X_k = D_1 \cap \cdots \cap D_k$. Then:
\begin{multline} \label{equ-arith-genus-horosph}
\chi(X_k) =  1 - \sum_{i_1} S'(\Delta_{i_1}, \alpha_{i_1}) + \sum_{i_1 < i_2} S'(\Delta_{i_1}+\Delta_{i_2}, \alpha_{i_1}+\alpha_{i_2}) - \cdots \\
+ (-1)^k S'(\Delta_1 + \cdots + \Delta_k, \alpha_1+\cdots+\alpha_k). 
\end{multline}
\end{Cor}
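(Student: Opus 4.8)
The plan is to deduce the corollary directly from the general genus formula of Theorem~\ref{th-main} by feeding in the additivity of the moment polytope that holds in the horospherical setting. Throughout we keep the standing assumption of Theorem~\ref{th-sph-nonempty} that $E_1, \ldots, E_k$ are independent, so that the generic complete intersection $X_k$ is nonempty and all the hypotheses of Theorem~\ref{th-main} are in force.

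First I would apply Theorem~\ref{th-main} to the $G$-invariant linear systems $E_1, \ldots, E_k$ on $G/H$ and to the chosen $B$-eigensection weights $\alpha_1, \ldots, \alpha_k$. This gives formula~\eqref{equ-arith-genus-smooth-sph1}:
\begin{multline*}
\chi(X_k) = 1 - \sum_{i_1} S'(\Delta(E_{i_1}), \alpha_{i_1}) + \sum_{i_1 < i_2} S'(\Delta(E_{i_1}E_{i_2}), \alpha_{i_1}+\alpha_{i_2}) - \cdots \\
+ (-1)^k S'(\Delta(E_1 \cdots E_k), \alpha_1 + \cdots + \alpha_k).
\end{multline*}
It then remains to rewrite each term $S'(\Delta(E_J), \alpha_J)$, where $J = \{i_1, \ldots, i_j\}$, $E_J = \prod_{i \in J} E_i$ and $\alpha_J$ is a $B$-eigensection weight of $E_J$, purely in terms of the polytopes $\Delta_i = \Delta(\A_i) = \conv(\A_i)$.

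The key input here is Theorem~\ref{th-horosph}: in the horospherical case the product of isotypic components $(E_{i_1})_{\lambda_{i_1}} \cdots (E_{i_j})_{\lambda_{i_j}}$ lies in the $(\lambda_{i_1} + \cdots + \lambda_{i_j})$-isotypic component of $E_J$, which forces $E_{\A_{i_1}} \cdots E_{\A_{i_j}} = E_{\A_{i_1} + \cdots + \A_{i_j}}$ (Minkowski sum of the finite subsets of $\Lambda$). By Corollary~\ref{cor-moment-poly-horosph} (see also Definition~\ref{def-NO-polytope-horosph}) the moment polytope of the graded algebra $\overline{\bigoplus_m E_\A^m}$ is precisely $\conv(\A)$; applying this to $\A = \A_{i_1} + \cdots + \A_{i_j}$ and using $\conv(\A_{i_1} + \cdots + \A_{i_j}) = \conv(\A_{i_1}) + \cdots + \conv(\A_{i_j})$ yields $\Delta(E_J) = \Delta_{i_1} + \cdots + \Delta_{i_j}$. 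Likewise, a product of $B$-eigensections is again a $B$-eigensection whose weight is the sum of the individual weights, so one may take $\alpha_J = \alpha_{i_1} + \cdots + \alpha_{i_j}$. Substituting these identifications into the displayed formula produces exactly \eqref{equ-arith-genus-horosph}.

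There is no genuine obstacle here; the content is entirely carried by Theorem~\ref{th-main} together with the horospherical additivity encapsulated in Theorem~\ref{th-horosph} and Corollary~\ref{cor-moment-poly-horosph}. The only points requiring a little care are (a) verifying that the $B$-eigensection weight used for $E_J$ inside $S'$ is additive, which is immediate since the product of highest weight vectors is again a highest weight vector with the sum of the weights, and (b) noting that the right-hand side is independent of the auxiliary choices (the eigensections and the sufficiently complete completion $X$ used in proving Theorem~\ref{th-main}), which was already recorded there. One obtains the companion statement in terms of the Newton--Okounkov polytopes $\tilde{\Delta}_i$ by the identical argument, now invoking additivity of the string polytope map (valid, for instance, for $G = \GL(n, \c)$ with Gelfand--Zetlin polytopes) and starting from \eqref{equ-arith-genus-smooth-sph2} instead.
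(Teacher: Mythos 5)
Your proof is correct and follows essentially the same route the paper intends: specialize Theorem \ref{th-main} and rewrite each term via the horospherical additivity $E_{\A_1}E_{\A_2}=E_{\A_1+\A_2}$ (Theorem \ref{th-horosph}) together with the identification of the moment polytope with $\conv(\A)$ (Corollary \ref{cor-moment-poly-horosph}), exactly as indicated in Remark \ref{rem-main-th-additive}. Nothing further is needed.
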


\begin{Rem}  \label{rem-horo-NO-poly-add}
As in Remark \ref{rem-main-th-additive}(2) if the string polytopes are additive for the choice of the reduced decomposition $\w$ (e.g. $G = \GL(n, \c)$ 
and Gelfand-Zetlin polytopes) then the Newton-Okounkov polytope is also additive. Let $\tilde{\Delta}_i$ denote the Newton-Okounkov polytope $\tilde{\Delta}(\A_i)$ 
associated to the finite subset $\A_i$.
Then, under the assumption of additivity of the string polytopes, we can rewrite the formula \eqref{equ-arith-genus-horosph} for the genus in terms of the 
Newton-Okounkov polytopes $\tilde{\Delta}_i$ and their sums:
\begin{multline} \label{equ-arith-genus-horosph-NO-poly}
\chi(X_k) =  1 - \sum_{i_1} N'(\tilde{\Delta}_{i_1}, \alpha_{i_1}) + \sum_{i_1 < i_2} N'(\tilde{\Delta}_{i_1} + \tilde{\Delta}_{i_2}, \alpha_{i_1}+\alpha_{i_2}) - \cdots \\
+ (-1)^k N'(\tilde{\Delta}_1 + \cdots + \tilde{\Delta}_k, \alpha_1+\cdots+\alpha_k). 
\end{multline}
\end{Rem}

In particular if $G/H$ is a quasi-affine variety we can consider the finite $G$-invariant subsets of the ring of regular functions $\c[G/H]$, that is, 
when we take all the line bundles $\E_i$ to be the trivial line bundle. 
In fact, as we now explain, to each face of the Weyl chamber $\Lambda^+_\r$ there corresponds 
a quasi-affine horospherical homogeneous space and by Proposition \ref{prop-horosph-P'} every horospherical homogeneous space $G/H$ is a 
quotient of such a quasi-affine horospherical homogeneous space: Let $\sigma$ be a face of the Weyl chamber $\Lambda^+_\r$. Let $P$ denote the corresponding parabolic subgroup. When $G = \GL(n, \c)$, the parabolic subgroups 
(up to conjugation) are block upper triangular subgroups. Let $P'$ denote the commutator subgroup of $P$. One shows that the homogeneous space $G/P'$ is quasi-affine and moreover, the algebra 
of regular functions $\c[G/P']$, for the natural action of $G$, decomposes as follows (\cite{Popov-Vinberg}):
$$\c[G/P'] = \bigoplus_{\lambda \in \sigma \cap \Lambda^+} V_\lambda^*.$$
Now each finite subset $\A \subset \sigma \cap \Lambda^+$ determines a $G$-invariant subspace $L_\A = \bigoplus_{\lambda \in \A} V_\lambda$ of $\c[G/P']$. 
Let us take finite subsets $\A_1, \ldots, \A_k$ of the semigroup $\sigma \cap \Lambda$. Then Corollary \ref{cor-genus-horosph} gives us a formula for the genus of a generic complete intersection from the subspaces $L_1, \ldots, L_k$ in the quasi-affine variety $G/P'$, in terms of the convex hulls $\Delta_i$ of the subsets $\A_i$.

Finally, below is a concrete example of Corollary \ref{cor-genus-horosph}.
\begin{Ex} \label{ex-S-variety}
Let $V$ be a finite dimensional $G$-module. Let $v_1, \ldots, v_s$ be highest weight vectors of $V$ with highest weights $\lambda_1, \ldots, \lambda_s$ respectively. 
Put $v = v_1 + \cdots + v_s$ and let $X$ be the closure of the $G$-orbit of $v$ in $V$. It is an affine horospherical subvariety of $V$. Let $L$ be the linear subspace of $\c[X]$
consisting of linear functions in $V^*$ restricted to $X$. As above one observes that the moment polytope $\Delta$ of the subspace $L$, or equivalently the graded algebra
$\bigoplus_m \overline{L^m}$, is $\Delta = \conv\{ \lambda_i \mid i = 1, \ldots, s\}$. Also let $\tilde{\Delta}$ denote the corresponding 
Newton-Okounkov polytope. 
Then if $f$ is a generic element in $L$ defining a hypersurface $H_f = \{ x \in X  \mid f(x) = 0\}$, 
Corollary \ref{cor-genus-horosph} implies that the genus of  $H_f$ is equal to $S^\circ(\Delta) = N^\circ(\tilde{\Delta})$, 
i.e. the number of integral points in the interior of the polytope $\tilde{\Delta}$.
\end{Ex}

\subsection{Group embeddings} \label{subsec-gp-embed}
Let $\pi: G \to \GL(n, \c) \subset \textup{Mat}(V)$ be a
finite dimensional representation of a connected reductive group $G$.
Let $\pi_{ij}: G \to \c$, $i,j=1, \ldots, n$ be the matrix elements, i.e. the
entries of $\pi$. Let $L_\pi$ be the subspace of regular functions on $G$ spanned by the
$\pi_{ij}$. Consider the action of $G \times G$ on $G$ by the multiplication
from left and right. The subspace $L_\pi$ is a $(G \times G)$-invariant subspace.
Let $\Lambda_\r^+$ (respectively $W$) denote the positive Weyl chamber (respectively the Weyl group) of $G$.

\begin{Def}[Weight polytope]
The convex hull of the Weyl orbit of the highest weights of the representation $\pi$ is called the {\it weight polytope} of $\pi$. We will denote the weight polytope by $P_\pi$ and 
its intersection with the positive Weyl chamber by $P^+_\pi$.
\end{Def}

As in \cite{Kazarnovskii}, one can show that:
\begin{Th}
The moment polytope of the $(G \times G)$-algebra $\overline{\bigoplus_{m} L^m_\pi}$
(which lives in $\Lambda_\r^+ \times \Lambda_\r^+$)
can be identified with $P^+_\pi$.
To identify the moment polytope and the polytope $P^+_\pi$ we should send a point $(\lambda, \lambda^*)$ to $\lambda$. 
\end{Th}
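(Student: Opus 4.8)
The plan is to reduce the claim, via the Peter--Weyl theorem, to a computation with tensor powers of $V$, and then to a moment polytope computation along the lines of \cite{Kazarnovskii}.

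First I would invoke the Peter--Weyl decomposition $\c[G] = \bigoplus_{\lambda \in \Lambda^+} V_\lambda \boxtimes V_\lambda^*$ of $\c[G]$ as a $(G\times G)$-module for the two-sided action (this is the multiplicity-freeness of the spherical homogeneous space $(G\times G)/G_{\mathrm{diag}}$ from Section \ref{subsec-prelim-sph}); it shows in particular that the weight lattice of $G$ as a $(G\times G)$-spherical homogeneous space is $\{(\lambda,\lambda^*) : \lambda \in \Lambda\}$, so the spectrum of any $(G\times G)$-submodule of $\c[G]$ lies on this sublattice and the map $(\lambda,\lambda^*)\mapsto\lambda$ from the statement is just the projection to the first factor, a linear isomorphism onto its image. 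Hence it suffices to show that the image of $\Delta(\A)$ under this projection is $P^+_\pi$, where $\A$ is the set of highest weights of $\pi$ and $\overline{R(\A)} = \overline{\bigoplus_m L^m_\pi}$. Since a product of $m$ matrix elements of $\pi$ is a matrix element of $\pi^{\otimes m}$ acting on $V^{\otimes m}$, the space $L^m_\pi$ is exactly the span of the matrix elements of $V^{\otimes m}$, so $L^m_\pi = \bigoplus_\mu V_\mu \boxtimes V_\mu^*$, the sum over the highest weights $\mu$ of the irreducible constituents of $V^{\otimes m}$. Thus the semigroup $S(R(\A))$ is identified with $\Gamma := \{(m,\mu)\in\n\times\Lambda^+ : V_\mu \subseteq V^{\otimes m}\}$ (a semigroup, since the Cartan component gives $V_{\mu+\nu} \subseteq V_\mu \otimes V_\nu$), and $\Delta(\A)$ is the slice at level one of $\overline{\mathrm{cone}(\Gamma)}$; equivalently, $\Delta(\A)$ is the moment polytope of the projective $(G\times G)$-variety $Y := \overline{\pi(G)} \subseteq \p(\End(V))$ (whose normalization has homogeneous coordinate ring $\overline{R(\A)}$), i.e. by Remark \ref{rem-moment-polytope-symplectic} the Kirwan polytope of $Y$.

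The inclusion $\subseteq P^+_\pi$ is immediate: if $V_\mu \subseteq V^{\otimes m}$ then $\mu$ is a weight of $V^{\otimes m}$, hence a sum of $m$ weights of $V$; every weight of $V$ lies in $P_\pi = \conv(W\cdot\A)$, so $\mu/m \in P_\pi$, and $\mu/m$ is dominant, so $\mu/m \in P_\pi \cap \Lambda^+_\r = P^+_\pi$. Passing to the closure gives the inclusion.

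The reverse inclusion $P^+_\pi \subseteq \Delta(\A)$ is the heart of the matter, and I expect it to be the main obstacle. I would argue it via the moment map. Fix a maximal compact $K\subset G$. The toric subvariety $\overline{\pi(T)}\subseteq Y$ (the projectivization of the diagonal matrices in a weight basis of $\pi$) is $(T\times T)$-stable, and since $\pi(t)\pi(t)^*$ and $\pi(t)^*\pi(t)$ are diagonal matrices, the $(K\times K)$-moment map takes values in $\Lambda_\r\times\Lambda_\r$ on $\overline{\pi(T)}$, namely $[\pi(t)]\mapsto(\xi(t),-\xi(t))$ with $\xi(t) = \big(\sum_j |\nu_j(t)|^2\nu_j\big)\big/\big(\sum_j|\nu_j(t)|^2\big)$ a convex combination of the weights $\nu_1,\dots,\nu_n$ of $\pi$; as $t$ runs over $T$, $\xi(t)$ runs over the relative interior of $\conv\{\nu_j\} = P_\pi$ (the standard mean-value parametrization of the gradient of a log-sum-exp). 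Given $\xi^+ \in P^+_\pi$, pick $t_\ell$ with $\xi(t_\ell)\to\xi^+$ and let $k_0\in K$ represent $w_0\in W$; then $(1,k_0)\cdot[\pi(t_\ell)]\in Y$ has $(K\times K)$-moment value tending to $(\xi^+,-w_0\xi^+) = (\xi^+,(\xi^+)^*)\in\Lambda^+_\r\times\Lambda^+_\r$, so by compactness of $Y$ and closedness of $\Delta(\A)$ one gets $(\xi^+,(\xi^+)^*)\in\Delta(\A)$; thus the projection of $\Delta(\A)$ contains $P^+_\pi$. Combined with the first inclusion and the fact that $\Delta(\A)$ lies over $\{(\lambda,\lambda^*)\}$, this gives $\Delta(\A) = \{(\lambda,\lambda^*) : \lambda\in P^+_\pi\}$, which is identified with $P^+_\pi$ via $(\lambda,\lambda^*)\mapsto\lambda$. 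The points requiring care are exactly these two moment-map assertions (that the moment values on $\overline{\pi(T)}$ are as claimed, and that $\xi(\cdot)$ fills out the relative interior of $P_\pi$); alternatively one can establish $P^+_\pi\subseteq\Delta(\A)$ purely representation-theoretically, writing a dominant rational point of $P_\pi$ as $\tfrac1m\sum n_{i,w}(w\mu_i)$ and invoking a saturation/PRV-type statement for the tensor semigroup $\Gamma$, at the cost of more combinatorial bookkeeping.
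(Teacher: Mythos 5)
Your proposal is essentially correct, but it does not follow the paper's route, because the paper gives no proof of this theorem at all: it is stated with the remark ``as in \cite{Kazarnovskii}'', and the intended argument there (and in \cite{KKh-Kazarnovskii}) is purely representation-theoretic. That argument begins exactly as yours does, identifying $L_\pi^m$ with the span of the matrix coefficients of $V^{\otimes m}$, so that the spectrum is $\{(m,\mu): V_\mu\subseteq V^{\otimes m}\}$ and the easy inclusion into $P^+_\pi$ follows from the fact that every weight of $V^{\otimes m}$ is a sum of $m$ weights of $V$; but the hard inclusion $P^+_\pi\subseteq\Delta(\A)$ is obtained there by semigroup arguments with Cartan and PRV-type components, i.e.\ the alternative you only sketch in your last sentence. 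Your main route replaces that combinatorial step by a symplectic one: you invoke Remark \ref{rem-moment-polytope-symplectic}(3) to identify $\Delta(\A)$ with the Kirwan polytope of $Y=\overline{\pi(G)}\subseteq\p(\End(V))$, compute the $(K\times K)$-moment map on $\overline{\pi(T)}$ (the log-sum-exp gradient computation and the fact that its image is the relative interior of $\conv\{\nu_j\}=P_\pi$ are standard and correct), and then use equivariance under $(1,k_0)$ plus compactness of $Y$ to hit every point $(\xi^+,(\xi^+)^*)$ with $\xi^+\in P^+_\pi$. This is a legitimate and rather clean way to get the reverse inclusion, at the price of relying on Remark \ref{rem-moment-polytope-symplectic}(3) for a possibly singular and non-normal $Y$ (a fact the paper quotes but does not prove), whereas the Kazarnovskii-style proof stays within the algebraic framework used elsewhere in the paper. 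Two points you should make explicit to close the argument: (i) your upper bound is proved for $R(\A)=\bigoplus_m L_\pi^m$ while the theorem concerns the completion $\overline{\bigoplus_m L_\pi^m}$, so you need the (standard, via finiteness of the integral closure as in \cite{SZ}) fact that completing the graded algebra does not change its moment polytope; (ii) the $*$-involution appearing in Remark \ref{rem-moment-polytope-symplectic}(2)--(3), together with the left/right action conventions, must be tracked consistently, since for a non-self-dual $\pi$ they decide whether the projection produces $P^+_\pi$ or $P^+_{\pi^*}=(P^+_\pi)^*$; with a consistent choice your two inclusions do match, but as written you pass over this silently.
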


Let $F$ denote the Weyl polynomial for the group $G \times G$. It is a polynomial on the vector space $\Lambda_\r \times \Lambda_\r$ (see Section \ref{subsec-moment}, paragraph after Theorem \ref{th-H^0-moment-poly} for the definition of the Weyl polynomial). 
For any dominant weight of $G \times G$ of the form $(\lambda, \lambda^*)$ we have $F(\lambda, \lambda^*) = \dim(V_\lambda \times V_\lambda^*) = \dim(V_\lambda)^2$.

Let us take $k$ representation $\pi_1, \ldots, \pi_k$ where $k \leq \dim(G)$. For each $\pi_i$ let $L_i$ (respectively $P_i^+$) be its subspace of matrix elements
(respectively its weight polytope intersected with the positive Weyl chamber).
\begin{Cor}[Genus of a complete intersection in a group] \label{cor-genus-gp}
Let $f_1, \ldots, f_k$ be generic elements in $L_{\pi_1}, \ldots, L_{\pi_k}$ respectively. For each $i =1, \ldots, k$ let $D_i = \{ x \in G/H \mid f(x) = 0 \}$.
and let $X_k = D_1 \cap \cdots \cap D_k$. Then:
\begin{multline} \label{equ-arith-genus-gp}
\chi(X_k) =  1 - \sum_{i_1} S'(P_1^+) + \sum_{i_1 < i_2} S'(P_1^+ + P_2^+) - \cdots \\
+ (-1)^k S'(P_1^+ + \cdots P_k^+).
\end{multline}
\end{Cor}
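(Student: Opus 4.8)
The plan is to derive Corollary \ref{cor-genus-gp} as a direct specialization of Theorem \ref{th-main} to the group case, exactly as Corollary \ref{cor-genus-horosph} was derived for horospherical homogeneous spaces. First I would observe that the group $G$, viewed as the homogeneous space $(G\times G)/G_{\mathrm{diag}}$, is a spherical $(G\times G)$-variety, and that each subspace of matrix elements $L_{\pi_i}\subset\c[G]$ is a $(G\times G)$-invariant linear system, so Theorem \ref{th-main} applies with the reductive group taken to be $G\times G$. The general formula \eqref{equ-arith-genus-smooth-sph1} then reads
\[
\chi(X_k) = 1 - \sum_{i_1} S'(\Delta(L_{i_1}),\alpha_{i_1}) + \sum_{i_1<i_2} S'(\Delta(L_{i_1}L_{i_2}),\alpha_{i_1}+\alpha_{i_2}) - \cdots + (-1)^k S'(\Delta(L_1\cdots L_k),\alpha_1+\cdots+\alpha_k),
\]
where each $\Delta(L_{i})$ is the moment polytope of the $(G\times G)$-algebra $\overline{\bigoplus_m L_{\pi_i}^m}$.

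Next I would use the preceding theorem in this subsection, which identifies $\Delta(L_{\pi_i})$ with $P_i^+$ (under the embedding $(\lambda,\lambda^*)\mapsto\lambda$), together with the fact — coming from Theorem \ref{th-horosph}-type multiplicativity of matrix elements, namely $L_{\pi_i}L_{\pi_j}=L_{\pi_i\otimes\pi_j}$ and the behavior of highest weights under tensor product — that the moment polytope map is additive here: $\Delta(L_{i}L_{j}) = P_i^+ + P_j^+$. This is precisely condition (II) of Section \ref{subsec-Euler-char-sph} for the group case, so by Remark \ref{rem-main-th-additive}(1) the right-hand side of the genus formula can be rewritten purely in terms of the polytopes $P_i^+$ and their Minkowski sums, giving \eqref{equ-arith-genus-gp}. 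One small point to address is the $B$-eigensection weight $\alpha_i$: since $\c[G]$ contains the constant function $1$ (a $(G\times G)$-invariant, hence a $B\times B$-eigensection of weight $0$), one has $\alpha_i=0$ for all $i$, which is why the $\alpha$'s are suppressed in the statement — so $S'(P_i^+) := S'(P_i^+,0)$, with $S'$ defined using the Weyl polynomial $F$ for $G\times G$, i.e. $F(\lambda,\lambda^*)=\dim(V_\lambda)^2$ on the relevant diagonal copy of the weight lattice.

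I do not expect any serious obstacle: the corollary is genuinely a corollary, and all the substantive work (the cohomology vanishing of Brion, the virtual polytope machinery of Section \ref{sec-virtual}, the transversality results of Section \ref{sec-transverse}, and the identification of the moment polytope with $P_\pi^+$) is already in place. The only thing requiring a line of care is checking that the representation $\pi$ is faithful enough — or rather, noting that replacing $\pi$ by a faithful representation does not change the relevant hypersurface up to the open orbit — so that $L_{\pi_i}$ is base point free on $G$ and the Kodaira maps used in Theorem \ref{th-suff-full-exists} behave as required; this is immediate since $L_{\pi_i}$ is $(G\times G)$-invariant and nonzero, hence automatically base point free. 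Thus the proof amounts to: invoke Theorem \ref{th-main} for $G\times G$, substitute $\Delta(L_{\pi_i})=P_i^+$, use additivity of the moment polytope (condition (II)) to collapse $\Delta(L_{i_1}\cdots L_{i_j})$ to $P_{i_1}^+ + \cdots + P_{i_j}^+$, and set $\alpha_i=0$.
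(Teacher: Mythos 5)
Your derivation is correct and is exactly the route the paper intends: the corollary is the specialization of Theorem \ref{th-main} to the spherical $(G\times G)$-homogeneous space $G$, using the identification of the moment polytope of $\overline{\bigoplus_m L_{\pi_i}^m}$ with $P_i^+$, the additivity $\Delta(L_{\pi_i}L_{\pi_j})=P_i^++P_j^+$ (condition (II), via $L_{\pi_i}L_{\pi_j}=L_{\pi_i\otimes\pi_j}$, as in the proposition the paper quotes from \cite{KKh-Kazarnovskii}), and $\alpha_i=0$ coming from the constant function $1\in\c[G]$, with $S'$ taken for the Weyl polynomial of $G\times G$. No gaps to report.
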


As usual to a representation $\pi$ we can associate a Newton-Okounkov polytope $\tilde{\Delta}_\pi$ defined by:
$$\tilde{\Delta}_\pi =  \bigcup_{\lambda \in P^+_\pi} (\{(\lambda, \lambda^*)\} \times \Delta_{\w}(\lambda) \times \Delta_\w(\lambda^*)).$$

As in \cite{KKh-Kazarnovskii} we have the following:
\begin{Prop}[Additivity of the moment and Newton-Okounkov polytopes]
\begin{itemize}
\item[(i)]  The map $\pi \mapsto P^+_\pi$, which associates to a representation $\pi$ its polytope $P^+_\pi$, is additive with respect to the tensor product of representations. 
This is basically 
the condition (II) in Section \ref{subsec-Euler-char-sph}.
\item[(ii)]  Suppose the string polytope map $\lambda \mapsto \Delta_\w(\lambda)$ is additive (e.g. $G = \GL(n, \c)$ 
and the Gelfand-Zetlin polytopes). Then the map
$\pi \mapsto \tilde{\Delta}_\pi$ is also additive. This is basically the condition (I) in Section \ref{subsec-Euler-char-sph}. 
\end{itemize}
\end{Prop}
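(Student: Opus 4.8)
The plan is to reduce everything to the case of two representations $\pi_1,\pi_2$ (the general statement then follows by induction on $k$), and to deduce (ii) formally from (i) together with the assumed additivity of the string polytopes. For (i) the statement to be proved is
$$P^+_{\pi_1 \otimes \pi_2} = P^+_{\pi_1} + P^+_{\pi_2}.$$
Granting this, ``condition (II)'' of Section \ref{subsec-Euler-char-sph} follows at once: by the identification (proved as in \cite{Kazarnovskii}, recorded above) of $P^+_\pi$ with the moment polytope of $\overline{\bigoplus_m L^m_\pi}$, together with the observation that a product of a matrix coefficient of $\pi_1^{\otimes m_1}$ and one of $\pi_2^{\otimes m_2}$ is a matrix coefficient of $\pi_1^{\otimes m_1} \otimes \pi_2^{\otimes m_2}$ (so $L^{m_1}_{\pi_1} L^{m_2}_{\pi_2} = L_{\pi_1^{\otimes m_1} \otimes \pi_2^{\otimes m_2}}$), the moment polytopes attached to the relevant tensor products are exactly the Minkowski sums of those of the factors. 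To prove the displayed identity I would first record the elementary fact that the weight polytope $P_\pi$ — the convex hull of the Weyl orbit of the highest weights of $\pi$ — equals the convex hull of the set of all $T$-weights of $\pi$: every weight of an irreducible constituent $V_\lambda$ lies in $\conv(W\lambda)$, while $W\lambda$ itself consists of weights of $V_\lambda$. Since the set of $T$-weights of $\pi_1 \otimes \pi_2$ is the elementwise sum of those of $\pi_1$ and of $\pi_2$, passing to convex hulls gives $P_{\pi_1 \otimes \pi_2} = P_{\pi_1} + P_{\pi_2}$; in particular $P_\pi$ is $W$-invariant.

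It then remains to intersect with the positive Weyl chamber $\Lambda^+_\r$, and this is governed by the lemma: for $W$-invariant convex polytopes $A, B \subset \Lambda_\r$,
$$(A + B) \cap \Lambda^+_\r = (A \cap \Lambda^+_\r) + (B \cap \Lambda^+_\r).$$
The inclusion $\supseteq$ is immediate since $\Lambda^+_\r$ is a convex cone; the content is the reverse inclusion, namely that a dominant $x = a + b$ with $a \in A$, $b \in B$ admits a decomposition into dominant summands. The natural approach is a straightening procedure: if $a$ is not dominant across some simple wall $\alpha_i$, then $x$ dominant forces $b$ to lie strictly on the positive side of that wall, and replacing $(a, b)$ by $(a + t\alpha_i,\, b - t\alpha_i)$ — which stays in $A \times B$ by $W$-invariance and convexity and preserves the sum — moves $a$ onto that wall without making $b$ non-dominant there; one then iterates in a suitable order. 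This is precisely the argument of \cite{KKh-Kazarnovskii}, and I expect it to be the main obstacle: a single straightening step can push a previously good wall of $a$ slightly negative, so making the iteration manifestly terminate takes some care (alternatively one can minimize over the compact convex set $\{a' \in A : x - a' \in B\}$ the convex functional $a' \mapsto \Phi(a') + \Phi(x - a')$, where $\Phi(v) = \max_{w \in W}\langle wv,\rho\rangle - \langle v,\rho\rangle$ vanishes exactly on $\Lambda^+_\r$, and argue that the attained minimum is $0$).

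Finally, (ii) is a formal fibrewise consequence of (i). Recall $\tilde\Delta_\pi = \bigcup_{\lambda \in P^+_\pi} \{(\lambda, \lambda^*)\} \times \Delta_\w(\lambda) \times \Delta_\w(\lambda^*)$, which is itself a convex polytope (an intersection of the rational cone governing the string polytopes with the slab lying over $P^+_\pi$). Given a point $(\lambda, \lambda^*, x, y) \in \tilde\Delta_{\pi_1 \otimes \pi_2}$, part (i) provides $\lambda = \lambda_1 + \lambda_2$ with each $\lambda_i \in P^+_{\pi_i}$, hence dominant, so also $\lambda^* = \lambda_1^* + \lambda_2^*$ with $\lambda_i^*$ dominant; the assumed additivity of $\lambda \mapsto \Delta_\w(\lambda)$ on the dominant cone then gives $x = x_1 + x_2$ and $y = y_1 + y_2$ with $x_i \in \Delta_\w(\lambda_i)$ and $y_i \in \Delta_\w(\lambda_i^*)$, so the given point is the sum of $(\lambda_1, \lambda_1^*, x_1, y_1) \in \tilde\Delta_{\pi_1}$ and $(\lambda_2, \lambda_2^*, x_2, y_2) \in \tilde\Delta_{\pi_2}$. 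The reverse inclusion is the same computation read backwards, and induction on $k$ then completes both parts, establishing ``condition (I)'' of Section \ref{subsec-Euler-char-sph}.
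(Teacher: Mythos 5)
Your proposal is correct and follows essentially the same route as the paper, which itself offers nothing beyond the citation ``As in \cite{KKh-Kazarnovskii}'': your reduction of (i) to the facts that $P_\pi$ is the convex hull of all $T$-weights (so $P_{\pi_1\otimes\pi_2}=P_{\pi_1}+P_{\pi_2}$, and $L_{\pi_1}^{m_1}L_{\pi_2}^{m_2}=L_{\pi_1^{\otimes m_1}\otimes\pi_2^{\otimes m_2}}$ gives condition (II)) together with the lemma $(A+B)\cap\Lambda^+_\r=(A\cap\Lambda^+_\r)+(B\cap\Lambda^+_\r)$ for $W$-invariant convex polytopes, and your fibrewise deduction of (ii) from (i) plus the assumed additivity of $\lambda\mapsto\Delta_\w(\lambda)$ (using that $\lambda\mapsto\lambda^*=-w_0\lambda$ is linear and preserves dominance), are exactly what is needed and are carried out correctly. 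The one step you leave unproved, the chamber lemma, is precisely what the cited reference \cite{KKh-Kazarnovskii} supplies (and it is true; for instance one can reduce to orbit polytopes $A=\conv(W\lambda)$, $B=\conv(W\mu)$, whose dominant parts are described by the dominance order), so deferring it there is consistent with what the paper itself does.
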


As in (ii) above, suppose the string polytope map is additive. Let $\tilde{\Delta}_i$ denote the Newton-Okounkov polytope of 
the representaiton $\pi_i$ in Corollary \ref{cor-genus-gp}. Then we can rewrite the formula \eqref{equ-arith-genus-gp} for the genus in terms of the 
Newton-Okounkov polytopes $\tilde{\Delta}_i$ and their sums:
\begin{multline} \label{equ-arith-genus-gp-NO-poly}
\chi(X_k) =  1 - \sum_{i_1} N'(\tilde{\Delta_i}) + \sum_{i_1 < i_2} N'(\tilde{\Delta}_{i_1} + \tilde{\Delta}_{i_2})) - \cdots \\
+ (-1)^k N'(\tilde{\Delta}_1 + \cdots + \tilde{\Delta}_k). 
\end{multline}

\subsection{Flag varieties} \label{subsec-flag}
Let $X = G/B$ be the complete flag variety of a connected complex reductive algebraic group. 

\begin{Cor}[Genus of a complete intersection in the flag variety] \label{cor-genus-flag-var}
Let $\lambda_1, \ldots, \lambda_k$ be dominant weights. Let $D_1, \ldots, D_k$ be smooth and transversely intersecting divisors for the corresponding line bundles 
$L_{\lambda_1}, \ldots, L_{\lambda_k}$. Let $X_k = D_1 \cap \cdots \cap D_k$. Then:
\begin{multline} \label{equ-arith-genus-smooth-sph}
\chi(X_k) =  1 - \sum_{i_1} N'(\Delta_{\w}(\lambda_{i_1})) + \sum_{i_1 < i_2} N'(\Delta_{\w}(\lambda_{i_1}+\lambda_{i_2})) - \cdots \\
+ (-1)^k N'(\Delta_{\w}(\lambda_1 + \cdots + \lambda_k)). 
\end{multline}
\end{Cor}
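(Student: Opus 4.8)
The plan is to deduce the corollary as the specialization of Theorem \ref{th-main} (equivalently, of Theorem \ref{th-genus-complete-int}) to the case $G/H = G/B$ with $E_i = H^0(G/B,\L_{\lambda_i})$ for $i=1,\dots,k$. The first point is that $G/B$ is already a smooth projective spherical variety, so $X = G/B$ serves as its own sufficiently complete completion with respect to the $E_i$ (each $\L_{\lambda_i}$ is globally generated, so the Kodaira maps are defined on all of $G/B$), and no resolution of singularities is needed. Since by Theorem \ref{th-genus-complete-int} the arithmetic genus $\chi(X_k)$ depends only on the line bundles $\L_{\lambda_i}$ and on the transversality of the $D_i$ (not on the chosen divisors), the hypotheses of the corollary suffice, and it is enough to compute $\chi(G/B,\L_J^{-1})$ for every nonempty $J\subseteq\{1,\dots,k\}$, where $\L_J = \bigotimes_{i\in J}\L_{\lambda_i} = \L_{\lambda_J}$ with $\lambda_J = \sum_{i\in J}\lambda_i$. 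Here one uses the multiplicativity $\lambda\mapsto\L_\lambda$, together with the classical surjectivity of Cartan multiplication $H^0(\L_\mu)\otimes H^0(\L_\nu)\to H^0(\L_{\mu+\nu})$ on $G/B$ to identify the linear system $E_J = \prod_{i\in J}E_i$ with the full space $H^0(G/B,\L_{\lambda_J})$.

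Next I would pin down the convex-geometric data. By the Borel--Weil theorem $H^0(G/B,\L_\mu^{\otimes m}) = V_{m\mu}^*$ is an irreducible $G$-module, so the moment polytope $\Delta(G/B,\L_\mu)$ is a single point (the highest weight $\mu^* = -w_0\mu$ of $V_\mu^*$), and hence the Newton--Okounkov polytope $\tilde\Delta(G/B,\L_\mu)$ is the fibre string polytope $\Delta_\w(\mu^*)$ lying over that point. From \eqref{equ-dimV-lambda-string-poly} and the cone description of string polytopes in Remark \ref{rem-string-poly-add} (which gives $\Delta_\w(m\mu) = m\Delta_\w(\mu)$, since $C_\w$ is a cone with apex at the origin), the function $m\mapsto\dim H^0(G/B,\L_\mu^{\otimes m}) = \dim V_{m\mu} = \#\big(m\Delta_\w(\mu^*)\cap\z^N\big)$ is precisely the Ehrhart polynomial $N(m\Delta_\w(\mu^*))$ of the string polytope.

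With these in hand the computation of $\chi(G/B,\L_\mu^{-1})$ is short. By Brion's vanishing theorem (Theorem \ref{th-coh-line-bundle}(b)) only the degree-$\kappa$ cohomology of $\L_\mu^{-1}$ survives, so by Corollary \ref{cor-dim-H^k-L^-1} (equivalently Proposition \ref{prop-NO-poly-dim-H^0}) $\chi(G/B,\L_\mu^{-1})$ equals the value at $m=-1$ of the polynomial $m\mapsto\dim H^0(G/B,\L_\mu^{\otimes m}) = N(m\Delta_\w(\mu^*))$, which by Theorem \ref{th-lambda-virtual}(1) (Ehrhart reciprocity) is $(-1)^{\dim\Delta_\w(\mu^*)}N^\circ(\Delta_\w(\mu^*)) = N'(\Delta_\w(\mu^*))$. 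Since $G/B$ is rational (or by Theorem \ref{th-h-p-sph}), $\chi(G/B,\mathcal{O}) = 1$. Substituting $\chi(G/B,\mathcal{O}) = 1$ and $\chi(G/B,\L_{\lambda_J}^{-1}) = N'(\Delta_\w(\lambda_J^*))$ into the alternating-sum formula of Theorem \ref{th-genus-complete-int} and summing over all $J$ yields exactly the formula \eqref{equ-arith-genus-smooth-sph}, with $\lambda_J^*$ in place of $\lambda_J$.

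The only delicate point --- and the main ``obstacle'', such as it is --- is reconciling conventions: Theorem \ref{th-main} naturally produces the fibre $\Delta_\w(\mu^*)$ rather than $\Delta_\w(\mu)$, and $\tilde\Delta(E_i)$ sits over the shifted lattice point $\alpha_i$ appearing there. This is harmless: the $\z^N$-component of $\alpha_i$ is zero, and for any dominant $\nu$ the Ehrhart polynomial $m\mapsto\#(m\Delta_\w(\nu)\cap\z^N)$ equals $\dim V_{m\nu}$, which depends on $\nu$ only through the involution $\nu\mapsto\nu^*$; hence $N'(\Delta_\w(\mu^*)) = N'(\Delta_\w(\mu))$ and the shift contributes nothing. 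Thus the corollary follows with $\Delta_\w(\lambda_J)$ exactly as stated, and all of the genuine content lies in the Borel--Weil theorem, Brion's vanishing theorem, and Ehrhart reciprocity for virtual polytopes.
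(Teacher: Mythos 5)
Your proposal is correct and follows essentially the route the paper intends: the corollary is the specialization of Theorem \ref{th-main} (via Theorem \ref{th-genus-complete-int}, Brion's vanishing theorem, and the identification of the Newton--Okounkov polytope of $\L_{\lambda}$ on $G/B$ with the string polytope through Borel--Weil and reciprocity). Your extra care with the Cartan-product surjectivity and the $\nu\mapsto\nu^*$ convention, which is harmless since $\dim V_{m\nu}=\dim V_{m\nu^*}$ makes $N'$ insensitive to the involution, fills in details the paper leaves implicit but does not change the argument.
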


Finally we briefly discuss the case of the flag variety of $G = \GL(n, \c)$ and Gelfand-Zetlin polytopes.
Let $G = \GL(n, \c)$. The flag variety of $G$ can be identified with the variety of all flags of linear subspace in $\c^n$:
$$\{0\} \subsetneqq F_1 \subsetneqq \cdots \subsetneqq F_n = \c^n.$$
Each dominant weight $\lambda$ of $G$ can be represented as an increasing $n$-tuple of integers:
$$\lambda = (\lambda_1 \leq \cdots \leq \lambda_n).$$
In their well-known work \cite{GZ}, given a dominant weight $\lambda$, 
Gelfand and Zetlin construct a natural vector basis for the irreducible representation $V_\lambda$ whose elements are parameterized with the 
integral points $x_{i,j}$ satisfying the following set of interlacing inequalities:
\begin{equation} \label{equ-GZ}
\left. \begin{matrix} \lambda_1 & \lambda_2 & \cdots & \cdots &
\cdots & \lambda_n \cr &&&&& \cr & x_{1,n-1} & x_{2,n-1} & \cdots &
\cdots & x_{n-1,n-1} \cr &&&&& \cr && x_{2,n-2} & x_{2,n-2} & \cdots
& x_{n-2,n-2} \cr &&&&& \cr &&& \cdots & \cdots & \cdots \cr &&&&&
\cr &&&& x_{1,2} & x_{2,2} \cr &&&&& \cr &&&&&x_{1,1}\cr
\end{matrix} \right.
\end{equation}
where the notation $$\left. \begin{matrix} a & b \cr &c \end{matrix}\right.$$
means $a \leq c \leq b$.

The set of all points $(x_{i,j})$ in $\r^{n(n-1)/2}$ satisfying \eqref{equ-GZ} is called the {\it Gelfand-Zetlin polytope} associated to $\lambda$ denoted by $\Delta_{\textup{GZ}}(\lambda)$. 
Let $\L_\lambda$ be the $G$-line bundle on the flag variety associated to a dominant weight $\lambda$. 
If $H$ is a generic divisor of $\L_\lambda$ then Corollary \ref{cor-genus-flag-var}
states that the genus of $H$ is equal to the number of integral points in $\r^{n(n-1)/2}$ lying in the interior of $\Delta_{\textup{GZ}}(\lambda)$. In other words, 
the number of integral points in $\r^{n(n-1)/2}$ which satisfy the inequalities in \eqref{equ-GZ} where all the inequalities are strict.




\end{document}